\newtheorem{thm}{Theorem}[section]
\newtheorem{cor}[thm]{Corollary}
\newtheorem{lem}[thm]{Lemma}
\newtheorem{prop}[thm]{Proposition}
\theoremstyle{definition}
\newtheorem{asp}[thm]{Assumption}
\newtheorem{defi}[thm]{Definition}
\newtheorem{rek}[thm]{Remark}
\def\R{\mathbb{R}}
\def\C{\mathbb{C}}
\def\D{\mathbb{D}}
\def\E{\mathbb{E}}
\def\P{\mathbb{P}}
\def\vec{\operatorname{vec}}
\def\diag{\operatorname{diag}}
\def\poly{\operatorname{poly}}
\def\cA{\mathcal{A}}
\def\cB{\mathcal{B}}
\def\cD{\mathcal{D}}
\def\cE{\mathcal{E}}
\def\cF{\mathcal{F}}
\def\cG{\mathcal{G}}
\def\cH{\mathcal{H}}
\def\cK{\mathcal{K}}
\def\cL{\mathcal{L}}
\def\cM{\mathcal{M}}
\def\cP{\mathcal{P}}
\def\cQ{\mathcal{Q}}
\def\cW{\mathcal{W}}
\def\cX{\mathcal{X}}
\def\cY{\mathcal{Y}}
\def\cZ{\mathcal{Z}}
\def\cR{\mathcal{R}}
\def\op{\mathrm{op}}
\def\a{\mathsf{a}}
\def\b{\mathsf{b}}
\def\d{\mathsf{d}}
\def\g{\mathsf{g}}
\def\h{\mathsf{h}}
\def\m{\mathsf{m}}
\def\p{\mathsf{p}}
\def\q{\mathsf{q}}
\def\r{\mathsf{r}}
\def\s{\mathsf{s}}
\def\st{\mathsf{t}}
\def\su{\mathsf{u}}
\def\sv{\mathsf{v}}
\def\w{\mathsf{w}}
\def\sU{\mathsf{U}}
\def\sV{\mathsf{V}}
\def\x{\mathsf{x}}
\def\y{\mathsf{y}}
\def\z{\mathsf{z}}
\def\A{\mathsf{A}}
\def\B{\mathsf{B}}
\def\Y{\mathsf{Y}}
\def\e{\mathbf{e}}
\def\i{\mathbf{i}}
\def\j{\mathbf{j}}
\def\u{\mathbf{u}}
\def\v{\mathbf{v}}
\def\bh{\mathbf{h}}
\def\bx{\mathbf{x}}
\def\btheta{\boldsymbol{\theta}}
\def\bxi{\boldsymbol{\xi}}
\def\1{\mathbf{1}}
\def\0{\mathbf{0}}
\def\Tr{\operatorname{Tr}}
\def\NC{\operatorname{NC}}
\def\Oprec{O_{\prec}}
\def\spec{\mathrm{spec}}
\def\supp{\mathrm{supp}}
\def\val{\mathrm{val}}
\def\Id{\mathrm{Id}}
\newcommand{\pnorm}[2]{\left\| #1\right\|_{#2}}
\DeclareMathOperator*{\argmin}{arg\,min}
\title{Kronecker-product random matrices and a matrix least squares problem}
\author{Zhou Fan}
\author{Renyuan Ma}
\email{zhou.fan@yale.edu, jack.ma.rm2545@yale.edu}
\address{Department of Statistics and Data Science, Yale University}
\begin{document}

\begin{abstract}
We study the eigenvalue distribution and resolvent of a Kronecker-product random matrix model $A \otimes I_{n \times n}+I_{n \times n} \otimes B+\Theta \otimes \Xi \in \mathbb{C}^{n^2 \times n^2}$, where $A,B$ are independent Wigner matrices and $\Theta,\Xi$ are deterministic and diagonal. For fixed spectral arguments, we establish a quantitative approximation for the Stieltjes transform by that of an approximating free operator, and a diagonal deterministic equivalent approximation for the resolvent. We further obtain sharp estimates in operator norm for the $n \times n$ resolvent blocks, and show that off-diagonal resolvent entries fall on two differing scales of $n^{-1/2}$ and $n^{-1}$ depending on their locations in the Kronecker structure.

Our study is motivated by consideration of a matrix-valued least-squares optimization problem $\min_{X \in \mathbb{R}^{n \times n}} \frac{1}{2}\|XA+BX\|_F^2+\frac{1}{2}\sum_{ij} \xi_i\theta_j x_{ij}^2$ subject to a linear constraint. For random instances of this problem defined by Wigner inputs $A,B$, our analyses imply an asymptotic characterization of the minimizer $X$ and its associated minimum objective value as $n \to \infty$.
\end{abstract}

\maketitle


\section{Introduction}\label{Introduction}

In recent years, high-dimensional probabilistic analyses have yielded important
insights into the exact asymptotic behavior of many
optimization problems with random data. We mention, as several examples,
analyses of ridge regression
\cite{karoui2013asymptotic,dicker2016ridge,dobriban2018high} with possibly non-linear random
features \cite{d2020double,hastie2022surprises,mei2022generalization} 
and/or in kernelized domains
\cite{mei2022generalizationkernel,xiao2022precise}
using asymptotic random matrix theory, and analyses of optimization problems
arising in contexts of non-linear regression
\cite{bayati2011lasso,el2013robust,thrampoulidis2015regularized,donoho2016high,el2018impact,thrampoulidis2018precise},
classification
\cite{sur2019modern,montanari2019generalization,liang2022precise,deng2022model}, and
variational Bayesian inference \cite{fan2021tap,qiu2023tap,celentano2023mean}
using Approximate Message Passing algorithms, Gaussian comparison and
interpolation arguments, and cavity-method techniques.
In most such examples, the behavior of the optimizer $\hat \bx \in \R^n$ in the
limit $n \to \infty$ is characterized by a system of scalar fixed-point
equations, derived via mean-field approximations over an interaction matrix
having a number of random elements much larger than the
dimension $n$ of the optimization variable.

Our current work is motivated by the study of large-$n$ asymptotics for a
different type of matrix-valued optimization problem, taking the form
\begin{equation}\label{eq:QPintro}
\min_{X \in \R^{n \times n}}
\frac{1}{2}\|XA+BX\|_F^2+\frac{1}{2}\sum_{i,j=1}^n \xi_i\theta_j x_{ij}^2
\qquad \text{ subject to } \frac{1}{n}\v^* X\u=1.
\end{equation}
Here, $\btheta,\bxi,\u,\v \in \R^n$ are deterministic ridge-regularization and
linear constraint parameters, and we will
study a setting of random inputs given by
independent Wigner matrices $A,B \in \R^{n \times n}$.
Notably, the optimization variable $X=(x_{ij})_{i,j=1}^n$ (with
$x_{ij}$ denoting the entries of $X$)
has dimension comparable to
$A$ and $B$. This problem (\ref{eq:QPintro}) may be written
equivalently in terms of the vectorization $\bx=\vec(X) \in \R^{n^2}$ and
diagonal matrices $\Theta=\diag(\btheta)$ and $\Xi=\diag(\bxi)$, as
\[\min_{\bx \in \R^{n^2}}
\frac{1}{2}\|(A \otimes I+I \otimes B)\bx\|_2^2
+\frac{1}{2}\,\bx^* (\Theta \otimes \Xi)\bx
\quad \text{ subject to } \frac{1}{n}(\u \otimes \v)^* \bx=1.\]
Such a problem is paradigmatic of a broader class of nonlinear problems/models
having a Kronecker-product structure: For motivation, let us mention
the matrix spin glass
model\footnote{We would like to thank Justin Ko for bringing a model similar to
(\ref{eq:matrixspinglass}) to our attention.} 
\begin{align}
p(X)&=\frac{1}{\mathcal{Z}}
\exp(\Tr AX^*X+\Tr BXX^*)\label{eq:matrixspinglass}\\
&=\frac{1}{\mathcal{Z}}\exp(\vec(X)^* (A \otimes I+I \otimes B) \vec(X)),
\qquad X \in \{\pm 1\}^{n \times n}\notag
\end{align}
defined by independent GOE coupling matrices $A,B \in \R^{n \times n}$,
and the optimization problem
\begin{equation}\label{eq:QPgraphmatching}
\min_{X \in \R^{n \times n}}
\|XA+BX\| \quad \text{ subject to }
\sum_{i=1}^n x_{ij}=\sum_{j=1}^n x_{ij}=1,\; x_{ij} \geq 0 \text{ for all }
i,j=1,\ldots,n
\end{equation}
defined by (possibly entrywise correlated) Wigner matrices $A,B \in
\R^{n \times n}$. The former model (\ref{eq:matrixspinglass}) describes a
disordered spin system on the lattice, with $A,B$ representing couplings for the
row and column inner-products of $X$. The latter
problem (\ref{eq:QPgraphmatching}) for various choices of matrix norm
$\|XA+BX\|$ corresponds to popular convex relaxations of combinatorial
optimization problems over permutation matrices $X$ that arise in random graph
matching \cite{almohamad1993linear,zaslavskiy2008path,aflalo2015convex}.

As a step towards developing techniques and insight for asymptotic analyses
of these types of Kronecker-structured models, in this work we carry out an
analysis of the simpler linear problem (\ref{eq:QPintro}) using
random matrix theory methods. We will establish a deterministic approximation
with $\Oprec(n^{-1/2})$ error for the value of the objective (\ref{eq:QPintro})
at its minimizer $\widehat{X}$, as well as for the value of
$n^{-1}{\v'}^*\widehat{X}\u'$ for arbitrary deterministic test vectors $\u',\v' \in \R^n$.
These results are closely related to a deterministic equivalent
approximation for the resolvent of a Kronecker-product random matrix
\begin{equation}\label{eq:matrixmodelintro}
Q=A \otimes I_{n \times n}+I_{n \times n} \otimes B+\Theta \otimes \Xi \in
\C^{n^2 \times n^2},
\end{equation}
which we will refer to as the ``Kronecker deformed Wigner model'' (in analogy
with the deformed Wigner model $A+\Theta$
studied classically in random matrix theory
\cite{pastur1972spectrum,capitaine2011free,LeeSchnelli2013Local,lee2015edge,lee2016bulk,knowles2017anisotropic}).
A second main focus of our work is to establish sharp quantitative
estimates for the resolvent $G=(Q-zI)^{-1}$ of this model at global
spectral scales, i.e.\ for fixed spectral parameters $z \in \C^+$. Writing
$G_{ij}=(\e_i \otimes I)^*G(\e_j \otimes I) \in \C^{n \times n}$
and $G_{ij,\alpha\beta}=(\e_i \otimes \e_\alpha)^*G(\e_j \otimes \e_\beta) \in
\C$ for the blocks and entries of this resolvent, we will show the
operator-norm estimates
\begin{equation}\label{eq:introopnorm}
\|G_{ii}^{-1}-G_{jj}^{-1}-(\theta_i-\theta_j)\Xi\|_\op \prec n^{-1/2},
\qquad \|G_{ij}\|_\op \prec n^{-1/2} \text{ for } i \neq j,
\end{equation}
the entrywise estimates
\begin{equation}\label{eq:introentrywise}
G_{ii,\alpha\alpha}-[G_0]_{ii,\alpha\alpha} \prec n^{-1/2},
\qquad G_{ii,\alpha\beta} \prec n^{-1/2} \text{ for } \alpha \neq \beta,
\qquad G_{ij,\alpha\beta} \prec n^{-1} \text{ for } \alpha \neq \beta, i \neq
j,
\end{equation}
and the bilinear-form estimates
\begin{equation}\label{eq:introbilinear}
(\u \otimes \v)^*[G-G_0](\u' \otimes \v') \prec n^{-1/2}
\text{ for deterministic unit vectors } \u,\v,\u',\v' \in \C^n,
\end{equation}
where $G_0 \in \C^{n^2 \times n^2}$ is a diagonal deterministic-equivalent
matrix. We remark that although this model (\ref{eq:QPintro})
is a ``toy'' setting that simplifies many of the additional complexities of
nonlinear models such as (\ref{eq:matrixspinglass}) and
(\ref{eq:QPgraphmatching}), some high-level aspects of this model may be
suggestive of properties to be expected also in these nonlinear models,
including a characterization of the large-$n$ limit
by a pair of fixed-point equations in an operator algebra rather than over the
scalar field, and a ``two-tiered'' mean-field structure as reflected in
(\ref{eq:introentrywise}), which arises from separate mean-field
approximations over $A$ and $B$.

A special case of the optimization problem (\ref{eq:QPintro}) with
$\Theta=\Xi=\eta I_{n \times n}$ (and correlated Wigner inputs $A$ and $-B$) was
previously analyzed in \cite{fan2023spectral}, in the context of the graph
matching application. We review a central idea of
this previous analysis in Appendix \ref{appendix:contour}, which,
however, is special to a commutative setting and does not extend more generally.
Here, we instead follow an alternative approach of a two-stage Schur-complement
analysis of the resolvent, in each stage applying, in an operator-algebra
setting, ideas around Dyson fixed-point equations and
fluctuation averaging techniques developed for Wigner-type models in
\cite{ErdosYauYin2012BulkUniversality,ErdosYauYin2012Rigidity,ErdosEtAl2013Local,LeeSchnelli2013Local}
and for $\C^{k \times k} \otimes \C^{n \times n}$-valued Kronecker
matrix models with fixed dimension $k$ in
\cite{AltErdosKrugerNemish2019Location,ErdosKrugerNemish2020Polynomials}. The
application of these methods in our setting of $k(n)=n$ seems to require new
ideas, even to obtain optimal quantitative estimates for fixed
spectral parameters on the global scale, and we discuss this further
in Section \ref{subsec:proofideas} below.

Recently, a breakthrough line of work has obtained sharp operator norm estimates
for polynomial matrix models in Kronecker-product spaces with growing first
dimension $k \equiv k(n)$
\cite{CollinsGuionnetParraud2022,Parraud2022OperatorNormHaar,Parraud2023Asymptotic,bandeira2023matrix,BelinschiCapitaine2022Preprint,Parraud2023Preprint,bordenave2023norm}.
The authors of \cite{CollinsGuionnetParraud2022} developed a new approach for
estimating expected traces of smooth functions of polynomials in
\begin{equation}\label{eq:polynomialmodel}
(A_1 \otimes I_{n \times n},\ldots,A_r \otimes I_{n \times n},
I_{k \times k} \otimes B_1,\ldots,I_{k \times k} \otimes B_s)
\end{equation}
for deterministic matrices $A_1,\ldots,A_r \in \C^{k \times k}$ and 
independent GUE matrices $B_1,\ldots,B_s \in \C^{n \times n}$,
via an interpolation between the GUE and free semicircular variables and a
differential calculus using Gaussian integration-by-parts and the
semicircular Schwinger-Dyson equation. These results implied
an estimate with $O(\frac{k^2}{n^2\poly(\Im z)})$-error for the expectation of
the Stieltjes transform, and a strong convergence result (i.e.\ convergence in
operator norm) when $k \ll n^{1/3}$.
This was extended into a full asymptotic expansion in
\cite{Parraud2023Asymptotic} and from GUE to Haar unitary matrices in
\cite{Parraud2022OperatorNormHaar,Parraud2023Preprint}, with
\cite{Parraud2023Preprint} deducing strong convergence for Kronecker-product
polynomials in Haar
unitary and deterministic matrices having first dimension
$k \leq n/\poly(\log n)$. An analogous
strong convergence statement in the Gaussian setting was shown as part of the
general results of \cite{bandeira2023matrix}, using a different
interpolation idea. The authors of
\cite{BelinschiCapitaine2022Preprint} showed strong convergence for
(\ref{eq:polynomialmodel}) when $k=n$ and $A_1,\ldots,A_r,B_1,\ldots,B_s$ are
all independent GUE matrices, via a mapping to unitary
matrices, an extension of the asymptotic expansion in
\cite{Parraud2023Asymptotic}, and a precise analysis of the
large-$|z|$ expansion of the expected Stieltjes transform. In
\cite{bordenave2023norm}, strong convergence for Kronecker-product
polynomials of Haar-unitary matrices up to dimensions $k \leq \exp(n^\alpha)$
was shown using a different non-backtracking high trace method.

The focus of our work is a bit different from the above,
as we will not address this question of strong convergence for our model, but
instead study the detailed structure of its resolvent in addition to its
spectral measure. We carry out a more classical analysis using the resolvent
calculus, and for our current purposes, we will also not separate the analysis
of the expectation of the resolvent from its fluctuations. (In particular, we
will not establish an estimate for the expected Stieltjes transform that is more
precise than the $\Oprec(n^{-1})$ scale of its fluctuations, as would be
needed to show strong convergence.) However, we highlight that our analyses
apply directly to non-Gaussian Wigner matrices, and also yield sharp
operator-norm estimates of the form (\ref{eq:introopnorm}) for the $n \times n$ 
resolvent blocks, which may be more difficult to obtain via arguments based
solely on large-$|z|$ expansions and analytic continuation ideas.

\subsection{Proof ideas}\label{subsec:proofideas}

We present here the high-level ideas of the arguments, deferring to Sections
\ref{sec:model} and \ref{sec:preliminaries} a more detailed description of the
model and notations. Consider first the model
\[Q=A \otimes I+I \otimes B+\Theta \otimes \Xi \in \C^{n^2 \times n^2}\]
and denote its resolvent $G=G(z)=(Q-zI)^{-1}$. The strategy will be
to perform a two-stage Schur-complement analysis of this resolvent, first over
the randomness of $A$, and then over $B$.

In the first stage,
conditioning on $B$, the independence structure of $A \otimes I$
suggests a Schur-complement analysis at the level of its $n \times n$ blocks:
Denoting $G_{ij}=(\e_i \otimes I)^* G(\e_j \otimes I) \in \C^{n \times n}$ and
applying standard resolvent identities, we have
\begin{equation}\label{eq:introGiiGij}
G_{ii}=\Big(a_{ii}I+B+\theta_i\Xi-zI-\sum_{r,s}^{(i)}
a_{ir}G_{rs}^{(i)}a_{si}\Big)^{-1},
\qquad G_{ij}={-}G_{ii}\sum_r^{(i)} a_{ir}G_{rj}^{(i)},
\end{equation}
where $G_{rs}^{(i)}$ is the version of $G_{rs}$ setting to 0
the $i^\text{th}$ row and column of $A$, and $\sum_{r}^{(i)}$ is the summation 
over indices $r \neq i$.
Applying concentration of the quadratic form in 
the first expression and
averaging over $i=1,\ldots,n$, we will eventually obtain an approximate
fixed-point relation for the partial trace
\[(n^{-1}\Tr \otimes I)G=\frac{1}{n}\sum_{i=1}^n G_{ii}
=\sum_{i=1}^n \Big(B+\theta_i\Xi-zI-(n^{-1}\Tr
\otimes I)G \Big)^{-1}+\Oprec(n^{-1}).\]
Stability of this equation will imply
\begin{equation}\label{eq:introMB}
(n^{-1}\Tr \otimes I)G=M_B+\Oprec(n^{-1}), \quad
\text{ for the fixed point }
M_B=\sum_{i=1}^n \Big(B+\theta_i\Xi-zI-M_B\Big)^{-1}.
\end{equation}

In the second stage, we then carry out the analysis over $B$. As
the implicit dependence of $M_B$ on $B$ in (\ref{eq:introMB}) is rather complex
and makes a direct Schur-complement analysis difficult, we introduce
an explicit operator algebra representation and write
\begin{equation}\label{eq:introMBrepr}
M_B=(\tau \otimes I)\Big[\underbrace{(\a \otimes I+1 \otimes B+\Theta \otimes
\Xi-z\,1 \otimes I)^{-1}}_{=\tilde \g}\Big]
\end{equation}
for a ($n$-dependent) von Neumann algebra $\cA$ with trace $\tau$,
containing a semicircular variable $\a$ and the subalgebra $\C^{n \times n}$
free of $\a$ (Lemma \ref{lem:uniquefixedpoint}).
We study the Stieltjes transform $n^{-2}\Tr G \approx n^{-1}\Tr M_B$ by first taking
$1 \otimes n^{-1}\Tr$ inside $\tau \otimes I$ in (\ref{eq:introMBrepr}),
and applying resolvent identities parallel to (\ref{eq:introGiiGij}),
\begin{equation}\label{eq:introGiiGij2}
\tilde\g_{\alpha\alpha}=\Big(b_{\alpha\alpha}1
+\a+\xi_\alpha\Theta-z1-\sum_{\gamma,\delta}^{(\alpha)}
b_{\alpha\gamma}\tilde\g_{\gamma\delta}^{(\alpha)}b_{\delta\alpha}\Big)^{-1},
\qquad \tilde\g_{\alpha\beta}={-}\tilde\g_{\alpha\alpha}
\sum_\gamma^{(\alpha)} b_{\alpha\gamma}\tilde\g_{\gamma\beta}^{(\alpha)},
\end{equation}
to analyze $(1 \otimes n^{-1}\Tr)\tilde\g$. This yields 
\begin{equation}\label{eq:introMA}
(1 \otimes n^{-1}\Tr)\tilde \g=\m_a+\Oprec(n^{-1}),
\quad \text{ for the fixed point }
\m_a=\sum_{\alpha=1}^n \Big(\a+\xi_\alpha \Theta-z1-\m_a\Big)^{-1},
\end{equation}
and applying this back to (\ref{eq:introMBrepr}) gives an approximation for
$n^{-2}\Tr G$. These arguments also yield, as direct
consequences, estimates in operator norm for the resolvent
blocks $G_{ii},G_{ij} \in \C^{n \times n}$.

In deducing (\ref{eq:introMB}) from (\ref{eq:introGiiGij}) and
(\ref{eq:introMA}) from (\ref{eq:introGiiGij2}), two difficulties arise that do
not occur in usual scalar random matrix models:
\begin{enumerate}
\item Applying non-commutative concentration inequalities to analyze
(\ref{eq:introGiiGij}) and (\ref{eq:introGiiGij2}), several of the
terms controlling the scale of fluctuations cannot be bounded
spectrally by the resolvent. For example, a non-commutative Khintchine-type
inequality gives for $G_{ij}$ in (\ref{eq:introGiiGij})
\begin{equation}\label{eq:introkhintchine}
\E\|G_{ij}\|_p^p \prec \frac{1}{\sqrt{n}}
\max\bigg\{\E\bigg\|\bigg(\sum_r
G_{rj}^{(i)}G_{rj}^{(i)*}\bigg)^{1/2}\bigg\|_p^p,\,
\E\bigg\|\bigg(\sum_r G_{rj}^{(i)*}G_{rj}^{(i)}\bigg)^{1/2}\bigg\|_p^p\bigg\}.
\end{equation}
The second term may be directly controlled by $\|G^*G\|_\op$ and a
spectral argument, but the first term is related instead to the spectrum of the
partial transpose ${G^\st}=\sum_{i,j} E_{ji} \otimes G_{ij}$, for which a naive
bound gives $\|G^\st\|_\op \leq n\|G\|_\op$ \cite{tomiyama1983transpose}.
Applying this naive bound produces a trivial estimate $G_{ij} \prec 1$.
A similar issue arises for the quadratic forms in the expressions of $G_{ii}$
and $\tilde\g_{\alpha\alpha}$ in
(\ref{eq:introGiiGij}) and (\ref{eq:introGiiGij2}).
\item In the infinite-dimensional context of (\ref{eq:introGiiGij2}),
Khintchine-type inequalities only yield estimates in the $L^p$-norms
$\|\cdot\|_p$ for $p<\infty$, rather than a dimension-free estimate in the
operator norm, whereas stability of the fixed-point equation (\ref{eq:introMA})
is most readily established under perturbations that are
bounded in operator norm.
\end{enumerate}

We address these difficulties by first carrying out the analysis for $|z|$
sufficiently large for which the resolvent admits a convergent series
expansion in $z^{-1}$, and furthermore the concentration of errors in
(\ref{eq:introGiiGij}) and (\ref{eq:introGiiGij2}) may be established by
expanding into elementary tensors and
applying scalar concentration term-by-term. Then, using a quantitative version
of the maximum modulus principle, we obtain a weak high-probability
estimate in the operator norm
\[\big\|G_{ii}-\big(B+\theta_i\Xi-zI-(n^{-1}\Tr \otimes I) G\big)^{-1}\big\|_\op
<n^{-\alpha}\]
for any fixed $z \in \C^+$ and a small (sub-optimal) constant $\alpha>0$
depending on $\Im z$
(Lemma \ref{lemma:weakopnormestimates}). Such an estimate and the stability of
the fixed-point equation (\ref{eq:introMB}) is sufficient to deduce
\begin{equation}\label{eq:introweakestimates}
G_{ii}-\underbrace{(B+\theta_i\Xi-zI-M_B)^{-1}}_{=M_i}
\prec n^{-\alpha}, \qquad G_{ij} \prec n^{-\alpha}.
\end{equation}

These estimates (\ref{eq:introweakestimates}) now enable the application of
non-commutative analogues of fluctuation averaging
techniques \cite{erdHos2013averaging}, which we use in conjunction with
an iterative bootstrapping argument to obtain the
optimal estimates for $G_{ii},G_{ij}$ as follows: Writing the
first non-spectral term of (\ref{eq:introkhintchine}) as
\[\frac{1}{n}\sum_r G_{rj}^{(i)}G_{rj}^{(i)*}
=\frac{1}{n}\sum_r \E_r[G_{rj}^{(i)}G_{rj}^{(i)*}]+\frac{1}{n}\sum_r \cQ_r[G_{rj}^{(i)}G_{rj}^{(i)*}]\]
where $\E_r$ is the partial expectation over row/column $r$ of $A$ and
$\cQ_r=1-\E_r$, fluctuation averaging with (\ref{eq:introweakestimates}) as
input shows for the second term
$n^{-1}\sum_r \cQ_r[G_{rj}^{(i)}G_{rj}^{(i)*}] \prec n^{-3\alpha}$.
Applying a resolvent expansion of the first term
$n^{-1}\sum_r \E_r[G_{rj}^{(i)}G_{rj}^{(i)*}]$ then yields
\[\cL_2\bigg(\frac{1}{n}\sum_r G_{rj}^{(i)}G_{rj}^{(i)*}\bigg) \prec
n^{-1}+n^{-3\alpha}\]
for the linear operator
$\cL_2(X)=X-\frac{1}{n}\sum_i M_iXM_i^*$, with $M_i$ defined in
(\ref{eq:introweakestimates}). This type of operator associated to Dyson
fixed-point equations has been studied previously in 
\cite{ajanki2019stability,AltErdosKrugerNemish2019Location}, and we adapt a
Perron-Frobenius argument of \cite{ajanki2019stability} to show quantitative
invertibility of $\cL_2$ for any fixed $z \in \C^+$. In our context, we require
invertibility of $\cL_2$ in the $L^p$-norm for each $p \in [1,\infty)$, and in
infinite-dimensional settings where $\cL_2$ may not have an exact
Perron-Frobenius eigenvector --- we thus carry out this analysis in the
$L^1$-$L^\infty$ duality rather than the Hilbert-space setting of
\cite{ajanki2019stability}, and appeal to the Riesz-Thorin interpolation
to obtain invertibility for all $p$ (Lemma
\ref{lemma:linearmap2}).
This shows $n^{-1}\sum_r G_{rj}^{(i)}G_{rj}^{(i)*} \prec n^{-1}+n^{-3\alpha}$, which
applied to (\ref{eq:introkhintchine}) shows
the implication
\[G_{ii}-M_i \text{ and } G_{ij} \prec n^{-\alpha}
\Longrightarrow G_{ii}-M_i \text{ and }
G_{ij} \prec \max(n^{-1/2},n^{-3\alpha/2}).\]
Iterating this bound gives finally the optimal errors
$G_{ii}-M_i \prec n^{-1/2}$ and $G_{ij} \prec n^{-1/2}$,
and an additional fluctuation averaging
step shows $n^{-1}\sum_i G_{ii}-M_B \prec n^{-1}$ for the partial trace,
as claimed in (\ref{eq:introMB}).

These arguments extend to show quantitative approximations for bilinear forms
of the resolvent $(\u \otimes \v)^* G(\u' \otimes \v')$, with
$\Oprec(n^{-1/2})$ error. To show the estimate
$G_{ij,\alpha\beta} \prec n^{-1}$ for off-diagonal entries, we apply a similar
idea as above, deriving from the resolvent identities and a fluctuation averaging
argument, for each fixed $i \neq j$ and $\alpha \neq \beta$,
\[\cL_1\bigg(\sum_k G_{kj}\e_\beta\e_\alpha^* G_{ik}\bigg) \prec n^{-1/2}\]
where $\cL_1(X)=X-\frac{1}{n}\sum_{i=1}^n M_iXM_i$. Quantitative invertibility
of $\cL_1$ follows from differentiation of the fixed-point equation
(\ref{eq:introMB}), yielding $\sum_k G_{kj}\e_\beta\e_\alpha^* G_{ik} \prec
n^{-1/2}$, and we will deduce from this the estimate
$G_{ij,\alpha\beta} \prec n^{-1}$ (Section \ref{sec:resolvent}).
Finally, results for the least-squares
problem (\ref{eq:QPintro}) are obtained via a similar analysis of a linearized
model (Section \ref{sec:optimization})
\[\begin{bmatrix}
            0 & 1\\
            1 & 0
        \end{bmatrix} \otimes (A \otimes I + I \otimes B) -i
        \begin{bmatrix}
            1 & 0\\
            0 & 0
        \end{bmatrix} \otimes \Theta \otimes \Xi - i\begin{bmatrix}
            0 & 0\\
            0 & 1
        \end{bmatrix}\otimes I \otimes I \in \C^{2 \times 2} \otimes \C^{n
\times n} \otimes \C^{n \times n}.\]

%

\subsection{Notation and conventions}
$(\e_1,\ldots,\e_n) \in \C^n$ are the standard basis vectors,
$E_{ij}=\e_i\e_j^*$ are the coordinate basis elements of $\C^{n \times n}$,
and $I_{n \times n}$ is the identity matrix (omitting the
subscript when the meaning is clear).
For $M \in \C^{n \times n}$, $\|M\|_F=(\sum_{i,j}
|m_{ij}|^2)^{1/2}$ and $\Tr M=\sum_i m_{ii}$ are the Frobenius norm and
(unnormalized) trace. We write $U=\diag(\u)$ for the diagonal matrix with
$\u \in \C^n$ on its diagonal, and $\u=\diag(U)$ for the vector
$(U_{11},\ldots,U_{nn})$ on the diagonal of $U \in \C^{n \times n}$.

For a von Neumann algebra $\cX$, we denote its unit as $1_\cX$ (omitting the
subscript when the meaning is clear).
Scalars $z \in \C$ are identified implicitly as elements of $\cX$ via
$z \mapsto z1_\cX$. We write $\x \geq 0$ if $\x \in \cX$ is self-adjoint and has
nonnegative spectrum.
For $\x,\y \in \cX$ self-adjoint, $\x \geq \y$ means $\x-\y \geq 0$.
We write
\[\Re \x=\tfrac{1}{2}(\x+\x^*), \qquad \Im \x=\tfrac{1}{2i}(\x-\x^*),
\qquad |\x|=(\x^*\x)^{1/2}\]
for the operator real and imaginary parts and absolute value. We denote
\[\cX^+=\{\x \in \cX:\Im \x \geq \epsilon \text{ for some } \epsilon>0\}\]
as the elements with strictly positive imaginary part (not to be confused with
the positive cone $\{\x \in \cX:\x \geq 0\}$).
$\|\x\|_\op$ is the operator norm, and $\|\x\|_p=\phi(|\x|^p)^{1/p}$
is the $L^p$-norm for a given trace $\phi$.

%

\section{Model and main results}\label{sec:model}

\subsection{Kronecker deformed Wigner model}

\begin{asp}\label{assump:Wigner}
\begin{enumerate}[(a)]
\item $A,B \in \C^{n \times n}$ are independent \emph{Wigner matrices} (defined
on an underlying probability space $(\Omega,\mathscr{F},\P)$)
satisfying $A=A^*$ and $B=B^*$, and having independent entries
$(a_{ij},b_{ij})_{i \leq j}$ such that, for all $1 \leq i<j \leq n$,
\[\E a_{ii}=\E b_{ii}=\E a_{ij}=\E b_{ij}=0, \qquad
\E |a_{ij}|^2=\E |b_{ij}|^2=1/n\]
and for all $1 \leq i,j \leq n$, each $p \geq 2$ and a constant $C_p>0$,
\begin{equation}\label{eq:momentassump}
\E |a_{ii}|^p,\E|a_{ij}|^p,\E |b_{ii}|^p,\E|b_{ij}|^p \leq C_p n^{-p/2}.
\end{equation}
(This includes the special case where $A,B$ are real and symmetric.)
\item $\Theta=\diag(\theta_1,\ldots,\theta_n) \in \R^{n \times n}$
and $\Xi=\diag(\xi_1,\ldots,\xi_n) \in \R^{n \times n}$
are deterministic diagonal matrices satisfying
$\|\Theta\|_\op,\|\Xi\|_\op \leq \upsilon$ for a constant $\upsilon>0$.
\end{enumerate}
\end{asp}
\vspace{\baselineskip}

We study the Kronecker deformed Wigner model
\begin{equation}\label{eq:Q}
Q=A \otimes I_{n \times n}+I_{n \times n} \otimes B+\Theta \otimes \Xi
\in \C^{n^2 \times n^2}.
\end{equation}
For spectral arguments $z \in \C^+$, define the resolvent and Stieltjes
transform of $Q$ by
\begin{align*}
G(z)&=(Q-z\,I_{n \times n}
\otimes I_{n \times n})^{-1},\\
m(z)&=n^{-2}\Tr G(z)
=(n^{-1}\Tr\,\otimes\,n^{-1}\Tr)\Big[(Q-z\,I_{n \times n}
\otimes I_{n \times n})^{-1}\Big].
\end{align*}
We will use Roman indices for the first tensor factor, Greek indices for the
second, and write the blocks and entries of $G(z)$ as
\[G_{ij}=(\e_i \otimes I)^*G(\e_j \otimes I) \in \C^{n \times n},
\qquad G_{\alpha\beta}=(I \otimes \e_\alpha)^*G(I \otimes \e_\beta)
\in \C^{n \times n},\]
\[G_{ij,\alpha\beta}=(\e_i \otimes \e_\alpha)^* G(\e_j \otimes \e_\beta) \in
\C.\]

Our main result will establish deterministic equivalent approximations for
$G(z)$ and $m(z)$. These may be defined
through a free probability construction, in the following setting:
\begin{asp}\label{assump:free}
\begin{enumerate}[(a)]
\item $\cA$ is a von Neumann algebra with unit $1_\cA$, operator norm
$\|\cdot\|_\op$, and faithful, normal, tracial state $\tau:\cA \to \C$. 
(We review this definition at the start of
Section \ref{sec:preliminaries}.)
\item $\cA$ contains as a von Neumann subalgebra
\[\cM \equiv \C^{n \times n}\]
where $I_{n \times n} \in \cM \subset \cA$ coincides with $1_\cA$,
and $\tau|_{\cM}$ and $\|\cdot\|_\op|_{\cM}$ restrict to the
normalized matrix trace $\frac{1}{n}\Tr$ and matrix operator norm on $\cM$.
\item $\cA$ has two semicircular elements $\a,\b$
(i.e.\ satisfying $\tau(\a^k)=\tau(\b^k)=\int_{-2}^2
x^k\,\frac{1}{2\pi}\sqrt{4-x^2}\,dx$ for each $k \geq 1$) which are free of
$\cM$ with respect to $\tau$.
\end{enumerate}
\end{asp}

Let $\cD \subset \cM$ be the subalgebra of diagonal matrices, 
generated by the diagonal basis elements
$\{E_{ii}\}_{i=1}^n$, and denote by $\tau^\cD:\cA \to \cD$ the diagonal
projection
\begin{equation}\label{eq:tauD}
\tau^\cD(\x)=\sum_{i=1}^n \frac{\tau(\x E_{ii})}{\tau(E_{ii})}E_{ii}
=\sum_{i=1}^n n\,\tau(\x E_{ii})E_{ii}.
\end{equation}
(This is the unique $\tau$-preserving conditional expectation onto $\cD$,
in the sense of Lemma \ref{conditional expectation}.)
We implicitly identify $\cA$ with its representation on an
underlying Hilbert space $\cH$, and denote by $\cA \otimes \cA$ the von
Neumann tensor product acting on $\cH \otimes \cH$. We denote by
$\tau \otimes \tau:\cA \otimes \cA \to \C$,
$\tau \otimes 1:\cA \otimes \cA \to \cA$,
and $1 \otimes \tau:\cA \otimes \cA \to \cA$
the unique bounded linear maps satisfying
\[(\tau \otimes \tau)(\x \otimes \y)
=\tau(\x)\tau(\y), \qquad
(\tau \otimes 1)(\x \otimes \y)=\tau(\x)\y,
\qquad (1 \otimes \tau)(\x \otimes \y)=\tau(\y)\x,\]
and similarly for
$\tau^\cD \otimes \tau^\cD$, $\tau^\cD \otimes 1$, and $1 \otimes \tau^\cD$.

Identifying $\Theta,\Xi \in \cM$ as elements of $\cA$ (which are
free of $\a,\b$ by assumption), in the limit $n \to \infty$, an
approximation of our matrix of interest $Q$ in the tracial sense is given by
\[\q=\a \otimes 1+1 \otimes \b+\Theta \otimes \Xi \in \cA \otimes \cA.\]
For spectral arguments $z \in \C^+$, we define its (deterministic,
$n$-dependent) resolvent and Stieltjes transform by
\begin{align*}
\g(z)&=(\q-z\,1 \otimes 1)^{-1},\\
m_0(z)&=\tau \otimes \tau[\g(z)] \in \C^+.
\end{align*}
The deterministic equivalent matrix $G_0(z)$ for $G(z)$ is then given by
\[G_0(z)=(\tau^\cD \otimes \tau^\cD)[\g(z)] \in \cD \otimes \cD.\]
We remark that $G_0(z)$ is a deterministic diagonal matrix
in $\C^{n^2 \times n^2}$. (We refer to \cite[Theorem 4.4]{ZhouFanSunWang2021Principal} for a previous example of this type of construction, in a different model.)

In contrast to more classical random matrix models, the above Stieltjes
transform $m_0(z)$ in general does not seem to admit a simple characterization
in terms of scalar-valued fixed-point equations.
In some previously studied models with variance profiles or
correlated entries, deterministic equivalents for the Stietljes transform
and resolvent may instead be defined via fixed-point equations in a vector or matrix space
\cite{AltErdosKrugerNemish2019Location,ErdosKrugerNemish2020Polynomials,AltErdosKruger2017}.
For our model of interest, the most direct characterization of its asymptotic
Stieltjes transform seems to be
via the following fixed-point equation in the operator algebra $\cA$, from
which a deterministic equivalent $G_0(z)$ for its resolvent may also be
constructed.

\begin{prop}\label{rmt:fixed point characterization}
In the setting of Assumption \ref{assump:free}, set
$\cA^+=\{\x \in \cA:\Im \x \geq \epsilon \text{ for some }
\epsilon>0\}$.
For any $z \in \C^+$, there exists a unique element $\m_a(z)
\in \cA^+$ satisfying the fixed-point equation
\begin{align}
\m_a(z)&=\frac{1}{n}\sum_{\alpha=1}^n (\a+\xi_\alpha\Theta-z-\m_a(z))^{-1}.
\label{eq:ma}
\end{align}
Similarly, there exists a unique element $\m_b(z) \in \cA^+$ satisfying
\begin{align}
\m_b(z)&=\frac{1}{n}\sum_{i=1}^n (\b+\theta_i \Xi-z-\m_b(z))^{-1}.
\label{eq:mb}
\end{align}
We have
\begin{align*}
m_0(z)&=\tau[\m_a(z)]=\tau[\m_b(z)],\\
G_0(z)&=\sum_{\alpha=1}^n
\tau^\cD\big[(\a+\xi_\alpha\Theta-\z-\m_a(z))^{-1}\big]\otimes E_{\alpha\alpha}
=\sum_{i=1}^n E_{ii} \otimes \tau^\cD\big[(\b+\theta_i\Xi-z-\m_b(z))^{-1}\big].
\end{align*}
\end{prop}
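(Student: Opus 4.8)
\emph{Proof idea.} The plan has two parts. First I would establish existence and uniqueness of $\m_a(z)$ and $\m_b(z)$ in $\cA^+$ by a holomorphic fixed-point argument inside $\cA$. Then I would identify the partial traces $(1\otimes\tau)\g(z)$ and $(\tau\otimes 1)\g(z)$ with $\m_a(z)$ and $\m_b(z)$ by means of the operator-valued subordination relation for semicircular elements; the asserted formulas for $m_0(z)$ and $G_0(z)$ then follow by applying further trace and diagonal-projection maps to $\g(z)$.

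\emph{Existence and uniqueness.} Fix $z\in\C^+$ and consider $\Phi_z(\m)=\frac{1}{n}\sum_{\alpha=1}^n(\a+\xi_\alpha\Theta-z-\m)^{-1}$. For $\m\in\cA^+$, each $\w_\alpha:=\a+\xi_\alpha\Theta-z-\m$ has $\Im(-\w_\alpha)=(\Im z)\,1+\Im\m\ge(\Im z)\,1>0$, hence is invertible with $\|\w_\alpha^{-1}\|_\op\le(\Im z)^{-1}$ and $\Im(\w_\alpha^{-1})=\w_\alpha^{-1}(-\Im\w_\alpha)\w_\alpha^{-*}\ge(\Im z)(\w_\alpha^*\w_\alpha)^{-1}\ge0$, so $\Phi_z$ is a holomorphic self-map of $\cA^+$ with $\|\Phi_z(\m)\|_\op\le(\Im z)^{-1}$. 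Feeding this norm bound back, together with the bound on $\|\w_\alpha\|_\op$ in terms of $z,\upsilon,\|\m\|_\op$, shows $\Im\Phi_z(\m)\ge\delta\,1$ on $\{\|\m\|_\op\le(\Im z)^{-1}\}\cap\cA^+$ for an explicit $\delta=\delta(z,\upsilon)>0$. Thus $\Phi_z$ maps the bounded open set $D=\{\m\in\cA:\|\m\|_\op<2(\Im z)^{-1},\ \Im\m>\tfrac{\delta}{2}\,1\}$ holomorphically into itself with $\Phi_z(D)$ at positive distance from $\partial D$, and the Earle--Hamilton fixed-point theorem gives a unique fixed point $\m_a(z)\in D$. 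Any fixed point in $\cA^+$ lies in $D$ after one application of $\Phi_z$, so uniqueness holds throughout $\cA^+$; the same argument with the roles of $\Theta,\Xi$ and $\a,\b$ interchanged produces $\m_b(z)$.

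\emph{Identification.} Work in the von Neumann tensor product $\cA\otimes\cA$, and let $\cE_1=\mathrm{id}_\cA\otimes\cE_\cM:\cA\otimes\cA\to\cA\otimes\cM$, where $\cE_\cM:\cA\to\cM$ is the $\tau$-preserving conditional expectation. A direct moment computation, using only that $\b$ is free from $\cM$ with $\tau(\b^2)=1$, shows that $1\otimes\b$ is an $(\cA\otimes\cM)$-valued semicircular element with covariance $\eta(\x)=[(1\otimes\tau)\x]\otimes1$. Writing $\q=b'+1\otimes\b$ with $b':=\a\otimes1+\Theta\otimes\Xi\in\cA\otimes\cM$, the operator-valued Cauchy-transform (subordination) identity for $(\cA\otimes\cM)$-valued semicircular elements gives
\[\cE_1[\g(z)]=\big(b'-z-\eta(\cE_1[\g(z)])\big)^{-1}=\big(\a\otimes1+\Theta\otimes\Xi-z-\tilde\g\otimes1\big)^{-1},\]
where $\tilde\g:=(1\otimes\tau)\g(z)=(1\otimes\tau)\cE_1[\g(z)]\in\cA$ and $\eta(\cE_1[\g(z)])=\tilde\g\otimes1$; moreover $\tilde\g\in\cA^+$ since $\Im\g(z)\ge\epsilon\,(1\otimes1)$ for some $\epsilon>0$ while $1\otimes\tau$ is positive, unital and $*$-preserving. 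Since $\a\otimes1+\Theta\otimes\Xi-z-\tilde\g\otimes1=\sum_{\alpha=1}^n(\a+\xi_\alpha\Theta-z-\tilde\g)\otimes E_{\alpha\alpha}$ is block-diagonal in the second factor, $\cE_1[\g(z)]=\sum_\alpha(\a+\xi_\alpha\Theta-z-\tilde\g)^{-1}\otimes E_{\alpha\alpha}$, and applying $1\otimes\tau$ (with $\tau|_\cM=\tfrac{1}{n}\Tr$) shows $\tilde\g$ solves \eqref{eq:ma}; hence $\tilde\g=\m_a(z)$ by the uniqueness just proved. Consequently $m_0(z)=(\tau\otimes\tau)\g(z)=\tau[(1\otimes\tau)\g(z)]=\tau[\m_a(z)]$, and since $\tau^\cD=\tau^\cD\circ\cE_\cM$ on $\cA$ forces $(\tau^\cD\otimes\tau^\cD)=(\tau^\cD\otimes\tau^\cD)\circ\cE_1$, applying $\tau^\cD\otimes\tau^\cD$ to the displayed identity and using $\tau^\cD(E_{\alpha\alpha})=E_{\alpha\alpha}$ yields $G_0(z)=\sum_\alpha\tau^\cD[(\a+\xi_\alpha\Theta-z-\m_a(z))^{-1}]\otimes E_{\alpha\alpha}$. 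Running the symmetric argument with $\a\otimes1$ as the semicircular element over $\cM\otimes\cA$ gives $(\tau\otimes1)\g(z)=\m_b(z)$, hence $m_0(z)=\tau[\m_b(z)]$ and the second formula for $G_0(z)$.

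\emph{Main obstacle.} The step demanding the most care is the operator-valued bookkeeping in the identification: checking that $1\otimes\b$ is $(\cA\otimes\cM)$-valued semicircular with covariance exactly $\eta(\x)=[(1\otimes\tau)\x]\otimes1$, so that the subordination relation holds and collapses, through the block-diagonal structure, precisely onto the scalar fixed-point equation \eqref{eq:ma}. In the first part, the care is in exhibiting a bounded subdomain of $\cA^+$ on which $\Phi_z$ is a holomorphic self-map with image strictly interior, so that Earle--Hamilton applies uniformly for every fixed $z\in\C^+$. Granting these, the remaining manipulations --- the block-diagonal reduction, the applications of $1\otimes\tau$ and $\tau^\cD\otimes\tau^\cD$, and the $\a\leftrightarrow\b$ symmetry --- are routine.
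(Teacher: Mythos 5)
Your proof is correct and follows essentially the same conceptual path as the paper's: establish existence and uniqueness of the fixed point via a holomorphic fixed-point theorem in an operator half-plane, and identify it with the appropriate partial trace of $\g(z)$ via the operator-valued Cauchy--R (subordination) relation for a tensor-product semicircular. The paper packages this as Lemma~\ref{lem:uniquefixedpoint} and then applies it with $\cX=\cA$, $\x=1_\cA$, $\h=\a$. The specific tools differ in two cosmetic ways that are worth noting. First, you invoke Earle--Hamilton and must explicitly exhibit a bounded domain $D$ with $\Phi_z(D)$ strictly interior (for which the $\delta$ should be computed from the larger radius $2(\Im z)^{-1}$ defining $D$, a small bookkeeping point in your write-up); the paper instead uses Helton--Far--Speicher, which is tailored to equations of the form $\s=(\b+\eta(\s))^{-1}$ on the operator half-plane and bypasses the domain construction. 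Second, for the identification you condition onto $\cA\otimes\cM$ via $\cE_1=\mathrm{id}_\cA\otimes\cE_\cM$ and view $1\otimes\b$ as an $(\cA\otimes\cM)$-valued semicircular, whereas the paper conditions onto $\cD\otimes\cX$ and organizes the cumulant computation by first noting freeness of $\h\otimes\x$ from $\cD\otimes\cX$ with amalgamation over $1\otimes\cX$, then invoking Nica--Shlyakhtenko--Speicher's theorem relating cumulants over nested subalgebras. Your ``direct moment computation'' for the $(\cA\otimes\cM)$-valued semicircular structure is the crux of the identification and is glossed over; the paper establishes the analogous fact carefully for precisely this reason. You should also note (as the paper does) that the Cauchy--R relation a priori holds only for $\|\z^{-1}\|_\op$ small, so an analytic-continuation step is needed to extend the subordination identity to all $z\in\C^+$. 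None of these points is a gap, but they are the places where the argument requires genuine care.
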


Qualitative approximations of the resolvent $G(z)$ and
Stieltjes transform $m(z)$ by the above constructions $G_0(z)$ and $m_0(z)$ may
be obtained using standard techniques of free probability theory, such as a
moment expansion of $G(z)$ and $m(z)$ for large $\Im z$ and analytic
continuation to the full upper half-plane. A primary purpose of our
work is to obtain more quantitative estimates that are not readily deduced from
these types of methods. The following theorem states these estimates, which
represent optimal approximations for $G(z)$ and $m(z)$ 
on the global spectral scale. Here, the stochastic domination notation 
$\xi \prec \zeta$ (c.f.\ \cite{erdHos2013averaging})
means $\P[|\xi|>n^\epsilon \zeta] \leq n^{-D}$ for any fixed $\epsilon,D>0$ 
and all $n \geq n_0(\epsilon,D)$; we review this definition
in Section \ref{subsec:domination}. 

\begin{thm}\label{rmt:main theorem}
Fix any $\upsilon,\delta>0$. Under Assumptions \ref{assump:Wigner} and
\ref{assump:free}, uniformly over $z \in \C^+$ with 
$|z| \leq \upsilon$ and $\Im z \geq \delta$, and over $i,j,\alpha,\beta \in
\{1,\ldots,n\}$ with $i \neq j$ and $\alpha \neq \beta$, we have:
\begin{enumerate}[(a)]
\item (Stieltjes transform)
\begin{equation}\label{rmt:Stieltjes transform}
|m(z)-m_0(z)| \prec n^{-1}.
\end{equation}
\item (Resolvent blocks)
\begin{align}
\Big\|G_{ii}^{-1}-G_{jj}^{-1}-(\theta_i-\theta_j)\Xi\Big\|_\op &\prec
n^{-1/2},\label{rmt:diagonalblocksa}\\
\Big\|G_{\alpha\alpha}^{-1}-G_{\beta\beta}^{-1}
-(\xi_\alpha-\xi_\beta)\Theta\Big\|_\op &\prec n^{-1/2},
\label{rmt:diagonalblocksb}\\
\|G_{ij}\|_\op,\|G_{\alpha\beta}\|_\op &\prec n^{-1/2}.
\label{rmt:offdiagonalblocks}
\end{align}
\item (Resolvent entries)
\begin{align}
G_{ii,\alpha\alpha}-(G_0)_{ii,\alpha\alpha} &\prec n^{-1/2},
\label{rmt:diagonal entries}\\
G_{ii,\alpha\beta}, G_{ij,\alpha\alpha}
&\prec n^{-1/2},\label{rmt:root n entries}\\
G_{ij,\alpha\beta} &\prec n^{-1}.\label{rmt:1/n entries}
\end{align}
\item (Bilinear forms)
Uniformly over vectors $\u,\v,\u',\v' \in \C^n$ with
$\|\u\|_2,\|\v\|_2,\|\u'\|_2,\|\v'\|_2 \leq \upsilon$,
\begin{equation}\label{rmt:isotropic entries}
(\u \otimes \v)^* \big[G-G_0\big] (\u' \otimes \v')
\prec n^{-1/2}.
\end{equation}
\end{enumerate}
\end{thm}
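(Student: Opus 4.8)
The plan is a two-stage Schur-complement analysis of the resolvent $G=(Q-z)^{-1}$, first over the randomness of $A$ with $B$ frozen, and then over $B$. In the first stage, conditioning on $B$ and applying the resolvent identities \eqref{eq:introGiiGij} to the $n\times n$ blocks $G_{ij}$, the target is an approximate block-Dyson equation for the partial trace $(n^{-1}\Tr\otimes I)G$ whose stable solution is $M_B$ of \eqref{eq:introMB}, together with $G_{ii}\approx M_i:=(B+\theta_i\Xi-z-M_B)^{-1}$ and $G_{ij}\approx 0$. The obstruction here is that concentration of the quadratic form $\sum_{r,s}^{(i)}a_{ir}G_{rs}^{(i)}a_{si}$ and of the off-diagonal sum $\sum_r^{(i)}a_{ir}G_{rj}^{(i)}$, via a non-commutative Khintchine inequality \eqref{eq:introkhintchine}, produces terms such as $\|n^{-1}\sum_r G_{rj}^{(i)*}G_{rj}^{(i)}\|$ that are \emph{not} controlled spectrally by $\|G\|_\op$ --- they see the partial transpose, whose norm can be a factor $n$ larger --- so the usual self-improving resolvent argument never gets started. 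The plan is to break this circularity by first working at $|z|$ large, where $G$ admits a norm-convergent series in $z^{-1}$ and concentration of the Stage-1 error terms can be proved by expanding into elementary tensors and applying scalar concentration termwise; a quantitative maximum-modulus argument then transports a high-probability operator-norm bound of the form $\|G_{ii}-(B+\theta_i\Xi-z-(n^{-1}\Tr\otimes I)G)^{-1}\|_\op<n^{-\alpha}$ to every fixed $z\in\C^+$ (Lemma \ref{lemma:weakopnormestimates}), for a small $\alpha=\alpha(\Im z)>0$. Combined with stability of \eqref{eq:introMB} under operator-norm perturbations, this yields the weak estimates \eqref{eq:introweakestimates}.

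With \eqref{eq:introweakestimates} in hand, the next step is a non-commutative fluctuation-averaging bootstrap. Splitting $n^{-1}\sum_r G_{rj}^{(i)}G_{rj}^{(i)*}$ into its partial conditional mean and a fluctuation, the fluctuation is $\prec n^{-3\alpha}$ by averaging cancellations, and a resolvent expansion of the mean gives $\cL_2\big(n^{-1}\sum_r G_{rj}^{(i)}G_{rj}^{(i)*}\big)\prec n^{-1}+n^{-3\alpha}$ for $\cL_2(X)=X-\frac{1}{n}\sum_i M_iXM_i^*$. I need quantitative invertibility of $\cL_2$ in every $L^p$-norm, $1\leq p<\infty$, uniformly in $n$; this I would obtain from a Perron--Frobenius-type estimate, carried out in the $L^1$--$L^\infty$ duality (since in infinite dimensions $\cL_2$ may lack an exact Perron eigenvector) and then interpolated via Riesz--Thorin (Lemma \ref{lemma:linearmap2}). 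This controls the non-spectral Khintchine term and upgrades the weak estimates to exponent $\max(1/2,3\alpha/2)$; iterating drives the exponent to $1/2$, giving $G_{ii}-M_i,\,G_{ij}\prec n^{-1/2}$, and one further averaging step improves the partial trace to $(n^{-1}\Tr\otimes I)G-M_B\prec n^{-1}$. Running the same machinery with deterministic test vectors in place of coordinate projections also produces a Stage-1 bilinear-form estimate. Together these yield part (b), and --- once the second stage identifies $M_B$ and $M_i$ with their free analogues --- parts (a), (c), and (d): the three $n^{-1/2}$-scale entry bounds of (c) are precisely the coordinate-vector instances of the bilinear analysis, using that $G_0$ is diagonal.

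For the second stage, since $M_B$ depends on $B$ implicitly it should not be Schur-complemented directly, but via the operator-algebra lift $M_B=(\tau\otimes I)[\tilde\g]$ with $\tilde\g=(\a\otimes I+1\otimes B+\Theta\otimes\Xi-z)^{-1}$ (Lemma \ref{lem:uniquefixedpoint}), repeating the Stage-1 program with $B$ in the role of $A$: resolvent identities \eqref{eq:introGiiGij2} for the blocks $\tilde\g_{\alpha\beta}$, a weak estimate via large $|z|$ and maximum modulus, then fluctuation averaging to reach $(1\otimes n^{-1}\Tr)\tilde\g=\m_a+\Oprec(n^{-1})$ with $\m_a$ the stable solution of \eqref{eq:introMA}. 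Composing the two stages gives $m(z)=n^{-2}\Tr G=n^{-1}\Tr M_B+\Oprec(n^{-1})=\tau[\m_a]+\Oprec(n^{-1})=m_0(z)+\Oprec(n^{-1})$, which is part (a); it also identifies the Stage-1 approximants with the free quantities of Proposition \ref{rmt:fixed point characterization}, upgrading the Stage-1 bilinear estimate to (d), and the symmetric roles of $A$ and $B$ then give the $\alpha,\beta$ statements of (b). For the remaining bound $G_{ij,\alpha\beta}\prec n^{-1}$ with $i\neq j$ and $\alpha\neq\beta$, the plan is a further fluctuation-averaging computation producing $\cL_1\big(\sum_k G_{kj}\e_\beta\e_\alpha^*G_{ik}\big)\prec n^{-1/2}$ for $\cL_1(X)=X-\frac{1}{n}\sum_i M_iXM_i$; quantitative invertibility of $\cL_1$ follows by differentiating \eqref{eq:introMB}, giving $\sum_k G_{kj}\e_\beta\e_\alpha^*G_{ik}\prec n^{-1/2}$, from which isolating the terms carrying the Kronecker off-diagonal structure extracts the extra $n^{-1/2}$.

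The step I expect to be the main obstacle is the very first one: obtaining any non-trivial estimate at all in Stage 1. The naive spectral bounds on the Khintchine and quadratic-form error terms are trivial ($\prec 1$) precisely because of the partial-transpose norm inflation, so there is no standard bootstrap to run; making it work requires the detour through large $|z|$, the quantitative maximum-modulus transfer to arbitrary fixed $z$, and then the $L^p$-uniform quantitative invertibility of $\cL_2$ --- a Perron--Frobenius bound for an infinite-dimensional positivity-preserving map with no exact eigenvector, handled through $L^1$--$L^\infty$ duality and interpolation --- so that the fluctuation-averaging bootstrap closes. Beyond this, the bulk of the work is bookkeeping: keeping every estimate uniform over $n$, over the block and entry indices, and over $z$ in the stated region, and tracking the $p$-dependence of all $L^p$-norm bounds throughout.
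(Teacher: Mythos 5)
Your proposal follows the paper's argument essentially step for step: the two-stage Schur-complement analysis over $A$ then $B$, the detour through large $|z|$ and the quantitative maximum-modulus transfer to obtain the weak $n^{-\alpha}$ operator-norm estimates (Lemma \ref{lemma:weakopnormestimates}), the fluctuation-averaging bootstrap driven by the $L^1$--$L^\infty$ Perron--Frobenius invertibility of $\cL_2$ interpolated by Riesz--Thorin (Lemma \ref{lemma:linearmap2}), the operator-algebra lift $M_B=(\tau\otimes I)[\tilde\g]$ to run stage two, and the final $G_{ij,\alpha\beta}\prec n^{-1}$ via $\cL_1$-invertibility obtained by differentiating the fixed-point equation (Lemma \ref{lemma:linearmap1}) combined with one more scalar Khintchine concentration in $b_{\alpha\gamma}$. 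The one small cosmetic difference is that the paper actually packages both stages as a single abstract theorem (Theorem \ref{aux:main theorem}) for a generalized resolvent $(H\otimes\x-\z)^{-1}$ over a von Neumann algebra $\cX$ and invokes it twice, and obtains the $G_{ii,\alpha\beta}$, $G_{ij,\alpha\alpha}$ bounds directly from the block operator-norm estimates rather than as coordinate instances of the bilinear form estimate --- but this is a repackaging, not a different proof.
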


The estimates (\ref{rmt:diagonal entries}), (\ref{rmt:root n entries}), and
(\ref{rmt:1/n entries}) indicate that the entries of the resolvent fall on three
scales of orders 1, $n^{-1/2}$, and $n^{-1}$, as depicted in the numerical
simulation of Figure \ref{fig:resolvent}. In a setting where
$\Theta \otimes \Xi$ commutes with $A \otimes I$ and $I
\otimes B$, the origins and optimality of these estimates may be understood via
a contour integral representation of $G(z)$, as we discuss in
Appendix \ref{appendix:contour}.
The above theorem illustrates that various properties of $G(z)$ 
for general $\Theta \otimes \Xi$ are in fact similar to this commutative case.

\begin{figure}[t]
    \centering
    \includegraphics[height=7cm]{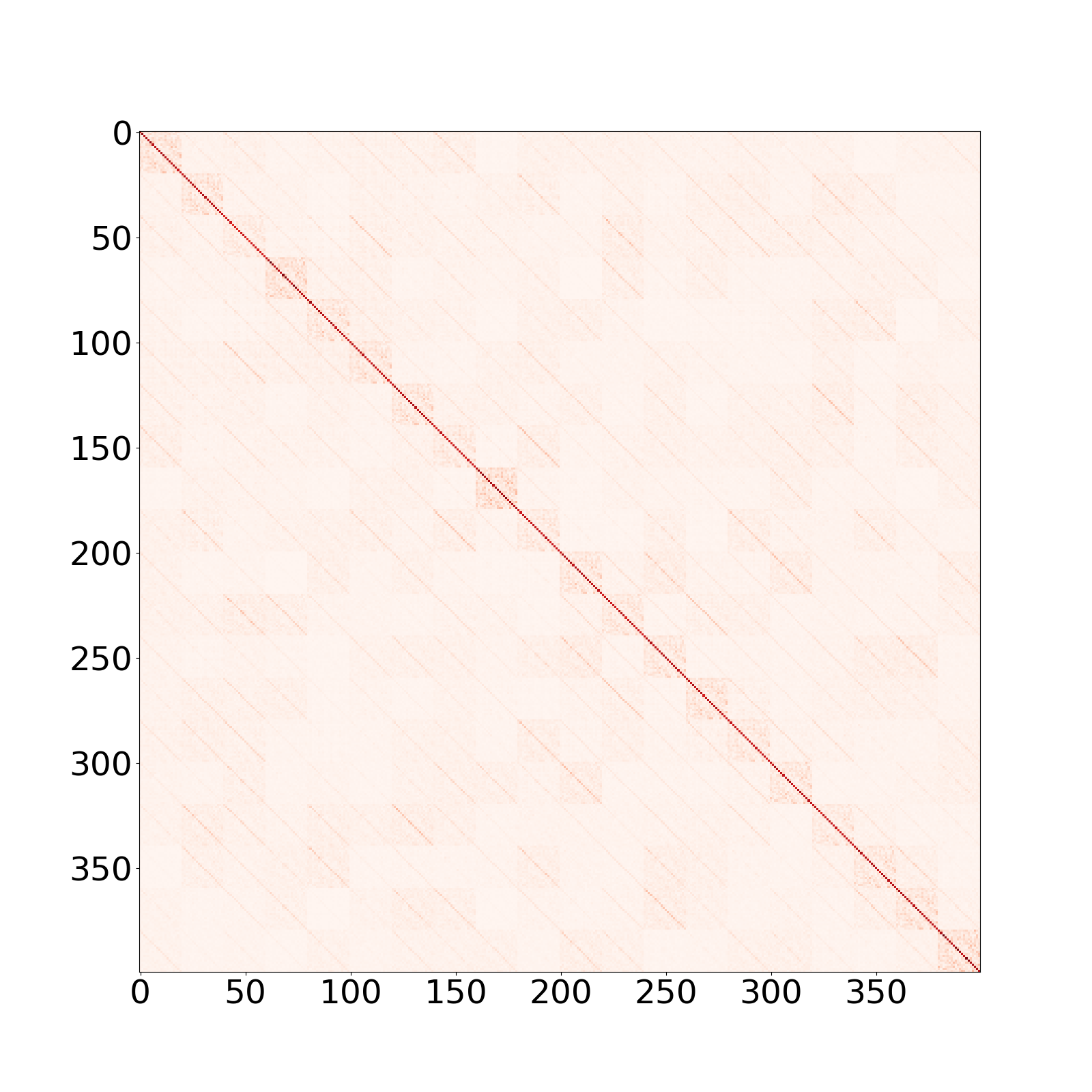}
    \caption{Entrywise modulus of the resolvent $G(z)=(A \otimes I+I
\otimes B+\Theta \otimes \Xi-z)^{-1}$
    at $z=i$, where $n=20$, $A,B$ are independent GOE matrices of size $n$,
    and $\Theta,\Xi$ have independent $\text{Uniform}(-1,1)$ diagonal entries.}
    \label{fig:resolvent}
\end{figure}

\subsection{Application to least-squares problem}

In the same setting of Assumption \ref{assump:Wigner} with real-valued Wigner
matrices $A,B \in \R^{n \times n}$ and
$\theta_i,\xi_j>0$, consider now the optimization objective
\begin{equation}\label{eq:objective}
f(X)=\frac{1}{2}\|XA+BX\|_F^2+\frac{1}{2}\sum_{i,j=1}^n \xi_i\theta_j
x_{ij}^2.
\end{equation}
Given vectors $\u,\v \in \R^n$, define its minimizer under a linear constraint
\begin{equation}\label{eq:opt}
\widehat{X}=\argmin_{X \in \R^{n \times n}} f(X)
\qquad \text{ subject to } \qquad \frac{1}{n}\v^* X\u=1.
\end{equation}
Our main results for this optimization problem (\ref{eq:opt}) are the following
asymptotic characterization of the minimum objective value $f(\widehat X)$
and the values of linear forms $n^{-1}{\v'}^*\widehat X\u'$ for deterministic
test vectors $\u',\v' \in \R^n$.

\begin{thm}\label{optimization: main theorem}
Fix any $\upsilon,\delta>0$. Suppose Assumptions \ref{assump:Wigner} and
\ref{assump:free} hold, where $A,B \in \R^{n \times n}$ are real-valued.
Associated to $\u,\v,\u',\v' \in \R^n$, denote
$\su=\diag(\u)$, $\sv=\diag(\v)$,
$\su'=\diag(\u')$, and $\sv'=\diag(\v')$ as diagonal matrices in $\cD$.
Then, uniformly over $\Theta,\Xi,\u,\v$ defining (\ref{eq:opt}) such that
$\Theta,\Xi \geq \delta I$ and
$\upsilon^{-1}\sqrt{n} \leq \|\u\|_2,\|\v\|_2 \leq \upsilon\sqrt{n}$:
\begin{enumerate}[(a)]
\item (Objective value)
\[f(\widehat X)=\frac{1}{2} \cdot \frac{1}{(\tau \otimes \tau)[(\su \otimes \sv)[(\a
\otimes 1+1 \otimes \b)^2+\Theta \otimes \Xi]^{-1}(\su \otimes \sv)]}
+\Oprec(n^{-1/2})\]
\item (Linear projection) Uniformly over $\u',\v' \in \R^n$ with
$\|\u'\|_2,\|\v'\|_2 \leq \upsilon\sqrt{n}$,
\[\frac{1}{n}\,{\u'}^*\widehat X{\v'}=\frac{(\tau \otimes \tau)[(\su' \otimes
\sv')[(\a \otimes 1+1 \otimes \b)^2+\Theta \otimes \Xi]^{-1}(\su \otimes \sv)]}
{(\tau \otimes \tau)[(\su \otimes \sv)[(\a \otimes 1+1 \otimes
\b)^2+\Theta \otimes \Xi]^{-1}(\su \otimes \sv)]}
+\Oprec(n^{-1/2}).\]
\end{enumerate}
\end{thm}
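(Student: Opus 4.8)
The plan is to reduce the constrained optimization problem to a bilinear form of the resolvent of the linearized model displayed at the end of Section \ref{subsec:proofideas}, and then invoke Theorem \ref{rmt:main theorem} to replace that bilinear form by its deterministic equivalent. First I would write down the Lagrangian/KKT conditions for \eqref{eq:opt}. The objective $f(X)=\tfrac12\|XA+BX\|_F^2+\tfrac12\sum_{ij}\xi_i\theta_j x_{ij}^2$ is a positive-definite quadratic form in $\bx=\vec(X)$, namely $f=\tfrac12\bx^*Q_0\bx$ with $Q_0=(A\otimes I+I\otimes B)^2+\Theta\otimes\Xi$ (using that $A,B$ are symmetric so $A\otimes I+I\otimes B$ is symmetric). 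Minimizing $\tfrac12\bx^*Q_0\bx$ subject to the linear constraint $\tfrac1n(\u\otimes\v)^*\bx=1$ is a standard exercise: the minimizer is $\widehat\bx=\lambda\,Q_0^{-1}(\u\otimes\v)$ with the multiplier $\lambda$ fixed by the constraint, giving $\lambda=n/\big((\u\otimes\v)^*Q_0^{-1}(\u\otimes\v)\big)$, and the minimum value $f(\widehat X)=\tfrac12\lambda^2\cdot\tfrac1{n^2}(\u\otimes\v)^*Q_0^{-1}(\u\otimes\v)\cdot n^2/n^2 = \tfrac12\,n^2/\big((\u\otimes\v)^*Q_0^{-1}(\u\otimes\v)\big)$. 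Likewise $\tfrac1n{\u'}^*\widehat X\v' = \tfrac1n(\u'\otimes\v')^*\widehat\bx = (\u'\otimes\v')^*Q_0^{-1}(\u\otimes\v)\big/\big((\u\otimes\v)^*Q_0^{-1}(\u\otimes\v)\big/n\big)$, wait — more carefully $\tfrac1n{\u'}^*\widehat X\v'=\lambda\tfrac1n(\u'\otimes\v')^*Q_0^{-1}(\u\otimes\v)=\big((\u'\otimes\v')^*Q_0^{-1}(\u\otimes\v)\big)\big/\big((\u\otimes\v)^*Q_0^{-1}(\u\otimes\v)\big)$. So both quantities are ratios of bilinear forms $(\u'\otimes\v')^*Q_0^{-1}(\u\otimes\v)$ in the \emph{inverse} of $Q_0=(A\otimes I+I\otimes B)^2+\Theta\otimes\Xi$, which is not itself a resolvent but can be accessed through one.

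The second step is to express $Q_0^{-1}$ via the resolvent of a Hermitian linearization so that Theorem \ref{rmt:main theorem} applies. The $2\times2$ block matrix at the end of Section \ref{subsec:proofideas},
\[
L=\begin{bmatrix}0&1\\1&0\end{bmatrix}\otimes(A\otimes I+I\otimes B)
-i\begin{bmatrix}1&0\\0&0\end{bmatrix}\otimes\Theta\otimes\Xi
-i\begin{bmatrix}0&0\\0&1\end{bmatrix}\otimes I\otimes I,
\]
is built precisely so that the Schur complement of $L$ (at spectral parameter $z=0$, say, or a fixed $z\in\C^+$ tuned to make $L-z$ of the required Kronecker-deformed-Wigner type) has a $(1,1)$-block proportional to $\big((A\otimes I+I\otimes B)^2+\Theta\otimes\Xi\big)^{-1}$. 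Concretely, for a $2\times2$ block operator $\begin{bmatrix}-i\Theta\otimes\Xi & A\otimes I+I\otimes B\\ A\otimes I+I\otimes B & -iI\end{bmatrix}$, the Schur complement onto the first block is $-i\Theta\otimes\Xi-(A\otimes I+I\otimes B)(-iI)^{-1}(A\otimes I+I\otimes B)=-i\big((A\otimes I+I\otimes B)^2+\Theta\otimes\Xi\big)=-iQ_0$, so the $(1,1)$-block of the inverse of that block operator is $i\,Q_0^{-1}$. Thus $(\u'\otimes\v')^*Q_0^{-1}(\u\otimes\v)$ is, up to the explicit factor $i$, a bilinear form of a block of the resolvent of $L$ at a fixed spectral parameter. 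One then checks that $L$ (after the appropriate shift) falls within the class covered by Theorem \ref{rmt:main theorem} — it is of Kronecker-deformed-Wigner type tensored with a fixed $\C^{2\times2}$ factor — so part (d) of that theorem, the bilinear-form estimate with $\Oprec(n^{-1/2})$ error, upgrades $(\u'\otimes\v')^*Q_0^{-1}(\u\otimes\v)$ to its deterministic equivalent, which by Proposition \ref{rmt:fixed point characterization} (applied to the linearized model) equals $(\tau\otimes\tau)\big[(\su'\otimes\sv')[(\a\otimes1+1\otimes\b)^2+\Theta\otimes\Xi]^{-1}(\su\otimes\sv)\big]$ up to negligible error.

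The third step is bookkeeping of the error propagation through the ratio. From the above, $(\u\otimes\v)^*Q_0^{-1}(\u\otimes\v)=D_n+\Oprec(n^{-1/2})\cdot\|\u\|_2\|\v\|_2$ where $D_n=(\tau\otimes\tau)[(\su\otimes\sv)[(\a\otimes1+1\otimes\b)^2+\Theta\otimes\Xi]^{-1}(\su\otimes\sv)]$; here one must use that $\|\u\|_2,\|\v\|_2\asymp\sqrt n$, so the bilinear form and $D_n$ are both of order $n$, and the relative error is $\Oprec(n^{-1/2})$. A lower bound $D_n\gtrsim n$ uniform over the allowed parameter range follows from $\Theta,\Xi\ge\delta I$ and the norm bounds on $\u,\v$, since $[(\a\otimes1+1\otimes\b)^2+\Theta\otimes\Xi]^{-1}\ge(\,\|\a\otimes1+1\otimes\b\|_\op^2+\upsilon^2)^{-1}\cdot 1$ and the operator norm of $\a\otimes1+1\otimes\b$ is bounded (it is $4$, since $\a,\b$ are free standard semicirculars, so $\a\otimes1+1\otimes\b$ is semicircular of variance $2$ with spectrum $[-2\sqrt2,2\sqrt2]$). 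Dividing the two expansions and using $1/(D_n+O(n^{-1/2}\cdot n))=1/D_n\cdot(1+\Oprec(n^{-1/2}))$ yields part (a) with the stated error $\Oprec(n^{-1/2})$ after noting $f(\widehat X)=\tfrac12 n^2/\big((\u\otimes\v)^*Q_0^{-1}(\u\otimes\v)\big)$ and $n^2/D_n=1/\big((\tau\otimes\tau)[(\su\otimes\sv)[\cdots]^{-1}(\su\otimes\sv)]\big)$ once one absorbs the $n^{-2}$ normalization consistently into $\su=\diag(\u),\sv=\diag(\v)\in\cD$ (recall $\tau|_\cD=\tfrac1n\Tr$, so the two $\tfrac1n$'s from $\tau\otimes\tau$ account for the $n^2$). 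Part (b) follows the same way, dividing the expansion of $(\u'\otimes\v')^*Q_0^{-1}(\u\otimes\v)$ by that of $(\u\otimes\v)^*Q_0^{-1}(\u\otimes\v)$, with the numerator controlled in absolute value by $\upsilon^2 n$ via $\|\u'\|_2,\|\v'\|_2\le\upsilon\sqrt n$, so that numerator errors of size $\Oprec(n^{-1/2}\cdot n)$ against a denominator of size $\asymp n$ produce the claimed $\Oprec(n^{-1/2})$.

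The main obstacle I anticipate is verifying that the linearized model $L$ genuinely falls within the hypotheses of Theorem \ref{rmt:main theorem} and Proposition \ref{rmt:fixed point characterization} — the theorem as stated is for $Q=A\otimes I+I\otimes B+\Theta\otimes\Xi$ on $\C^{n^2\times n^2}$, whereas $L$ lives on $\C^2\otimes\C^{n^2\times n^2}$ and has a non-self-adjoint-looking deformation term (the $-i\,(\cdot)$ pieces). One needs either a version of the main theorem stated for the $\C^{2\times2}$-amplified model (which presumably is what Section \ref{sec:optimization} actually develops — re-running the two-stage Schur-complement argument with the inert $\C^{2\times2}$ factor carried along, which does not change any of the estimates since the factor is of fixed dimension), together with the observation that at the fixed spectral parameter $z$ of interest $L-z$ has strictly positive imaginary part in the relevant block so all resolvents and fixed-point equations are well-posed. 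The other, more routine, delicate point is ensuring the test vectors $\u\otimes\v$ entering the bilinear form are the \emph{same} on both sides of the ratio so that the leading terms are identical objects and only the errors need to be tracked; this is automatic from the explicit KKT solution $\widehat\bx\propto Q_0^{-1}(\u\otimes\v)$.
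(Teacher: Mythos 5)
Your proposal follows essentially the same route as the paper: the explicit KKT solution $\widehat\bx\propto P(\u\otimes\v)$ with $P=[(A\otimes I+I\otimes B)^2+\Theta\otimes\Xi]^{-1}$, the $2\times2$ Hermitian linearization $\widetilde P$ expressing $P$ as a corner block of a resolvent, the bilinear-form estimate, and the ratio bookkeeping. The ``main obstacle'' you raise is resolved exactly as you anticipate: the paper does not invoke Theorem \ref{rmt:main theorem} directly, but instead applies the generalized Theorem \ref{aux:main theorem}(\ref{isotropic estimates}) twice --- first to the $A$-factor with $\cX=\C^{2\times2}\otimes\C^{n\times n}$, then to the $B$-factor with $\cX=\C^{2\times2}\otimes\cA$ --- so the fixed $\C^{2\times2}$ ``inert'' factor is simply carried inside the abstract algebra, which is permissible precisely because Section \ref{sec:general} is set up for a general von Neumann algebra $\cX$; no re-running of the proof is needed.

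One numerical slip that does not affect the argument: you claim $\|\a\otimes1+1\otimes\b\|_\op=2\sqrt2$ on the grounds that the sum is semicircular of variance $2$. But $\a\otimes1$ and $1\otimes\b$ \emph{commute} (they are classically independent, not free), so their sum has spectrum given by the classical convolution of the two semicircle laws, which is $[-4,4]$, and indeed $\|\a\otimes1+1\otimes\b\|_\op\le\|\a\|_\op+\|\b\|_\op=4$. The paper accordingly uses $16+\upsilon^2$ in the uniform lower bound $P_0\ge(16+\upsilon^2)^{-1}$, not $8+\upsilon^2$. Your scaling of $D_n$ in the final bookkeeping is also slightly inconsistent as written (the bilinear form $(\u\otimes\v)^*P(\u\otimes\v)$ is $O(n^2)$ while $(\tau\otimes\tau)[(\su\otimes\sv)\p(\su\otimes\sv)]$ is $O(1)$, so they differ by $n^2$, not $n$), but you flag the $n^{-2}$ normalization via $\tau|_\cD=\frac1n\Tr$ at the end, which is the correct reconciliation.
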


We remark that the above value
\begin{equation}\label{eq:value1}
(\tau \otimes \tau)[(\su' \otimes \sv')[(\a
\otimes 1+1 \otimes \b)^2+\Theta \otimes \Xi]^{-1}(\su \otimes \sv)]
\end{equation}
may be understood as
\begin{equation}\label{eq:value2}
n^{-2}(\u' \otimes \v')^*
(\tau^\cD \otimes \tau^\cD)[(\a
\otimes 1+1 \otimes \b)^2+\Theta \otimes \Xi]^{-1}(\u \otimes \v),
\end{equation}
where $(\tau^\cD \otimes \tau^\cD)[(\a
\otimes 1+1 \otimes \b)^2+\Theta \otimes \Xi]^{-1} \in \R^{n^2 \times n^2}$ is
a deterministic-equivalent approximation for the matrix
$[(A \otimes I+I \otimes B)^2+\Theta \otimes \Xi]^{-1}$ arising in the
vectorization of the objective (\ref{eq:objective}), and these expressions
(\ref{eq:value1}) and (\ref{eq:value2})
coincide because this deterministic-equivalent matrix is diagonal.

In an asymptotic setting, if the empirical distributions of coordinates of
$\Theta,\Xi,\u,\v,\u',\v'$ converge to deterministic limits, then a qualitative
implication of Theorem \ref{optimization: main theorem} is a characterization
of the almost-sure limit values of $f(\widehat{X})$ and
$n^{-1}{\v'}^*\widehat{X}\u'$, as summarized in the following corollary.

\begin{cor}\label{cor:optimization} 
Asymptotically as $n \to \infty$, if the empirical distributions of coordinates
of $\btheta=\diag(\Theta)$, $\bxi=\diag(\Xi)$, and $\u,\v,\u',\v'$ satisfy
\[\frac{1}{n}\sum_{i=1}^n \delta_{\theta_i,u_i,u_i'} \Rightarrow \cP,
\qquad \frac{1}{n}\sum_{\alpha=1}^n \delta_{\xi_\alpha,v_\alpha,v_\alpha'}
\Rightarrow \cQ\]
weakly for two joint laws $\cP,\cQ$ on $\R^3$,
then there exist almost-sure limit values
\begin{align*}
T(\cP,\cQ)&=\lim_{n \to \infty}
(\tau \otimes \tau)[(\su \otimes \sv)[(\a \otimes 1+1 \otimes \b)^2
+\Theta \otimes \Xi]^{-1}(\su \otimes \sv)],\\
T'(\cP,\cQ)&=\lim_{n \to \infty}
(\tau \otimes \tau)[(\su' \otimes \sv')[(\a \otimes 1+1 \otimes \b)^2
+\Theta \otimes \Xi]^{-1}(\su \otimes \sv)]
\end{align*}
depending only on $\cP,\cQ$, and almost surely as $n \to \infty$,
\[f(\widehat{X}) \to \frac{1}{2T(\cP,\cQ)},
\qquad \frac{1}{n}{\u'}^*\widehat{X}\v' \to \frac{T'(\cP,\cQ)}{T(\cP,\cQ)}.\]
\end{cor}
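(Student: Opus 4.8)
## Proof Plan for Corollary~\ref{cor:optimization}

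The plan is to deduce the corollary from Theorem~\ref{optimization: main theorem} in two stages: first show that the finite-$n$ quantities $f(\widehat X)$ and $n^{-1}{\u'}^*\widehat X\v'$ concentrate around the deterministic expressions appearing in Theorem~\ref{optimization: main theorem}, which is immediate from the $\Oprec(n^{-1/2})$ error bounds together with a Borel--Cantelli argument; and second show that those deterministic expressions themselves converge as $n \to \infty$ under the stated weak-convergence hypotheses on the empirical distributions. For the first stage, note that by definition of stochastic domination, $\P[|f(\widehat X) - \tfrac{1}{2T_n}| > n^{-1/2+\epsilon}] \le n^{-D}$ for any fixed $\epsilon, D > 0$ and large $n$, where $T_n$ denotes the finite-$n$ version of the quantity in Theorem~\ref{optimization: main theorem}(a); taking $D = 2$ and summing over $n$, Borel--Cantelli gives $f(\widehat X) - \tfrac{1}{2T_n} \to 0$ almost surely. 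The same reasoning applies to the linear projection in part (b), using $T_n'$.

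The second stage is where the real work lies: establishing the limits $T(\cP,\cQ)$ and $T'(\cP,\cQ)$. The natural approach is to recognize that $T_n = (\tau \otimes \tau)[(\su \otimes \sv)\,\cR_n\,(\su \otimes \sv)]$ where $\cR_n = [(\a \otimes 1 + 1 \otimes \b)^2 + \Theta \otimes \Xi]^{-1}$, and that since $(\a \otimes 1 + 1\otimes\b)^2 + \Theta\otimes\Xi$ has spectrum bounded below by $\delta$ (as $\Theta, \Xi \ge \delta I$ and the squared term is $\ge 0$), the resolvent $\cR_n$ is a norm-convergent power series in $(\a\otimes 1 + 1\otimes\b)^2$ and $\Theta\otimes\Xi$ on a suitable domain — or, more robustly, $\cR_n$ can be written as a contour integral $\frac{1}{2\pi i}\oint (w - (\a\otimes 1 + 1\otimes\b)^2 - \Theta\otimes\Xi)^{-1}\frac{dw}{w}$ around the spectrum. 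Then $T_n$ becomes an integral of $(\tau\otimes\tau)$ applied to resolvents of $(\a\otimes 1 + 1\otimes\b)^2 + \Theta\otimes\Xi$ sandwiched by the diagonal matrices $\su, \sv$. The key point is that all such traces are polynomial moments — more precisely, joint moments of the semicircular pair $(\a, \b)$ (which are free and whose mixed moments are fixed, $n$-independent constants given by noncrossing pair partitions) and the diagonal matrices $\Theta, \su$ (in the first tensor factor) and $\Xi, \sv$ (in the second). By freeness and the tensor-product structure, each such mixed moment factors into a product of a universal combinatorial constant, a normalized trace of a monomial in $\Theta$ and $\su$, i.e.\ $\frac{1}{n}\sum_i \theta_i^{a} u_i^{b}$, and a normalized trace of a monomial in $\Xi$ and $\sv$, i.e.\ $\frac{1}{n}\sum_\alpha \xi_\alpha^{c} v_\alpha^{d}$. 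Under the hypothesis $\frac{1}{n}\sum_i \delta_{\theta_i, u_i, u_i'} \Rightarrow \cP$, each such empirical moment converges to $\int \theta^a u^b \, d\cP$, and similarly for $\cQ$; hence every term in the expansion converges, and with uniform control of the tails of the expansion (from the spectral lower bound $\delta$ and the norm bounds $\|\Theta\|_\op, \|\Xi\|_\op, \|\su\|_\op, \|\sv\|_\op \le \upsilon$), dominated convergence gives convergence of $T_n$ to a limit $T(\cP,\cQ)$ depending only on $\cP, \cQ$. The argument for $T_n'$ is identical, replacing one factor of $\su$ by $\su'$ and one of $\sv$ by $\sv'$, which is why the limit depends on the joint laws of $(\theta_i, u_i, u_i')$ and $(\xi_\alpha, v_\alpha, v_\alpha')$ rather than their marginals. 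Finally, since $T(\cP,\cQ)$ is a sum of products of integrals against $\cP$ and $\cQ$ with positive leading term bounded below (indeed $T_n \ge \|\su\otimes\sv\|_2^2 / \|(\a\otimes 1 + 1\otimes\b)^2 + \Theta\otimes\Xi\|_\op \cdot (\text{something}) > 0$ using $\upsilon^{-1}\sqrt n \le \|\u\|_2, \|\v\|_2$), the limit $T(\cP,\cQ)$ is strictly positive, so the ratios $\tfrac{1}{2T(\cP,\cQ)}$ and $T'(\cP,\cQ)/T(\cP,\cQ)$ are well-defined, and combining with the first stage completes the proof.

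The main obstacle I anticipate is making the ``moment expansion converges term-by-term with dominated tails'' argument fully rigorous in the von Neumann algebra $\cA \otimes \cA$: one must verify that the freeness of $\{\a, \b\}$ from $\cM = \C^{n\times n}$ genuinely forces every relevant mixed moment to factor as (universal constant) $\times$ (trace in first factor) $\times$ (trace in second factor), which requires carefully tracking which tensor leg each operator lives in and invoking the moment-cumulant / noncrossing-partition description of freeness for the pair $(\a,\b)$ relative to the amalgamation-free subalgebra. An alternative that sidesteps delicate combinatorics is to argue more softly: the joint $*$-distribution of $(\a, \b, \Theta, \Xi, \su, \sv, \su', \sv')$ in $(\cA\otimes\cA, \tau\otimes\tau)$ is determined by the $*$-distribution of $(\Theta, \Xi, \su, \sv, \su', \sv')$ in $(\cM, \tfrac1n\Tr)$ together with the fixed law of the free semicircular pair, and the former converges (in the sense of convergence of all mixed moments) precisely because the pair of empirical measures converges weakly and all variables are uniformly bounded; then $T_n$ and $T_n'$ are bounded continuous functionals (via the resolvent, holomorphic on the relevant spectral domain uniformly in $n$) of this joint distribution, hence converge. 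Either route requires care but no genuinely new ideas beyond standard free-probability bookkeeping.
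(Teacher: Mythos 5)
Your proposal is correct and your ``alternative'' argument at the end---that the joint $*$-distribution of $(\a,\b,\Theta,\Xi,\su,\sv,\su',\sv')$ in $(\cA\otimes\cA,\tau\otimes\tau)$ converges in mixed moments under the weak-convergence hypothesis, and that $T_n,T_n'$ are obtained from this distribution by applying the resolvent, which is a uniform limit of polynomials on the fixed compact spectral interval---is precisely the paper's proof. One small slip: since $\Theta,\Xi\geq\delta I$, the spectral lower bound on $(\a\otimes 1+1\otimes\b)^2+\Theta\otimes\Xi$ is $\delta^2$ rather than $\delta$, but this does not affect the argument.
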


\subsubsection{Numerical computation}\label{sec:computation}

Since operator fixed-point equations of the form in Proposition \ref{rmt:fixed
point characterization} are not directly amenable to numerical computation,
we provide in this section a procedure for numerically approximating
the values of $T(\cP,\cQ)$ and $T'(\cP,\cQ)$ in Corollary \ref{cor:optimization}
using a moment expansion. (This procedure applies equally to
approximate the finite-$n$ value of (\ref{eq:value1}) rather than its limit
$T'(\cP,\cQ)$, upon replacing the expectations in
(\ref{eq:val}--\ref{eq:emptyval}) below by averages under the empirical
measures of coordinates $\frac{1}{n}\sum_{i=1}^n \delta_{\theta_i,u_i,u_i'}$
and $\frac{1}{n}\sum_{\alpha=1}^n \delta_{\xi_\alpha,v_\alpha,v_\alpha'}$.)

For each even integer $m \geq 2$, let $\NC_{2,2}(m)$ denote the set of ordered
pairs $(\rho_a,\rho_b)$ where $\rho_a,\rho_b$ are non-crossing pairings of two
disjoint even-cardinality subsets of $\{1,2,\ldots,m\}$ whose union is all of
$\{1,2,\ldots,m\}$. (The pairings $\rho_a$ and $\rho_b$ may cross each other,
and either pairing may be empty.) We associate to each $(\rho_a,\rho_b) \in
\NC_{2,2}(m)$ a value $\val(\rho_a,\rho_b)$ in the following way:
\begin{enumerate}
\item Let $w_a$ be the word in the letters $\{\a,\d\}$ that is obtained by
traversing the elements of $\{1,\ldots,m\}$ in sequential order, and writing
$\d\a$ for odd elements in $\rho_a$, $\a\d$ for even elements in $\rho_a$,
and $\d$ for all elements in $\rho_b$.

Similarly, let $w_b$ be the word obtained by writing
$\d\b$ for odd elements in $\rho_b$, $\b\d$ for even elements in $\rho_b$,
and $\d$ for all elements in $\rho_a$.
\item Define the complement $K(\rho_a)$ of $\rho_a$ in $w_a$
as the coarsest non-crossing partition of its letters $\d$,
for which $\rho_a \cup K(\rho_a)$ forms a non-crossing partition of all letters
of $w_a$. Define similarly the complement $K(\rho_b)$ of $\rho_b$ in $w_b$.
\item Finally, let $S_1$ be the block of $K(\rho_a)$ that
contains the first letter of $w_a$, let $T_1$ be the block of
$K(\rho_b)$ that contains the first letter of $w_b$, and set
\begin{align}
\val(\rho_a,\rho_b)
&=\E\Big[\sU\sU'\theta^{-(|S_1|+2)/2}\Big]
\prod_{S \in K(\rho_a) \setminus S_1}\E\Big[\theta^{-|S|/2}\Big] \times
\nonumber\\
&\hspace{1in}\E\Big[\sV\sV'\xi^{-(|T_1|+2)/2}\Big] 
\prod_{T \in K(\rho_b) \setminus T_1}
\E\Big[\xi^{-|T|/2}\Big]\label{eq:val}
\end{align}
where the expectations are over
$(\theta,\sU,\sU') \sim \cP$ and $(\xi,\sV,\sV') \sim \cQ$.
\end{enumerate}
For $m=0$, we take $\NC_{2,2}(0)$ to consist of the single pair
$(\rho_a,\rho_b)$ with $\rho_a=\rho_b=\emptyset$ both as the empty pairing,
and set
\begin{equation}\label{eq:emptyval}
\val(\emptyset,\emptyset)
=\E[\sU\sU'\theta^{-1}]\E[\sV\sV'\xi^{-1}].
\end{equation}

To illustrate this in an example, suppose $m=6$, $\rho_a=\{(1,3)\}$,
and $\rho_b=\{(2,6),(4,5)\}$. Then
\begin{align*}
w_a&=\underbrace{\d\a}_{1}\underbrace{\d}_{2}\underbrace{\d\a}_{3}\underbrace{\d}_{4}\underbrace{\d}_{5}\underbrace{\d}_{6}
=\d\a\d\d\a\d\d\d\\
w_b&=\underbrace{\d}_{1}\underbrace{\b\d}_{2}\underbrace{\d}_{3}\underbrace{\b\d}_{4}\underbrace{\d\b}_{5}\underbrace{\b\d}_{6}
=\d\b\d\d\b\d\d\b\b\d
\end{align*}
Re-numbering the letters of $w_a$ as $\{1,\ldots,8\}$, $\rho_a$ now corresponds
to the pairing $\{(2,5)\}$ of the letters $\a$, and $K_a$ is the partition
$\{(1,6,7,8),(3,4)\}$ of the letters $\d$. Similarly, re-numbering the letters
of $w_b$ as $\{1,\ldots,10\}$, $\rho_b$ is now the
pairing $\{(2,9),(5,8)\}$ of the letters $\b$, and $K_b$ is the partition
$\{(1,10),(3,4),(6,7)\}$ of the letters $\d$. Then $K_a$ has blocks of sizes
$4,2$, $K_b$ has blocks of sizes $2,2,2$, so
\[\val(\rho_a,\rho_b)
=\E[\sU\sU'\theta^{-3}]\E[\theta^{-1}]\E[\sV\sV'\xi^{-2}]\E[\xi^{-1}]\E[\xi^{-1}].\]

The values $T(\cP,\cQ)$ and $T'(\cP,\cQ)$ then admit the following series
approximations.

\begin{prop}\label{prop:computation}
Let $(\theta,\sU,\sU') \sim \cP$ and $(\xi,\sV,\sV') \sim \cQ$ where $\cP,\cQ$
are the joint laws on $\R^3$ of Corollary \ref{cor:optimization}.
Denote $\|\sU\|_\infty=\max\{|x|:x \in \supp(\sU)\}$ where $\supp(\sU)$ is the
support of $\sU$.
Set $\eta=\min\{\sqrt{x_ax_b}:x_a \in \supp(\theta),\,x_b \in \supp(\xi)\}$.
Then for any $M \geq 1$ and $z>1$,
\begin{align*}
T'(\cP,\cQ)&=\mathop{\sum_{m=0}^{M-1}}_{m \text{ is even}}
\left(\frac{(-1)^{m/2}}{z(z-1)^m}
\sum_{k=m}^{M-1} \binom{k}{m}\left(\frac{z-1}{z}\right)^k\right)
\sum_{(\rho_a,\rho_b) \in \NC_{2,2}(m)}
\val(\rho_a,\rho_b)+r_M
\end{align*}
where
\begin{equation}\label{eq:computationremainder}
|r_M| \leq \|\sU\|_\infty\|\sV\|_\infty\|\sU'\|_\infty\|\sV'\|_\infty
\cdot \eta^{-2} \left(\frac{\sqrt{(z-1)^2+16\eta^{-2}}}{z}\right)^M.
\end{equation}
The same result holds for $T(\cP,\cQ)$ upon replacing $\sU',\sV'$ in
(\ref{eq:val}), (\ref{eq:emptyval}), and (\ref{eq:computationremainder}) by
$\sU,\sV$.
\end{prop}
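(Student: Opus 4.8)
The plan is to combine a moment (resolvent series) expansion of the operator $[(\a \otimes 1 + 1\otimes\b)^2 + \Theta\otimes\Xi]^{-1}$ with a free-probabilistic evaluation of the resulting mixed moments of $\a,\b$ and the diagonal matrices, and then to reorganize the divergent-looking power series via a Borel-type resummation that gives a convergent scheme for $z>1$. Concretely, I would first write $H = (\a\otimes 1 + 1\otimes\b)^2 + \Theta\otimes\Xi$ and, using that $\|\a\otimes 1+1\otimes\b\|_\op \le 4$ and $\Theta\otimes\Xi \ge \eta^2$, expand $H^{-1} = \sum_{k\ge 0} (\Theta\otimes\Xi)^{-1/2}\big[-(\Theta\otimes\Xi)^{-1/2}(\a\otimes 1+1\otimes\b)^2(\Theta\otimes\Xi)^{-1/2}\big]^k (\Theta\otimes\Xi)^{-1/2}$, or more simply expand $\big(1 + (\Theta\otimes\Xi)^{-1}(\a\otimes 1+1\otimes\b)^2\big)^{-1}$ as a geometric series. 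Sandwiching between $\su'\otimes\sv'$ on the left and $\su\otimes\sv$ on the right and applying $\tau\otimes\tau$, each term becomes a sum over words in $\a,\b$ interleaved with diagonal matrices; since $\Theta,\Xi,\su,\su',\sv,\sv'$ all lie in the diagonal subalgebra $\cD$ and $\a,\b$ are free of $\cM\supset\cD$ (and free of each other), these moments are computed by the free cumulant / non-crossing-pairing formula: only non-crossing pairings $\rho_a$ of the $\a$'s and $\rho_b$ of the $\b$'s contribute, and the surviving diagonal factors between consecutive pair-endpoints get grouped according to the complementary non-crossing partitions $K(\rho_a),K(\rho_b)$, with the traces of products of diagonal matrices becoming the empirical/limit expectations $\E[\theta^{-\ell/2}]$, $\E[\sU\sU'\theta^{-(\ell+2)/2}]$, etc. This is exactly the combinatorial content packaged into $\val(\rho_a,\rho_b)$ in (\ref{eq:val}) and (\ref{eq:emptyval}); the bookkeeping of which powers of $\Theta^{-1}$ and $\Xi^{-1}$ land in which block of $K(\rho_a)$ resp. $K(\rho_b)$, and the $+2$ shift for the blocks $S_1,T_1$ containing the first letter (where the extra $\su\su'$ resp. $\sv\sv'$ insertions sit), is precisely the $w_a,w_b$ construction in the statement.

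Having identified the exact power series $T'(\cP,\cQ) = \sum_{m \text{ even}} c_m\,\big(\sum_{(\rho_a,\rho_b)\in\NC_{2,2}(m)}\val(\rho_a,\rho_b)\big)$ for appropriate coefficients $c_m$ coming from the geometric expansion, I would next perform the resummation that produces the stated double sum with weight $\frac{(-1)^{m/2}}{z(z-1)^m}\sum_{k=m}^{M-1}\binom{k}{m}\big(\frac{z-1}{z}\big)^k$. The natural device is the substitution $H^{-1} = \frac1z\big(1 - \tfrac{1}{z}(z - H)\big)^{-1}$ or, better, using $H = 1 + (H-1)$ and then the identity $\frac{1}{H} = \sum_{k\ge 0}\frac{(z-1)^k}{z^{k+1}}\big(1 - \tfrac{z-H}{?}\big)$ — i.e. an Euler-transform of the divergent series $\sum (-1)^{m/2}(\text{moment})_m$ into a convergent one for $z>1$. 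Expanding $(1 + t)^{-1}$ in powers of $(1 + t) - 1$ around the point $z$ rather than $0$ yields exactly the binomial weights $\binom{k}{m}\big(\frac{z-1}{z}\big)^k$; truncating at $k \le M-1$ leaves the remainder $r_M$. I would track the algebra so that the inner geometric variable is $s := (\Theta\otimes\Xi)^{-1}(\a\otimes 1+1\otimes\b)^2$, which is a non-negative operator with $\|s\|_\op \le 16\eta^{-2}$, and the transformed series converges because $\big|\frac{\sqrt{(z-1)^2 + 16\eta^{-2}}}{z}\big| < 1$ for $z$ large, with the rate in (\ref{eq:computationremainder}).

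For the remainder bound I would keep the resummed series in closed operator form: after the Euler transform, $r_M$ is the $\tau\otimes\tau$-value of $(\su'\otimes\sv')$ times a contour/operator expression of the form $\big(\text{explicit polynomial of degree }<M \text{ in } H\big) - H^{-1}$ times $(\su\otimes\sv)$, which I can bound in operator norm by $\|\su'\otimes\sv'\|_\op\,\|\su\otimes\sv\|_\op$ times the operator norm of the tail, and then use $\|\su\|_\op = \|\sU\|_\infty$ etc., $H \ge \eta^2$, and $\|H - 1\|_\op \le 16\eta^{-2} + (\text{bounded})$ to get the geometric factor $\big(\frac{\sqrt{(z-1)^2+16\eta^{-2}}}{z}\big)^M$ and the prefactor $\eta^{-2}$. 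The identical argument with $\su',\sv'$ replaced by $\su,\sv$ gives $T(\cP,\cQ)$, which is the quantity appearing in Theorem \ref{optimization: main theorem}(a); that $T,T'$ are well-defined limits (independent of the particular sequence realizing $\cP,\cQ$) follows because every $\val(\rho_a,\rho_b)$ is a finite product of moments of $\cP$ and $\cQ$ and the series converges uniformly in $n$ once $\eta$ is bounded below. The main obstacle I expect is step two — verifying that the free-probability evaluation of the interleaved moments reproduces \emph{exactly} the $w_a,w_b/K(\rho_a),K(\rho_b)$ recipe, including the correct placement of the $+2$ exponent shifts on the first blocks and the correct correspondence between "odd/even elements of $\rho_a$" and the $\d\a$ versus $\a\d$ letter patterns; this requires carefully unwinding the non-crossing structure of a word of the form $\d\,\b^{?}\,\d\,\a^{?}\,\d\,\cdots$ and matching complementary partitions of the $\a$-letters with those of the $\b$-letters, and is the only genuinely intricate combinatorial check. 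The Euler-resummation and the remainder estimate are then routine.
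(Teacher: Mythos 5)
Your high-level plan — expand the operator, evaluate the mixed moments by the free moment-cumulant formula over non-crossing pairings, then Euler-resum around $z$ and bound the tail — is the right strategy and matches the paper's. But you skip the one device on which both the stated combinatorics and the stated remainder bound actually hinge: the $2\times 2$ linearization of $[(\a\otimes 1+1\otimes\b)^2+\theta\otimes\xi]^{-1}$ followed by a conjugation that balances the block diagonal. The paper does not expand $H^{-1}=[(\a\otimes 1+1\otimes\b)^2+\Theta\otimes\Xi]^{-1}$ directly. It writes
\[
\tilde\p=\begin{bmatrix}-i\eta^{-1}\theta\otimes\xi & \a\otimes 1+1\otimes\b\\ \a\otimes 1+1\otimes\b & -i\eta\,1\otimes 1\end{bmatrix}^{-1},
\qquad \tilde\d=\begin{bmatrix}\eta^{1/2}\d_a\otimes\d_b & 0\\ 0 & \eta^{-1/2}1\otimes 1\end{bmatrix},
\]
with $\d_a=\theta^{-1/2}$, $\d_b=\xi^{-1/2}$, so that $\tilde\p=\tilde\d(\tilde\q+i(z-1)-iz)^{-1}\tilde\d$ with $\tilde\q$ strictly off-diagonal in the $2\times 2$ block structure. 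This has two consequences that your direct expansion does not capture.

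First, the combinatorics. Because $\tilde\q$ is strictly block-off-diagonal, the $(1,1)$ block of $\tilde\q^m$ is exactly the sum over words of length $m$ that alternate between the two ``sides'' $\{\A,\B\}$ and $\{\A^*,\B^*\}$ (the set $\cW_m$ in the paper), with the $\d_a,\d_b$ factors interlaced precisely as in the definitions of $w_a,w_b$. This is what makes $\NC_{2,2}(m)$ and $\val(\rho_a,\rho_b)$ the right objects for the degree-$m$ coefficient. Your direct series in $s=(\Theta\otimes\Xi)^{-1}(\a\otimes 1+1\otimes\b)^2$ (or its symmetrized version $s'$) produces, at degree $m$, words of length $2m$ in $\a,\b$ with no alternation constraint — a genuinely different indexing (off by a factor of $2$ in degree, and with a different sign pattern, $(-1)^m$ versus $(-1)^{m/2}$) — so you would be proving a different-looking formula and would still owe a nontrivial bijection with $\NC_{2,2}(m)$. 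You flag this as ``the main obstacle'' but don't identify that the linearization is what resolves it.

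Second, and this is a gap you don't notice: the remainder bound. From $s'\geq 0$ with $\|s'\|_\op\leq 16\eta^{-2}$, an Euler transform of $(1+s')^{-1}$ about the point $z$ has geometric ratio $\|z-1-s'\|_\op/z=\max\{z-1,\,|z-1-16\eta^{-2}|\}/z$, since $z-1-s'$ is self-adjoint with spectrum in $[z-1-16\eta^{-2},\,z-1]$. That is a \emph{linear} combination of $z-1$ and $16\eta^{-2}$, which for small $z-1$ and small $\eta$ is of order $16\eta^{-2}/z$ — much larger than $\sqrt{(z-1)^2+16\eta^{-2}}/z\sim 4\eta^{-1}/z$. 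The square-root in (\ref{eq:computationremainder}) is the fingerprint of the linearization: $\tilde\q$ is self-adjoint with spectrum in $[-4\eta^{-1},4\eta^{-1}]$ (operator norm $\leq 4\eta^{-1}$, not $\leq 16\eta^{-2}$), so the shift $i(z-1)$ adds to it \emph{in quadrature}, giving $\|\tilde\q+i(z-1)\|_\op\leq\sqrt{(z-1)^2+16\eta^{-2}}$. Your step ``$\|s\|_\op\leq 16\eta^{-2}$, and the transformed series converges because $\sqrt{(z-1)^2+16\eta^{-2}}/z<1$'' is therefore a non sequitur: that inequality does not follow from your setup. To recover the stated bound you must linearize (equivalently, factor $s'=\sv'^*\sv'$ with $\|\sv'\|_\op\leq 4\eta^{-1}$ and pass to the $2\times 2$ block resolvent), which is precisely what $\tilde\p$ and $\tilde\d$ accomplish.
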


In particular, choosing $z=1+16\eta^{-2}$, the remainder is bounded as
\[|r_M| \leq \|\sU\|_\infty\|\sV\|_\infty\|\sU'\|_\infty\|\sV'\|_\infty
\cdot \eta^{-2}\Big(\frac{z-1}{z}\Big)^{M/2},\]
so the series in $m$ converges geometrically (with faster convergence for larger
values of the regularization $\eta$). Then $T(\cP,\cQ)$ and $T'(\cP,\cQ)$ 
may be approximated to high numerical accuracy by computing
$\sum_{(\rho_a,\rho_b) \in \NC_{2,2}(m)} \val(\rho_a,\rho_b)$ for a small number
of terms $m=0,2,\ldots,M$. A numerical illustration is provided in Figure
\ref{fig:simulation}.

\begin{figure}[t]
    \centering
    \includegraphics[height=7cm]{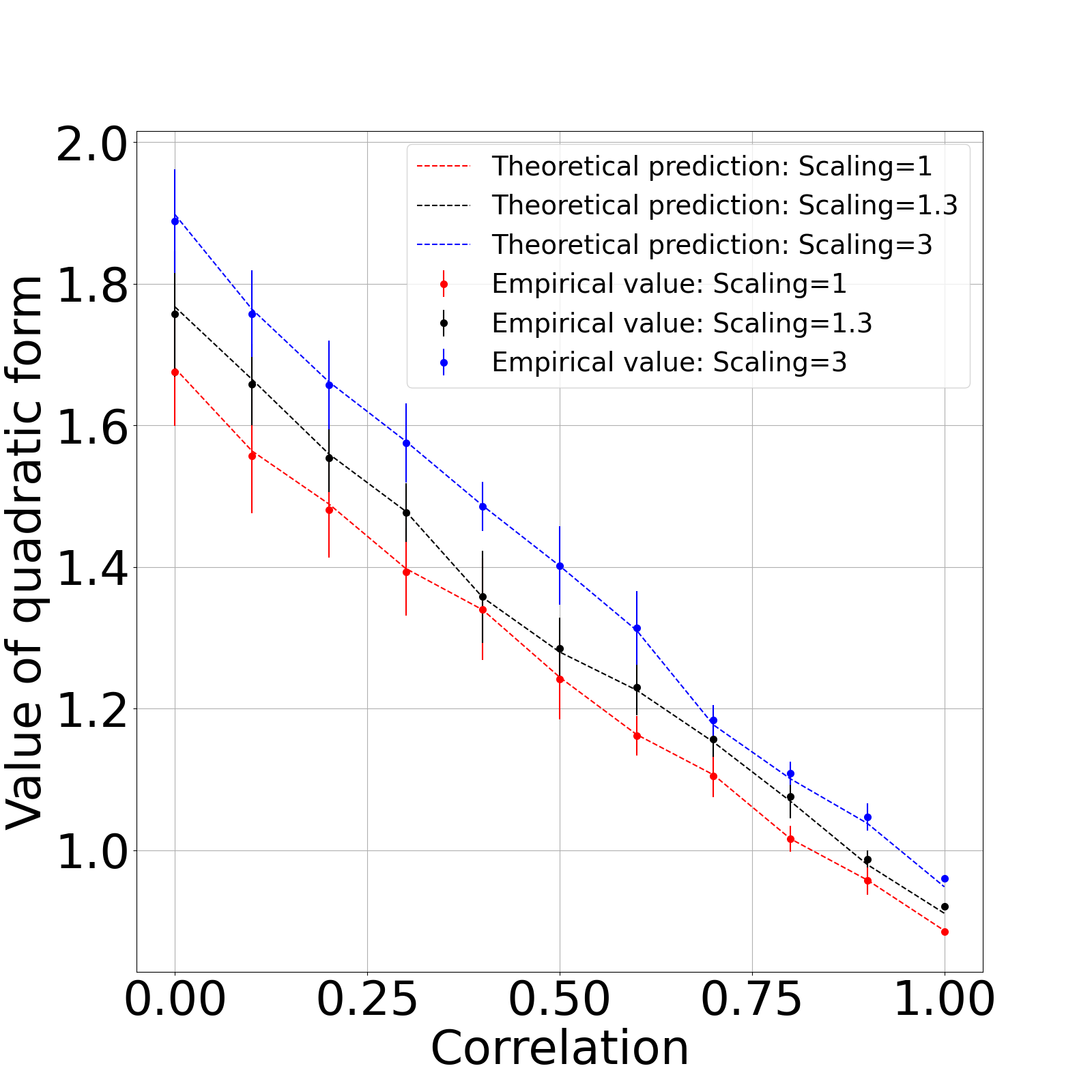}
    \caption{Values of $1/(2f(\widehat{X}))$ obtained from solving
(\ref{eq:opt}) across 10 independent realizations (solid dots, with vertical
lines indicating 1 standard deviation) versus the theoretical prediction
$T(\cP,\cQ)$ computed from Proposition \ref{prop:computation} with $M=12$ (dashed
lines). Here $A,B$ are GOE matrices of size $n=1000$, we take
$\theta_i^{-1},\xi_\alpha^{-1},u_i^2,v_\alpha^2 \sim \text{Uniform}(0.05,0.5)/k$
with $k \in \{1,1.3,3\}$, and the horizontal axis indicates the correlation
between coordinate pairs $(\theta_i^{-1},u_i^2)$ and between
$(\xi_\alpha^{-1},v_\alpha^2)$.}\label{fig:simulation}
\end{figure}

\section{Preliminaries}\label{sec:preliminaries}

In the remainder of the paper, we prove the preceding results. 
We note that the bulk of our analysis is in fact carried out
for a generalized resolvent operator
\[R=\bigg(H \otimes \x-\sum_{k=1}^K D_k \otimes \x_k\bigg)^{-1} \in \C^{n \times n}
\otimes \cX\]
in the setting of an abstract von Neumann algebra $\cX$, which we define
more precisely in Section \ref{sec:general}. This analysis may be of independent
interest to models beyond the ones we consider in our work.

We summarize in
this section the technical tools needed for our analyses.
Section
\ref{sec:general} contains the core of our main argument for analyzing
the above generalized resolvent
in an operator-algebra setting.
Section \ref{sec:resolvent} completes the analyses for the Kronecker deformed
Wigner model (\ref{eq:Q}), and Section \ref{sec:optimization} completes the
analyses for the least-squares problem (\ref{eq:opt}).

Throughout this section and Section \ref{sec:general},
$\cX$ is a von Neumann algebra acting on a (complex)
Hilbert space $\cH$, having unit $1_\cX$, operator norm $\|\cdot\|_\op$,
and a faithful, normal, tracial state $\phi:\cX \to \C$. We recall that this
means $\phi$ is a linear functional satisfying
\[|\phi(\x)| \leq \|\x\|_\op, \qquad
\phi(1_\cX)=1, \qquad \phi(\x\y)=\phi(\y\x) \text{ for all } \x,\y \in \cX,\]
\[\phi(\x) \geq 0 \text{ for all } \x \geq 0, \qquad \phi(\x)=0 \text{ only if }
\x=0,\]
and $\phi(\sup \x_i)=\lim \phi(\x_i)$ for any bounded increasing net $\{\x_i\}$
of elements $\x_i \geq 0$ in $\cX$. We will be working with $\cX$-valued random
variables, understood as strongly (i.e.\ Bochner) measurable functions from
the underlying probability space $(\Omega,\mathscr{F},\P)$ to the Banach space
$(\cX,\|\cdot\|_\op)$. For a $\cX$-valued
random variable $\x$ satisfying $\E\|\x\|_\op<\infty$, we denote by $\E\x \in
\cX$ its expectation and $\E[\x \mid \mathscr{G}]$ its conditional
expectation with respect to a sub-sigma-field $\mathscr{G} \subset \mathscr{F}$
(c.f.\ \cite[Chapter 1]{pisier2016martingales}).

\subsection{Stochastic domination}\label{subsec:domination}

For $p \in [1,\infty)$, we denote the $L^p$-norms with respect to $\phi$ as
\[\|\x\|_p=\phi\big(|\x|^p\big)^{1/p}, \qquad |\x|=(\x^*\x)^{1/2}.\]
Relevant properties of these norms and
their associated non-commutative $L^p$-spaces are reviewed in Appendix
\ref{appendix:background}. Throughout, we will write $\x \prec \zeta$
to mean stochastic domination of the $L^p$-norm for each fixed $p \in
[1,\infty)$, in the following sense.

\begin{defi}[Stochastic domination]\label{def:domination}
Let $\x=\{\x(u):u\in U\}$ be a $n$-dependent family of $\cX$-valued random
variables, and $\zeta=\{\zeta(u):u \in U\}$ a corresponding family of
(positive) scalar-valued random variables, where $U$ is a
$n$-dependent parameter set. We say that
\[\x \prec \zeta, \qquad \x=\Oprec(\zeta)\]
uniformly over $u \in U_n$ if, for any fixed $p \in [1,\infty)$
and $\epsilon,D>0$, there exists $n_0(p,\epsilon,D)>0$ such that for all
$n \geq n_0$,
    \begin{align*}
        \sup_{u\in U_n}\mathbb{P}\left[\|\x(u)\|_p \geq
n^\epsilon \zeta(u) \right] \leq n^{-D}.
    \end{align*}
In the scalar setting of a $\C$-valued random variable $\x$, this means
        $\sup_{u\in U_n}\mathbb{P}\left[|\x(u)| \geq n^\epsilon
\zeta(u) \right] \leq n^{-D}$
for some $n_0(\epsilon,D)>0$ and all $n \geq n_0(\epsilon,D)$.
\end{defi}

We will often use implicitly the following basic properties of $\prec$.

\begin{lem}\label{lemma:domination}
\phantom{ }
    \begin{enumerate}[(a)]
        \item If $\x(u,v)\prec \zeta(u,v)$ uniformly over $u\in U$ and $v\in V$,
and $|V|\leq n^C$ for some constant $C>0$, then uniformly over $u \in U$,
        \begin{align*}
            \sum_{v\in V} \x(u,v) \prec \sum_{v\in V}\zeta(u,v)
        \end{align*}
        \item If $\x_1 \prec \zeta_1$ and $\x_2 \prec \zeta_2$
uniformly over $u \in U$, then also $\x_1\x_2\prec \zeta_1\zeta_2$
uniformly over $u \in U$.
        \item 
Suppose $\x \prec \zeta$ uniformly over $u \in U$, where $\zeta$ is
deterministic, $\zeta>n^{-C}$,
and $\E[\|\x\|_p^k] \leq n^{C_{p,k}}$ for all $p,k \in [1,\infty)$ and some constants
$C,C_{p,k}>0$. Then $\E[\x \mid \mathscr{G}] \prec \zeta$ uniformly over
$u \in U$ and over all sub-sigma-fields $\mathscr{G} \subseteq \mathscr{F}$
of the underlying probability space $(\Omega,\mathscr{F},\P)$.
    \end{enumerate}
\end{lem}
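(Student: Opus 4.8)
The plan is to establish each of (a), (b), (c) directly from Definition \ref{def:domination}, in each case reducing to the scalar behavior of $\prec$ via an elementary inequality for the non-commutative $L^p$-norms together with a union bound. Throughout we fix $p \in [1,\infty)$ and $\epsilon,D>0$ and exhibit the required $n_0$.

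For (a), I would use the triangle inequality for $\|\cdot\|_p$: on the event $\bigcap_{v\in V}\{\|\x(u,v)\|_p < n^\epsilon\zeta(u,v)\}$ one has $\|\sum_{v}\x(u,v)\|_p < n^\epsilon\sum_v \zeta(u,v)$. Applying the hypothesis $\x(u,v)\prec\zeta(u,v)$ uniformly over the enlarged index set with exponent $D+C$ in place of $D$, and a union bound over the at most $n^C$ values $v$, the complementary event has probability at most $n^C\cdot n^{-(D+C)}=n^{-D}$, uniformly over $u$. For (b), I would instead invoke the non-commutative Hölder inequality $\|\x_1\x_2\|_p \le \|\x_1\|_{2p}\|\x_2\|_{2p}$ (valid since $\|\cdot\|_p$ is an honest norm for $p\ge1$, as recorded in Appendix \ref{appendix:background}); since $\x_1\prec\zeta_1$ and $\x_2\prec\zeta_2$ hold in \emph{every} $L^q$-norm, in particular $L^{2p}$, a two-term union bound over $\{\|\x_1\|_{2p}\ge n^{\epsilon/2}\zeta_1\}$ and $\{\|\x_2\|_{2p}\ge n^{\epsilon/2}\zeta_2\}$ gives $\|\x_1\x_2\|_p < n^\epsilon\zeta_1\zeta_2$ off an event of probability at most $2n^{-(D+1)}\le n^{-D}$.

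For (c)—the part requiring the most care—I would combine two ingredients. First, a conditional Jensen inequality: embedding $\cX$ into its non-commutative $L^p$-completion $L^p(\cX,\phi)$ norm-decreasingly (since $\|\x\|_p\le\|\x\|_\op$), the Bochner conditional expectation commutes with this embedding and, by convexity of the norm on the Banach space $L^p(\cX,\phi)$, one obtains $\|\E[\x\mid\mathscr G]\|_p \le \E[\,\|\x\|_p\mid\mathscr G\,]$ almost surely (the Banach-valued conditional Jensen inequality, c.f.\ \cite{pisier2016martingales}). Second, a truncation argument converting the domination $\|\x\|_p\prec\zeta$ into control of $\E[\|\x\|_p\mid\mathscr G]$: writing $\|\x\|_p = \|\x\|_p\1\{\|\x\|_p < n^{\epsilon/2}\zeta\} + \|\x\|_p\1\{\|\x\|_p \ge n^{\epsilon/2}\zeta\}$ gives $\E[\|\x\|_p\mid\mathscr G] \le n^{\epsilon/2}\zeta + \E\big[\|\x\|_p\1\{\|\x\|_p \ge n^{\epsilon/2}\zeta\}\,\big|\,\mathscr G\big]$, and for $n$ large the first term is at most $\tfrac12 n^\epsilon\zeta$. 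Hence, using that $\zeta$ is deterministic and positive, Markov's inequality followed by Cauchy–Schwarz and the tower property yields
\[\P\Big[\|\E[\x\mid\mathscr G]\|_p \ge n^\epsilon\zeta\Big]
\le \frac{2\,\E\big[\|\x\|_p\1\{\|\x\|_p \ge n^{\epsilon/2}\zeta\}\big]}{n^\epsilon\zeta}
\le \frac{2\,\E[\|\x\|_p^2]^{1/2}\,\P[\|\x\|_p \ge n^{\epsilon/2}\zeta]^{1/2}}{n^\epsilon\zeta}.\]
Invoking the moment bound $\E[\|\x\|_p^2]\le n^{C_{p,2}}$, the lower bound $\zeta>n^{-C}$, and the hypothesis $\x\prec\zeta$ with a sufficiently large exponent $D'$ in place of $D$ (namely $D'\ge C_{p,2}+2C+2D$), the right-hand side is at most $n^{-D}$. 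Since none of these constants depends on $\mathscr G$, this is uniform over all sub-sigma-fields, as claimed.

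I expect (c) to be the only nontrivial point: (a) and (b) are routine union bounds paired with the triangle and Hölder inequalities, whereas (c) hinges on the (standard but less elementary) conditional Jensen inequality for non-commutative $L^p$-norms and on the truncation trick needed to upgrade a tail bound on $\|\x\|_p$ into an $L^1$-type bound on $\E[\|\x\|_p\mid\mathscr G]$; the moment and lower-bound hypotheses on $\zeta$ are exactly what make this last step go through.
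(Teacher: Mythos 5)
Your proof is correct and follows the same overall strategy as the paper's: parts (a) and (b) are handled identically (triangle inequality, H\"older's inequality $\|\x_1\x_2\|_p\le\|\x_1\|_{2p}\|\x_2\|_{2p}$, union bound), and part (c) uses the same three ingredients — the conditional Jensen inequality $\|\E[\x\mid\mathscr{G}]\|_p\le\E[\|\x\|_p\mid\mathscr{G}]$ from \cite{pisier2016martingales}, a truncation of $\|\x\|_p$ at level $n^{\epsilon/2}\zeta$ with Cauchy--Schwarz on the tail, and Markov's inequality.

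The one genuine (if minor) organizational difference is in (c): the paper first proves the intermediate moment bound $\E[\|\x\|_p^k]<n^\epsilon\zeta^k$ for every fixed $k$ (their equation displaying the truncation/Cauchy--Schwarz step), and then applies Markov at a high power $k$ chosen so that $(k-1)\epsilon>D$. You instead apply Markov at power $1$ directly to $\E[\|\x\|_p\1\{\|\x\|_p\ge n^{\epsilon/2}\zeta\}]$, and make the tail small by invoking the hypothesis with a large probability exponent $D'$ rather than a large moment exponent $k$. Both routes exploit the same slack in Definition \ref{def:domination} (validity for arbitrary $\epsilon,D$), the same second-moment bound $\E[\|\x\|_p^2]\le n^{C_{p,2}}$, and the same lower bound $\zeta>n^{-C}$, so they are of comparable difficulty; yours avoids introducing the auxiliary $k$-dependent inequality but has to balance constants slightly more explicitly at the end, while the paper's version isolates a cleaner reusable moment estimate. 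Either way the argument is sound.
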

\begin{proof}
The argument is similar to the scalar setting
(c.f.\ \cite[Lemma D.2]{fan2022tracy}):
For each fixed $p \in [1,\infty)$, by the triangle inequality and H\"older's
inequality for the $L^p$-norm (Lemma \ref{Holder's inequality}),
\[\bigg\|\sum_{v \in V} \x(u,v)\bigg\|_p \leq
\sum_{v \in V} \|\x(u,v)\|_p, \qquad
\|\x_1\x_2\|_p \leq \|\x_1\|_{2p}\|\x_2\|_{2p}.\]
Statements (a) and (b) then follow from a union bound. For (c), by the given
assumptions, for any $p,k \in [1,\infty)$, $\epsilon>0$, and
all $n \geq n_0(\epsilon,k,p)$,
\begin{align}
\E[\|\x\|_p^k] &\leq \E[\|\x\|_p^k\1\{\|\x\|_p^k \leq n^{\epsilon/2}\zeta^k\}]+
\E[\|\x\|_p^k\1\{\|\x\|_p^k>n^{\epsilon/2}\zeta^k\}]\notag\\
&\leq
n^{\epsilon/2}\zeta^k+\E[\|\x\|_p^{2k}]^{1/2}\P[\|\x\|_p^k>n^{\epsilon/2}\zeta^k]^{1/2}<n^\epsilon\zeta^k.\label{eq:Expk}
\end{align}
The triangle inequality for $\|\cdot\|_p$ implies for $\lambda \in [0,1]$ that
$\|\lambda\x+(1-\lambda)\y\|_p \leq \lambda\|\x\|_p+(1-\lambda)\|\y\|_p$,
so $\x \mapsto \|\x\|_p$ is continuous and convex. Then
(c.f.\ \cite[Proposition 1.12]{pisier2016martingales})
\begin{equation}\label{eq:jensencondexp}
\|\E[\x \mid \mathscr{G}]\|_p \leq \E[\|\x\|_p \mid \mathscr{G}].
\end{equation}
So for any $\mathscr{G} \subseteq \mathscr{F}$, $p \in [1,\infty)$, and
$\epsilon,D>0$, fixing $k \geq 1$ such that $(k-1)\epsilon>D$
and choosing $n \geq n_0(p,\epsilon,D)$ large enough so that (\ref{eq:Expk})
holds,
\[\P[\|\E[\x \mid \mathscr{G}]\|_p>n^\epsilon \zeta]
\leq \frac{\E[\|\E[\x \mid \mathscr{G}]\|_p^k]}{n^{k\epsilon}\zeta^k}
\leq \frac{\E[\E[\|\x\|_p \mid \mathscr{G}]^k]}{n^{k\epsilon}\zeta^k}
\leq \frac{\E[\|\x\|_p^k]}{n^{k\epsilon}\zeta^k}<n^{-(k-1)\epsilon}<n^{-D}.\]
\end{proof}

\begin{rek}\label{rek:op holds in finite dimension}
For the finite-dimensional matrix algebra
$(\C^{n \times n},\|\cdot\|_\op,\frac{1}{n}\Tr)$, we have
$\|X\|_\op \leq (\Tr (X^*X)^{p/2})^{1/p}=n^{1/p}\|X\|_p$.
Thus if $X \prec \zeta$, then for any
$\epsilon,D>0$ and all $n \geq n_0(\epsilon,D)$, choosing $p=\max(1,2/\epsilon)$,
\[\P[\|X\|_\op \geq n^\epsilon \zeta]
\leq \P[n^{1/p}\|X\|_p \geq n^\epsilon\zeta]
\leq \P[\|X\|_p \geq n^{\epsilon/2}\zeta]<n^{-D}\]
so this implies the operator norm bound $\|X\|_\op \prec \zeta$.
However, we caution that this implication does not hold in
infinite-dimensional settings, where our notation $\x \prec \zeta$ has a weaker
meaning than $\|\x\|_\op \prec \zeta$.
\end{rek}

\subsection{Khintchine-type inequalities}

The following statements may be deduced from the non-commutative Rosenthal
inequalities of \cite{JungeXu2008}. In the setting of Rademacher variables,
similar Khintchine inequalities have been shown in
\cite[Eq.\ (8.4.11)]{pisier1998non} and
\cite[Theorem 6.22]{rauhut2010compressive}.

\begin{lem}\label{lemma:concentration}
Let $(\x_i,\y_{ij}:i,j=1,\ldots,n)$ be (deterministic) elements in
$\cX$, and denote
\begin{equation}\label{eq:partialtranspose}
\Y=\sum_{i,j=1}^n E_{ij} \otimes \y_{ij} \in
\C^{n \times n} \otimes \cX, \qquad
\Y^\st=\sum_{i,j=1}^n E_{ji} \otimes \y_{ij} \in
\C^{n \times n} \otimes \cX
\end{equation}
where $\Y^\st$ is the partial transpose of $\Y$ in its first tensor factor.
We equip $\C^{n \times n} \otimes \cX$ with the trace
$n^{-1}\Tr \otimes \phi$ and its $L^p$-norm
$\|\x\|_p=((n^{-1}\Tr \otimes \phi)[|\x|^p])^{1/p}$.

Let $(\alpha_i,\beta_i:i=1,\ldots,n)$ be independent $\C$-valued random
variables, satisfying $\E \alpha_i=\E \beta_i=0$
and $\E[|\alpha_i|^p],\E[|\beta_i|^p] \leq C_p$ for every $p \geq 2$ and some
constant $C_p>0$. Then for all $p \in [2,\infty)$,
there are constants $C_p',C_p'', C_p'''>0$ such that
\begin{enumerate}[(a)]
\item
\[\E\bigg[\bigg\|\sum_{i=1}^n \alpha_i\x_i\bigg\|_p^p\bigg] \leq C_p'
\max\bigg\{\bigg\|\bigg(\sum_{i=1}^n \x_i\x_i^*\bigg)^{1/2}\bigg\|_p^p,
\bigg\|\bigg(\sum_{i=1}^n \x_i^*\x_i\bigg)^{1/2}\bigg\|_p^p\bigg\}\]
\item
       \begin{align*}
        \E\bigg[\bigg\|\sum_{i,j=1}^n \alpha_i\beta_j\y_{ij}\bigg\|_p^p\bigg] &\leq C_p'' \max\bigg\{\bigg\|\bigg(\sum_{i,j=1}^n
\y_{ij}\y_{ij}^*\bigg)^{1/2}\bigg\|_p^p, 
\bigg\|\bigg(\sum_{i,j=1}^n\y_{ij}^*\y_{ij}\bigg)^{1/2}\bigg\|_p^p,
n\|\Y\|_p^p,n\|\Y^\st\|_p^p\bigg\}
       \end{align*}
\item Suppose $\y_{ii}=0$ for each $i=1,\ldots,n$. Then
\begin{align*}
    \E\bigg[\bigg\|\sum_{1 \leq i \neq j \leq n}
\alpha_i\alpha_j\y_{ij}\bigg\|_p^p\bigg] &\leq C_p'''
\max\bigg\{\bigg\|\bigg(\sum_{1 \leq i \neq j \leq n}
\y_{ij}\y_{ij}^*\bigg)^{1/2}\bigg\|_p^p, 
\bigg\|\bigg(\sum_{1 \leq i \neq j \leq n}
\y_{ij}^*\y_{ij}\bigg)^{1/2}\bigg\|_p^p,n\|\Y\|_p^p,n\|\Y^\st\|_p^p\bigg\}
\end{align*}
\end{enumerate}
\end{lem}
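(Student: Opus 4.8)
The plan is to deduce all three parts from the non-commutative Rosenthal (Burkholder--Rosenthal) inequality of \cite{JungeXu2008}, with part (c) additionally requiring a decoupling reduction to part (b). For part (a), I would apply the Rosenthal inequality to the independent mean-zero summands $\alpha_i\x_i$, viewed in the non-commutative $L^p$-space of $L^\infty(\Omega)\,\bar\otimes\,\cX$ with trace $\E\otimes\phi$: since the $\x_i$ are deterministic, the two conditioned square functions are the deterministic elements $\sum_i\E|\alpha_i|^2\,\x_i^*\x_i$ and $\sum_i\E|\alpha_i|^2\,\x_i\x_i^*$, and using $\E|\alpha_i|^2\le C_2$ together with operator monotonicity of $t\mapsto\sqrt t$ these are bounded by $C$ times the two square-function terms appearing in (a). The point requiring an argument is that the ``diagonal'' Rosenthal term is \emph{also} dominated: $\sum_i\E\|\alpha_i\x_i\|_p^p\le C_p\sum_i\phi((\x_i^*\x_i)^{p/2})$, and the Araki--Lieb--Thirring trace inequality (valid in semifinite von Neumann algebras) gives $\sum_i\phi((\x_i^*\x_i)^{p/2})\le\phi((\sum_i\x_i^*\x_i)^{p/2})$ — concretely, writing $S=\sum_i\x_i^*\x_i$ and $c_i=S^{-1/2}\x_i^*\x_iS^{-1/2}$ (so $0\le c_i\le 1$ and $\sum_ic_i=1$, after a routine approximation if $S$ is not invertible), Lieb--Thirring yields $\phi((S^{1/2}c_iS^{1/2})^{p/2})\le\phi(S^{p/4}c_i^{p/2}S^{p/4})\le\phi(S^{p/4}c_iS^{p/4})$, which sums over $i$ to $\phi(S^{p/4}(\sum_ic_i)S^{p/4})=\phi(S^{p/2})=\|(\sum_i\x_i^*\x_i)^{1/2}\|_p^p$.

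For part (b), I would condition on $(\beta_j)_j$ and apply (a) in the variable $\alpha$ to $\sum_i\alpha_i\z_i$ with $\z_i=\sum_j\beta_j\y_{ij}$ (deterministic given $\beta$), then take $\E_\beta$; this reduces the estimate to bounding $\E_\beta\|\sum_i\z_i^*\z_i\|_{p/2}^{p/2}$ and $\E_\beta\|\sum_i\z_i\z_i^*\|_{p/2}^{p/2}$. I would then identify $\sum_i\z_i^*\z_i=|\mathbf{Z}|^2$ for the column $\mathbf{Z}=(\z_i)_i=\sum_j\beta_j\mathbf{y}^{(j)}$, $\mathbf{y}^{(j)}=(\y_{ij})_i\in M_{n,1}(\cX)$, and $\sum_i\z_i\z_i^*=|(\mathbf{Z}')^*|^2$ for the row $\mathbf{Z}'=\sum_j\beta_j\mathbf{w}^{(j)}$, $\mathbf{w}^{(j)}=(\y_{ij})_i\in M_{1,n}(\cX)$, and apply (a) a \emph{second} time inside the von Neumann algebra $(M_n(\cX),\tfrac1n\Tr\otimes\phi)$ with the $\beta_j$ now playing the role of the random multipliers. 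Unwinding the identifications, $\E_\beta\|\sum_i\z_i^*\z_i\|_{p/2}^{p/2}$ gets controlled by a constant times $\max\{n\|\Y\|_p^p,\;\|(\sum_{ij}\y_{ij}^*\y_{ij})^{1/2}\|_p^p\}$ and $\E_\beta\|\sum_i\z_i\z_i^*\|_{p/2}^{p/2}$ by a constant times $\max\{\|(\sum_{ij}\y_{ij}\y_{ij}^*)^{1/2}\|_p^p,\;n\|\Y^\st\|_p^p\}$; the factors of $n$ come from passing between the normalized trace on $M_n(\cX)$ and the trace $\phi$ on $\cX$, and the partial transpose $\Y^\st$ enters precisely because $\sum_j(\mathbf{w}^{(j)})^*\mathbf{w}^{(j)}=|\Y^\st|^2$ whereas $\sum_j\mathbf{y}^{(j)}(\mathbf{y}^{(j)})^*=|\Y^*|^2$. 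Combining the two estimates gives (b).

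For part (c), using $\y_{ii}=0$ I would first decouple: introduce an independent copy $(\alpha_i')_i$ of $(\alpha_i)_i$ and invoke the classical Banach-space-valued decoupling inequality for homogeneous chaos of order two, applied with Banach space $L^p(\cX)$ (e.g.\ via de la Pe\~na--Gin\'e), which gives $\E\|\sum_{i\neq j}\alpha_i\alpha_j\y_{ij}\|_p^p\le C_p\,\E\|\sum_{i,j}\alpha_i\alpha_j'\y_{ij}\|_p^p$. Part (b) with $\beta_j:=\alpha_j'$ then yields the stated bound.

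I expect the main obstacle to be the bookkeeping in part (b): correctly recognizing the two square functions produced by the first application of (a) as squared norms of a row and of a column over $M_n(\cX)$, tracking the normalization factor $n$ when moving between the traces $\tfrac1n\Tr\otimes\phi$ and $\phi$, and verifying that the second application of (a) genuinely produces $\|\Y^\st\|_p^p$ (rather than $\|\Y\|_p^p$) in the correct slot. A secondary point is to state the decoupling inequality used in (c) at the needed generality (general mean-zero multipliers with uniformly bounded moments, off-diagonal chaos, Banach-space-valued coefficients); this is classical but should be cited carefully.
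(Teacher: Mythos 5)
Your proposal is correct and follows essentially the same route as the paper: all three parts rest on the Junge--Xu non-commutative Rosenthal inequality applied in $L^\infty(\Omega)\,\bar\otimes\,\cX$; part (b) is a two-fold application of (a), first conditionally on $\beta$, then over $\beta$ after embedding the conditional square functions as a column and a row in $\C^{n\times n}\otimes\cX$ (your bookkeeping correctly identifies $\sum_j (\mathbf{y}^{(j)})^*\mathbf{y}^{(j)}=|\Y^{\st}|^2$ and $\sum_j \mathbf{y}^{(j)}(\mathbf{y}^{(j)})^*=|\Y^*|^2$, and the factor of $n$ arising from the normalized trace); and part (c) decouples to (b) via the De la Pe\~na--Montgomery-Smith comparison. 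The one genuine departure is how you dominate the diagonal Rosenthal term $\sum_i\|\x_i\|_p^p$ by the square-function term in part (a). The paper embeds $(\x_i)_i$ as a column $\hat\x\in\C^{n\times n}\otimes\cX$, moves the entries to the diagonal by a linear map $T$, checks $\|T(\hat\x)\|_2=\|\hat\x\|_2$ and $\|T(\hat\x)\|_\op\le\|\hat\x\|_\op$, and concludes $\|T(\hat\x)\|_p\le\|\hat\x\|_p$ by Riesz--Thorin interpolation. You instead argue directly in $\cX$ via the Araki--Lieb--Thirring inequality applied to $S=\sum_i\x_i^*\x_i$ and $c_i=S^{-1/2}\x_i^*\x_iS^{-1/2}$, using $0\le c_i\le 1$, $\sum_i c_i=1$, and $c_i^{p/2}\le c_i$ (valid since $p\ge 2$), summing to $\phi(S^{p/2})$. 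Both are valid and of comparable length; your version avoids invoking interpolation but needs a routine approximation when $S$ is not invertible, which you correctly flag. Everything else matches the paper.
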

\begin{proof}
See Appendix \ref{appendix:concentration}.
\end{proof}

\subsection{Fluctuation averaging}\label{sec:fluctuationavg}

Let $\{\mathscr{G}_i:i=1,\ldots,n\}$ be a collection of sub-sigma-fields in the
underlying probability space $(\Omega,\mathscr{F},\P)$.
For a $\cX$-valued random variable $\x$ with $\E\|\x\|_\op<\infty$,
define the projections
\[\E_i[\x]=\E[\x \mid \mathscr{G}_i],
\qquad \cQ_i[\x]=(1-\E_i)[\x]=\x-\E_i[\x].\]
Supposing that $\{\E_i,\cQ_i:i=1,\ldots,n\}$ all commute, set
\[\E_S = \prod_{i \in S} \E_i, \qquad \cQ_S = \prod_{i\in S} \cQ_i\]
with the conventions $\E_\emptyset[\x]=\cQ_\emptyset[\x]=\x$.

The following fluctuation averaging statements in the $L^p$-norms on $\cX$ are
similar to those of the scalar setting, see
e.g.\ \cite[Theorem 4.7]{ErdosEtAl2013Local}
and \cite[Lemma A.2]{FanJohnstoneSun2018Spiked}.

\begin{lem}[Fluctuation averaging]\label{lemma:fluctuationavg}
Suppose that $\{\E_i,\cQ_i:i=1,\ldots,n\}$ commute.
Let $\{\x_i\}_{i=1}^n$ and $\{\x_{ij}\}_{1 \leq i \neq j \leq n}$ be 
$\cX$-valued random variables such that for any $p,k \in [1,\infty)$
and some constants $C_{p,k}>0$,
we have $\E[\|\x_i\|_p^k],\E[\|\x_{ij}\|_p^k] \leq n^{C_{p,k}}$
for all $i \neq j$. 
\begin{enumerate}[(a)]
\item Suppose $\E_i[\x_i]=0$, and for some $\alpha,\beta>0$ and each fixed
$l \geq 1$, uniformly over $S \subseteq \{1,\ldots,n\}$ with
$|S| \leq l$ and over $i \notin S$, we have
\begin{equation}\label{eq:QSassump3}
\left\|\cQ_S[\x_i]\right\|_l \prec n^{-\alpha-\beta|S|}.
\end{equation}
Denote $\beta'=\min\{1/2, \beta\}$. Then uniformly over deterministic vectors
$\u=(u_1,\ldots,u_n) \in \C^n$,
    \begin{align*}
        \sum_{i=1}^n u_i\x_i \prec n^{-\alpha-\beta'}(n\|\u\|_\infty)
    \end{align*} 
\item Suppose $\E_i[\x_i]=0$, and for some $\alpha>0$ and each fixed
$l \geq 1$, uniformly over $S \subseteq \{1,\ldots,n\}$ with $|S|
\leq l$ and over $i \notin S$, we have
\begin{equation}\label{eq:QSassump1}
\left\|\cQ_S[\x_i]\right\|_l \prec n^{-\alpha-|S|/2}.
\end{equation}
Then uniformly over deterministic vectors
$\u=(u_1,\ldots,u_n) \in \C^n$,
    \begin{align*}
        \sum_{i=1}^n u_i\x_i \prec n^{-\alpha}\|\u\|_2
    \end{align*}
\item Suppose $\E_i[\x_{ij}]=\E_j[\x_{ij}]=0$, and
for some $\alpha>0$ and each fixed $l \geq 1$,
uniformly over $S \subseteq \{1,\ldots,n\}$ with $|S| \leq l$
and over $i,j \notin S$ with $i \neq j$, we have
\begin{equation}\label{eq:QSassump2}
        \left\|\cQ_S[\x_{ij}]\right\|_l \prec n^{-\alpha-|S|/2}.
\end{equation}
Then uniformly over $(u_{ij})_{i,j=1}^n \in \C^{n \times n}$,
\begin{align*}
\sum_{i\neq j} u_{ij}\x_{ij} \prec n^{-\alpha}\left(\sum_{i\neq j} |u_{ij}|^2\right)^{1/2}
    \end{align*}
    \end{enumerate}
\end{lem}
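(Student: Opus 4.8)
\textbf{Proof proposal for Lemma \ref{lemma:fluctuationavg}.}
The plan is to follow the standard high-moment scheme for fluctuation averaging, adapted to the non-commutative $L^p$-setting. For part (b), I would fix an even integer $p$ and expand $\E\|\sum_i u_i\x_i\|_p^p$ using the tracial structure: since $\|\y\|_p^p=\phi(|\y|^p)$ with $p$ even equals $\phi((\y^*\y)^{p/2})$, writing out $\y=\sum_i u_i\x_i$ produces a sum over $p$-tuples of indices $(i_1,\ldots,i_p)$ of terms $\bar u_{i_1}u_{i_2}\cdots$ times a trace of an alternating product $\phi(\x_{i_1}^*\x_{i_2}\x_{i_3}^*\cdots)$. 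Using $\E_{i_k}[\x_{i_k}]=0$ and the commutation of the $\{\E_i,\cQ_i\}$, I would insert resolutions of the identity $\prod_k(\E_{i_k}+\cQ_{i_k})$ and argue by the usual "partner/lonely index" bookkeeping: any index $a$ that appears exactly once among $\{i_1,\ldots,i_p\}$ forces, after expansion, a $\cQ_a$ acting on the unique factor $\x_a$ (all other factors being independent of $\mathscr{G}_a$ we may pull $\E_a$ through and kill the term), while indices with multiplicity contribute via hypothesis (\ref{eq:QSassump1}). Counting: if there are $\ell$ distinct indices with multiplicities summing to $p$, a term is $\prec n^{-\alpha p}\cdot n^{-(\text{number of } \cQ\text{'s gain})}$, and the combinatorial factor from choosing index patterns together with the $\ell\le p/2+(\#\text{lonely})/2$-type constraint and the factor $\|\u\|_\infty^{?}$ vs $\|\u\|_2$ shows each surviving class is bounded by $n^{-\alpha p}\|\u\|_2^p$ up to constants. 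Taking $p\to\infty$ along even integers and using Markov gives the claimed $\prec n^{-\alpha}\|\u\|_2$. The same expansion with the sharper decay $n^{-\alpha-\beta|S|}$ of (\ref{eq:QSassump3}) but possibly $\beta<1/2$ yields part (a), where the loss of the extra $n^{-1/2}$ per repeated index forces the cruder bound with $n\|\u\|_\infty$ in place of $\|\u\|_2$; one just tracks that each $\cQ_S$ now gains only $n^{-\beta|S|}$ and re-optimizes the counting, giving exponent $\alpha+\beta'$ with $\beta'=\min(1/2,\beta)$.

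For part (c) the scheme is essentially the same but with a two-index family: expand $\E\|\sum_{i\ne j}u_{ij}\x_{ij}\|_p^p$ into a sum over $(\,(i_1,j_1),\ldots,(i_p,j_p)\,)$, and now each of $\mathscr{G}_{i_k},\mathscr{G}_{j_k}$ can annihilate the factor via $\E_{i_k}[\x_{i_k j_k}]=\E_{j_k}[\x_{i_k j_k}]=0$; an index value appearing only once (in either coordinate position, across the whole $p$-tuple) again forces a $\cQ$ and hence gains a power of $n^{-1/2}$ by (\ref{eq:QSassump2}). The bookkeeping that each surviving term is controlled by $n^{-\alpha p}\big(\sum_{ij}|u_{ij}|^2\big)^{p/2}$ is the matrix analogue of the scalar argument in \cite[Theorem 4.7]{ErdosEtAl2013Local}; I would reduce it to a graph-counting estimate on the multigraph whose vertices are the distinct index values and whose edges are the pairs $(i_k,j_k)$, where the number of edges is $p$, and the key inequality is that the number of ways to label such a graph compatibly, weighted by $\prod|u_{i_kj_k}|$, is at most $(\sum_{ij}|u_{ij}|^2)^{p/2}$ times a combinatorial constant once every vertex has degree $\ge 2$ (vertices of degree $1$ being exactly the ones that ate a $\cQ$ and a spare $n^{-1/2}$).

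Two points need care in the non-commutative setting and are where I expect the main obstacle. First, the moment method requires the a priori polynomial bound $\E[\|\x_i\|_p^k]\le n^{C_{p,k}}$ (respectively for $\x_{ij}$), which is exactly the stated hypothesis and is used both to discard "bad" events and to make the $\cQ_S$-estimates on sub-sigma-fields legitimate — here I would invoke Lemma \ref{lemma:domination}(c) to upgrade the pointwise domination bounds (\ref{eq:QSassump1})--(\ref{eq:QSassump2}) into statements that survive conditional expectation, and H\"older for the $L^p$-norm (Lemma \ref{Holder's inequality}) to split the alternating traces $\phi(\x_{i_1}^*\x_{i_2}\cdots)$ into products of $\|\x_{i_k}\|_{L^{p\cdot(\#\text{factors})}}$-type norms. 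Second, and more delicate than in the scalar case: in the expansion of $\phi(|\y|^p)$ the factors $\x_{i_k}$ do \emph{not} commute and do not factor under $\phi$, so the cancellation "$\E_a$ passes through everything not depending on $\mathscr{G}_a$" must be justified carefully — this uses that $\E_a$ is the conditional expectation onto the $\mathscr{G}_a$-measurable elements and is a $\cX$-bimodule map over that subalgebra, together with the commutation assumption on $\{\E_i,\cQ_i\}$, so that one may still collect all $\cQ$'s onto the lonely factors. Once these two structural facts are in place, the combinatorics is identical to the scalar proof and the three parts follow by the three different decay inputs. The cleanest write-up is to prove a single master estimate bounding $\E\|\sum u\,\x\|_p^p$ by the stated right-hand sides raised to the $p$, and then instantiate it; I would relegate the full index-counting to an appendix, citing the scalar template and pointing out only the modifications above.
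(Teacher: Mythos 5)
Your proposal follows essentially the same route as the paper's Appendix C proof: a high-moment expansion, insertion of the resolution $\prod_{j\in T}(\E_j+\cQ_j)$ over the set $T$ of lonely indices, a H\"older split of the alternating trace, an appeal to Lemma \ref{lemma:domination}(c) to carry the $\cQ_S$ bounds through the additional conditional expectations, and the three separate counting arguments for (a), (b), (c), with (c) reduced to the graph-weight bound of the paper's Lemma \ref{lemma:FAcombinatorial}. One correction to the vanishing mechanism you sketch: a lonely index $a$ does not annihilate the term because the other factors are independent of $\mathscr{G}_a$ (in general they are not); rather, if $a\notin S_b$ for every $b\neq a$, then $\E_a$ sits inside each $\E_{T\setminus S_b}$ and has already been applied to every other factor, making them $\mathscr{G}_a$-measurable, so the bimodule property lets $\E_a$ slide through to hit $\E_a\cQ_a[\widetilde\x_{i_a}]=0$ — it is this pairing requirement (each lonely $a$ must lie in both $S_a$ and some $S_b$ with $b\neq a$) that gives $\sum_a|S_a\setminus i_a|\geq|T|$ and hence the per-lonely-index gain in the counting.
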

\begin{proof}
    See Appendix \ref{Fluctuation averaging}.
\end{proof}

\subsection{Minors and resolvent identities}\label{subsec:resolventidentities}

Let $H \in \C^{n \times n}$ and $\x \in \cX$ be self-adjoint, and let
\[\z=\sum_{i=1}^n E_{ii} \otimes \z_i \in \cD \otimes \cX,
\qquad \z_i \in \cX^+=\{\x \in \cX:\Im \x \geq \epsilon \text{ for some }
\epsilon>0\}.\]
Consider the generalized resolvent
\[R=(H \otimes \x-\z)^{-1} \in \C^{n \times n} \otimes \cX.\]
We note that this generalized resolvent
exists by the following standard lemma,
since $\Im(H \otimes \x-\z)={-}\Im \z \leq {-}\epsilon$.
\begin{lem}[\cite{HaagerupThorbjornsen2005}, Lemma 3.1]
    \label{im bound inverse}
    Suppose $\x \in \cX$ is such that for some $\epsilon>0$, either $\Im \x \geq
    \epsilon$ or $\Im \x \leq -\epsilon$.
    Then $\x$ is invertible and $\|\x^{-1}\|_{\op}\leq 1/\epsilon$.
    \end{lem}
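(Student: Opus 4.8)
The plan is to use that $\cX$ acts on the Hilbert space $\cH$ and reduce to the standard fact that an operator which, together with its adjoint, is bounded below is boundedly invertible. First I would reduce to the case $\Im \x \geq \epsilon$: replacing $\x$ by $-\x$ replaces $\Im\x$ by $-\Im\x$, and if $(-\x)^{-1}$ exists it equals $-\x^{-1}$ with the same operator norm, so the two hypotheses are interchangeable. Write $\x = \Re\x + i\,\Im\x$ with $\Re\x,\Im\x\in\cX$ self-adjoint and $\Im\x \geq \epsilon\,1_\cX$.

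Next I would establish the two-sided lower bound
\[
\|\x\psi\| \geq \epsilon\|\psi\| \quad\text{and}\quad \|\x^*\psi\| \geq \epsilon\|\psi\| \qquad\text{for all }\psi\in\cH.
\]
For a unit vector $\psi$, since $\langle(\Re\x)\psi,\psi\rangle\in\R$, Cauchy--Schwarz gives
\[
\|\x\psi\| \;\geq\; |\langle\x\psi,\psi\rangle| \;\geq\; \big|\Im\langle\x\psi,\psi\rangle\big| \;=\; \langle(\Im\x)\psi,\psi\rangle \;\geq\; \epsilon;
\]
the same computation applied to $\x^*=\Re\x-i\,\Im\x$, whose imaginary part satisfies $-\Im\x \leq -\epsilon\,1_\cX$, yields $\|\x^*\psi\|\geq\epsilon$.

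From here the argument is routine. The bound on $\x$ shows $\x$ is injective with closed range, while the bound on $\x^*$ shows $\ker\x^*=\{0\}$, so $\overline{\operatorname{ran}\x}=(\ker\x^*)^\perp=\cH$; a closed dense subspace being everything, $\x$ is a bijection of $\cH$, and substituting $\psi=\x^{-1}\eta$ into $\|\x\psi\|\geq\epsilon\|\psi\|$ gives $\|\x^{-1}\|_\op\leq 1/\epsilon$. That $\x^{-1}\in\cX$ follows since $\x^*\x \geq \epsilon^2\,1_\cX$ and $\x\x^* \geq \epsilon^2\,1_\cX$ are positive invertible elements of the $C^*$-algebra $\cX$ with spectra in $[\epsilon^2,\infty)$, so $(\x^*\x)^{-1},(\x\x^*)^{-1}\in\cX$ by the continuous functional calculus, hence $\x^{-1}=(\x^*\x)^{-1}\x^*=\x^*(\x\x^*)^{-1}\in\cX$ (alternatively $\x^{-1}$ commutes with $\cX'$ and so lies in $\cX''=\cX$). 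I do not expect any real obstacle: the only points requiring care are that one must bound below \emph{both} $\x$ and $\x^*$ to get surjectivity rather than mere injectivity with closed range, and that $\x^{-1}\in\cX$ uses that $\cX$ is (at least) inverse-closed, which holds since $\cX$ is a von Neumann algebra.
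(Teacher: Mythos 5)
The paper cites this lemma directly from Haagerup--Thorbj{\o}rnsen and gives no in-text proof, so there is no in-paper argument to compare against. Your proof is correct as written: for unit $\psi$, the chain $\|\x\psi\|\geq|\langle\x\psi,\psi\rangle|\geq\langle(\Im\x)\psi,\psi\rangle\geq\epsilon$ together with the same bound for $\x^*$ gives injectivity plus closed dense range, hence bijectivity; substituting $\psi=\x^{-1}\eta$ gives the norm bound; and $\x^{-1}\in\cX$ follows either from $\x^{-1}\in\cX''=\cX$ or from functional calculus on $\x^*\x\geq\epsilon^2$. Your remark that one must bound \emph{both} $\x$ and $\x^*$ from below is exactly the step that is easy to overlook.

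For reference, there is also a purely $C^*$-algebraic route that avoids picking a Hilbert-space representation (and thus works in any unital $C^*$-algebra, not only a von Neumann algebra, making the inverse-closedness point moot): since $\Im\x\geq\epsilon$, the element $(\Im\x)^{1/2}$ is invertible with $\|(\Im\x)^{-1/2}\|_\op\leq\epsilon^{-1/2}$. Writing
\[
\x=(\Im\x)^{1/2}\,(\w+i)\,(\Im\x)^{1/2},\qquad \w=(\Im\x)^{-1/2}(\Re\x)(\Im\x)^{-1/2}=\w^*,
\]
the self-adjointness of $\w$ gives $\|(\w+i)^{-1}\|_\op\leq 1$ by the spectral mapping theorem, hence $\x^{-1}=(\Im\x)^{-1/2}(\w+i)^{-1}(\Im\x)^{-1/2}$ with $\|\x^{-1}\|_\op\leq\epsilon^{-1}$. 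Either argument serves the purposes of the paper; yours is the familiar Lax--Milgram-style spatial proof, while the algebraic one exhibits the inverse explicitly and sidesteps the range/kernel bookkeeping.
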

For an index set $S \subseteq \{1,\ldots,n\}$, we define $H^{(S)} \in \C^{n
\times n}$ and $R^{(S)} \in \C^{n \times n} \otimes \cX$ by
\[H_{ij}^{(S)}=\begin{cases} H_{ij} & \text{ if } i,j \notin S \\
0 & \text{ otherwise} \end{cases},
\qquad R^{(S)}=(H^{(S)} \otimes \x-\z)^{-1},\]
with $H^{(\emptyset)}=H$ and $R^{(\emptyset)}=R$.
We will use the indexing $R_{ij}=(\e_i\otimes 1)^*R(\e_j\otimes 1)$ for the
$\cX$-valued entries of $R$, and the notations
\begin{align*}
\sum_i^{(S)}=\sum_{i \in \{1,...,n\}\setminus S},
\qquad \frac{1}{R_{ii}^{(S)}}=(R_{ii}^{(S)})^{-1},
\qquad iS=S \cup \{i\}.
\end{align*}

\begin{lem}[Resolvent identities]\label{resolvent identities}
Suppose $H \in \C^{n \times n}$ and $\x \in \cX$ are self-adjoint,
and $\z_i \in \cX^+$ for each $i=1,\ldots,n$.
For any $S \subseteq\{1,\ldots,n\}$:
\begin{enumerate}[(a)]
\item For any $i \notin S$, $R_{ii}^{(S)} \in \cX$ is invertible, and
\[\frac{1}{R_{ii}^{(S)}} =
h_{ii}\x-\z_i-\x\left(\sum_{r,s}^{(iS)}
h_{ir}R_{rs}^{(iS)}h_{si}\right)\x\]
\item For any $i,j \notin S$ with $i \neq j$,
\begin{align*}
R_{ij}^{(S)}&=
{-}R_{ii}^{(S)}\x\left(\sum_r^{(iS)}h_{ir}R_{rj}^{(iS)}\right)
={-}\left(\sum_s^{(jS)}R_{is}^{(jS)}h_{sj}\right)\x R_{jj}^{(S)}\\
&={-}h_{ij} R_{ii}^{(S)}\x R_{jj}^{(iS)}
+R_{ii}^{(S)}\x \left(\sum_{r,s}^{(ijS)} h_{ir}R_{rs}^{(ijS)}h_{sj}\right)\x R_{jj}^{(iS)}.
\end{align*}
\item For any $i,j,r \notin S$ (including $i=j$) with $r \notin \{i,j\}$,
        \begin{align*}
            R_{ij}^{(S)} &= R_{ij}^{(rS)} + 
            R_{ir}^{(S)}\frac{1}{R_{rr}^{(S)}}R_{rj}^{(S)},\\
            \frac{1}{R_{ii}^{(S)}} &= \frac{1}{R_{ii}^{(rS)}} - 
            \frac{1}{R_{ii}^{(S)}}R_{ir}^{(S)}\frac{1}{R_{rr}^{(S)}}R_{ri}^{(S)}
            \frac{1}{R_{ii}^{(rS)}}.
        \end{align*}
    \end{enumerate}
\end{lem}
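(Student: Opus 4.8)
The plan is to obtain these as the standard resolvent identities, derived from the block-inverse (Schur-complement) formulas for matrices whose entries lie in the noncommutative ring $\cX$, once one has disposed of the frozen indices in $S$ and verified the invertibility facts that license those formulas.

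First I would record two preliminary observations. Since $H,\x$ are self-adjoint, $H^{(T)}\otimes\x$ is self-adjoint for every $T\subseteq\{1,\dots,n\}$, so $\Im(H^{(T)}\otimes\x-\z)=-\Im\z=-\sum_i E_{ii}\otimes\Im\z_i\le-\epsilon$ with $\epsilon:=\min_i\epsilon_i>0$ (each $\z_i\in\cX^+$ has $\Im\z_i\ge\epsilon_i>0$), whence by Lemma~\ref{im bound inverse} the operator $R^{(T)}$ exists; the same applies to every principal submatrix of $H^{(T)}\otimes\x-\z$, so all the block inverses used below exist. Second, $H^{(S)}$ has vanishing rows and columns indexed by $S$, so $M:=H^{(S)}\otimes\x-\z$ is block diagonal along $\{1,\dots,n\}=S^c\sqcup S$, with $S$-block $\diag(-\z_s)_{s\in S}$; hence $R_{ii}^{(S)}=-\z_i^{-1}$ and $R_{ij}^{(S)}=0$ for $i\in S$, $i\ne j$, while on $S^c$ the operator $M$ restricts to $H|_{S^c}\otimes\x-\sum_{i\in S^c}E_{ii}\otimes\z_i$, a model of the same type on the smaller index set. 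It therefore suffices to prove the identities with all indices lying in $S^c$, and I retain $S$ only to record which indices have been removed.

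For part (a), partition $S^c$ into $\{i\}$ and its complement $(iS)^c$ and write $M$ (restricted to $S^c$) in $2\times2$ block form; the $(iS)^c$-block of $M$ is $H^{(iS)}\otimes\x-\z$ restricted to those indices, whose inverse has $(r,s)$-entry $R_{rs}^{(iS)}$. Since both $M$ and this block are invertible, the block-inverse formula --- valid over any ring once the needed inverses exist --- shows $R_{ii}^{(S)}=(M^{-1})_{ii}$ is invertible with
\[
\frac{1}{R_{ii}^{(S)}}=M_{ii}-\sum_{r,s}^{(iS)}M_{ir}\,R_{rs}^{(iS)}\,M_{si}
=h_{ii}\x-\z_i-\x\bigg(\sum_{r,s}^{(iS)}h_{ir}R_{rs}^{(iS)}h_{si}\bigg)\x,
\]
using $M_{ii}=h_{ii}\x-\z_i$, $M_{ir}=h_{ir}\x$, and that scalars commute with $\x$. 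For part (b), the off-diagonal block of the same inverse gives $R_{ij}^{(S)}=-R_{ii}^{(S)}\x\sum_r^{(iS)}h_{ir}R_{rj}^{(iS)}$, and partitioning out $j$ instead yields the symmetric form $R_{ij}^{(S)}=-\big(\sum_s^{(jS)}R_{is}^{(jS)}h_{sj}\big)\x R_{jj}^{(S)}$. For the third formula I would start from the first expression, split off the $r=j$ term (producing $-h_{ij}R_{ii}^{(S)}\x R_{jj}^{(iS)}$), and for $r\ne j$ substitute the second expression of (b) within the algebra with $i$ removed, $R_{rj}^{(iS)}=-\big(\sum_s^{(ijS)}R_{rs}^{(ijS)}h_{sj}\big)\x R_{jj}^{(iS)}$, then collect terms using $\{r:r\notin iS,\,r\ne j\}=(ijS)^c$.

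For part (c), I would use the resolvent expansion: with $\Delta:=(H^{(S)}-H^{(rS)})\otimes\x$, which is supported on the $r$-th row and column, one has $R^{(S)}-R^{(rS)}=-R^{(S)}\Delta R^{(rS)}$. Because index $r$ decouples in $R^{(rS)}$ (so $R_{rb}^{(rS)}=0$ for $b\ne r$), taking the $(i,j)$-entry for $i,j\notin rS$ leaves only the terms in which $\Delta$ has row index $r$, giving $R_{ij}^{(S)}=R_{ij}^{(rS)}-R_{ir}^{(S)}\x\sum_b^{(rS)}h_{rb}R_{bj}^{(rS)}$; matching the sum against the first identity of (b) with $i$ replaced by $r$, $R_{rj}^{(S)}=-R_{rr}^{(S)}\x\sum_b^{(rS)}h_{rb}R_{bj}^{(rS)}$, identifies $\x\sum_b^{(rS)}h_{rb}R_{bj}^{(rS)}=-\frac{1}{R_{rr}^{(S)}}R_{rj}^{(S)}$ and gives the first identity. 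The second identity follows by setting $i=j$, multiplying on the left by $\frac{1}{R_{ii}^{(S)}}$ and on the right by $\frac{1}{R_{ii}^{(rS)}}$ (both defined by part (a)), and rearranging. The identities themselves are exactly those familiar from scalar random matrix theory; the only real care needed is verifying via Lemma~\ref{im bound inverse} that every Schur complement and block inverse invoked genuinely exists, and keeping the left/right order of the noncommutative factors straight. I expect the main source of slips to be the bookkeeping of the active index sets ($S$ versus $iS$ versus $ijS$ versus $rS$) and the ranges of the restricted summations, rather than any substantive obstacle.
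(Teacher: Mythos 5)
Your argument is correct and is exactly the approach the paper takes: the paper's proof of this lemma is a one-line remark that these follow from Schur-complement inversion identities for the block matrix $H^{(S)}\otimes\x - \z$, which are purely algebraic and identical to the scalar case (citing Erd\H{o}s--Yau--Yin). You have simply written out those algebraic details in full, including the $\cX^+$ hypothesis and Lemma~\ref{im bound inverse} to guarantee the relevant block inverses exist, and the derivation of part (c) via the resolvent expansion together with the first identity of part (b); all of this checks out.
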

\begin{proof}
These follow from Schur-complement inversion identities applied to
\[H^{(S)}-\z
=\begin{pmatrix} h_{11}^{(S)}\x-\z_1 & h_{12}^{(S)}\x & \cdots &
h_{1n}^{(S)}\x \\
h_{21}^{(S)}\x & h_{22}^{(S)}\x-\z_2 & \cdots & h_{2n}^{(S)}\x \\
\vdots & \vdots & \ddots & \vdots \\
h_{n1}^{(S)}\x & h_{n2}^{(S)}\x & \cdots & h_{nn}^{(S)}\x-\z_n
\end{pmatrix} \in \C^{n \times n} \otimes \cX,\]
which are purely algebraic and identical to those of the scalar setting, see
e.g.\ \cite[Lemma 4.2]{ErdosYauYin2012BulkUniversality}.
\end{proof}

\subsection{Maximum modulus principle}

We will use the following quantitative version of the maximum modulus
principle, following \cite[Appendix A]{SpeicherVargas2012}.

\begin{lem}\label{lemma:maximummodulus}
Fix any $a>0$. For each $r \in (-\infty,0)$, define the circle in $\C^+$
\[S_r=\{z \in \C^+:|z-ia\tfrac{1+e^{2r}}{1-e^{2r}}|=\tfrac{2ae^r}{1-e^{2r}}\}.\]
Let $f:\C^+ \to \C$ be any analytic function not identically equal to 0, and set
\[M(r)=\max_{z \in S_r} \log |f(z)|.\]
Then $r \mapsto M(r)$ is increasing and convex over $r \in (-\infty,0)$.
\end{lem}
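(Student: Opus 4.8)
The plan is to reduce the statement to the classical Hadamard three-circles theorem by producing a conformal automorphism of $\C^+$ that maps a fixed family of concentric circles to the family $\{S_r\}_{r<0}$. First I would observe that the circles $S_r$ are precisely the images of the circles $\{|w|=e^r\}$ in the unit disk $\D$ under a M\"obius transformation $\psi:\D \to \C^+$. Concretely, one checks that the map $\psi(w)=ia\frac{1+w}{1-w}$ sends $\D$ to $\C^+$ (it sends $0 \mapsto ia$, $\pm 1$ to $0,\infty$, and the unit circle to $\R \cup \{\infty\}$), and that $\psi(\{|w|=e^r\})$ is a Euclidean circle whose center and radius can be computed directly: parametrizing $w=e^r e^{i\theta}$ and simplifying $\psi(w)$, the image is the circle with center $ia\frac{1+e^{2r}}{1-e^{2r}}$ and radius $\frac{2ae^r}{1-e^{2r}}$ (both positive since $e^r<1$), which is exactly $S_r$. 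This identification is the only real computation, and I would present it as a short lemma or inline verification rather than grinding through it.

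Next, given the analytic function $f:\C^+ \to \C$ that is not identically zero, I would set $g=f \circ \psi:\D \to \C$, which is analytic and not identically zero on $\D$. Then by construction
\[M(r)=\max_{z \in S_r}\log|f(z)|=\max_{|w|=e^r}\log|g(w)|.\]
The classical Hadamard three-circles theorem states precisely that $\rho \mapsto \max_{|w|=\rho}\log|g(w)|$ is a convex function of $\log \rho$ for an analytic $g$ on an annulus (here the punctured disk $0<|w|<1$ suffices, and the hypothesis that $g$ is analytic on all of $\D$ only helps). Writing $\rho=e^r$ so that $\log\rho=r$, this says exactly that $r \mapsto M(r)$ is convex on $(-\infty,0)$. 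Monotonicity then follows: since $g$ is analytic on the full disk $\D$ (not merely the annulus), the maximum modulus principle gives that $\max_{|w|=\rho}|g(w)|$ is nondecreasing in $\rho$, hence $M(r)$ is increasing in $r$; alternatively, a convex function on $(-\infty,0)$ that is bounded below on bounded neighborhoods of $-\infty$ (which it is, by continuity of $g$ at $0$) must be nondecreasing.

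I expect the main obstacle — such as it is — to be purely bookkeeping: carefully verifying the explicit formula for the image circles $\psi(\{|w|=e^r\})$, keeping track of signs (note $1-e^{2r}>0$ for $r<0$, so center and radius are as written), and confirming that $\psi$ is genuinely a bijection $\D \to \C^+$ so that the correspondence $z \in S_r \leftrightarrow w \in \{|w|=e^r\}$ is exact and $M(r)$ is literally the three-circles quantity for $g$. One should also note the degenerate case where $g$ has a zero at the origin, which is harmless: Hadamard three-circles and the maximum principle apply verbatim to any analytic $g$ on $\D$, and convexity/monotonicity of $r \mapsto \max_{|w|=e^r}\log|g(w)|$ holds regardless (the only excluded case, $g \equiv 0$, is ruled out by hypothesis). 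With the conformal identification in hand, the proof is a one-line invocation of Hadamard, so there is no serious analytic difficulty; the work is entirely in setting up the correspondence correctly.
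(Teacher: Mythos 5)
Your proposal is correct and takes essentially the same approach as the paper: you use the same M\"obius map $\psi(w)=ia\frac{1+w}{1-w}$ to identify $S_r$ with the circle $\{|w|=e^r\}\subset\D$, set $g=f\circ\psi$, and appeal to the classical fact that $r\mapsto\max_{|w|=e^r}\log|g(w)|$ is increasing and convex. The paper cites this fact from Hardy's 1915 paper in a single stroke, whereas you split it into Hadamard three-circles (convexity) plus the maximum modulus principle (monotonicity), but this is the same underlying result.
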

\begin{proof}
For any analytic function $g:\D \to \C$ (not identically 0)
on the unit disk $\D=\{z:|z|<1\}$, the function
\[r \mapsto \max_{z \in \D:|z|=e^r} \log |g(z)|\]
is increasing and
convex \cite[Section 1]{hardy1915mean}. Fixing $a>0$, consider the conformal mapping
$\psi_a:\D \rightarrow\C^+$ given by
$\psi_a(z)=ia\frac{1+z}{1-z}$. For any $z \in \D$,
\[\left|\psi_a(z)-ia\frac{1+|z|^2}{1-|z|^2}\right|
=\left|\frac{2a(z-|z|^2)}{(1-z)(1-|z|^2)}\right|
=\frac{2a|z|}{1-|z|^2}\left|\frac{1-\bar{z}}{1-z}\right|=\frac{2a|z|}{1-|z|^2},\]
so $\psi_a$ maps each circle $\{z \in \D:|z|=e^r\}$ bijectively to
$S_r \subset \C^+$. The result then follows from applying the above monotonicity
and convexity to the function $g(z)=f(\psi_a(z))$.
\end{proof}

\section{Analysis of a generalized resolvent}\label{sec:general}

Much of the analysis for Theorem \ref{rmt:main theorem}
may be stated at the level of a generalized resolvent
\begin{equation}\label{eq:generalR}
R=(H \otimes \x-\z)^{-1} \in \C^{n \times n} \otimes \cX,
\qquad \z=\sum_{k=1}^K D_k \otimes \x_k \in \cD \otimes \cX
\end{equation}
where $\x,\x_1,\ldots,\x_K$ are elements of an abstract von Neumann
algebra $\cX$, and $H$ is a Wigner random matrix.
We will then specialize to $\cX=\C^{n \times n}$ and $\cX=\cA$ for the two
stages of analysis of the Kronecker deformed Wigner model (\ref{eq:Q}).
We emphasize that it is important for our arguments to hold when $\cX$ is
infinite-dimensional, for the analysis in the second stage with $\cX=\cA$.

We collect here the assumptions of this general setting.

\begin{asp}\label{assump:general}
There exist constants $\gamma \geq 1$, $\upsilon \geq \delta>0$, and $K \geq 1$
such that:
\begin{enumerate}[(a)]
\item $H \in \C^{n \times n}$ is a random Wigner matrix satisfying the
conditions of Assumption \ref{assump:Wigner},
and $D_1,\ldots,D_K \in \C^{n \times n}$ are
deterministic and diagonal matrices.
\item $(\cA,\|\cdot\|_\op,\tau)$ is the von Neumann algebra of
Assumption \ref{assump:free} with trace $\tau$,
containing the subalgebra $\cM \equiv \C^{n \times n}$
and a semicircular element, denoted here as $\h$, that is free of $\cM$.
\item $(\cX,\|\cdot\|_\op,\phi)$ is a von Neumann algebra with faithful, normal,
tracial state $\phi:\cX \to \C$,
and $\x \in \cX$ is self-adjoint and invertible.
\item We have
\begin{equation}\label{eq:xzbounds}
\|\x\|_\op,\|\x^{-1}\|_\op \leq \gamma,
\quad \|\z\|_\op \leq \sum_{k=1}^K \|D_k\|_\op\|\x_k\|_\op \leq \upsilon,
\quad \Im \z=\Im \sum_{k=1}^K D_k \otimes \x_k \geq \delta.
\end{equation}
\end{enumerate}
\end{asp}

We denote by $\cD \subset \cM$ the subalgebra of diagonal matrices, and
$\tau^\cD:\cA \to \cD$ the diagonal projection (\ref{eq:tauD}) onto $\cD$.
We remark that under this assumption, $\z$ admits an equivalent representation
\[\z=\sum_{i=1}^n E_{ii} \otimes \z_i,
\qquad \z_i=\sum_{k=1}^K [D_k]_{ii}\x_k \in \cX^+,
\qquad \Im \z_i \geq \delta,\]
and we will use these representations of $\z$ interchangeably.
We denote the limiting operator for the generalized resolvent $R$ by
\[\r=\left(\h \otimes \x-\z\right)^{-1}
\in \cA \otimes \cX,\]
its projection under $\tau^\cD \otimes 1$
(the $\cD\otimes\cX$-valued Stieltjes transform of
$\h\otimes\x$ evaluated at $\z$) by
\[\r_0=(\tau^\cD \otimes 1)[\r] \in \cD \otimes \cX,\]
and its projection under $\tau \otimes 1$ by
\[\m_0=(\tau \otimes 1)[\r] = (n^{-1}\Tr \otimes 1)[\r_0] \in \cX.\]
We use the indexing $R_{ij}=(\e_i \otimes 1)^* R(\e_j \otimes 1)$ and
$(\r_0)_{ij}=(\e_i \otimes 1)^* \r_0(\e_j \otimes 1)$,
and write $\x \prec \zeta$ or $\x=O_\prec(\zeta)$ 
for stochastic domination in the $L^p$-norms for $p \in [1,\infty)$
as discussed in Section \ref{subsec:domination}.

Our main result in this context is the following theorem.

\begin{thm}\label{aux:main theorem}
Uniformly over
$\x,\z$ satisfying Assumption \ref{assump:general},
\begin{equation}
(n^{-1}\Tr\,\otimes 1)[R]-\m_0 \prec n^{-1}.
\label{X-valued Stieltjes transform estimates}
\end{equation}
Also uniformly over $i \neq j \in \{1,\ldots,n\}$,
    \begin{align}
R_{ii}-(\r_0)_{ii} &\prec n^{-1/2},\label{DX-valued Stieltjes transform
estimates}\\
R_{ij} &\prec n^{-1/2},\label{off diagonal estimates}
    \end{align}
and uniformly over deterministic vectors $\u,\v\in\C^n$ with
$\|\u\|_2,\|\v\|_2 \leq \upsilon$,
    \begin{align}\label{isotropic estimates}
        (\u \otimes 1)^* [R-\r_0](\v \otimes 1) \prec n^{-1/2}.
    \end{align}
\end{thm}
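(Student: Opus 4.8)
The plan is to establish Theorem \ref{aux:main theorem} in two phases: first obtain a weak (suboptimal) operator-norm estimate that is valid for all fixed $z$ via analytic continuation from large $|z|$, and then bootstrap it to the optimal $n^{-1/2}$ scale via a fluctuation-averaging argument combined with quantitative invertibility of an associated stability operator.

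\textbf{Phase 1: the weak estimate.} First I would analyze the generalized resolvent $R = (H \otimes \x - \z)^{-1}$ in the regime where $|z|$ (more precisely, where $\|\z^{-1}\|_\op$) is small enough that $R$ admits a convergent Neumann series $R = -\z^{-1}\sum_{k \geq 0} (H \otimes \x \cdot \z^{-1})^k$. In this regime, concentration of the quadratic and linear forms appearing in the resolvent identities of Lemma \ref{resolvent identities} can be established term-by-term: expand each power into elementary tensors, apply scalar concentration to the Wigner entries of $H$, and resum. This yields, in this restricted regime, that $R_{ii}$ is close to a solution of the Dyson-type fixed-point equation and hence close to $(\r_0)_{ii}$, with $R_{ij}$ small, and that $(n^{-1}\Tr \otimes 1)[R]$ is close to $\m_0$ --- all in operator norm with a good rate. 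Then, using the quantitative maximum modulus principle (Lemma \ref{lemma:maximummodulus}): for a fixed bilinear functional applied to $R(z) - \r_0(z)$ (viewed as an analytic function of $z \in \C^+$), the estimate on the large-$|z|$ circle plus the trivial a priori bound $\|R\|_\op \leq 1/\delta$ on any circle in $\C^+$ with $\Im z \geq \delta$ interpolate, via convexity of $M(r)$, to give $\|R_{ii} - (\r_0)_{ii}\|_\op < n^{-\alpha}$ and $\|R_{ij}\|_\op < n^{-\alpha}$ with high probability, for some small $\alpha = \alpha(\delta) > 0$. (To upgrade from bilinear forms to operator norm one uses an $\varepsilon$-net over a low-rank family, or works block-entrywise and uses Remark \ref{rek:op holds in finite dimension}; in the infinite-dimensional $\cX$ case one keeps the $L^p$ formulation throughout.) Stability of the fixed-point equation (\ref{eq:ma})/(\ref{eq:mb}), which must itself be established quantitatively in operator norm, then converts the "approximate fixed-point" statement into the weak estimates $R_{ii} - (\r_0)_{ii} \prec n^{-\alpha}$, $R_{ij} \prec n^{-\alpha}$.

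\textbf{Phase 2: bootstrapping to the optimal scale.} With the weak estimate as input, I would apply the Khintchine-type inequality Lemma \ref{lemma:concentration} to the resolvent identities. The off-diagonal term $R_{ij} = -R_{ii}\x\sum_r h_{ir} R_{rj}^{(i)}$ has fluctuation scale governed by $n^{-1/2}\|(n^{-1}\sum_r R_{rj}^{(i)} R_{rj}^{(i)*})^{1/2}\|_p$ and its adjoint version; the adjoint term is spectrally controlled, but the first requires a fluctuation-averaging analysis. Writing $n^{-1}\sum_r R_{rj}^{(i)} R_{rj}^{(i)*} = n^{-1}\sum_r \E_r[\cdots] + n^{-1}\sum_r \cQ_r[\cdots]$, fluctuation averaging (Lemma \ref{lemma:fluctuationavg}) with the weak input shows the $\cQ_r$ part is $\prec n^{-3\alpha}$, while a resolvent expansion of the $\E_r$ part together with the Dyson equation shows $\cL_2(n^{-1}\sum_r R_{rj}^{(i)} R_{rj}^{(i)*}) \prec n^{-1} + n^{-3\alpha}$ for the linear operator $\cL_2(Y) = Y - \frac{1}{n}\sum_i M_i Y M_i^*$ built from $M_i = (\r_0)_{ii}$. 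Quantitative invertibility of $\cL_2$ on every $L^p$, $p \in [1,\infty)$ --- via a Perron--Frobenius argument in the $L^1$--$L^\infty$ duality adapted from \cite{ajanki2019stability} plus Riesz--Thorin interpolation (this is Lemma \ref{lemma:linearmap2} in the paper) --- then gives $n^{-1}\sum_r R_{rj}^{(i)} R_{rj}^{(i)*} \prec n^{-1} + n^{-3\alpha}$, hence $R_{ij} \prec \max(n^{-1/2}, n^{-3\alpha/2})$; the same scheme applied to the quadratic form in the expression for $R_{ii}^{-1}$ gives the analogous self-improvement for $R_{ii} - (\r_0)_{ii}$. Iterating the implication "$\prec n^{-\alpha} \Rightarrow \prec \max(n^{-1/2}, n^{-3\alpha/2})$" finitely many times drives the error to $n^{-1/2}$, proving (\ref{DX-valued Stieltjes transform estimates}) and (\ref{off diagonal estimates}). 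One further fluctuation-averaging step applied to $n^{-1}\sum_i (R_{ii} - M_i)$ --- using that the individual terms are now $\prec n^{-1/2}$ and mean-zero-like after subtracting their partial expectations --- gains the extra $n^{-1/2}$ and yields $(n^{-1}\Tr \otimes 1)[R] - \m_0 \prec n^{-1}$, which is (\ref{X-valued Stieltjes transform estimates}). The bilinear-form bound (\ref{isotropic estimates}) follows by the same resolvent-expansion-plus-fluctuation-averaging machinery applied directly to $(\u \otimes 1)^*(R - \r_0)(\v \otimes 1)$, expanding in the resolvent identities and controlling $\sum_i u_i \bar u_i'(R_{ii} - M_i)$-type sums and off-diagonal sums $\sum_{i \neq j} u_i \bar v_j R_{ij}$ via parts (b) and (c) of Lemma \ref{lemma:fluctuationavg}.

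\textbf{The main obstacle} I anticipate is twofold and concentrated in Phase 2. First, the non-spectral terms flagged in the introduction: the quantity $\|(n^{-1}\sum_r R_{rj}^{(i)} R_{rj}^{(i)*})^{1/2}\|_p$ cannot be bounded by $\|R\|_\op$ alone (the naive partial-transpose bound loses a factor of $n$), so one genuinely needs the fluctuation-averaging decomposition plus the stability operator $\cL_2$ to close the estimate --- there is no shortcut. Second, and more delicate, is proving quantitative invertibility of $\cL_2$ (and its cousin $\cL_1$) uniformly in $p \in [1,\infty)$ in a possibly \emph{infinite-dimensional} $\cX$, where $\cL_2$ need not possess an honest Perron--Frobenius eigenvector; handling this requires the $L^1$--$L^\infty$ duality formulation and interpolation rather than the Hilbert-space spectral argument, and getting the constants to be $n$-uniform (depending only on $\delta, \upsilon, \gamma$) is where the real work lies. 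A secondary technical point is the propagation of the error bounds from the minor resolvents $R^{(S)}$ back to $R$ uniformly over bounded $|S|$, which requires the resolvent-expansion identities of Lemma \ref{resolvent identities}(c) together with the weak a priori bounds; this is routine in the scalar setting but needs care with the $L^p$-norms here.
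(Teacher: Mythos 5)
Your Phases 1 and 2 for (\ref{X-valued Stieltjes transform estimates})--(\ref{off diagonal estimates}) match the paper's argument essentially step for step: the Neumann-series-plus-maximum-modulus argument for the weak $n^{-\alpha}$ operator-norm estimate (Lemmas \ref{lemma:weakopnormprelim}--\ref{lemma:weakopnormestimates}), the Khintchine and fluctuation-averaging bootstrap through the stability operator $\cL_2$ (Lemma \ref{lemma:bootstrapping} and Lemma \ref{lemma:bootstrapFA}), the $L^1$--$L^\infty$ Perron--Frobenius analysis and Riesz--Thorin interpolation for $\cL_2^{-1}$ (Lemma \ref{lemma:linearmap2}), and the final fluctuation-averaging step for the partial trace are all as in Sections \ref{subsec:weakestimate}--\ref{subsec:proofgeneral}.

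For the isotropic estimate (\ref{isotropic estimates}), however, your plan as written has a gap. You propose to control the off-diagonal sum $\sum_{i\neq j}\bar u_i v_j R_{ij}$ by applying Lemma \ref{lemma:fluctuationavg}(c) directly, but that lemma requires the centering hypothesis $\E_i[\x_{ij}]=\E_j[\x_{ij}]=0$, which $R_{ij}$ does not satisfy. After polarization reduces to $\u=\v$, the paper must decompose $\sum_{i\neq j}\bar v_i v_j R_{ij}$ into the four pieces $\E_i\E_j$, $\E_i\cQ_j$, $\E_j\cQ_i$, and $\cQ_i\cQ_j$; only the last is a target for Lemma \ref{lemma:fluctuationavg}(c) (using Lemma \ref{lemma:mainproofFA}(b)). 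The three pieces carrying an $\E_i$ or $\E_j$ require the nontrivial intermediate estimate $\E_i[R_{ij}]\prec n^{-3/2}$, which does not follow from the $n^{-1/2}$ off-diagonal bound alone (that only gives $\E_i[R_{ij}]\prec n^{-1/2}$, which is useless since $\sum_{i\neq j}|\bar v_i v_j|$ is of order $n$). The paper gets the needed $n^{-3/2}$ by expanding $R_{ij}$ using the second-order identity $R_{ii}=\widehat R_{ii}+R_{ii}\Delta_i\widehat R_{ii}$ and, crucially, using that the averaged diagonal $\widehat R_{ii}$ of \eqref{eq:R_ii leave out error} approximates $(\r_0)_{ii}$ to accuracy $n^{-1}$ --- this is itself a consequence of the already-proven statement (\ref{X-valued Stieltjes transform estimates}) --- combined with a nested fluctuation-averaging over the interior index $r$ (Lemma \ref{lemma:mainproofFA}(c)). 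This is the genuinely new step for (\ref{isotropic estimates}) that your outline does not identify; treating $R_{ij}$ as if it were already doubly centered will not close the argument.
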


In the remainder of this section, we prove Theorem \ref{aux:main theorem}.
Section~\ref{subsec:stability} discusses existence, uniqueness, and stability
of the solution to the relevant operator-valued fixed-point equation.
Section~\ref{subsec:weakestimate} proves a preliminary estimate in operator norm
\[\|R_{ii}-(\r_0)_{ii}\|_\op \prec n^{-\alpha},
\qquad \|R_{ij}\|_\op \prec n^{-\alpha} \text{ for } i \neq j\]
for some $\alpha>0$, by conducting the analysis for sufficiently large $|z|$
and extending the result to all fixed $z \in \C^+$ using the maximum modulus
principle. Section \ref{subsec:bootstrapping} improves this to the optimal
estimate of $n^{-1/2}$ in the $L^p$-norms for $p \in [1,\infty)$,
using the non-commutative Khintchine-type inequalities, fluctuation averaging
techniques, and an iterative bootstrapping argument. Section
\ref{subsec:proofgeneral} concludes the proof of Theorem \ref{aux:main theorem}.

All stochastic domination statements of this section are implicitly uniform over
$\x,\z$ satisfying Assumption \ref{assump:general}, indices $i,j,k,\ldots \in
\{1,\ldots,n\}$, subsets $S \subset \{1,\ldots,n\}$ having cardinality $|S| \leq
l$ for any fixed ($n$-independent) value $l \geq 1$, and unit vectors
$\u,\v \in \C^n$ satisfying $\|\u\|_2,\|\v\|_2 \leq \upsilon$.

\subsection{Operator fixed-point equations}\label{subsec:stability}

In the following we recall a few relevant notions from operator-valued free probability
theory \cite[Chapter 9.2]{MingoSpeicher2017}:
Let $\NC(m)$ be the lattice of non-crossing partitions of $\{1,\ldots,m\}$.
Associated to any von Neumann subalgebra $\cB$ of $\cY=\cA \otimes \cX$
and its conditional expectation $\tau^\cB:\cY \to \cB$
(c.f.\ Lemma \ref{conditional expectation}) is a system of $\cB$-valued
cumulants $(\kappa_\pi^\cB)_{\pi\in \NC(m)}$, which are $\C$-multilinear maps
$\kappa_\pi^\cB:\cY^m \to \cB$ satisfying the free moment-cumulant relation
\begin{equation}\label{eq:momentcumulant}
\tau^\cB(\y_1\y_2\ldots \y_m)=\sum_{\pi \in \NC(m)}
\kappa_\pi^\cB(\y_1,\y_2,\ldots,\y_m)
\end{equation}
and the bimodule properties
\begin{equation}\label{eq:bimodule}
\begin{aligned}
\kappa_\pi^\cB(\b\y_1,\y_2,\ldots,\y_{m-1},\y_m\b')&=\b
\kappa_\pi^\cB(\y_1,\ldots,\y_{m-1},\y_m)\b',\\
\kappa_\pi^\cB(\y_1,\ldots,\y_r\b,\y_{r+1},\ldots,\y_m)
&=\kappa_\pi^\cB(\y_1,\ldots,\y_r,\b\y_{r+1},\ldots,\y_m)
\end{aligned}
\end{equation}
for any $\y_1,\ldots,\y_m \in \cY$ and $\b,\b' \in \cB$. Fixing a self-adjoint
element $\y \in \cY$, for any invertible $\b \in \cB$ with $\|\b^{-1}\|_\op$
small enough, we may define the $\cB$-valued Cauchy-transform
\[G_\y^\cB(\b)=\tau^\cB[(\b-\y)^{-1}]=\sum_{m\geq 0}
\tau^\cB[\b^{-1}(\y\b^{-1})^m] \in \cB,\]
and for any $\b \in \cB$ with $\|\b\|_\op$ small enough, we may define the
$\cB$-valued R-transform
\[\cR_\y^\cB(\b)=\sum_{m\geq 1}\kappa_m^\cB(\y\b,\ldots,\y\b,\y) \in \cB\]
where $\kappa_m=\kappa_\pi$ for the partition
$\pi=\{\{1,\ldots,m\}\}$. Then, for any invertible $\b \in \cB$ with
$\|\b^{-1}\|_\op$ small enough, these transforms satisfy the Cauchy-R relation
\begin{equation}\label{eq:Cauchy-R relation}
G_\y^\cB(\b)=(\b-\cR_\y^\cB(G_\y^\cB(\b)))^{-1}.
\end{equation}

\begin{lem}\label{lem:uniquefixedpoint}
Let $\cD \subset \cM \subset \cA$ and $\cX$
be as in Assumption \ref{assump:general}. Let $\x \in \cX$ be self-adjoint,
and let $\h \in \cA$ be a semicircular element free of $\cM$.
\begin{enumerate}
\item[(a)] ($\cD \otimes \cX$-valued fixed point) For any
$\z \in (\cD \otimes \cX)^+=\{\z \in \cD \otimes \cX:\Im \z \geq \epsilon \text{
for some }\epsilon>0\}$, there exists a unique solution $\s \in
(\cD \otimes \cX)^+$ to the fixed-point equation
\begin{equation}\label{eq:generalfixedpoint}
\s=\left({-}\z-I_{n \times n}\otimes[\x\left(n^{-1}\Tr\otimes
1[\s]\right)\x]\right)^{-1}.
\end{equation}
This solution is given by $\r_0=(\tau^\cD \otimes 1)[(\h \otimes \x-\z)^{-1}]$.
\item[(b)] ($\cX$-valued fixed point) For any $\z_1,\ldots,\z_n \in \cX^+$,
there exists a unique solution $\m \in \cX^+$ to the fixed-point equation
\begin{equation}\label{eq:generalfixedpointreduced}
\m=\frac{1}{n}\sum_{i=1}^n \left({-}\z_i-\x\m\x\right)^{-1}.
\end{equation}
Setting $\z=\sum_{i=1}^n E_{ii} \otimes \z_i \in \cD \otimes \cX$,
this solution is given by $\m_0=(\tau \otimes 1)[(\h \otimes \x-\z)^{-1}]$.
\end{enumerate}
\end{lem}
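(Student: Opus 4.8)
\emph{Strategy.} The plan is to first show that parts (a) and (b) are equivalent, then prove existence in (b) by exhibiting the explicit solution $\m_0$ and verifying the fixed-point equation via operator-valued free probability, and finally prove uniqueness in (b) by a contraction/fixed-point argument on a bounded invariant region. For the reduction: if $\s\in(\cD\otimes\cX)^+$ solves \eqref{eq:generalfixedpoint}, then with $\m:=(n^{-1}\Tr\otimes 1)[\s]$ the element $-\z-I_{n \times n}\otimes[\x\m\x]=\sum_i E_{ii}\otimes(-\z_i-\x\m\x)$ is diagonal over $\cD$, so applying $n^{-1}\Tr\otimes 1$ to \eqref{eq:generalfixedpoint} yields \eqref{eq:generalfixedpointreduced} for $\m$; conversely, if $\m\in\cX^+$ solves \eqref{eq:generalfixedpointreduced} then $\s:=(-\z-I_{n \times n}\otimes[\x\m\x])^{-1}$ lies in $(\cD\otimes\cX)^+$, has $(n^{-1}\Tr\otimes 1)[\s]=\m$, and solves \eqref{eq:generalfixedpoint}. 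These two maps are mutually inverse, so existence and uniqueness for the two equations are equivalent; moreover they carry $\r_0=(\tau^\cD\otimes 1)[\r]$ to $\m_0=(\tau\otimes 1)[\r]$ and back, since $(\tau\otimes 1)=(n^{-1}\Tr\otimes 1)\circ(\tau^\cD\otimes 1)$. Thus it suffices to treat (b).

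\emph{Existence.} Since $\Im(\h\otimes\x-\z)=-\Im\z\le-\delta$, Lemma~\ref{im bound inverse} shows $\r=(\h\otimes\x-\z)^{-1}$ exists, and the identity $\Im(\w^{-1})=\w^{-1}\,(\Im(-\w))\,(\w^{-1})^*$ gives $\Im\r_0,\Im\m_0>0$, so $\m_0\in\cX^+$. To verify that $\m_0$ solves \eqref{eq:generalfixedpointreduced}, I would apply the operator-valued free probability of Section~\ref{subsec:stability} with $\cB=\cD\otimes\cX$ inside $\cA\otimes\cX$ and the self-adjoint element $\y=\h\otimes\x$: because $\h$ is semicircular and free of $\cM$, the element $\y$ is $\cB$-semicircular, so $\cR^\cB_\y$ is linear, and the only computation needed is the covariance $\tau^\cD(\h E_{jj}\h)=n^{-1}I_{n \times n}$ (which follows from $\tau(\h m_1\h m_2)=\tau(\h^2)\tau(m_1)\tau(m_2)$ for $m_1,m_2\in\cM$), yielding $\cR_\y^\cB(\b)=I_{n \times n}\otimes[\x\,(n^{-1}\Tr\otimes 1)[\b]\,\x]$. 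The Cauchy--R relation \eqref{eq:Cauchy-R relation} then becomes exactly \eqref{eq:generalfixedpoint} with $\s=\r_0=-G^\cB_\y(\z)$. Since \eqref{eq:Cauchy-R relation} is only justified a priori for $\z$ with $\|\z^{-1}\|_\op$ small (e.g. $\Im\z$ a large multiple of $1$), I would finish by noting that $\z\mapsto\r_0(\z)$ and $\z\mapsto(-\z-I_{n \times n}\otimes[\x(n^{-1}\Tr\otimes 1)[\r_0(\z)]\x])^{-1}$ are both holomorphic on the convex (hence connected) open set $(\cD\otimes\cX)^+$ and agree on the large-parameter open subset, so they agree throughout by the identity theorem; this gives \eqref{eq:generalfixedpointreduced} for $\m_0=(n^{-1}\Tr\otimes 1)[\r_0]$.

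\emph{Uniqueness.} Let $\m\in\cX^+$ be any solution of \eqref{eq:generalfixedpointreduced}. Since $\Im\m\ge 0$ and $\x$ is self-adjoint, $\Im(-\z_i-\x\m\x)=\Im\z_i+\x(\Im\m)\x\ge\delta$, so Lemma~\ref{im bound inverse} gives $\|(-\z_i-\x\m\x)^{-1}\|_\op\le\delta^{-1}$ and hence $\|\m\|_\op\le\delta^{-1}$; and $\Im[(-\z_i-\x\m\x)^{-1}]\ge\delta\,\|-\z_i-\x\m\x\|_\op^{-2}\ge\delta(\upsilon+\gamma^2\delta^{-1})^{-2}$, so $\Im\m\ge c:=\delta(\upsilon+\gamma^2\delta^{-1})^{-2}>0$. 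Thus every solution lies in the bounded convex open domain $D=\{\w\in\cX:\|\w\|_\op<\delta^{-1}+1,\ \Im\w>c/2\}$, which is nonempty since it contains $\m_0$; the map $\Phi(\w)=\tfrac1n\sum_i(-\z_i-\x\w\x)^{-1}$ is holomorphic on $D$ (each $-\z_i-\x\w\x$ is invertible there, as $\Im(-\z_i-\x\w\x)\le-\delta$) and, by the same bounds, maps $D$ into a subset at positive distance from $\partial D$. The Earle--Hamilton fixed point theorem then yields a unique fixed point of $\Phi$ in $D$, which is therefore the unique solution of \eqref{eq:generalfixedpointreduced} in $\cX^+$. (Alternatively, one may show that the stability operator $\w\mapsto\tfrac1n\sum_i(-\z_i-\x\m_1\x)^{-1}\x\w\x(-\z_i-\x\m_2\x)^{-1}$, which fixes the difference $\m_1-\m_2$ of two solutions, has spectral radius below $1$, via a Perron--Frobenius argument as in \cite{ajanki2019stability}.)

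\emph{Main obstacle.} The hard part is the uniqueness step: pinning down the a priori bounds that confine every solution to a fixed bounded region, and then running a genuine contraction argument that remains valid when $\cX$ is infinite-dimensional (this is exactly why the $L^1$--$L^\infty$ and Perron--Frobenius machinery of Section~\ref{sec:general} is needed later). A secondary, more routine, point is the analytic-continuation argument in the existence proof, required because the Cauchy--R relation is only directly available in the convergent-series (large-$|z|$) regime.
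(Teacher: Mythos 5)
Your proposal is correct and reaches the same conclusion, but with some modest differences from the paper's argument worth noting. First, you reduce (a) to (b), whereas the paper reduces (b) to (a); both reductions go through since the maps $\s\mapsto(n^{-1}\Tr\otimes 1)[\s]$ and $\m\mapsto(-\z-I\otimes\x\m\x)^{-1}$ are mutual inverses between solution sets. Second, for uniqueness you derive a priori bounds (every solution has $\|\m\|_\op\le\delta^{-1}$ and $\Im\m\ge\delta(\upsilon+\gamma^2\delta^{-1})^{-2}$) and invoke Earle--Hamilton directly; the paper instead cites the Helton--Far--Speicher theorem \cite[Theorem~2.1]{HeltonFarSpeicher2007Operator}, which packages existence and uniqueness together in one step and which is itself proved by Earle--Hamilton, so the two routes agree at the root. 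One small caution here: the lemma as stated only requires $\z_i\in\cX^+$ (not the quantitative bounds of Assumption~\ref{assump:general}(d)), so $\delta$ and $\upsilon$ should be understood as $\min_i$ and $\max_i$ taken over the finitely many $\z_i$; and the domain $D$ must be chosen so that the range of $\Phi$ is strictly inside it, which the computation shows works since $\Phi(D)\subset\{\|\w\|_\op\le\delta^{-1},\ \Im\w\ge\delta(\upsilon+\gamma^2(\delta^{-1}+1))^{-2}\}$. Third, for existence you shortcut the R-transform computation by observing that $\h\otimes\x$ is $\cD\otimes\cX$-semicircular (linear R-transform) and computing the single covariance $\tau^\cD(\h E_{jj}\h)=n^{-1}I$, which gives $\cR_\y^{\cD\otimes\cX}(\b)=I\otimes\x(n^{-1}\Tr\otimes 1[\b])\x$ directly; the paper takes a longer route through freeness with amalgamation over $1\otimes\cX$ and reduction to the scalar cumulants of $\h$. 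Your formula $\tau(\h m_1\h m_2)=\tau(\h^2)\tau(m_1)\tau(m_2)$ for $m_1,m_2\in\cM$ is correct by the freeness of $\h$ and $\cM$, and the $\cD\otimes\cX$-semicircularity of $\h\otimes\x$ follows from the $\cD$-semicircularity of $\h$, so this step is sound. The analytic-continuation step, which you correctly flag as necessary since the Cauchy--R relation is only available a priori for small $\|\z^{-1}\|_\op$, is handled the same way in the paper via the identity principle on the connected open set $(\cD\otimes\cX)^+$.
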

\begin{proof}
For existence and uniqueness in part (a), we apply the general
result of \cite[Theorem 2.1]{HeltonFarSpeicher2007Operator}: Let $\cB$ be any
$C^*$-algebra and denote its right operator half-plane
\[\cB'=\{\a \in \cB:\Re \a=\tfrac{1}{2}(\a+\a^*) \geq
\epsilon \text{ for some } \epsilon>0\}.\]
Then for any $\b \in \cB'$ and any analytic mapping $\eta:\cB' \to \cB'$ that is
bounded on bounded domains of $\cB'$, there exists a unique solution $\s \in
\cB'$ to the equation $\s=[\b+\eta(\s)]^{-1}$.

Fixing $\z \in (\cD \otimes \cX)^+$, we take
$\cB=\cD \otimes \cX$, $\b={-}i\z/2$, and
$\eta(\s)={-}i\z/2+I \otimes \x(n^{-1}\Tr \otimes 1[\s])\x$, noting that
$\Re[\b]=\Im \z/2 \geq \epsilon>0$ and
$\Re \eta(\s)=\Im z/2+I \otimes \x(n^{-1}\Tr \otimes 1[\Re s])\x
\geq \epsilon>0$ for some $\epsilon>0$ whenever $\Re s \geq 0$ and
$\z \in (\cD \otimes \cX)^+$, because $\x$ is self-adjoint and
$n^{-1}\Tr \otimes 1$ is a positive map (Lemma \ref{conditional expectation}).
Thus, there exists a unique solution $\s' \in (\cD \otimes \cX)'$ to
\[\s'=\big({-}i\z+I \otimes \x(n^{-1}\Tr \otimes 1[\s'])\x\big)^{-1}.\]
Multiplying by $i$, there exists a unique solution $\s=i\s' \in (\cD \otimes
\cX)^+$ to (\ref{eq:generalfixedpoint}).

To show that this solution is given by $\r_0=(\tau^\cD \otimes 1)[(\h \otimes
\x-\z)^{-1}]$, we claim that the map $\s \mapsto I \otimes \x(n^{-1}\Tr \otimes
1[\s])\x$ is the $\cD \otimes \cX$-valued R-transform of $\h \otimes \x$,
and (\ref{eq:generalfixedpoint}) is the Cauchy-R relation over $\cD \otimes
\cX$. Namely, for any $\s \in \cD\otimes\cX$ with $\|\s\|_\op$ small enough,
\begin{equation}\label{eq:DX-valued R transform}
\cR_{\h \otimes \x}^{\cD \otimes
\cX}(\s)=I_{n \times n}\otimes\x\big(n^{-1}\Tr\otimes 1[\s]\big)\x.
\end{equation}
Indeed, since $\h$ is free of $\cM$ and hence of $\cD \subset \cM$,
it is readily checked by definition (c.f.\ \cite[Definition
9.5]{MingoSpeicher2017}) that $\h \otimes \x$ is free
of $\cB=\cD \otimes \cX$ with amalgamation over $1 \otimes \cX$
under the conditional expectation $\tau \otimes 1:\cA \otimes \cX \to \cX \cong
1 \otimes \cX$.
Then by \cite[Theorem 3.6]{NicaShlyakhtenkoSpeicher2002},
for any $\s\in\cD\otimes\cX$ with $\|\s\|_{\op}$ small enough,
    \begin{align*}
        \cR_{\h\otimes\x}^{\cD\otimes\cX}(\s)&=\sum_{m\geq
1}\kappa_m^{\cD\otimes\cX}((\h\otimes\x)\s,\ldots,(\h\otimes\x)\s,\h\otimes\x)\\
        &=\sum_{m\geq 1}\kappa_m^{1 \otimes \cX}((\h\otimes\x)(\tau\otimes
1)[\s],\ldots,(\h\otimes\x)(\tau\otimes 1)[\s],\h\otimes\x)\\
        &=\sum_{m\geq 1}\left(1 \otimes\left[\x \left(n^{-1}\Tr\otimes
1[\s]\right)\right]^{m-1}\x\right)\kappa_m^{1\otimes\cX}(\h\otimes 1,...,\h\otimes 1)
    \end{align*}
    where we used in the last step that
$\tau\otimes 1[\s]=n^{-1}\Tr\otimes 1[\s]$
for any $\s\in\cD\otimes\cX$, that $\h\otimes 1$ commutes with $1 \otimes\cX$,
and the bimodule properties (\ref{eq:bimodule}).
On the other hand, we have
\[\kappa_m^{1\otimes\cX}(\h \otimes 1,\ldots,\h \otimes 1)
=\kappa_m(\h,\ldots,\h)(1 \otimes 1)\]
where $\kappa_m(\h,\ldots,\h)$ are the scalar-valued free cumulants of $\h$:
This may be verified by expressing
$\kappa_m^{1\otimes\cX}(\h \otimes 1,\ldots,\h \otimes 1)$
in terms of moments under the conditional expectation $\tau \otimes 1$
using M\"obius-inversion of
(\ref{eq:momentcumulant}) (c.f.\ \cite[Eq.\ (9.19)]{MingoSpeicher2017}),
applying the identity $(\tau \otimes 1)[\a \otimes 1]=\tau(\a)(1 \otimes 1)$ for
each moment term, and
re-applying (\ref{eq:momentcumulant}) to express the result back in terms of
the scalar cumulants $\kappa_m(\h,\ldots,\h)$.
Here, $\h \in \cA$ is semi-circular, so
$\kappa_m(\h,\ldots,\h)=1$ if $m=2$ and 0 otherwise
(c.f.\ \cite[Example 11.21]{NicaSpeicher2006}). Thus only the $m=2$ term above
remains, and we obtain the claim (\ref{eq:DX-valued R transform}).

Then, identifying $\r_0(\z)=(\tau^\cD \otimes 1)[(\h \otimes \x-\z)^{-1}]
={-}G_{\h \otimes \x}^{\cD \otimes \cX}(\z)$, the Cauchy-R relation 
(\ref{eq:Cauchy-R relation}) implies that
\[\r_0(\z)=\left({-}\z-I \otimes \x\left(n^{-1}\Tr\otimes 1[\r_0(\z)]
\right)\x\right)^{-1}\]
for all $\z \in (\cD \otimes \cX)^+$ with $\|\z^{-1}\|_\op$ small enough.
Since the two sides of this equation are analytic over the operator half-plane
$\z \in (\cD\otimes\cX)^+$
and are equal on an open subset of $(\cD\otimes\cX)^+$, it follows from the identity principle 
they must be equal for all
$\z \in (\cD\otimes\cX)^+$, showing part (a) that $\r_0$ is the unique
solution of (\ref{eq:generalfixedpoint}) for any such $\z$.

For part (b),  observe that for any $\m \in \cX^+$,
\[(\z+I_{n \times n} \otimes \x\m\x)^{-1}
=\left(\sum_{i=1}^n E_{ii} \otimes (\z_i+\x\m\x)\right)^{-1}
=\sum_{i=1}^n E_{ii} \otimes (\z_i+\x\m\x)^{-1}.\]
Then setting $\m_0=n^{-1}\Tr \otimes 1[\r_0]=(\tau \otimes 1)[(\h \otimes
\x-\z)^{-1}]$ and
taking $n^{-1}\Tr \otimes 1$ on both sides of
(\ref{eq:generalfixedpoint}) with $\s=\r_0$ shows that $\m_0$ solves
(\ref{eq:generalfixedpointreduced}). To see that $\m_0$ is the unique solution,
observe that if $\m \in \cX^+$
solves (\ref{eq:generalfixedpointreduced}), then
defining $\s_i=(-\z_i - \x\m\x)^{-1}$
and $\s=\sum_i E_{ii} \otimes \s_i \in (\cD \otimes \cX)^+$, it follows from
(\ref{eq:generalfixedpointreduced}) that $\m=n^{-1}\Tr \otimes 1[\s]$, so
$\s$ solves (\ref{eq:generalfixedpoint}).
Then by the uniqueness claim of part (a), we must have $\s=\r_0$,
so $\m=n^{-1}\Tr \otimes 1[\r_0]=\m_0$. Hence this solution $\m_0 \in \cX^+$ to
(\ref{eq:generalfixedpointreduced}) is unique.
\end{proof}

We deduce from the above the following stability statements for approximate
solutions of these fixed-point equations.

\begin{cor}\label{cor:Stability of DX-valued fixed point}
Under Assumption \ref{assump:general}:
\begin{enumerate}[(a)]
\item Suppose $\s \in (\cD\otimes\cX)^+$ and $\Delta \in \cD\otimes\cX$
satisfy $\Im(\z+\Delta) \geq \delta/2$ and
\[\s=\left(-\z-\Delta - I_{n \times n}\otimes [\x\left(n^{-1}\Tr\otimes
1[\s]\right)\x]\right)^{-1}.\]
Then for any $p \in [1,\infty]$ (where $\|\cdot\|_\infty=\|\cdot\|_\op$),
\[\pnorm{\s-\r_0}{p} \leq 2\delta^{-2}\pnorm{\Delta}{p}.\]
\item Suppose $\m \in\cX^+$ and $\Delta\in\cX$ satisfy
$\Im(\z_i+\x\Delta\x) \geq \delta/2$ for all $i=1,\ldots,n$ and
\begin{align}\label{eq:X-valued perturbed fixed point}
\m=\frac{1}{n}\sum_{i=1}^n \left(-\z_i-\x\m\x\right)^{-1}+\Delta.
\end{align}
Then for any $p \in [1,\infty]$,
\[\pnorm{\m-\m_0}{p} \leq (1+2\gamma^2\delta^{-2})\pnorm{\Delta}{p}\]
\end{enumerate}
\end{cor}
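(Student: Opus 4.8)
The plan is to treat both parts as perturbation statements around the exact fixed-point equations of Lemma \ref{lem:uniquefixedpoint}, using resolvent identities to linearize. For part (a), write $\r_0 = (-\z - I \otimes [\x(n^{-1}\Tr \otimes 1[\r_0])\x])^{-1}$ and $\s = (-\z - \Delta - I \otimes [\x(n^{-1}\Tr\otimes 1[\s])\x])^{-1}$. Let $T = n^{-1}\Tr \otimes 1[\s - \r_0] \in \cX$. Applying the identity $\a^{-1} - \b^{-1} = \a^{-1}(\b - \a)\b^{-1}$ with $\a = -\z - \Delta - I\otimes[\x(n^{-1}\Tr\otimes 1[\s])\x]$ (so $\a^{-1} = \s$) and $\b = -\z - I\otimes[\x(n^{-1}\Tr\otimes 1[\r_0])\x]$ (so $\b^{-1} = \r_0$) gives
\[
\s - \r_0 = \s\big(\Delta + I \otimes [\x T \x]\big)\r_0.
\]
Now apply $n^{-1}\Tr \otimes 1$ to both sides. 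The key point is that $n^{-1}\Tr\otimes 1$ applied to $\s(I\otimes[\x T\x])\r_0$ can be controlled: since $\x$ is a fixed self-adjoint element with $\|\x\|_\op \le \gamma$, one bounds $\|n^{-1}\Tr\otimes 1[\s(I\otimes \x T\x)\r_0]\|_p \le \|\s\|_\op \|\r_0\|_\op \gamma^2 \|T\|_p$, and by Lemma \ref{im bound inverse} both $\|\s\|_\op$ and $\|\r_0\|_\op$ are bounded, $\|\s\|_\op \le 2/\delta$ (using $\Im(\z+\Delta)\ge\delta/2$) and $\|\r_0\|_\op \le 1/\delta$. Actually the cleaner route, avoiding a fixed-point bound on $T$ with a possibly-large constant, is to not iterate through $T$ at all but rather to bound $\|\s-\r_0\|_p$ directly: from the displayed identity, $\|\s-\r_0\|_p \le \|\s\|_\op\|\r_0\|_\op(\|\Delta\|_p + \gamma^2\|T\|_p)$ and $\|T\|_p = \|n^{-1}\Tr\otimes 1[\s-\r_0]\|_p \le \|\s - \r_0\|_p$ since $n^{-1}\Tr\otimes 1$ is a contraction in every $L^p$-norm (it is a trace-preserving conditional expectation, Lemma \ref{conditional expectation}, combined with Lemma \ref{lemma:domination}'s Jensen bound \eqref{eq:jensencondexp}). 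This would give $\|\s-\r_0\|_p \le \|\s\|_\op\|\r_0\|_\op\|\Delta\|_p + \|\s\|_\op\|\r_0\|_\op\gamma^2\|\s-\r_0\|_p$, which only closes if $\|\s\|_\op\|\r_0\|_\op\gamma^2 < 1$ — not guaranteed.

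So the main obstacle, and the place where the argument must be done carefully, is establishing invertibility of the relevant linear map rather than a naive contraction. The right approach is: take $n^{-1}\Tr\otimes 1$ of the identity $\s - \r_0 = \s\Delta\r_0 + \s(I\otimes\x T\x)\r_0$ to get an equation purely in $T \in \cX$,
\[
T - (n^{-1}\Tr\otimes 1)\big[\s(I\otimes\x T\x)\r_0\big] = (n^{-1}\Tr\otimes 1)[\s\Delta\r_0],
\]
and observe the left side defines a linear operator $\cL$ on $\cX$. The point is that $\cL$ is a genuine perturbation of the identity whose inverse is controlled by the stability of the $\cX$-valued fixed point; but more directly, I would instead expand $\s$ on the right using $\s = \r_0 + (\s - \r_0)$ to write the quadratic-in-$(\s-\r_0)$ terms as lower order, obtaining $T \approx (n^{-1}\Tr\otimes 1)[\r_0\Delta\r_0] + (n^{-1}\Tr\otimes 1)[\r_0(I\otimes \x T\x)\r_0]$, where now the linear map $T \mapsto (n^{-1}\Tr\otimes 1)[\r_0(I\otimes\x T\x)\r_0]$ is exactly (minus) the derivative of the fixed-point map at $\r_0$, whose invertibility with norm $\le$ const in every $L^p$ is precisely what the Perron–Frobenius / Riesz–Thorin analysis (Lemma \ref{lemma:linearmap2}, in the specialized form needed here — or a direct argument since $\|\r_0\|_\op\le 1/\delta$ and a contraction estimate $\gamma^2/\delta^2$ may fail but the stability is rescued by the imaginary-part structure). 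In fact the clean finish: use $\Im\z \ge \delta$ to get $\|\r_0\|_\op \le 1/\delta$ AND the stronger bound that the derivative map has spectral radius strictly less than $1$ with the explicit gap giving operator norm $\le 2\gamma^2\delta^{-2}/(1 - \text{something})$; matching the stated constants $2\delta^{-2}$ (part a, where there is no $\gamma$ since the $\x$-factors are absorbed — suggesting the intended argument keeps $\|\x(\cdots)\x\|$ grouped and uses $\|\r_0\|_\op\le\delta^{-1}$ twice directly) and $1 + 2\gamma^2\delta^{-2}$ (part b).

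For part (b), I would reduce to part (a) exactly as in the proof of Lemma \ref{lem:uniquefixedpoint}(b): given $\m$ and $\Delta$ solving \eqref{eq:X-valued perturbed fixed point}, set $\s_i = (-\z_i - \x\m\x)^{-1}$, $\s = \sum_i E_{ii}\otimes \s_i \in (\cD\otimes\cX)^+$. Then $n^{-1}\Tr\otimes 1[\s] = \m - \Delta$, and one checks $\s = (-\z - I\otimes[\x(n^{-1}\Tr\otimes 1[\s])\x] - I\otimes[\x\Delta\x])^{-1}$, i.e. $\s$ is a perturbed solution in the sense of part (a) with perturbation $\Delta' = I\otimes[\x\Delta\x]$, which satisfies $\|\Delta'\|_p \le \gamma^2\|\Delta\|_p$ and $\Im(\z + \Delta') = \Im\z + \x(\Im\Delta)\x \ge \delta/2$ by hypothesis (this is where the assumption $\Im(\z_i+\x\Delta\x)\ge\delta/2$ enters). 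Part (a) then gives $\|\s - \r_0\|_p \le 2\delta^{-2}\gamma^2\|\Delta\|_p$, and since $\m - \m_0 = n^{-1}\Tr\otimes 1[\s - \r_0] + \Delta$, the triangle inequality and contractivity of $n^{-1}\Tr\otimes 1$ in $L^p$ yield $\|\m - \m_0\|_p \le \|\Delta\|_p + 2\gamma^2\delta^{-2}\|\Delta\|_p = (1 + 2\gamma^2\delta^{-2})\|\Delta\|_p$, as claimed. The only subtlety is verifying the $p = \infty$ case uniformly, which follows because all the manipulations (resolvent identity, contractivity of the conditional expectation, the operator-norm bounds from Lemma \ref{im bound inverse}) are valid with $\|\cdot\|_\op$ in place of $\|\cdot\|_p$; I expect the write-up to handle all $p \in [1,\infty]$ simultaneously. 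The genuine obstacle throughout is keeping the constants sharp in the linearization step, i.e.\ extracting the factor $2\delta^{-2}$ from the product of two resolvent bounds $\delta^{-1}\cdot\delta^{-1}$ together with the geometric-series correction $2$ coming from $\Im(\z+\Delta)\ge\delta/2$ rather than $\delta$.
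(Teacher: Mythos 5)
Your proposal for part (a) identifies the difficulty correctly but does not resolve it, and so has a genuine gap. You linearize directly between $\s$ and $\r_0$ via $\s - \r_0 = \s(\Delta + I\otimes\x T\x)\r_0$, then observe the resulting contraction constant $\|\s\|_\op\|\r_0\|_\op\gamma^2$ is not necessarily less than $1$, and finally gesture at an invertibility argument in the spirit of Lemma \ref{lemma:linearmap2}. That route would at best yield a cruder bound with different constants, and you do not close it. The missing idea is that one should not treat $\s$ as a new object at all: since $\s$ solves the fixed-point equation of Lemma \ref{lem:uniquefixedpoint}(a) with $\z$ replaced by $\z+\Delta$, and $\Im(\z+\Delta)\geq\delta/2>0$ so $\z+\Delta\in(\cD\otimes\cX)^+$, the uniqueness part of Lemma \ref{lem:uniquefixedpoint}(a) gives $\s=\r_0(\z+\Delta)=(\tau^\cD\otimes 1)[\r(\z+\Delta)]$ where $\r(\z)=(\h\otimes\x-\z)^{-1}$ is the resolvent of the \emph{free} operator in $\cA\otimes\cX$. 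The resolvent identity is then applied to $\r$, not to $\s$ and $\r_0$, giving $\s-\r_0=(\tau^\cD\otimes 1)[\r(\z+\Delta)\,\Delta\,\r(\z)]$, and the bound closes cleanly with no self-consistency equation to solve: $\|\r(\z+\Delta)\|_\op\leq 2/\delta$, $\|\r(\z)\|_\op\leq 1/\delta$, and $L^p$-contractivity of $\tau^\cD\otimes 1$ give $\|\s-\r_0\|_p\leq 2\delta^{-2}\|\Delta\|_p$, exactly the stated constant. Your observation that the $\gamma$-factors are ``absorbed'' in part (a) is a symptom of this: the $\x$ conjugation never appears as a standalone factor because it is hidden inside $\r$.

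Your part (b) reduction is correct and essentially equivalent to the paper's: both reinterpret the perturbed $\m$ (via $\m'=\m-\Delta$, or via your $\s=\sum_i E_{ii}\otimes(-\z_i-\x\m\x)^{-1}$) as an exact solution at the shifted argument $\z+I\otimes\x\Delta\x$, verify the hypothesis $\Im(\z+I\otimes\x\Delta\x)\geq\delta/2$, and combine part (a) with the triangle inequality and $L^p$-contractivity of $n^{-1}\Tr\otimes 1$ to land on $(1+2\gamma^2\delta^{-2})\|\Delta\|_p$. This is fine, but of course inherits the gap in part (a).
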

\begin{proof}
For part (a),
defining $\r(\z)=(\h \otimes \x-\z)^{-1}$ and $\r_0(\z)=(\tau^\cD \otimes
1)[\r(\z)]$, Lemma \ref{lem:uniquefixedpoint}(a) implies that
$\s=\r_0(\z+\Delta)$. Hence
    \begin{align*}
        \pnorm{\s-\r_0}{p} &= \pnorm{\r_0(\z+\Delta)-\r_0(\z)}{p} = \pnorm{(\tau^\cD \otimes 1)[\r(\z+\Delta)\Delta\r(\z)]}{p}\\
        &\leq\|\r(\z+\Delta)\|_{\op}\|\Delta\|_p\|\r(\z)\|_\op\leq 2\delta^{-2}\pnorm{\Delta}{p}
    \end{align*}
    where we used $L^p$-contractivity of $\tau^\cD\otimes 1$ (Lemma
\ref{lemma:Lpcontraction}),
H\"older's inequality (Lemma \ref{Holder's inequality}), the given
conditions $\Im \z \geq \delta$ and $\Im(\z+\Delta) \geq \delta/2$,
and Lemma \ref{im bound inverse}.

Similarly for part (b), defining $\m_0(\z)=(n^{-1}\Tr \otimes 1)[\r(\z)]$
and setting $\m'=\m-\Delta$, we have
$\m'=n^{-1}\sum_i ({-}\z_i-\x\Delta\x-\x\m'\x)^{-1}$ so
Lemma \ref{lem:uniquefixedpoint}(a) implies that
$\m'=\m_0(\z+I \otimes \x\Delta\x)$. Then, using also $\|\x\|_\op \leq \gamma$,
\begin{align*}
\|\m-\m_0\|_p
&\leq \|\Delta\|_p+\|\m_0(\z+I \otimes \x\Delta\x)-\m_0(\z)\|_p\\
&\leq \|\Delta\|_p+2\delta^{-2}\|I \otimes \x\Delta\x\|_p
\leq (1+2\gamma^2\delta^{-2})\|\Delta\|_p.
\end{align*}
\end{proof}

We close this section with an analysis of the quantitative invertibility of two
linear operators $\cL_1,\cL_2:\cX \to \cX$ in the $L^p$-norms, defined as
\[\cL_1(\a)=\a-\frac{1}{n}\sum_{i=1}^n (\r_0)_{ii}\x\a\x(\r_0)_{ii},
\qquad \cL_2(\a)=\a-\frac{1}{n}\sum_{i=1}^n (\r_0)_{ii}\x\a\x(\r_0)_{ii}^*.\]
The invertibility of $\cL_1$ is more immediate, and follows from
differentiating the preceding fixed-point equation.
We state this in the following lemma.

    \begin{lem}\label{lemma:linearmap1}
Under Assumption \ref{assump:general}, consider the linear operator
$\cL_1:\cX \to \cX$ given by
\[\cL_1(\a)=\a-\frac{1}{n}\sum_{i=1}^n (\r_0)_{ii}\x\a\x(\r_0)_{ii}.\]
Then $\cL_1$ is invertible, and for any $\a \in \cX$ and $p \in [1,\infty]$
(where $\|\cdot\|_\infty=\|\cdot\|_\op$),
        \begin{equation*}
            \|\cL_1^{-1}(\a)\|_p \leq \gamma^2(\gamma^2+\delta^{-2})\|\a\|_p.
        \end{equation*}
    \end{lem}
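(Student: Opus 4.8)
The plan is to exhibit $\cL_1$ as (the restriction of) the derivative of the fixed-point map of Lemma~\ref{lem:uniquefixedpoint}(b) and to invert it by a geometric-series argument after establishing a uniform contraction bound. First I would recall from Lemma~\ref{lem:uniquefixedpoint}(b) and Corollary~\ref{cor:Stability of DX-valued fixed point}(b) that $\m_0 \in \cX^+$ solves $\m_0 = \frac{1}{n}\sum_i(-\z_i-\x\m_0\x)^{-1}$, and moreover that $(\r_0)_{ii}=(-\z_i-\x\m_0\x)^{-1}$, so that $\|(\r_0)_{ii}\|_\op \leq \delta^{-1}$ by Lemma~\ref{im bound inverse}. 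Differentiating the fixed-point map $\Phi(\m)=\frac{1}{n}\sum_i(-\z_i-\x\m\x)^{-1}$ at $\m_0$ gives $D\Phi|_{\m_0}(\a)=\frac{1}{n}\sum_i(\r_0)_{ii}\,\x\a\x\,(\r_0)_{ii}$, so that $\cL_1=\Id-D\Phi|_{\m_0}$; this is the structural fact that makes invertibility natural.

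Next I would bound the operator $\cT(\a)=\frac{1}{n}\sum_i(\r_0)_{ii}\x\a\x(\r_0)_{ii}$ in each $L^p$-norm. The naive estimate $\|\cT(\a)\|_p \leq \gamma^2\delta^{-2}\|\a\|_p$ is not smaller than $1$, so a bare Neumann series does not close; instead I would use the extra factor of $\Im$ available from $\z$. The key identity is that for each $i$,
\[
\Im\big[(\r_0)_{ii}^{-1}\big]={-}\Im\z_i - \x(\Im \m_0)\x \leq {-}\delta,
\]
which, combined with $(\r_0)_{ii}=\big((\r_0)_{ii}^{-1}\big)^{-1}$ and the resolvent identity, yields $(\r_0)_{ii}^*(\x \cdot \x)(\r_0)_{ii}$-type bounds showing that $\|\cT\|_{p\to p}$ is bounded away from $1$ — more precisely, that $\Im\big[\cL_1(\m_0)^{-1}\cdot\big]$-style positivity forces $\|\cT^k\|_{p\to p}$ to decay. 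Concretely, I would follow the same differentiation-of-the-fixed-point argument used for $\cL_2$ in the companion lemma: writing $\m_0(\z)$ as an analytic function of $\z$, the chain rule gives $(\cL_1^{-1}\a) = \partial_t\big|_{t=0}\m_0(\z - t\cdot I\otimes\x\a\x)\cdot(\text{scaling})$, and the stability estimate of Corollary~\ref{cor:Stability of DX-valued fixed point}(b) (which is already phrased for all $p\in[1,\infty]$) directly controls this derivative by $(1+2\gamma^2\delta^{-2})\|I\otimes\x\a\x\|_p \leq (1+2\gamma^2\delta^{-2})\gamma^2\|\a\|_p$, after absorbing the $\frac1n\sum_i$ structure. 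Rechecking the constant, the bound $\gamma^2(\gamma^2+\delta^{-2})\|\a\|_p$ claimed in the statement is what one gets from this computation with the slightly sharper $2\delta^{-2}$ replaced by $\delta^{-2}$ in the relevant resolvent product.

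The main obstacle I anticipate is making the differentiation argument rigorous in the $p<\infty$ norms simultaneously with $p=\infty$: one must verify that $t\mapsto \m_0(\z-tW)$ is differentiable in the Banach space $(\cX,\|\cdot\|_p)$ with derivative given by the formal expression, uniformly in $p$, rather than merely in operator norm. This is handled by noting that the map is actually analytic in operator norm (hence the derivative exists in $\|\cdot\|_\op$ and is given by the stated formula as an element of $\cX$), and then the $L^p$-bound on that fixed element of $\cX$ follows from $L^p$-contractivity of the relevant conditional expectations and Hölder's inequality (Lemmas~\ref{lemma:Lpcontraction} and~\ref{Holder's inequality}), exactly as in Corollary~\ref{cor:Stability of DX-valued fixed point}. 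The only remaining subtlety is injectivity of $\cL_1$ on all of $\cX$ (not just surjectivity onto the range of the derivative), which follows because $\cL_1^{-1}$ has just been constructed as a bounded two-sided inverse. I expect the write-up to be short once the connection $\cL_1 = \Id - D\Phi|_{\m_0}$ is made explicit.
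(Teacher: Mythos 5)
Your overall strategy---identifying $\cL_1$ as $\Id - D\Phi|_{\m_0}$ and inverting it by differentiating the fixed-point characterization of $\m_0$---is indeed the one the paper uses, but several details are off in ways that matter. The middle paragraph about $\Im[(\r_0)_{ii}^{-1}] \leq -\delta$ driving a contraction is a dead end for $\cL_1$: that positivity mechanism (taking $\Im$ of $\m_0 = n^{-1}\sum_i (\r_0)_{ii}$) is precisely the one the paper uses for $\cL_2$ in Lemma~\ref{lemma:linearmap2}, and it works there because $\cL_2$ carries $(\r_0)_{ii}^*$ on the right---exactly the sandwich that appears upon taking imaginary parts. $\cL_1$ carries $(\r_0)_{ii}$ with no adjoint on both sides, so the imaginary-part identity yields an $\cL_2$-relation rather than an $\cL_1$-relation, and the Perron--Frobenius/positivity route does not apply. (You also have the two lemmas swapped: the Perron--Frobenius argument is for $\cL_2$ and the derivative argument is for $\cL_1$.)

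The second gap is quantitative. Differentiating $\m_0 = \frac{1}{n}\sum_i(-\z_i-\x\m_0\x)^{-1}$ along $\z \mapsto \z - t\,I\otimes\x\a\x$ gives $\cL_1(\partial_t\m_0) = -\frac{1}{n}\sum_i(\r_0)_{ii}\x\a\x(\r_0)_{ii} = \cL_1(\a)-\a$, so that $\cL_1^{-1}(\a) = \a - \partial_t\big|_{t=0}\m_0(\z-t\,I\otimes\x\a\x)$; your proposed formula omits the additive $\a$, and the unspecified ``(scaling)'' does not recover it. The paper makes this bookkeeping automatic by introducing the auxiliary map $\omega(\b) = \x^{-1}\b\x^{-1}+(\tau\otimes 1)[(\h\otimes\x-\z-1\otimes\b)^{-1}]$, for which $g(\omega(\b)) = \x^{-1}\b\x^{-1}$ holds identically with $g(\omega)=\omega-\frac{1}{n}\sum_i(-\z_i-\x\omega\x)^{-1}$; the chain rule at $\b=0$ then gives the exact formula $\cL_1^{-1}(\a) = D\omega(0)[\x\a\x]$, and the constant follows from $\|D\omega(0)[\a]\|_p \leq \gamma^2\|\a\|_p + \|\r\|_\op^2\|1\otimes\a\|_p \leq (\gamma^2+\delta^{-2})\|\a\|_p$ after substituting $\a\to\x\a\x$. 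Your attempt to extract the constant from Corollary~\ref{cor:Stability of DX-valued fixed point}(b) instead yields $(1+2\gamma^2\delta^{-2})\gamma^2$, which does not equal $\gamma^2(\gamma^2+\delta^{-2})$, and the discrepancy cannot be waved away: the corollary is a one-sided perturbative estimate, not the exact derivative identity the lemma requires. Your remark that $L^p$-differentiability follows from operator-norm analyticity together with $L^p$-contractivity of $\tau\otimes 1$ (Lemma~\ref{lemma:Lpcontraction}) is correct and is indeed what the paper implicitly relies on.
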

    \begin{proof}
        For any $\b \in \cX$ with $\|\b\|_\op<\delta/2$
(which ensures $\Im(\z+1 \otimes \b)>\delta/2>0$), define
\[f(\b)=(\tau \otimes 1)[(\h \otimes \x-\z-1 \otimes \b)^{-1}],
\qquad \omega(\b)=\x^{-1}\b\x^{-1}+f(\b).\]
Then Lemma \ref{lem:uniquefixedpoint}(b) applied at $\z+1 \otimes \b$ shows
that $f(\b)$ is the unique solution in $\cX^+$ to the fixed-point equation
$f(\b)=n^{-1}\sum_i ({-}\z_i-\b-\x f(\b)\x)^{-1}$, i.e.
        \begin{equation}\label{eq:fixedpointOmega}
            \x^{-1}\b\x^{-1}=\omega(\b)-\frac{1}{n}\sum_{i=1}^n ({-}\z_i-\x\,\omega(\b)\x)^{-1}.
        \end{equation}
For all $\omega \in \cX^+$, define
        \begin{equation}\label{eq:rmt subordination}
        g(\omega)=\omega-\frac{1}{n}\sum_{i=1}^n ({-}\z_i-\x\,\omega\x)^{-1}.
        \end{equation}
Then for all $\b \in \cX$ with $\|\b\|_\op<\delta/2$, since
$\omega(\b)$ satisfies (\ref{eq:fixedpointOmega}), we have
$g(\omega(\b))=\x^{-1}\b\x^{-1}$.

Let us write $D\omega(\b),Dg(\omega)$ for the Fr\'echet derivatives of
$\omega(\b)$ and $g(\omega)$ as linear maps on $\cX$. 
Recalling $\r=(\h \otimes \x-\z)^{-1}$ and
differentiating $\omega(\b)=\x^{-1}\b\x^{-1}+(\tau \otimes 1)[(\h \otimes
\x-\z-1 \otimes \b)^{-1}]$ at $\b=0$, for any $\a \in \cX$ and $p \in
[1,\infty]$ we have
\begin{align}
\|D\omega(0)[\a]\|_p &=\|\x^{-1}\a\x^{-1}+(\tau \otimes 1)[\r(1 \otimes
\a)\r]\|_p\notag\\
&\leq \gamma^2\|\a\|_p+\|\r\|_\op^2\|1 \otimes \a\|_p
\leq (\gamma^2+\delta^{-2})\|\a\|_p.\label{eq:Domegabound}
\end{align}
Here, the second line uses H\"older's inequality (Lemma \ref{Holder's
inequality}), $\|\x^{-1}\|_\op \leq \gamma$,
contractivity of the conditional expectation $\tau \otimes 1$ in $L^p$
(Lemma \ref{lemma:Lpcontraction}), 
and $\|\r\|_\op \leq \delta^{-1}$ by the assumption $\Im \z \geq \delta$ and
Lemma \ref{im bound inverse}.
On the other hand, differentiating (\ref{eq:rmt subordination})
at $\omega_0=\omega(0)$ and using $({-}\z_i-\x\omega_0\x)^{-1}
=({-}\z_i-\x\m_0\x)^{-1}=(\r_0)_{ii}$
by Lemma \ref{lem:uniquefixedpoint}(a), we have
\[Dg(\omega_0)[\a]=\a-\frac{1}{n}\sum_{i=1}^n (\r_0)_{ii}\x\a\x(\r_0)_{ii}
=\cL_1(\a).\]
Then differentiating both sides of the identity
$g(\omega(\b))=\x^{-1}\b\x^{-1}$ at $\b=0$ and evaluating the derivative at
$\x\a\x$ for any $\a \in \cX$ gives
\[\cL_1(D\omega(0)[\x\a\x])=Dg(\omega_0)[D\omega(0)[\x\a\x]]=\x^{-1}[\x\a\x]\x^{-1}=\a.\]
The bound (\ref{eq:Domegabound}) for $p=\infty$ shows that
$\a \mapsto D\omega(0)[\x\a\x]$ defines a bounded linear operator on
$\cX$, so $\cL_1$ is invertible with inverse given explicitly by
$\cL_1^{-1}(\a)=D\omega(0)[\x\a\x]$. Finally, (\ref{eq:Domegabound})
and the condition $\|\x\|_\op \leq \gamma$
imply that $\|\cL_1^{-1}(\a)\|_p \leq \gamma^2(\gamma^2+\delta^{-2})\|\a\|_p$.
\end{proof}

We now turn to the invertibility of $\cL_2$. We use an idea from
\cite[Section 4.2]{ajanki2019stability}, which relies on the observation
that $\cL_2^{-1}$ may be controlled at a single positive element of $\cX$
by taking imaginary parts of (\ref{eq:generalfixedpointreduced}), and that
the linear map $\a \mapsto (\r_0)_{ii}\x\a\x(\r_0)_{ii}^*$ is
positivity-preserving. For finite-dimensional matrix algebras, this implies
that this map has a Perron-Frobenius eigenvector $\a \geq 0$ in the
positive cone. The analyses of \cite{ajanki2019stability} construct a
symmetrized version of this map that is self-adjoint, so that its $L^2 \to L^2$
operator norm coincides with its spectral radius, and then apply a
$L^2$-inner-product of the Perron-Frobenius eigenvector with the imaginary part
of (\ref{eq:generalfixedpointreduced}) to deduce a quantitative bound on
$\|\cL_2^{-1}\|_{L^2 \to L^2}$.

We adapt this idea to address two additional challenges in our setting: First, 
as the map $\a \mapsto (\r_0)_{ii}\x\a\x(\r_0)_{ii}^*$ is non-compact and may
have continuous spectrum, we can only guarantee the existence of an approximate
Perron-Frobenius eigenvector (c.f.\ Lemma \ref{lemma:approxperronfrobenius}
below). For reasons that will be clear in the proof, the approximation error
must be controlled in $L^1$ rather than $L^2$, and thus we implement a version
of this argument in the $L^1$-$L^\infty$ duality rather than in a Hilbert space
setting, to obtain a bound for $\|\cL_2^{-1}\|_{L^\infty \to L^\infty}$.
Second, as we will also require a bound on $\|\cL_2^{-1}\|_{L^p \to L^p}$ for
each $p \in [1,\infty)$, we carry out a dual version of this argument
to obtain also a bound for $\|\cL_2^{-1}\|_{L^1 \to L^1}$, and hence
deduce a bound on $\|\cL_2^{-1}\|_{L^p \to L^p}$ via the Riesz-Thorin
interpolation.

\begin{lem}[Approximate Perron-Frobenius
eigenvector]\label{lemma:approxperronfrobenius}
Let $(\cX,\|\cdot\|)$ be a real Banach space, and $\cK \subset \cX$ a
closed convex cone such that $\cK \cap (-\cK)=\{0\}$ ($\cK$ is proper),
$\cX=\{x-y:x,y \in \cK\}$ ($\cK$ is generating), and for some $C>0$ we have
$\|x\| \leq C\|x+y\|$ whenever $x,y \in \cK$ ($\cK$ is normal).

Let $T:\cX \to \cX$ be a bounded linear operator such that $T(\cK) \subseteq
\cK$, and let $r(T)$ be the spectral radius of its complexification
$T_\C:\cX+i\cX \to \cX+i\cX$. Then $r(T)$ is an element of the spectrum of $T$.
Furthermore, for any $\epsilon>0$, there exists an approximate eigenvector
$x$ of $T$ such that
\[x \in \cK, \qquad \|x\|=1, \qquad \|T(x)-r(T)x\|<\epsilon.\]
\end{lem}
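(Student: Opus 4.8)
The plan is to establish the two claims in sequence: first that $r(T)$ lies in the spectrum $\sigma(T)$ of $T$, and then the existence of the approximate eigenvector in the cone. For the spectral claim I would appeal to the classical infinite-dimensional Krein–Rutman-type fact that a bounded positive operator on a real ordered Banach space with a proper, generating, normal cone has $r(T) \in \sigma(T)$. The standard proof goes via the resolvent: for $\lambda > r(T)$, the Neumann series $R(\lambda,T) = \sum_{k \geq 0} \lambda^{-k-1} T^k$ converges and maps $\cK$ into $\cK$ (each $T^k$ is positive, and $\cK$ is closed), so $R(\lambda,T)$ is a positive operator. If $r(T)$ were in the resolvent set, then since $\sigma(T)$ is compact and nonempty (complexify; $\cX+i\cX$ is a genuine complex Banach space), the resolvent $R(\cdot,T)$ would be analytic in a neighborhood of $r(T)$; but a theorem of Pringsheim-type for resolvents of positive operators forces a singularity at the spectral radius along the positive real axis. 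Concretely: normality of $\cK$ gives $\|T^k x\| \leq C\|T^k\| \cdot(\text{bound in terms of order})$ and one compares $\|R(\lambda,T)x\|$ for $x \in \cK$, $x \neq 0$, with $\|R(\lambda,T)\|$; the positivity forces $\|R(\lambda,T)\| \sim (\lambda - r(T))^{-1}$ as $\lambda \downarrow r(T)$, contradicting analyticity at $r(T)$. I would cite this from a standard reference (e.g. Schaefer's book on Banach lattices / positive operators, or Meyer-Nieberg) rather than reproving it.

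For the approximate eigenvector, the key point is that $r(T) \in \sigma(T)$ means $r(T)I - T$ is not boundedly invertible on $\cX + i\cX$, so either it is not bounded below or it does not have dense range. I would like to extract an approximate eigenvector \emph{in the cone}, and this is where the positivity must be used rather than just the spectral membership. Here is the approach: scale so $r(T) = 1$ (the case $r(T) = 0$ is trivial since then $T$ is quasinilpotent and $Tx = 0$ for suitable $x \in \cK$, or one argues directly). For $\lambda > 1$ set $x_\lambda = (\lambda - 1) R(\lambda, T) u$ for a fixed $u \in \cK$ with $R(\lambda,T)u \neq 0$ (such $u$ exists because $\cK$ is generating); then $x_\lambda \in \cK$ since $R(\lambda,T)$ is positive, and a direct computation gives
\[
T x_\lambda - x_\lambda = (\lambda-1)\bigl(T R(\lambda,T) - R(\lambda,T)\bigr)u = (\lambda-1)\bigl((\lambda R(\lambda,T) - I) - R(\lambda,T)\bigr)u = (\lambda-1)\bigl((\lambda-1)R(\lambda,T)u - u\bigr).
\]
Normalizing $y_\lambda = x_\lambda / \|x_\lambda\|$, this says $\|Ty_\lambda - y_\lambda\| = (\lambda-1)\,\|x_\lambda - u\| / \|x_\lambda\|$. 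The heart of the matter is to show this quantity tends to $0$ along a sequence $\lambda \downarrow 1$: this is exactly the statement that $\|R(\lambda,T)\| \to \infty$ faster than $(\lambda-1)^{-1}$ is NOT needed, but rather that $\|x_\lambda\| = (\lambda-1)\|R(\lambda,T)u\|$ does not go to $0$, equivalently that $\limsup_{\lambda \downarrow 1} (\lambda-1)\|R(\lambda,T)u\| > 0$ for \emph{some} $u \in \cK$. This follows from the Pringsheim singularity: if $(\lambda-1)\|R(\lambda,T)u\| \to 0$ for every $u \in \cK$, then by the generating and normality hypotheses $(\lambda-1)\|R(\lambda,T)\| \to 0$, so $R(\lambda,T)$ would extend analytically across $\lambda = 1$, contradicting $1 \in \sigma(T)$. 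Picking $u$ realizing the $\limsup$ and a sequence $\lambda_k \downarrow 1$ along which $(\lambda_k - 1)\|R(\lambda_k,T)u\| \geq c > 0$, while $(\lambda_k-1)\|u\| \to 0$, gives $\|Ty_{\lambda_k} - y_{\lambda_k}\| \leq (\lambda_k-1)(\|x_{\lambda_k}\| + \|u\|)/\|x_{\lambda_k}\| = (\lambda_k - 1) + (\lambda_k-1)\|u\|/\|x_{\lambda_k}\| \to 0$. Choosing $k$ large enough that this is $< \epsilon$ and setting $x = y_{\lambda_k}$ finishes the proof, since $y_{\lambda_k} \in \cK$, $\|y_{\lambda_k}\| = 1$.

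The main obstacle I anticipate is the careful deployment of the normality and generating properties of $\cK$ to pass from "$(\lambda-1)\|R(\lambda,T)u\| \to 0$ for all $u \in \cK$" to "$(\lambda-1)\|R(\lambda,T)\|_{\op} \to 0$": one writes an arbitrary $x \in \cX$ as $x = u - v$ with $u,v \in \cK$, but controlling $\|u\|, \|v\|$ in terms of $\|x\|$ requires the decomposition to be done with uniform bounds, which is where normality (the constant $C$) enters, together with a uniform boundedness / Baire category argument to upgrade the pointwise-on-$\cK$ statement to a norm statement. This is the one place where some genuine care is needed; everything else is either a citation (Krein–Rutman spectral radius) or the short resolvent computation above. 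I should also double-check the degenerate case $r(T) = 0$ separately, where the argument via $R(\lambda,T)$ at $\lambda \downarrow 0$ does not directly apply and one instead notes that $0 \in \sigma(T)$ with $T$ positive on a generating cone forces the existence of $x \in \cK$, $\|x\| = 1$, with $\|Tx\|$ arbitrarily small (otherwise $T$ would be bounded below on $\cK$, hence on $\cX$ by normality/generation, contradicting $0 \in \sigma(T)$).
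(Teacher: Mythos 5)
The paper itself does not prove this lemma; it simply cites \cite[Lemma~3.5]{gluck2021stability}. Your proposal attempts a from-scratch argument, and your main device --- computing $Tx_\lambda - r x_\lambda = (\lambda - r)(x_\lambda - u)$ for $x_\lambda = (\lambda - r)R(\lambda,T)u$ with $u \in \cK$, $\lambda \downarrow r$, and normalizing --- is indeed the standard route and almost certainly mirrors the cited proof. However, the step where you secure a \emph{fixed} $u \in \cK$ with $\limsup_{\lambda \downarrow r}(\lambda - r)\|R(\lambda,T)u\| > 0$ does not go through as written. You argue by contradiction: if $(\lambda - r)\|R(\lambda,T)u\| \to 0$ for every $u \in \cK$, then (generating + normal + Baire) $(\lambda - r)\|R(\lambda,T)\| \to 0$, contradicting $r \in \sigma(T)$. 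But strong convergence to $0$ on a generating cone does \emph{not} upgrade to operator-norm convergence to $0$; Banach--Steinhaus gives only a uniform bound, not decay. And even granting $(\lambda - r)\|R(\lambda,T)\| \to 0$, that alone would not force analyticity at $\lambda = r$ --- the resolvent could in principle still diverge sub-linearly. So the contradiction you invoke is not established.

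The fix is simpler than the mechanism you propose, and removes the need for a fixed $u$: from $r \in \sigma(T)$ and the elementary bound $\|R(\lambda,T)\| \geq \operatorname{dist}(\lambda, \sigma(T))^{-1}$ one gets $(\lambda - r)\|R(\lambda,T)\| \geq 1$ for every $\lambda > r$. For each such $\lambda$, pick a unit vector $w_\lambda$ achieving (half) the norm, decompose $w_\lambda = u_\lambda - v_\lambda$ with $u_\lambda, v_\lambda \in \cK$ and $\|u_\lambda\|, \|v_\lambda\| \leq C'$ (the uniform decomposition constant, which exists by generating + closed + normal via an open mapping/Baire argument, exactly as you flagged). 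Then at least one of $(\lambda - r)\|R(\lambda,T)u_\lambda\|$, $(\lambda - r)\|R(\lambda,T)v_\lambda\|$ is $\geq 1/4$; say the former. Setting $x_\lambda = (\lambda - r)R(\lambda,T)u_\lambda \in \cK$ (positivity of the Neumann series) with $\|x_\lambda\| \geq 1/4$ and $y_\lambda = x_\lambda / \|x_\lambda\|$, your own computation gives $\|Ty_\lambda - r y_\lambda\| \leq (\lambda - r)\bigl(1 + \|u_\lambda\|/\|x_\lambda\|\bigr) \leq (\lambda - r)(1 + 4C') \to 0$. Note that this handles $r(T) = 0$ with no separate argument (replace $\lambda - r$ by $\lambda$), so your side discussion of the degenerate case --- whose ``$T$ bounded below on $\cK$ implies bounded below on $\cX$'' step is itself not justified by normality alone --- can be dropped.
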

\begin{proof}
See \cite[Lemma 3.5]{gluck2021stability}.
\end{proof}

\begin{lem}\label{lemma:linearmap2}
Under Assumption \ref{assump:general}, consider the linear operator
$\cL_2:\cX \to \cX$ given by
\[\cL_2(\a)=\a-\frac{1}{n}\sum_{i=1}^n (\r_0)_{ii}\x\a\x(\r_0)_{ii}^*.\]
Then $\cL_2$ is invertible, and for any $\a \in \cX$ and $p \in [1,\infty]$
(where $\|\cdot\|_\infty=\|\cdot\|_\op$),
\begin{equation}\label{eq:Linvbound}
\|\cL_2^{-1}(\a)\|_p \leq \frac{2\gamma^4(\upsilon+\gamma^2\delta^{-1})^2}{\delta^2}\|\a\|_p.
\end{equation}
\end{lem}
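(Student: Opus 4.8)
The plan is to follow the strategy outlined in the paragraphs preceding the lemma: exploit the positivity-preserving structure of the auxiliary map and an approximate Perron--Frobenius argument in the $L^1$--$L^\infty$ duality, and then interpolate. Write $\cL_2 = \Id - \cS$ where $\cS(\a) = \frac1n\sum_i (\r_0)_{ii}\x\a\x(\r_0)_{ii}^*$. The key structural facts I would establish first are: (i) $\cS$ maps the positive cone $\{\a \in \cX : \a \geq 0\}$ into itself, hence so does each power $\cS^k$; (ii) $\cS$ is bounded on every $L^p$ with $\|\cS\|_{L^p \to L^p} \leq \gamma^4\|\r_0\|_\op^2 \leq \gamma^4\delta^{-2}$, using H\"older (Lemma \ref{Holder's inequality}), $L^p$-contractivity of $n^{-1}\Tr \otimes 1$, and $\|\r_0\|_\op \leq \delta^{-1}$ from $\Im\z \geq \delta$ and Lemma \ref{im bound inverse}; (iii) the spectral radius of $\cS$ acting on $L^p$ is strictly less than $1$ — this is the heart of the matter. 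Granting (iii) with a quantitative gap, the Neumann series $\cL_2^{-1} = \sum_{k\geq 0}\cS^k$ converges and one reads off the $L^p$ bound.

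For the quantitative spectral radius bound, I would first get it for $p=\infty$ (equivalently a bound on $\|\cL_2^{-1}\|_{L^\infty \to L^\infty}$) and separately for $p=1$, then invoke Riesz--Thorin for the intermediate $p$. The $L^\infty$ case: let $r = r(\cS)$ on $\cX$ (with its operator norm); since $\cS$ preserves the normal, generating, proper positive cone, Lemma \ref{lemma:approxperronfrobenius} gives, for any $\epsilon>0$, an element $\a_\epsilon \geq 0$ with $\|\a_\epsilon\|_\op = 1$ and $\|\cS(\a_\epsilon) - r\a_\epsilon\|_\op < \epsilon$. The idea is to test this against $\Im$ of the fixed-point equation (\ref{eq:generalfixedpointreduced}): taking imaginary parts of $\m_0 = \frac1n\sum_i(-\z_i - \x\m_0\x)^{-1}$ and using $\Im(\cdot)^{-1} = -(\cdot)^{-1}(\Im\,\cdot)(\cdot)^{-*}$ gives $\Im\m_0 = \frac1n\sum_i (\r_0)_{ii}(\Im\z_i + \x(\Im\m_0)\x)(\r_0)_{ii}^*$, i.e. $\Im\m_0 = \cS(\Im\m_0) + \frac1n\sum_i(\r_0)_{ii}(\Im\z_i)(\r_0)_{ii}^*$. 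Since $\Im\z_i \geq \delta$ the last term is $\geq \delta \cdot \frac1n\sum_i(\r_0)_{ii}(\r_0)_{ii}^* \geq \delta\gamma^{-2}\cdot(\text{something controlling }\Im\m_0$ from below$)$, and an upper bound $\|\Im\m_0\|_\op \leq \|\m_0\|_\op \leq \delta^{-1}$ plus $\Im\m_0 \geq c$ for an explicit $c>0$ (from $\m_0 = \frac1n\sum_i(-\z_i-\x\m_0\x)^{-1}$ and Lemma \ref{im bound inverse} with the denominator's imaginary part bounded) pin $\Im\m_0$ between two positive multiples of $1_\cX$. Pairing $\Im\m_0 = \cS(\Im\m_0) + (\text{positive, }\geq \epsilon_0 1)$ with the approximate eigenvector via the trace $\phi$ (which is the $L^1$--$L^\infty$ pairing, and is positive), using $\phi(\cS(\Im\m_0)\a_\epsilon) = \phi(\Im\m_0 \cS^*(\a_\epsilon))$ where $\cS^*$ is the adjoint of $\cS$ under $\phi$ — here I would instead run the argument with $\cS^*$ directly, since $\cS^*$ is also positivity-preserving and has the same spectral radius — yields $\phi(\Im\m_0\,\a_\epsilon) \geq r\,\phi(\Im\m_0\,\a_\epsilon) - \epsilon\|\Im\m_0\|_\op + \epsilon_0\phi(\a_\epsilon)$, and rearranging with the two-sided bounds on $\Im\m_0$ and $\phi(\a_\epsilon) \geq \|\a_\epsilon\|_\op/C$ for some constant (normality of the cone in the trace pairing) gives $(1-r) \geq c'$ for an explicit $c' \asymp \delta^4/(\gamma^4(\upsilon+\gamma^2\delta^{-1})^2)$ once $\epsilon\to 0$. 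The $L^1$ case is the dual version: apply the same argument to $\cS^*$ acting on $\cX$ equipped with $\|\cdot\|_1$, whose cone is $\{\a \geq 0\}$ again, now normal and generating for the $L^1$-norm, producing an approximate eigenvector there and pairing against $\Im\m_0$ from the other side.

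Having bounded $1 - r(\cS|_{L^\infty})$ and $1 - r(\cS|_{L^1})$ below by the same explicit constant, I would conclude: for $p \in \{1,\infty\}$, $\|\cL_2^{-1}\|_{L^p\to L^p} = \|\sum_k\cS^k\|_{L^p\to L^p} \leq \sum_k r(\cS|_{L^p})^k + (\text{correction})$ — more carefully, since spectral radius bounds don't immediately bound the resolvent $\|\cL_2^{-1}\|$, I would instead argue directly that $\cL_2$ is bounded below, i.e. $\|\cL_2(\a)\|_p \geq c''\|\a\|_p$, by a Perron--Frobenius-type positivity estimate (for $\a \geq 0$ one has $\cL_2(\a) \geq \a - \cS(\a)$ and testing against $\a_\epsilon$ controls $\phi((\a-\cS(\a))\a_\epsilon)$ from below; the general case follows by writing $\a = \a_+ - \a_-$ via the generating property of the cone and using normality). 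Combined with injectivity and the Neumann-series surjectivity for the explicit bound, this gives (\ref{eq:Linvbound}) at $p=1,\infty$, and Riesz--Thorin (Lemma, cited in the text around the Riesz--Thorin interpolation) transfers it to all $p\in[1,\infty]$, with the stated constant $2\gamma^4(\upsilon+\gamma^2\delta^{-1})^2/\delta^2$ coming from tracking the dependence on $\gamma,\upsilon,\delta$ through the two-sided bounds on $\Im\m_0$ and on $\|\r_0\|_\op$.

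The main obstacle I expect is the passage from the spectral-gap estimate to a genuine norm bound on $\cL_2^{-1}$ in the non-compact, infinite-dimensional setting: without an exact Perron--Frobenius eigenvector, $r(\cS) < 1$ alone does not control $\|(\Id-\cS)^{-1}\|$, so the argument must instead produce a direct lower bound $\|\cL_2(\a)\|_p \gtrsim \|\a\|_p$ uniformly, which requires carefully combining the approximate-eigenvector pairing, the two-sided positivity bounds on $\Im\m_0$, the normality constants of the positive cone in both $L^1$ and $L^\infty$, and the decomposition of a general $\a$ into positive parts — all while keeping the constants explicit enough to match the stated bound. A secondary technical point is verifying that $\{\a\in\cX:\a\geq 0\}$ is closed, proper, generating, and normal in both the operator norm and the $L^1$-norm (normality in $L^1$ uses $\|\a\|_1 \leq \|\a+\b\|_1$ for $\a,\b\geq 0$, which holds since $\phi$ is positive and $|\a| = \a \leq \a+\b$).
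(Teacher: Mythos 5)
Your high‐level strategy coincides with the paper's: write $\cL_2 = \Id - \cS$, exploit that $\cS$ and its trace‐adjoint preserve the positive cone, produce an approximate Perron--Frobenius eigenvector via Lemma~\ref{lemma:approxperronfrobenius}, test it against the identity $\Im\m_0 = \cS(\Im\m_0) + \su$ with $\su = \frac1n\sum_i(\r_0)_{ii}[\Im\z_i](\r_0)_{ii}^*\geq c(\gamma,\upsilon,\delta)$ to get a spectral gap, convert that to a norm bound on $\cL_2^{-1}$ at $p=1$ and $p=\infty$ by positivity/monotonicity, and interpolate. You also correctly flag that $r(\cS)<1$ alone does not control $\|\cL_2^{-1}\|$; the paper closes that via the Neumann series (Gelfand) to get positivity-preservation of $\cL_2^{-1}$, then bounds $\b \leq \|\cL_2(\b)\|_\op c^{-1}\su$ and pushes through the monotone $\cL_2^{-1}$ with the explicit identity $\cL_2^{-1}[\su]=\Im\m_0$.

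However, there is a concrete gap in your $L^\infty$ step. You take the approximate eigenvector $\a_\epsilon$ for $\cS^*$ in the Banach space $(\cX_\sa,\|\cdot\|_\op)$, normalized $\|\a_\epsilon\|_\op=1$, and then assert ``\emph{$\phi(\a_\epsilon)\geq \|\a_\epsilon\|_\op/C$ by normality of the cone in the trace pairing}.'' This inequality is false: normality of the positive cone in $\|\cdot\|_\op$ only says $\|\a\|_\op \leq \|\a+\b\|_\op$ for $\a,\b\geq 0$, which has nothing to do with a lower bound on $\phi(\a)$, and indeed (already in $\C^{n\times n}$ with $\phi=n^{-1}\Tr$) $\a=E_{11}$ has $\|\a\|_\op=1$, $\phi(\a)=1/n$. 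Without such a lower bound, your derived inequality $(1-r)\delta^{-1}\phi(\a_\epsilon) \geq c\,\phi(\a_\epsilon) - \epsilon\delta^{-1}$ is vacuous when $\epsilon\to 0$, because $\phi(\a_\epsilon)$ may tend to zero at least as fast as $\epsilon$. This is exactly the point the paper flags before the proof (``the approximation error must be controlled in $L^1$''): the fix is to invoke Lemma~\ref{lemma:approxperronfrobenius} in $(L^1_\sa,\|\cdot\|_1)$, applied to $\cF = \cS^*$ acting on $L^1$, so that $\|\sv\|_1 = \phi(\sv) = 1$ and the trace lower bound $\phi(\sv\su) \geq c\,\phi(\sv)=c$ is immediate; the spectral radius of $\cF$ on $L^1$ equals that of its adjoint $\cF'=\cS$ on $L^\infty$, which is what you need. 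Secondarily, for the $p=1$ case your ``pairing from the other side'' needs an extra ingredient that you don't supply: the fixed-point identity paired there is not $\cL_2[\Im\m_0]=\su$ but its conjugate, $(\Id-\cG')[\x(\Im\m_0)\x]=\w$ with $\w=\frac1n\sum_i\x(\r_0)_{ii}^*[\Im\z_i](\r_0)_{ii}\x$, obtained by taking imaginary parts of the adjoint fixed-point equation for $\m_0^*$; this is where the extra $\gamma^4$ in the final constant comes from.
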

\begin{proof}
Denote $L^p \equiv L^p(\cX)$ for the non-commutative $L^p$-spaces associated to
$\cX$ (Appendix \ref{appendix:background})
and define $\cF:L^1 \to L^1$ and $\cF':L^\infty \to L^\infty$ by
\[\cF(\b)=\frac{1}{n}\sum_{i=1}^n \x(\r_0)_{ii}^*\b(\r_0)_{ii}\x,
\qquad \cF'(\a)=\frac{1}{n}\sum_{i=1}^n (\r_0)_{ii}\x\a\x(\r_0)_{ii}^*.\]
Here $\cF$ and $\cF'$ are bounded linear operators on the Banach spaces
$(L^1,\|\cdot\|_1)$ and $(L^\infty,\|\cdot\|_\op)$,
by H\"older's inequality and the bounds $\|(\r_0)_{ii}\|_\op \leq \|\r_0\|_\op \leq
\delta^{-1}$ and $\|\x\|_\op \leq \gamma$.
Identifying the dual $(L^1)^*$ with $L^\infty$ via the isometry
$\a \in L^\infty \mapsto \ell_\a \in (L^1)^*$ where $\ell_\a(\b)=\phi(\b\a)$
(Lemma \ref{Holder's inequality}), for any $\a \in L^\infty$ and $\b \in L^1$
we have $\ell_\a(\cF(\b))=\phi(\cF(\b)\a)=\phi(b\cF'(\a))
=\ell_{\cF'(\a)}(\b)$, so $\cF'$ is the Banach space adjoint of $\cF$.

Let $r(\cF)$ be the spectral radius of $\cF$ as an operator on $L^1$. Note that
$\cF$ restricts to a bounded linear operator on the real Banach space of
self-adjoint elements $L^1_\text{sa}=\{\a \in L^1:\a=\a^*\}$ (whose
complexification is $\cF$ itself), which furthermore
preserves the positive cone $L^1_\text{pos}=\{\a \in L^1_\text{sa}:\a \geq 0\}$.
Any $\a \in L^1_\text{sa}$ may be decomposed as $\a=\a_+-\a_-$ with $\a_+,\a_-
\in L^1_\text{pos}$ via $\a_+=(|\a|+\a)/2$ and $\a_-=(|\a|-\a)/2$,
and we have $\|\a\|_1=\tau(\a) \leq \tau(\a+\b)=\|\a+\b\|_1$ for all
$\a,\b \in L^1_\text{pos}$ by positivity of the trace.
Thus $L^1_\text{pos}$ is a proper, generating, and
normal cone in $L^1_\text{sa}$, so Lemma \ref{lemma:approxperronfrobenius}
ensures the existence of some $\sv \in L^1$ satisfying
\begin{equation}
\sv=\sv^*, \qquad \sv \geq 0,
\qquad \|\sv\|_1=1, \qquad \cF(\sv)=r(\cF)\sv+\Delta \text{ where }
\|\Delta\|_1<\epsilon.
\end{equation}

Lemma \ref{lem:uniquefixedpoint} shows that $\m_0$ satisfies the
fixed-point equation $\m_0=n^{-1}\sum_{i=1}^n ({-}\z_i-\x\m_0\x)^{-1}
=n^{-1}\sum_{i=1}^n (\r_0)_{ii}$. Taking imaginary parts,
this gives $\Im \m_0=n^{-1}\sum_{i=1}^n (\r_0)_{ii}(\Im \z_i+\x[\Im
\m_0]\x)(\r_0)_{ii}^*$,
which may be rearranged as
\begin{equation}\label{eq:Lfixedpoint}
\cL_2[\Im \m_0]=(\Id-\cF')[\Im \m_0]=\su, \qquad
\text{ for } \su=\frac{1}{n}\sum_{i=1}^n (\r_0)_{ii}[\Im \z_i](\r_0)_{ii}^*.
\end{equation}
Multiplying on the left by the above approximate eigenvector $\sv$ and taking
the trace,
\[\phi(\sv\su)=\phi(\sv \Im \m_0)-\phi(\sv\cF'[\Im \m_0])
=\phi(\sv \Im \m_0)-\phi(\cF[\sv]\Im \m_0)
=(1-r(\cF))\phi(\sv\Im \m_0)-\phi(\Delta\Im \m_0).\]
Thus
\[1-r(\cF)=\frac{\phi(\sv\su)+\phi(\Delta\Im \m_0)}{\phi(\sv\Im \m_0)}.\]
By H\"older's inequality and the bound $\|\Im \m_0\|_\op \leq \|\m_0\|_\op \leq
\delta^{-1}$, we have
\begin{equation}\label{eq:specradbound}
0 \leq \phi(\sv\Im \m_0) \leq \delta^{-1}, \qquad
|\phi(\Delta\Im \m_0)| \leq \delta^{-1}\epsilon.
\end{equation}
We have also $\Im \z_i \geq \delta$
and $\|(\r_0)_{ii}^{-1}\|_\op=\|\z_i+\x\m_0\x\|_\op
\leq \|\z\|_\op+\|\x\|_\op^2\|\m_0\|_\op \leq \upsilon+\gamma^2\delta^{-1}$,
implying that
\begin{equation}\label{eq:ulowerbound}
\su \geq \delta \cdot \frac{1}{n}\sum_{i=1}^n
(\r_0)_{ii}(\r_0)_{ii}^* \geq c(\gamma,\upsilon,\delta)
\qquad \text{ for }
c(\gamma,\upsilon,\delta)=\delta(\upsilon+\gamma^2\delta^{-1})^{-2}.
\end{equation}
Then by positivity of $\sv$, positivity of the trace, and the normalization
$\|\sv\|_1=1$, we have
$\phi(\sv\su)=\phi(\sv^{1/2}\su\sv^{1/2})
\geq c(\gamma,\upsilon,\delta)\phi(\sv)=c(\gamma,\upsilon,\delta)$.
Applying these bounds above and then taking $\epsilon \to 0$, we obtain
\begin{equation}\label{eq:1minusr}
1-r(\cF) \geq \delta c(\gamma,\upsilon,\delta)>0.
\end{equation}

Since $\cF'$ is the adjoint of $\cF$ and thus shares its spectrum,
this shows $r(\cF')=r(\cF)<1$. Then
$\cL_2=\Id-\cF'$ is invertible as a bounded linear operator on $L^\infty$, and
its inverse has the Neumann series representation
\[\cL_2^{-1}=\sum_{k=0}^\infty (\cF')^k\]
which is convergent in the induced operator norm $\|\cdot\|_{L^\infty \to
L^\infty}$ (by Gelfand's formula $r(\cF')=\lim_{k \to \infty} \|{\cF'}^k\|_{L^\infty \to L^\infty}^{1/k}$). Thus, for
any $\a \in L^\infty$, we have $\cL_2^{-1}(\a)=\sum_{k=0}^\infty (\cF')^k[\a]$
which is convergent under $\|\cdot\|_\op$. We note that $(\cF')^k$ is also
positivity-preserving, i.e.\ if $\a \geq 0$, then $(\cF')^k[\a] \geq 0$. 
Thus if $\a \geq 0$, then $\cL_2^{-1}(\a) \geq 0$ since
the positive cone is closed under $\|\cdot\|_\op$. To summarize, we have
shown that $\cL_2:L^\infty \to L^\infty$ is invertible, and
\begin{equation}\label{eq:Linvpositive}
\a \geq 0 \Longrightarrow \cL_2^{-1}(\a) \geq 0.
\end{equation}
Now take any $\b \in L^\infty$ self-adjoint.
Applying (\ref{eq:ulowerbound}), we have
\[\cL_2(\b) \leq \|\cL_2(\b)\|_\op \cdot 1_\cX \leq \|\cL_2(\b)\|_\op \cdot
c(\gamma,\upsilon,\delta)^{-1} \cdot \su\]
so the monotonicity of $\cL_2^{-1}$ in (\ref{eq:Linvpositive}) and explicit form
of $\cL_2^{-1}[\su]$ in (\ref{eq:Lfixedpoint}) imply
\[\b \leq \|\cL_2(\b)\|_\op \cdot c(\gamma,\upsilon,\delta)^{-1} \cL_2^{-1}[\su]
=\|\cL_2(\b)\|_\op \cdot c(\gamma,\upsilon,\delta)^{-1} \Im \m_0.\]
Similarly $\b \geq {-}\|\cL_2(\b)\|_\op \cdot c(K,\upsilon,\delta)^{-1} \Im \m_0$.
Applying again $\|\Im \m_0\|_\op \leq \delta^{-1}$, this shows
for every $\b \in L^\infty$ self-adjoint that
$\|\b\|_\op \leq \delta^{-1}c(K,\upsilon,\delta)^{-1} \|\cL_2(\b)\|_\op$.
Then for any (non-self-adjoint) $\a \in L^\infty$,
noting that $(\|\Re \a\|_\op+\|\Im \a\|_\op)/2
\leq \|\a\|_\op \leq \|\Re \a\|_\op+\|\Im \a\|_\op$ and that $\Re
\cL_2(\a)=\cL_2(\Re \a)$ and $\Im \cL_2(\a)=\cL_2(\Im \a)$, this implies
\begin{equation}\label{eq:Linftystability}
\|\a\|_\op \leq 2\delta^{-1}c(\gamma,\delta,\upsilon)^{-1}\|\cL_2(\a)\|_\op
=\frac{2(\upsilon+\gamma^2\delta^{-1})^2}{\delta^2}\|\cL_2(\a)\|_\op,
\end{equation}
which implies (\ref{eq:Linvbound}) for $p=\infty$.

Next, we show (\ref{eq:Linvbound}) for $p=1$ using a dual argument: As $L^1$
is not reflexive, we reverse the roles of
$\cF,\cF'$ and define the bounded linear operators $\cG:L^1 \to L^1$
and $\cG':L^\infty \to L^\infty$ by
\[\cG(\b)=\frac{1}{n}\sum_{i=1}^n (\r_0)_{ii}\x\b\x(\r_0)_{ii}^*,
\qquad \cG'(\a)=\frac{1}{n}\sum_{i=1}^n \x(\r_0)_{ii}^*\a(\r_0)_{ii}\x.\]
Then again $\cG'$ is the adjoint of $\cG$. For any $\epsilon>0$, Lemma
\ref{lemma:approxperronfrobenius} shows there exists $\sv \in L^1$ satisfying
\[\sv=\sv^*, \qquad \sv \geq 0, \qquad \|\sv\|_1=1,
\qquad \cG(\sv)=r(\cG)\sv+\Delta \text{ where } \|\Delta\|_1<\epsilon.\]
Now taking imaginary parts of the fixed-point equation
$\m_0^*=n^{-1}\sum_{i=1}^n ({-}\z_i^*-\x\m_0^*\x)^{-1}$, we have
$\Im(\m_0^*)=n^{-1}\sum_{i=1}^n (\r_0)_{ii}^*(\Im (\z_i^*)+\x[\Im(\m_0^*)]\x)
(\r_0)_{ii}$.
Negating and applying $\Im(\a^*)={-}\Im \a$, this gives
\[\Im \m_0=\x^{-1}\cG'(\x[\Im \m_0]\x)\x^{-1}+\frac{1}{n}\sum_{i=1}^n
(\r_0)_{ii}^*[\Im \z_i](\r_0)_{ii}.\]
Thus, in place of (\ref{eq:Lfixedpoint}) we have
\begin{equation}\label{eq:Gfixedpoint}
(\Id-\cG')[\x(\Im \m_0)\x]=\w,
\qquad \text{ for } \w=\frac{1}{n}\sum_{i=1}^n \x(\r_0)_{ii}^*[\Im
\z_i](\r_0)_{ii}\x.
\end{equation}
In place of (\ref{eq:specradbound}) and (\ref{eq:ulowerbound}), we may apply
$\phi(\sv\x[\Im \m_0]\x) \leq \gamma^2\delta^{-1}$ and
$\w \geq \gamma^{-2}c(\gamma,\upsilon,\delta)$,
where the first inequality uses $\|\x\|_\op \leq \gamma$ and the
second uses $\|\x^{-1}\|_\op \leq \gamma$ and
$c(\gamma,\upsilon,\delta)$ as defined in (\ref{eq:ulowerbound}).
Then, multiplying (\ref{eq:Gfixedpoint}) by $\sv$, taking the trace, and then
taking the limit $\epsilon \to 0$, we obtain similarly to (\ref{eq:1minusr})
that $1-r(\cG) \geq \delta \gamma^{-4}c(\gamma,\upsilon,\delta)>0$.
This implies that $\Id-\cG':L^\infty \to L^\infty$
is invertible with positivity-preserving inverse,
and repeating the preceding arguments gives, for any $\a \in L^\infty$,
\[\|\a\|_\op \leq
\frac{2\gamma^4(\upsilon+\gamma^2\delta^{-1})^2}{\delta^2}\|(\Id-\cG')[\a]\|_\op,\]
Since $(\Id-\cG')^{-1}$ is the adjoint of $\cL_2^{-1}=(\Id-\cG)^{-1}:L^1
\to L^1$, we have
$\|(\Id-\cG')^{-1}\|_{L^\infty \to L^\infty}=\|\cL_2^{-1}\|_{L^1 \to L^1}$.
Thus, this shows also for any $\a \in L^1$ that
\begin{equation}\label{eq:L1stability}
\|\a\|_{L^1} \leq \frac{2\gamma^4(\upsilon+\gamma^2\delta^{-1})^2}{\delta^2}
\|\cL_2(\a)\|_{L^1},
\end{equation}
which is the desired result (\ref{eq:Linvbound}) for $p=1$.

Finally, the result (\ref{eq:Linvbound}) for general $p \in [1,\infty]$ follows
from the bounds for $\|\cL_2^{-1}\|_{L^\infty \to L^\infty}$ and
$\|\cL_2^{-1}\|_{L^1 \to L^1}$ already shown, and the Riesz-Thorin interpolation
(Lemma \ref{lemma:rieszthorin}).
\end{proof}

\subsection{Weak estimates in operator norm}\label{subsec:weakestimate}

\begin{lem}\label{lemma:weakopnormprelim}
Under Assumption \ref{assump:general}, there exists a constant $\alpha \in
(0,1/2)$
depending only on $\gamma,\upsilon,\delta$ such that for any $l \geq 1$, $D>0$,
and all $n \geq n_0(l,K,\gamma,\upsilon,\delta,D)$,
with probability at least $1-n^{-D}$,
\begin{align}
\sup_{S \subset \{1,\ldots,n\}:|S| \leq l}\;\sup_{i \notin S}
\pnorm{\sum_r^{(iS)}\left(|h_{ir}|^2-\frac{1}{n}\right)R_{rr}^{(iS)} + 
\sum_{r\neq s}^{(iS)}h_{ir}R_{rs}^{(iS)}h_{si}}{\op} < n^{-\alpha},
\label{eq:opnormest1}\\
\sup_{S \subset \{1,\ldots,n\}:|S| \leq l}\;
\sup_{i,j \notin S:i \neq j}
\pnorm{\sum_r^{(iS)}h_{ir}R_{rj}^{(iS)}}{\op} < n^{-\alpha}.
\label{eq:opnormest2}
\end{align}
\end{lem}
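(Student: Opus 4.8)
The plan is a three-step argument: introduce an auxiliary spectral parameter $z$, establish the estimate in the regime of large $|z|$ (where the resolvent expands in a convergent Neumann series), and then transfer it to the relevant value $z=0$ using the quantitative maximum modulus principle of Lemma~\ref{lemma:maximummodulus}. For $z \in \C$ with $\Im z > -\delta$, put $R^{(iS)}(z) = (H^{(iS)} \otimes \x - \z - z)^{-1}$; since $\Im(H^{(iS)} \otimes \x - \z - z) \leq -(\delta + \Im z)$, this is well defined and holomorphic in $z$ with $\|R^{(iS)}(z)\|_\op \leq (\delta + \Im z)^{-1}$ by Lemma~\ref{im bound inverse}. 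Writing $\cE_i = \E[\,\cdot \mid \mathscr{G}_i]$ for the partial expectation over row/column $i$ of $H$ and $\cQ_i = \Id - \cE_i$, and using $h_{si} = \overline{h_{ir}}$ when $s = r$ together with $\cE_i[h_{ir}h_{si}] = n^{-1}\delta_{rs}$, the two quantities in (\ref{eq:opnormest1})--(\ref{eq:opnormest2}) are precisely $\Phi^{(1)}(0)$ and $\Phi^{(2)}(0)$, where
\[
\Phi^{(1)}(z) = \cQ_i\Big[\,\sum_{r,s}^{(iS)} h_{ir} R_{rs}^{(iS)}(z) h_{si}\Big], \qquad
\Phi^{(2)}(z) = \sum_{r}^{(iS)} h_{ir} R_{rj}^{(iS)}(z).
\]
Since $R^{(iS)}(z)$ is $\mathscr{G}_i$-measurable, one has the trivial bound $\|\Phi^{(1)}(z)\|_\op, \|\Phi^{(2)}(z)\|_\op \leq (\|\bh_i\|_2^2 + 1)(\delta + \Im z)^{-1}$ for all such $z$, where $\bh_i = (h_{ir})_r$ and $\|\bh_i\|_2^2 \prec 1$.

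For the large-$|z|$ estimate, work on the event $\{\|H\|_\op \leq 3\}$, of probability $\geq 1 - n^{-D}$ by a standard truncation-and-moment argument. Set $\rho = \max(3\gamma, \upsilon)$ and $W_0 = 2(K+1)\rho$. For $|z| \geq W_0$ the series $R^{(iS)}(z) = -\sum_{m \geq 0} z^{-m-1}(H^{(iS)} \otimes \x - \z)^m$ converges, and grouping its $m$-th term over operator words $\w = \ell_1 \cdots \ell_m$ with letters $\ell_t \in \{\x, \x_1, \dots, \x_K\}$ gives $(H^{(iS)} \otimes \x - \z)^m = \sum_{|\w| = m} N_\w(H) \otimes \w$, where $N_\w(H)$ is the matrix product of $H^{(iS)}$ (for each letter $\x$) and $-D_k$ (for each letter $\x_k$). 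On this event $\|N_\w(H)\|_\op \|\w\|_\op \leq \rho^{|\w|}$ (using $\|H^{(iS)}\|_\op \leq 3$, $\|\x\|_\op \leq \gamma$, and $\sum_k \|D_k\|_\op \|\x_k\|_\op \leq \upsilon$). Hence $\Phi^{(1)}(z) = -\sum_\w z^{-|\w|-1} c_\w(H)\,\w$ and $\Phi^{(2)}(z) = -\sum_\w z^{-|\w|-1} d_\w(H)\,\w$ with scalar coefficients $c_\w(H) = \cQ_i[\sum_{r,s}^{(iS)} h_{ir}[N_\w(H)]_{rs} h_{si}]$ and $d_\w(H) = \sum_r^{(iS)} h_{ir}[N_\w(H)]_{rj}$, which are the centered fluctuation of, respectively, a sesquilinear form and a linear form in the entries of row $i$, with $\mathscr{G}_i$-measurable coefficient $N_\w(H)$. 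Applying the classical scalar Hanson--Wright inequality to $c_\w$ and Bernstein's inequality to $d_\w$, together with $\E|h_{ir}|^2 = n^{-1}$, the moment bounds (\ref{eq:momentassump}), and $\|N_\w(H)\|_\op \leq 3^{a}\prod_k \|D_k\|_\op^{b_k}$ (with $a, b_k$ the numbers of letters $\x, \x_k$ in $\w$), gives the ``scalar concentration term-by-term'' estimate $|c_\w(H)|\,\|\w\|_\op,\, |d_\w(H)|\,\|\w\|_\op \prec n^{-1/2}\|N_\w(H)\|_\op \|\w\|_\op \leq n^{-1/2}\rho^{|\w|}$. Truncating the word sum at $|\w| \leq M := \lceil C_D \log n\rceil$ (the remaining tail contributes $\leq (\|\bh_i\|_2^2 + 1)|z|^{-1} 2^{-M} \prec n^{-D}$ in operator norm, since the ratio in the residual geometric series is $\leq (K+1)^{-1}$), making these bounds uniform over the $\leq (K+1)^{M+1} = \poly(n)$ words and over the $\leq n^{l+2}$ choices of $(S, i, j)$ by a union bound (cf.\ Lemma~\ref{lemma:domination}(a)), and summing $\sum_m ((K+1)\rho/|z|)^m \leq 2$ for $|z| \geq W_0$, we obtain $\|\Phi^{(1)}(z)\|_\op, \|\Phi^{(2)}(z)\|_\op \prec n^{-1/2}$ uniformly over $|z| \geq W_0$ and over $(S, i, j)$.

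It remains to pass from $|z| \geq W_0$ to $z = 0$. Fix $W_1 = W_0 + \delta/4$, $a = 4W_1$, and recall that the circles $S_r$ ($r < 0$) of Lemma~\ref{lemma:maximummodulus} lie in $\C^+$, are centered on the imaginary axis, and satisfy $|z| \in [\,a\tfrac{1-e^r}{1+e^r}, a\tfrac{1+e^r}{1-e^r}]$ for $z \in S_r$. Choose $r_1 = -1$, so $S_{r_1} \subset \{|z| \geq a\tfrac{e-1}{e+1}\} \subset \{|z| \geq W_1\}$; let $r_0 < 0$ solve $a\tfrac{1-e^{r_0}}{1+e^{r_0}} = \delta/4$, so $i\delta/4 \in S_{r_0}$; and set $r_2 = r_0/2 \in (r_0, 0)$. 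For any bounded linear functional $\ell$ on $\cX$ with $\|\ell\| \leq 1$ and any $c \in \{1, 2\}$, the scalar function $f(w) = \ell(\Phi^{(c)}(w - i\delta/4))$ is holomorphic on $\{\Im w > -3\delta/4\} \supset \C^+$, and (if not identically zero) $r \mapsto \max_{w \in S_r} \log|f(w)|$ is convex on $(-\infty, 0)$ by Lemma~\ref{lemma:maximummodulus}. On the single high-probability event carrying all the above estimates (uniform in $\ell$ and $(S,i,j)$), the large-$|z|$ bound gives $\max_{w \in S_{r_1}} \log|f(w)| \leq -\tfrac14 \log n$ (as $|w - i\delta/4| \geq W_0$ there) and the trivial bound gives $\max_{w \in S_{r_2}} \log|f(w)| \leq 2\epsilon \log n$ (as $\Im(w - i\delta/4) > -\delta/4$ there); convexity then yields $\log|f(i\delta/4)| \leq \lambda(-\tfrac14 \log n) + (1-\lambda)\,2\epsilon \log n$ with $\lambda = \tfrac{r_2 - r_0}{r_2 - r_1} \in (0, 1)$ a constant depending only on $\gamma, \upsilon, \delta, K$. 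Taking $\epsilon = \lambda/16$ gives $|f(i\delta/4)| \leq n^{-\alpha}$ with $\alpha := \lambda/8 \in (0, 1/2)$; since this holds for every $\|\ell\| \leq 1$ we conclude $\|\Phi^{(c)}(0)\|_\op = \sup_{\|\ell\| \leq 1} |\ell(\Phi^{(c)}(0))| \leq n^{-\alpha}$, and a final union bound over $(S, i, j)$ gives the claimed probability and uniformity.

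The step I expect to be the main obstacle is the large-$|z|$ estimate. The crucial point is to group the Neumann series by \emph{operator words} and keep each $N_\w(H)$ as a single matrix controlled in operator norm by a product of letter-norms: then every scalar coefficient is a quadratic (resp.\ linear) form in one row of $H$, amenable to dimension-free scalar concentration, and the word-series is summable at a threshold $|z| \geq W_0$ that is a fixed constant, independent of $D$ --- which is what lets $\alpha$ depend only on $\gamma, \upsilon, \delta, K$. Expanding $N_\w(H)$ further into index-paths would instead turn $c_\w(H)$ into a degree-$|\w|$ polynomial chaos whose $p$-th moments grow like $p^{|\w|/2}$, forcing the threshold $W_0$ (hence $\alpha$) to deteriorate with $D$.
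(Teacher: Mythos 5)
Your proof is correct and takes essentially the same route as the paper's: you introduce an auxiliary spectral parameter, expand the resolvent in a Neumann series for $|z|$ large enough that it converges at a $D$-independent threshold, apply scalar concentration word-by-word while keeping each matrix coefficient $N_\w(H)$ intact (rather than expanding into index paths), and transfer to the target argument via Lemma~\ref{lemma:maximummodulus} with three circles and convexity. The only differences are cosmetic: a shift of $i\delta/4$ versus $i\delta/2$ in the parametrization, testing against general bounded linear functionals $\ell$ rather than the Hilbert-space inner products $\langle\xi,\cdot\,\zeta\rangle$ used by the paper, citing Hanson--Wright and Bernstein in place of the paper's own scalar specialization of Lemma~\ref{lemma:concentration}, and a relabeling of the circles $S_{r_0},S_{r_1},S_{r_2}$.
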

\begin{proof}
We present the argument for (\ref{eq:opnormest1}): 
Take any $S \subset \{1,\ldots,n\}$ with $|S| \leq l$, and any $i \notin S$.
Let $H^{(iS)}$ be as defined in Section \ref{subsec:resolventidentities},
and let $\bh_i^{(iS)} \in \C^n$ be the $i^\text{th}$ column of $H^{(iS)}$, i.e.\
the vector with entries $(\bh_i^{(iS)})_j=h_{ji}$
for $j \notin S \cup \{i\}$ and 0 otherwise.
Let $(\cH,\langle \cdot,\cdot \rangle_\cH)$ be the Hilbert space on
which $\cX$ acts. For any
unit vectors $\xi,\zeta \in \cH$, define the linear functional
$f_{i,\xi,\zeta}^{(S)}:\C^{n \times n} \otimes \cX \to \C$ by
\begin{align*}
f_{i,\xi,\zeta}^{(S)}(M)&=\left\langle
\xi,\,\left(\sum_r^{(iS)}\left(|h_{ir}|^2-\frac{1}{n}\right)M_{rr} +
\sum_{r\neq s}^{(iS)}h_{ir}M_{rs}h_{si}\right)\zeta\right\rangle_\cH\\
&=\left\langle
\xi,\,\left((\bh_i^{(iS)} \otimes 1)^* M (\bh_i^{(iS)} \otimes 1)
-(n^{-1}\Tr^{(iS)} \otimes 1) M\right)\zeta\right\rangle_\cH
\quad \text{ for } \Tr^{(iS)} M:=\sum_j^{(iS)} M_{jj}
\end{align*}
where $M_{rs}=(\e_r \otimes 1)^* M(\e_s \otimes 1)$ is the $\cX$-valued $(r,s)$
entry of $M$. Define
$M^{(iS)}:\C^+ \to \C^{n \times n} \otimes \cX$ by
\[M^{(iS)}(z)=\left(H^{(iS)}\otimes\x-\z+i(\delta/2)-z\right)^{-1}
\in \C^{n \times n} \otimes \cX.\]
Note that $\Im (\z-i(\delta/2)+z)>\delta/2$ for all $z \in \C^+$, so by Lemma
\ref{im bound inverse}, this inverse is well-defined and
\begin{equation}\label{eq:Mibound}
\|M^{(iS)}(z)\|_\op \leq 2/\delta \text{ for all } z \in \C^+.
\end{equation}
By definition we have $R^{(iS)}=M^{(iS)}(i\delta/2)$, so
the operator norm to be bounded in (\ref{eq:opnormest1}) is
\[\pnorm{\sum_r^{(iS)}\left(|h_{ir}|^2-\frac{1}{n}\right)R_{rr}^{(iS)} + 
\sum_{r\neq s}^{(iS)}h_{ir}R_{rs}^{(iS)}h_{si}}{\op}
=\sup_{\xi,\zeta \in \cH:\|\xi\|_\cH^2=\|\zeta\|_\cH^2=1}\,
f_{i,\xi,\zeta}^{(S)} \circ M^{(iS)}(i\delta/2).\]

Set $C_0=3\gamma+(3/2)\upsilon$, $\cD=\{z\in\C^+:|z| \geq 2C_0\}$, and
$\epsilon=0.1$, and define the event
\[\cE=\bigcap_{S \subset \{1,\ldots,n\}:|S| \leq l}\;\bigcap_{i \notin S}\;
\bigcap_{\xi,\zeta:\|\xi\|_\cH^2=\|\zeta\|_\cH^2=1}
\left\{\sup_{z\in \cD}|f_{i,\xi,\zeta}^{(S)} \circ M^{(iS)}(z)|\leq
n^{-1/2+\epsilon}\right\} \cap\{\|\bh_i^{(iS)}\|_2\leq 3\}.\]
Noting that $f_{i,\xi,\zeta}^{(S)} \circ M^{(iS)}(z)$ is an analytic function of
$z \in \C^+$,
we apply Lemma \ref{lemma:maximummodulus} on this event $\cE$: Let $a=3C_0$,
and set $r_0<r_1<r_2<0$ such that
\[a\frac{1-e^{r_0}}{1+e^{r_0}}=2C_0,
\qquad a\frac{1-e^{r_1}}{1+e^{r_1}}=\delta/2,
\qquad a\frac{1-e^{r_2}}{1+e^{r_2}}=\delta/4.\]
Then defining $S_r$ as in Lemma \ref{lemma:maximummodulus}, we have
$S_{r_0} \subset \cD$ and $i\delta/2 \in S_{r_1}$. On $\cE$, the
inclusion $S_{r_0} \subset \cD$ implies
$|f_{i,\xi,\zeta}^{(S)} \circ M^{(iS)}(z)|\leq n^{-1/2+\epsilon}$ for all $z \in S_{r_0}$,
and the bounds (\ref{eq:Mibound}) and $\|\bh_i^{(iS)}\|_2 \leq 3$ imply
$|f_{i,\xi,\zeta}^{(S)} \circ M^{(iS)}(z)| \leq 20/\delta$ for
all $z \in \C^+$. Thus Lemma \ref{lemma:maximummodulus} shows
\begin{align*}
\log |f_{i,\xi,\zeta}^{(S)} \circ M^{(iS)}(i\delta/2)|
&\leq \sup_{z \in S_{r_1}} \log |f_{i,\xi,\zeta}^{(S)} \circ M^{(iS)}(z)|\\
&\leq \frac{|r_1-r_2|}{|r_2-r_0|}\sup_{z \in S_{r_0}} \log
|f_{i,\xi,\zeta}^{(S)} \circ M^{(iS)}(z)|
+\frac{|r_1-r_0|}{|r_2-r_0|}\sup_{z \in S_{r_2}} \log |f_{i,\xi,\zeta}^{(S)}
\circ M^{(iS)}(z)|\\
&\leq -\alpha \log n
\end{align*}
for some constant $\alpha \in (0,1/2)$ depending only on
$\gamma,\upsilon,\delta$, and for all $n \geq n_0(\gamma,\upsilon,\delta)$. Thus
(\ref{eq:opnormest1}) holds on the event $\cE$ for all such $n$.

We now check that $\P[\cE] \geq 1-n^{-D}$ for all $n \geq n_0(l,K,D)$:
By a standard tail bound for the operator norm
(see e.g.\ \cite[Theorem 7.3]{ErdosEtAl2013Local}), the event
\begin{align*}
\cE_0&=\bigcap_{S \subset \{1,\ldots,n\}:|S| \leq l}\; \bigcap_{i \notin S}
 \left\{\|H^{(iS)}\|_{\op} \leq 3\right\}
\subset \{\|H\|_\op \leq 3\}
\end{align*}
holds with probability at least $1-n^{-D}$ for all $n \geq n_0(D)$. On
$\cE_0$, we have also $\|\bh_i^{(iS)}\|_2 \leq 3$.
Set for notational convenience $D_0={-}H^{(iS)}$
(which depends implicitly on $i$ and $S$), $\x_0=\x$, $D_{K+1}={-}i\delta/2$,
and $\x_{K+1}=1$ , so that
\[H^{(iS)}\otimes\x-\z+i(\delta/2) ={-}\sum_{k=0}^{K+1} D_k\otimes\x_k.\]
By the assumptions (\ref{eq:xzbounds}), we have on $\cE_0$ that
\[\pnorm{H^{(iS)}\otimes\x-\z+i(\delta/2)}{\op}
\leq \sum_{k=0}^{K+1} \|D_k\|_\op \|\x_k\|_\op
\leq 3\gamma+(3/2)\upsilon=C_0.\]
Then, since $|z| \geq 2C_0$ for $z \in \cD$, the series
expansion of $M^{(iS)}(z)$ in $z^{-1}$ is absolutely convergent in operator
norm, and
\begin{equation}\label{eq:Mseries}
M^{(iS)}(z)={-}\sum_{t=0}^{T(n)}
z^{-(t+1)}\left({-}\sum_{k=0}^{K+1} D_k \otimes \x_k\right)^t + J^{(iS)}(z),
\qquad \|J^{(iS)}(z)\|_\op \leq n^{-1}
\end{equation}
for some $T(n) \leq C\log n$ and an absolute constant $C>0$.
On $\cE_0$, using again $\|\bh_i^{(iS)}\|_2 \leq 3$,
we have $|f_{i,\xi,\zeta}^{(S)}(M)| \leq
10\|M\|_\op$ for any $M \in \C^{n \times n} \otimes \cX$.
Thus, for the remainder of (\ref{eq:Mseries}),
\begin{equation}\label{eq:remainderbound}
|f_{i,\xi,\zeta}^{(S)} \circ J^{(iS)}(z)|
\leq 10\|J^{(iS)}(z)\|_\op \leq 10n^{-1}.
\end{equation}

For the leading terms of (\ref{eq:Mseries}), let us expand
\begin{equation}\label{eq:tensorexpand}
\left(\sum_{k=0}^{K+1} D_k\otimes\x_k\right)^t
=\sum_{w \in \cW_t} \underbrace{w(D_0,\ldots,D_{K+1})}_{:=w(D^{(iS)})} \otimes
\underbrace{w(\x_0,\ldots,\x_{K+1})}_{:=w(\x)}
\end{equation}
where $\cW_t$ denotes the set of all length-$t$ words (i.e.\ non-commutative
degree-$t$ monic monomials) in $K+2$ variables. For $t=0$, we use the convention
$\cW_t=\{o\}$ where $o$ is the word of length 0, with $o(D^{(iS)})=I$ and
$o(\x)=1$. Then for each $w \in \cW_t$,
by the definition of $f_{i,\xi,\zeta}^{(S)}$,
we have the factorization
\begin{align}
f_{i,\xi,\zeta}^{(S)}\big(w(D^{(iS)}) \otimes w(\x)\big)
&=\left(\bh_i^{(iS)*} w(D^{(iS)})\bh_i^{(iS)}
-n^{-1}\Tr^{(iS)} w(D^{(iS)})\right) \cdot
\langle \xi,w(\x)\zeta\rangle_\cH.\label{eq:tensorfactorize}
\end{align}
Since $\E[|h_{ji}|^2]=n^{-1}$,
the scalar version of Lemma \ref{lemma:concentration}(a,c) (i.e.\ with
$\cX=\C$) implies that
uniformly over deterministic matrices $M \in \C^{n \times n}$,
\begin{equation}\label{eq:scalarconcentration}
\bh_i^{(iS)*} M\bh_i^{(iS)}-n^{-1}\Tr^{(iS)} M
\prec n^{-1}\left(\sum_{r,s}^{(iS)} |M_{rs}|^2\right)^{1/2}
\prec n^{-1/2}\|M\|_\op.
\end{equation}
Note that $w(D^{(iS)})$ is independent of $\bh_i^{(iS)}$, and
$|\{S \subset \{1,\ldots,n\}:|S| \leq l\}| \leq n^l$
and $|\cW_t|=(K+2)^t \leq n^{C\log(K+2)}$ for all $t \leq T(n) \leq C\log n$.
Then, fixing $\epsilon'=0.05$, applying (\ref{eq:scalarconcentration})
to each matrix $M=w(D^{(iS)})$ conditional on $H^{(iS)}$,
and taking a union bound, the event
\[\cE_1=\bigcap_{S \subset \{1,\ldots,n\}:|S| \leq l}\;\bigcap_{i \notin S}
\bigcap_{t=0}^{T(n)} \bigcap_{w \in \cW_t} 
\left\{\left|\bh_i^{(iS)*} w(D^{(iS)})\bh_i^{(iS)}-n^{-1}\Tr^{(iS)} w(D^{(iS)})
\right| \leq n^{-1/2+\epsilon'}\|w(D^{(iS)})\|_\op\right\}\]
holds with probability at least $1-n^{-D}$ for all $n \geq n_0(l,K,D)$.
On $\cE_1$, applying (\ref{eq:tensorfactorize}) to (\ref{eq:tensorexpand})
gives, for each $t=0,\ldots,T(n)$,
\begin{align}
\left|f_{i,\xi,\zeta}^{(S)}\left(\left(\sum_{k=0}^{K+1}
D_k\otimes\x_k\right)^t\right)\right|
&\leq n^{-1/2+\epsilon'} \sum_{w \in \cW_t}\|w(D^{(iS)})\|_\op
\|w(\x)\|_\op\notag\\
&\leq n^{-1/2+\epsilon'} \sum_{w \in \cW_t}
w(\|D_0\|_\op\|\x_0\|_\op,\ldots,\|D_{K+1}\|_\op\|\x_{K+1}\|_\op)\notag\\
&=n^{-1/2+\epsilon'}
\Big(\|D_0\|_\op\|\x_0\|_\op+\ldots+\|D_{K+1}\|_\op\|\x_{K+1}\|_\op\Big)^t\notag\\
&\leq n^{-1/2+\epsilon'} C_0^t.\label{eq:leadingtermsbound}
\end{align}
Then, applying (\ref{eq:leadingtermsbound}) and (\ref{eq:remainderbound})
to (\ref{eq:Mseries}), we have on $\cE_0 \cap \cE_1$ for all $z \in \cD$ that
\[|f_{i,\xi,\zeta}^{(S)} \circ M^{(iS)}(z)|
\leq \sum_{t=0}^{T(n)} |z|^{-(t+1)}n^{-1/2+\epsilon'}C_0^t
+10n^{-1} \leq n^{-1/2+\epsilon},\]
the final inequality holding for our preceding choices of $\epsilon=0.1$,
$\epsilon'=0.05$, and all $n \geq n_0$ since $|z| \geq 2C_0$.
So $\cE_0 \cap \cE_1 \subseteq \cE$, implying that
$\P[\cE] \geq 1-n^{-D}$ for $n \geq n_0(l,K,D)$, as claimed.

This shows that (\ref{eq:opnormest1}) holds with probability $1-n^{-D}$.
The proof for (\ref{eq:opnormest2}) is the same, applying these arguments with
the function
\[f_{i,j,\xi,\zeta}^{(S)}(M)=\left\langle \xi,\left(\sum_r^{(iS)}
h_{ir}M_{rj}\right)\zeta\right\rangle
=\left\langle \xi,\left((\bh_i^{(iS)} \otimes 1)^* M(\e_j \otimes 1)
\right)\zeta\right\rangle\]
in place of $f_{i,\xi,\zeta}^{(S)}$.
\end{proof}

\begin{lem}\label{lemma:weakopnormestimates}
Under Assumption \ref{assump:general}, there exists a constant
$\alpha \in (0,1/2)$ depending only on $\gamma,\upsilon,\delta$
such that for any $l \geq 1$, $D>0$, and all
$n \geq n_0(l,K,\gamma,\upsilon,\delta,D)$, with probability at least $1-n^{-D}$,
\begin{equation}\label{eq:weakopnormestimates}
\sup_{S \subset \{1,\ldots,n\}:|S| \leq l}\;\sup_{i \notin S}
\|R_{ii}^{(S)}-(\r_0)_{ii}\|_\op<n^{-\alpha}, \qquad
\sup_{S \subset \{1,\ldots,n\}:|S| \leq l}\;
\sup_{i,j \notin S:i \neq j} \|R_{ij}^{(S)}\|_\op<n^{-\alpha}.
\end{equation}
\end{lem}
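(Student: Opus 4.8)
The plan is to feed the operator-norm bounds on the quadratic forms from Lemma~\ref{lemma:weakopnormprelim} into the resolvent identities of Lemma~\ref{resolvent identities}, and then invoke the stability estimate of Corollary~\ref{cor:Stability of DX-valued fixed point}(b). All estimates below are claimed on the intersection $\cE$ of the high-probability event of Lemma~\ref{lemma:weakopnormprelim} (applied with $l+2$ in place of $l$), the event $\{\|H\|_\op \leq 3\}$, and the event $\{\max_i |h_{ii}| \leq n^{-1/2+\epsilon}\}$ with $\epsilon = 0.05$; each holds with probability at least $1-n^{-D}$ for $n$ large, so $\P[\cE] \geq 1-n^{-D}$. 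I will freely use $\|R^{(S)}\|_\op \leq \delta^{-1}$ (Lemma~\ref{im bound inverse}, since $\Im \z \geq \delta$), hence $\|R_{ij}^{(S)}\|_\op, \|R_{ii}^{(S)}\|_\op \leq \delta^{-1}$, and $\|(\r_0)_{ii}\|_\op \leq \delta^{-1}$ (from $\m_0 \in \cX^+$ and Lemma~\ref{lem:uniquefixedpoint}(b)). The off-diagonal bound is then immediate: by the first identity of Lemma~\ref{resolvent identities}(b), $R_{ij}^{(S)} = -R_{ii}^{(S)}\x\big(\sum_r^{(iS)} h_{ir}R_{rj}^{(iS)}\big)$, so (\ref{eq:opnormest2}) gives $\|R_{ij}^{(S)}\|_\op \leq \delta^{-1}\gamma\, n^{-\alpha}$.

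For the diagonal bound, the central step is to show that the full partial trace $\Pi := \tfrac1n\sum_{k=1}^n R_{kk}$ approximately satisfies the $\cX$-valued fixed-point equation (\ref{eq:generalfixedpointreduced}). By Lemma~\ref{resolvent identities}(a) with $S = \emptyset$, writing the quadratic form as in (\ref{eq:opnormest1}), $(R_{kk})^{-1} = h_{kk}\x - \z_k - \x\big(\tfrac1n\sum_r^{(k)} R_{rr}^{(k)} + E_k\big)\x$ with $\|E_k\|_\op \leq n^{-\alpha}$. Since $\|\tfrac1n\sum_r^{(kS)} R_{rr}^{(kS)}\|_\op \leq \delta^{-1}$, this yields a uniform bound $\|(R_{kk}^{(S)})^{-1}\|_\op \leq C_1(\gamma,\upsilon,\delta)$ valid for all $|S| \leq l+1$. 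To replace $\tfrac1n\sum_r^{(k)} R_{rr}^{(k)}$ by $\Pi$, I telescope via Lemma~\ref{resolvent identities}(c): $R_{rr} - R_{rr}^{(k)} = R_{rk}(R_{kk})^{-1}R_{kr}$, so the off-diagonal bound just shown gives $\|R_{rr} - R_{rr}^{(k)}\|_\op \leq C_1 n^{-2\alpha}$, whence $\|\tfrac1n\sum_r^{(k)} R_{rr}^{(k)} - \Pi\|_\op \leq C_1 n^{-2\alpha} + \delta^{-1}/n$. Combining, $(R_{kk})^{-1} = -\z_k - \x\Pi\x + \Delta_k$ with $\|\Delta_k\|_\op < n^{-\alpha'}$ uniformly in $k$, for a suitable $\alpha' \in (0,\alpha)$ (depending only on $\gamma,\upsilon,\delta$, absorbing the $n^{-1/2+\epsilon}$ from $h_{kk}$, the $n^{-\alpha}$ from $E_k$, and the $n^{-2\alpha}$ above).

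Next, $\Im R_{kk} = (\e_k \otimes 1)^* R^*(\Im \z)R(\e_k \otimes 1) \geq \delta\,\|H \otimes \x - \z\|_\op^{-2}\,1_\cX \geq \delta(3\gamma+\upsilon)^{-2}\,1_\cX$ on $\cE$, so $\Pi \in \cX^+$ and $\Im(-\z_k - \x\Pi\x) \leq -\delta$. Inverting $(R_{kk})^{-1} = -\z_k-\x\Pi\x+\Delta_k$ via $(A+\Delta)^{-1} = A^{-1} - A^{-1}\Delta(A+\Delta)^{-1}$, with $\|A^{-1}\|_\op, \|R_{kk}\|_\op \leq \delta^{-1}$, gives $R_{kk} = (-\z_k-\x\Pi\x)^{-1} + \Delta_k'$ with $\|\Delta_k'\|_\op \leq \delta^{-2}n^{-\alpha'}$; averaging over $k$, $\Pi = \tfrac1n\sum_k(-\z_k-\x\Pi\x)^{-1} + \Delta$ with $\|\Delta\|_\op \leq \delta^{-2}n^{-\alpha'}$. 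For $n$ large $\Im(\z_k+\x\Delta\x) \geq \delta/2$, so Corollary~\ref{cor:Stability of DX-valued fixed point}(b) gives $\|\Pi - \m_0\|_\op \leq (1+2\gamma^2\delta^{-2})\delta^{-2}n^{-\alpha'}$. The same telescoping, now adding back the at most $l+1$ indices of $iS$ one at a time (all minors involved having index sets of cardinality $\leq l+2$, so that (\ref{eq:opnormest1}), (\ref{eq:opnormest2}) and $\|(R_{kk}^{(S')})^{-1}\|_\op \leq C_1$ apply), yields $\|\tfrac1n\sum_r^{(iS)}R_{rr}^{(iS)} - \Pi\|_\op \leq (l+1)C_1 n^{-2\alpha} + (l+1)\delta^{-1}/n$, and hence by Lemma~\ref{resolvent identities}(a) and (\ref{eq:opnormest1}), $(R_{ii}^{(S)})^{-1} = -\z_i - \x\Pi\x + \Delta_i^{(S)}$ with $\|\Delta_i^{(S)}\|_\op < n^{-\alpha'}$ uniformly over $i \notin S$, $|S| \leq l$. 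Since $(\r_0)_{ii} = (-\z_i - \x\m_0\x)^{-1}$ and both $-\z_i-\x\Pi\x+\Delta_i^{(S)}$ and $-\z_i-\x\m_0\x$ have imaginary part $\leq -\delta/2$ for $n$ large, the identity $B^{-1} - A^{-1} = B^{-1}(A-B)A^{-1}$ with $A - B = \x(\Pi-\m_0)\x - \Delta_i^{(S)}$ gives $\|R_{ii}^{(S)} - (\r_0)_{ii}\|_\op \leq C'(\gamma,\upsilon,\delta)\,n^{-\alpha'}$. Taking the constant $\alpha$ of the lemma to be any fixed number in $(0,\alpha')$ makes $C' n^{-\alpha'} < n^{-\alpha}$ and $\delta^{-1}\gamma n^{-\alpha'} < n^{-\alpha}$ for all $n \geq n_0(l,K,\gamma,\upsilon,\delta,D)$, which gives (\ref{eq:weakopnormestimates}).

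I expect the main obstacle to be the bookkeeping in this telescoping step: one must first secure the uniform invertibility bound $\|(R_{kk}^{(S)})^{-1}\|_\op \leq C_1$, so that the off-diagonal estimates of Lemma~\ref{lemma:weakopnormprelim} may be iterated through successive minors without loss, and one must verify that all index sets arising stay of cardinality $\leq l+2$ so that the single high-probability event of Lemma~\ref{lemma:weakopnormprelim} controls everything uniformly. The analytic content — positivity of $\Im \Pi$, invertibility and norm control of the operators being inverted, and the passage through Corollary~\ref{cor:Stability of DX-valued fixed point}(b) — is then routine.
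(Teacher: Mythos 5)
Your argument is correct, but it takes a genuinely different route from the paper's. The off-diagonal bound is handled identically. For the diagonal bound, you anchor everything to the $S$-independent partial trace $\Pi = \frac1n\sum_k R_{kk}$, show that $\Pi$ approximately satisfies the $\cX$-valued fixed-point equation (\ref{eq:generalfixedpointreduced}), invoke Corollary \ref{cor:Stability of DX-valued fixed point}(b) to conclude $\|\Pi - \m_0\|_\op \lesssim n^{-\alpha'}$, and then telescope (via Lemma \ref{resolvent identities}(c)) from $\emptyset$ to each $iS$ to transfer this to $R_{ii}^{(S)}$. The paper instead works with the $S$-dependent reference $\frac1n\sum_j^{(S)} R_{jj}^{(S)}$: it extends the definition of $R_{ii}^{(S)}$ to $i \in S$ via the auxiliary quantity $\widetilde R_{ii}^{(S)}$ in (\ref{eq:tildeRdef}), so that $\sum_i E_{ii} \otimes \widetilde R_{ii}^{(S)}$ satisfies the $\cD\otimes\cX$-valued equation (\ref{eq:generalfixedpoint}) with a perturbation controlled in one pass, and then applies Corollary \ref{cor:Stability of DX-valued fixed point}(a). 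That version of stability delivers the uniform-in-$i$ bound $\max_i\|\widetilde R_{ii}^{(S)}-(\r_0)_{ii}\|_\op$ in a single step, with only a one-index minor comparison ($R_{jj}^{(iS)}$ versus $R_{jj}^{(S)}$) in the error term $\mathrm{III}$ of (\ref{eq:expression of Delta_i}); this avoids both the separate step of proving $\Pi \approx \m_0$ and the $(l{+}1)$-step telescoping from $\emptyset$ to $iS$ that your argument requires. Both arguments are sound; the paper's is more economical, while yours has the minor conceptual advantage of making the scalar-like approximation $\Pi \approx \m_0$ explicit and of using only the simpler $\cX$-valued stability. Two small cosmetic points in your write-up: your choice of $\alpha'$ should also be taken below $1/2 - \epsilon = 0.45$ so that the $|h_{kk}| \leq n^{-0.45}$ contribution is absorbed (you implicitly assume this by only constraining $\alpha' < \alpha$, where $\alpha$ could in principle be as large as $0.49$); and the expression you wrote for $\Im R_{kk}$, namely $R^*(\Im\z)R$, is the adjoint-side version, but it equals $R(\Im\z)R^*$ as well, so the bound holds either way.
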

\begin{proof}
Let $\alpha \in (0,1/2)$ be as in
Lemma \ref{lemma:weakopnormprelim}. Fixing $l \geq 1$, let $\cE$ be
the event on which the statements (\ref{eq:opnormest1}--\ref{eq:opnormest2}) of
Lemma \ref{lemma:weakopnormprelim} hold, and in addition,
$\sup_{i=1}^n |h_{ii}|<n^{-\alpha}$. Lemma \ref{lemma:weakopnormprelim}
and Assumption \ref{assump:Wigner} for $H$ imply that
$\cE$ holds with probability at least $1-n^{-D}$ for all $n \geq
n_0(l,K,\gamma,\upsilon,\delta,D)$.

Take any $S \subset \{1,\ldots,n\}$ with $|S| \leq l$, and any $i,j \notin S$
with $i \neq j$. By Lemma \ref{resolvent identities}(b), on $\cE$,
\begin{equation}\label{eq:weakRijbound}
\|R_{ij}^{(S)}\|_\op \leq \|R_{ii}^{(S)}\|_\op\|\x\|_\op
\left\|\sum_r^{(iS)} h_{ir}R_{rj}^{(iS)}\right\|_\op
\leq \gamma\delta^{-1}n^{-\alpha}
\end{equation}
where we have used also $\|R_{ii}^{(S)}\|_\op \leq \|R^{(S)}\|_\op\leq
\delta^{-1}$ by Lemma \ref{im
bound inverse} and the assumption $\|\x\|_\op \leq \gamma$. Adjusting the
value of $\alpha$ yields the second statement of (\ref{eq:weakopnormestimates}).

For the first statement of (\ref{eq:weakopnormestimates}),
take any $S \subset \{1,\ldots,n\}$ with $|S| \leq l$. Define
$\widetilde R_{ii}^{(S)}=R_{ii}^{(S)}$ for $i \notin S$, and
\begin{equation}\label{eq:tildeRdef}
\widetilde R_{ii}^{(S)}=\bigg({-}\z_i-\x\bigg[\frac{1}{n}\sum_j^{(S)}
R_{jj}^{(S)}\bigg]\x\bigg)^{-1} \text{ for } i \in S.
\end{equation}
By this definition and Lemma \ref{resolvent identities}(a), for each
$i=1,\ldots,n$ we have
\begin{equation}\label{eq:Riiapproxfixedpoint}
\widetilde R_{ii}^{(S)}=\bigg({-}\z_i-\Delta_i^{(S)}
-\x\bigg[\frac{1}{n}\sum_{j=1}^n
\widetilde R_{jj}^{(S)}\bigg]\x\bigg)^{-1}
\end{equation}
where 
    \begin{align}
        \Delta_i^{(S)} &= {-}h_{ii}\x + \x\left(\sum_{r,s}^{(iS)}
h_{ir}R_{rs}^{(iS)}h_{si}-\frac{1}{n}\sum_{j=1}^n \widetilde
R_{jj}^{(S)}\right)\x\notag\\
&={-}h_{ii}\x +
\x\Bigg[\underbrace{\sum_r^{(iS)}\left(|h_{ir}|^2-\frac{1}{n}\right)R_{rr}^{(iS)} + 
\sum_{r\neq s}^{(iS)}h_{ir}R_{rs}^{(iS)}h_{si}}_{:=\mathrm{I}}
-\underbrace{\frac{1}{n}\sum_{j \in S \cup \{i\}} \widetilde R_{jj}^{(S)}}_{:=\mathrm{II}}
+\underbrace{\frac{1}{n}\sum_j^{(iS)}(R_{jj}^{(iS)}-R_{jj}^{(S)})}_{:=\mathrm{III}}\Bigg]\x\notag\\
&\text{ for all } i \notin S\label{eq:expression of Delta_i}
    \end{align}
and
\begin{equation}\label{eq:Deltaalt}
\Delta_i^{(S)}={-}\x\bigg[\underbrace{\frac{1}{n}\sum_{j \in S} \widetilde
R_{jj}^{(S)}}_{:=\mathrm{IV}}\bigg]\x \qquad \text{ for all } i \in S.
\end{equation}

On $\cE$, we have $\|\mathrm{I}\|_\op \leq n^{-\alpha}$ by (\ref{eq:opnormest1}).
For $\mathrm{II}$ and $\mathrm{IV}$, Lemma \ref{im bound inverse} implies
$\|\widetilde R_{ii}^{(S)}\|_\op \leq \delta^{-1}$ for both $i \in S$ and $i
\notin S$, the latter because $\|R_{ii}^{(S)}\|_\op \leq \|R^{(S)}\|_\op \leq
\delta^{-1}$ and the former because $\Im R^{(S)} \geq 0$ so
$\Im R_{jj}^{(S)} \geq 0$ and $\Im \z_i \geq
\delta$ in the definition (\ref{eq:tildeRdef}). Then
\begin{equation}\label{eq:DeltaII}
\|\mathrm{II}\|_\op,\|\mathrm{IV}\|_\op \leq \frac{(|S|+1)\delta^{-1}}{n}.
\end{equation}
For $\mathrm{III}$, we have by Lemma \ref{resolvent identities}(b--c) that
\begin{equation}\label{eq:DeltaIII}
\mathrm{III}={-}\frac{1}{n}\sum_j^{(iS)} R_{ji}^{(S)}\frac{1}{R_{ii}^{(S)}}
R_{ij}^{(S)}
=\frac{1}{n}\sum_j^{(iS)} R_{ji}^{(S)}\x
\sum_r^{(iS)} h_{ir}R_{rj}^{(iS)}.
\end{equation}
Thus on $\cE$, $\|\mathrm{III}\|_\op \leq \gamma^2\delta^{-1}n^{-2\alpha}$
by (\ref{eq:opnormest2}) and (\ref{eq:weakRijbound}).
Collecting these bounds and applying also $|h_{ii}| \leq n^{-\alpha}$ on $\cE$,
we have for a constant $C(\gamma,\delta)>0$ and every $i=1,\ldots,n$ that
\begin{equation}\label{eq:Deltaopnormbound}
\|\Delta_i^{(S)}\|_\op \leq C(\gamma,\delta)n^{-\alpha} \text{ on } \cE.
\end{equation}

Taking the Kronecker product with $E_{ii}$ on both sides of
(\ref{eq:Riiapproxfixedpoint}), summing over $i=1,\ldots,n$,
recalling $\z=\sum_i E_{ii} \otimes \z_i$, and setting
$\Delta^{(S)}=\sum_i E_{ii} \otimes \Delta_i^{(S)}$, we have
\begin{equation}\label{eq:RiiSfixedpoint}
\sum_{i=1}^n E_{ii}\otimes \widetilde R_{ii}^{(S)}
=\left(-\mathsf{z}-\Delta^{(S)}-I_{n \times n}
\otimes\x\left(\frac{1}{n}\Tr\otimes 1\left[\sum_{i=1}^n E_{ii}\otimes
\widetilde R_{ii}^{(S)}\right]\right)\x \right)^{-1}.
\end{equation}
On $\cE$, by the estimate $\|\Delta^{(S)}\|_\op=\max_i\|\Delta_i^{(S)}\|_\op \leq
C(\gamma,\delta)n^{-\alpha}$ from (\ref{eq:Deltaopnormbound}),
for all $n \geq n_0(\gamma,\upsilon,\delta)$ this
implies $\Im(\z+\Delta^{(S)}) \geq \delta/2$, so Corollary
\ref{cor:Stability of DX-valued fixed point}(a) shows
\begin{equation}\label{eq:weakRiibound}
\max_{i=1}^n \|\widetilde R_{ii}^{(S)}-(\r_0)_{ii}\|_\op
=\pnorm{\sum_{i=1}^n E_{ii} \otimes \widetilde R_{ii}^{(S)} - \r_0}{\op}\leq
2\delta^{-2}\|\Delta^{(S)}\|_\op \leq 2\delta^{-1}C(\gamma,\delta)n^{-\alpha}.
\end{equation}
Then specializing this to $i \notin S$ and 
adjusting the value of $\alpha$
yields the first statement of (\ref{eq:weakopnormestimates}).
\end{proof}

\subsection{Iterative bootstrapping}\label{subsec:bootstrapping}

We now improve the preceding estimates of Lemma \ref{lemma:weakopnormestimates}
in the $L^p$-norms for $p<\infty$, using the following bootstrapping lemma.

\begin{lem}\label{lemma:bootstrapping}
Suppose Assumption \ref{assump:general} holds. Suppose also that, for some
$\alpha \in (0,1/2)$ and any fixed $l \geq 1$,
uniformly over $S \subset \{1,\ldots,n\}$ with
$|S| \leq l$ and over $i,j \notin S$ with $i \neq j$, we have
\begin{equation}\label{eq:bootstrappingassumption}
R_{ii}^{(S)}-(\r_0)_{ii} \prec n^{-\alpha}, \qquad
R_{ij}^{(S)} \prec n^{-\alpha}.
\end{equation}
Set $\alpha'=\min(\frac{3\alpha}{2},\frac{1}{2})$.
Then for any fixed $l \geq 1$, uniformly over $S \subset \{1,\ldots,n\}$ with
$|S| \leq l$ and over $i,j \notin S$ with $i \neq j$,
\[R_{ii}^{(S)}-(\r_0)_{ii} \prec n^{-\alpha'}, \qquad
R_{ij}^{(S)} \prec n^{-\alpha'}.\]
\end{lem}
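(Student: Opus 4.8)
\emph{Strategy.} The plan is to run the resolvent identities of Lemma~\ref{resolvent identities}, feed the weak input (\ref{eq:bootstrappingassumption}) into the non-commutative Khintchine inequalities (Lemma~\ref{lemma:concentration}) and the fluctuation-averaging estimates (Lemma~\ref{lemma:fluctuationavg}), and close a self-consistent equation, governed by the operators $\cL_1,\cL_2$ of Section~\ref{subsec:stability}, for the one ``partial-transpose'' quantity that is not controlled by $\|R^{(iS)}\|_\op$. All statements below hold in the $L^p$-norm for every fixed $p\in[1,\infty)$, uniformly over $S$ with $|S|\le l$ and over $i\ne j\notin S$; since there are only $\poly(n)$ such triples, uniformity costs only a union bound, harmless for $\prec$ by Lemma~\ref{lemma:domination}. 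First one improves the off-diagonal blocks; then, with that in hand, the diagonal blocks.

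\emph{Off-diagonal blocks.} Fix $S$ and $i\ne j\notin S$. By Lemma~\ref{resolvent identities}(b), $R_{ij}^{(S)}={-}R_{ii}^{(S)}\x\sum_r^{(iS)}h_{ir}R_{rj}^{(iS)}$, and since $\|R_{ii}^{(S)}\|_\op\le\delta^{-1}$, $\|\x\|_\op\le\gamma$, it suffices to bound $\big\|\sum_r^{(iS)}h_{ir}R_{rj}^{(iS)}\big\|_p$. Conditioning on $H^{(iS)}$ (making the $R_{rj}^{(iS)}$ deterministic and the $h_{ir}$ independent mean zero with $\E|h_{ir}|^2=1/n$), Lemma~\ref{lemma:concentration}(a) bounds this up to constants by $\max\big\{\big\|\big(\tfrac1n\textstyle\sum_r^{(iS)}R_{rj}^{(iS)}R_{rj}^{(iS)*}\big)^{1/2}\big\|_p,\big\|\big(\tfrac1n\textstyle\sum_r^{(iS)}R_{rj}^{(iS)*}R_{rj}^{(iS)}\big)^{1/2}\big\|_p\big\}$. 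The second term equals $\big\|\tfrac1n(\e_j\otimes1)^*(R^{(iS)})^*R^{(iS)}(\e_j\otimes1)\big\|_{p/2}^{1/2}\le\delta^{-1}n^{-1/2}$ after dropping the omitted nonnegative summands. The first term $W:=\tfrac1n\sum_r^{(iS)}R_{rj}^{(iS)}R_{rj}^{(iS)*}$ is the obstruction — a diagonal block of a partial transpose, which $\|R^{(iS)}\|_\op$ does not control, and (\ref{eq:bootstrappingassumption}) only gives $W\prec n^{-2\alpha}$ termwise. To do better, split $W=\tfrac1n\sum_r\E_r[\,\cdot\,]+\tfrac1n\sum_r\cQ_r[\,\cdot\,]$ over rows/columns of $H$: expanding $\cQ_t[R_{rj}^{(iS)}R_{rj}^{(iS)*}]$ via Lemma~\ref{resolvent identities}(c) shows each extra $\cQ_t$ gains a factor $n^{-\alpha}$, so Lemma~\ref{lemma:fluctuationavg}(a) gives $\tfrac1n\sum_r\cQ_r[\,\cdot\,]\prec n^{-3\alpha}$; while using $R_{rj}^{(iS)}={-}R_{rr}^{(iS)}\x\sum_a^{(riS)}h_{ra}R_{aj}^{(riS)}$, $\E_r[h_{ra}\bar h_{rb}]=\delta_{ab}/n$, and the replacements $R_{rr}^{(iS)}\to(\r_0)_{rr}$ and $R^{(riS)}\to R$ (each error multiplying a quantity $\prec n^{-2\alpha}$) yields $\tfrac1n\sum_r^{(iS)}\E_r[R_{rj}^{(iS)}R_{rj}^{(iS)*}]=\tfrac1n\sum_r(\r_0)_{rr}\x W\x(\r_0)_{rr}^*+\Oprec(n^{-1}+n^{-3\alpha})$. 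Hence $\cL_2(W)\prec n^{-1}+n^{-3\alpha}$, and the bound $\|\cL_2^{-1}\|_{L^p\to L^p}\lesssim1$ of Lemma~\ref{lemma:linearmap2} gives $W\prec n^{-1}+n^{-3\alpha}$, so $\|R_{ij}^{(S)}\|_p\prec(n^{-1}+n^{-3\alpha})^{1/2}+n^{-1/2}\prec n^{-\min(1/2,3\alpha/2)}=n^{-\alpha'}$.

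\emph{Diagonal blocks.} With $R_{ij}^{(S)}\prec n^{-\alpha'}$ now available, I return to the self-energy identity $\tfrac1{R_{ii}^{(S)}}=h_{ii}\x-\z_i-\x\big(\sum_{r,s}^{(iS)}h_{ir}R_{rs}^{(iS)}h_{si}\big)\x$ of Lemma~\ref{resolvent identities}(a) and rerun the stability argument of Lemma~\ref{lemma:weakopnormestimates}: the $\widetilde R_{ii}^{(S)}$ defined there satisfy the perturbed fixed point (\ref{eq:RiiSfixedpoint}), so by Corollary~\ref{cor:Stability of DX-valued fixed point}(a) it suffices to show $\|\Delta_i^{(S)}\|_p\prec n^{-\alpha'}$ for the perturbation (\ref{eq:expression of Delta_i})--(\ref{eq:Deltaalt}). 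Of its pieces, $h_{ii}\x\prec n^{-1/2}$, the terms $\mathrm{II},\mathrm{IV}$ are $\Oprec(n^{-1})$, the term $\mathrm{III}$ is $\prec n^{-2\alpha}\le n^{-\alpha'}$ by Lemma~\ref{resolvent identities}(b,c), and $\mathrm{I}$ splits into $\sum_r^{(iS)}(|h_{ir}|^2-\tfrac1n)R_{rr}^{(iS)}\prec n^{-1/2}$ (Lemma~\ref{lemma:concentration}(a)) and the off-diagonal quadratic form $\sum_{r\ne s}^{(iS)}h_{ir}R_{rs}^{(iS)}h_{si}$. This last form is handled by exactly the above scheme: Lemma~\ref{lemma:concentration}(c) conditional on $H^{(iS)}$ reduces it to $\tfrac1n\sum_{r,s}R_{rs}^{(iS)}R_{rs}^{(iS)*}$-type sums (bounded by $\|R^{(iS)}\|_\op$, hence $\prec n^{-1/2}$), a term $\prec n^{-1}$, and a partial-transpose term $\|\Y^\st\|_p$, which is located via an $\E_r/\cQ_r$-splitting, the resolvent identities, fluctuation averaging, the improved bound $R_{ij}^{(S)}\prec n^{-\alpha'}$, and a self-consistent $\cL_2$-relation inverted through Lemma~\ref{lemma:linearmap2}. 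One also needs $\tfrac1n\sum_r^{(iS)}R_{rr}^{(iS)}-\m_0\prec n^{-\alpha'}$, which comes from applying Lemma~\ref{lemma:fluctuationavg} to $\tfrac1n\sum_r(R_{rr}-(\r_0)_{rr})$ together with $R_{rr}^{(iS)}-R_{rr}\prec n^{-2\alpha}$. As $2\alpha,\tfrac12\ge\alpha'$, all contributions are $\Oprec(n^{-\alpha'})$, so $\max_i\|R_{ii}^{(S)}-(\r_0)_{ii}\|_p\le2\delta^{-2}\|\Delta^{(S)}\|_p\prec n^{-\alpha'}$.

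\emph{Main obstacle.} The crux, and the only place where the $k=n$ Kronecker setting departs essentially from the scalar case, is precisely the appearance of the partial-transpose objects $\tfrac1n\sum_r R_{rj}R_{rj}^*$ and the $\|\Y^\st\|_p$ terms from Lemma~\ref{lemma:concentration}(b,c): the naive bound $\|R^\st\|_\op\le n\|R\|_\op$ is useless, and interpolating between $L^2$ and the operator norm stalls at the non-optimal exponent $\alpha$ from Lemma~\ref{lemma:weakopnormestimates} (which never improves under the bootstrap). These quantities must instead be pinned down through self-consistent equations of $\cL_2$-type and the quantitative invertibility of Lemma~\ref{lemma:linearmap2}, carried out in the $L^p$-norms uniformly over $p\in[1,\infty)$. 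The delicate part of the bookkeeping is tracking which error terms do, and which do not, feed back into these $\cL_2$-equations — one needs the self-referential terms to enter with a coefficient $\Oprec(n^{-\alpha})$ (so they multiply a quantity already known to be $\prec n^{-2\alpha}$ and contribute only at order $n^{-3\alpha}$), while all remaining errors must be genuinely of order $n^{-1}+n^{-3\alpha}$.
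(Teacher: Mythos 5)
Your off-diagonal step matches the paper: condition on $H^{(iS)}$, apply Lemma~\ref{lemma:concentration}(a), control the ``spectral'' side by $\|R^{(iS)}\|_\op$, isolate $W=\tfrac1n\sum_r R_{rj}^{(iS)}R_{rj}^{(iS)*}$, run the $\E_r/\cQ_r$ split with fluctuation averaging (Lemmas~\ref{lemma:fluctuationavg}, \ref{lemma:bootstrapFA}), close a self-consistent equation $\cL_2(W)=\Oprec(n^{-1}+n^{-3\alpha})$, and invert via Lemma~\ref{lemma:linearmap2}. That part is correct and is essentially the paper's argument.

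The diagonal-block step, however, has a genuine gap. After Lemma~\ref{lemma:concentration}(c) reduces the quadratic-form term $\mathrm{I}$ to spectral pieces plus the partial-transpose error $n\|{R^{(iS)}}^\st\|_p^p$, you propose to pin this down by ``an $\E_r/\cQ_r$-splitting \ldots and a self-consistent $\cL_2$-relation inverted through Lemma~\ref{lemma:linearmap2}.'' But the $\cL_2$-argument that worked for $W$ exploits that $W$ is a \emph{single} element of $\cX$, so that averaging over $r$ produces the fixed-point map $\a\mapsto\tfrac1n\sum_r(\r_0)_{rr}\x\a\x(\r_0)_{rr}^*$ acting on $\cX$. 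The partial transpose ${R^{(iS)}}^\st=\sum_{k,\ell}E_{\ell k}\otimes R_{k\ell}^{(iS)}$ is a $\C^{n\times n}\otimes\cX$-valued object; it does not satisfy any analogue of (\ref{eq:generalfixedpointreduced}), and there is no operator of the form covered by Lemma~\ref{lemma:linearmap2} that governs it. You would need to formulate and quantitatively invert a new operator equation over the much larger space $\C^{n\times n}\otimes\cX$, which is not sketched and not obviously feasible.

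The paper closes this differently and far more elementarily: it decomposes $M^\st-\diag(M)=\sum_{j=1}^{n-1}M[j]$, where $M[j]=\sum_k E_{j+k,k}\otimes M_{k,j+k}$ collects the $j$-th shifted diagonal, observes that $M[j]M[j]^*$ is block-diagonal so that $\|M[j]\|_p\le\max_k\|M_{k,j+k}\|_p$, and then feeds in the \emph{already-improved} off-diagonal estimate $R^{(iS)}_{k,j+k}\prec n^{-\alpha'}$ from your own first step. This gives $\|{R^{(iS)}}^\st\|_p\prec n^{1-\alpha'}$ directly, with no new self-consistent equation. This is the key idea you are missing; without it the diagonal step does not close, because the partial-transpose error is precisely the quantity that cannot be controlled spectrally and cannot be fed into the $\cX$-level $\cL_2$ machinery.

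A smaller point: you say ``one also needs $\tfrac1n\sum_r^{(iS)}R_{rr}^{(iS)}-\m_0\prec n^{-\alpha'}$,'' but the paper avoids this by working with $\widetilde R_{ii}^{(S)}$ exactly as in Lemma~\ref{lemma:weakopnormestimates}: the $\widetilde R_{ii}^{(S)}$ satisfy the perturbed fixed-point equation (\ref{eq:RiiSfixedpoint}) \emph{identically}, so Corollary~\ref{cor:Stability of DX-valued fixed point}(a) requires only $\|\Delta^{(S)}\|_p$, not a separate trace estimate. Your extra step is not wrong, but it is not needed and adds a detour.
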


Before proving this lemma, we derive an estimate (Lemma \ref{lemma:bootstrapFA}
below) that we will use in conjunction with the fluctuation averaging result of
Lemma \ref{lemma:fluctuationavg}. Define
\begin{equation}\label{eq:partialexpectations}
\E_i[\cdot]=\E[\;\cdot \mid H^{(i)}], \qquad  \cQ_i=1-\E_i,
\end{equation}
where $\E_i$ is the expectation over the entries in only row and
column $i$ of $H$. Note that
$\E_i\E_j=\E_j\E_i$ for all $i \neq j$, so $\{\E_i,\cQ_i:i=1,\ldots,n\}$ form a 
commuting system of projections. Set $\cQ_T=\prod_{i \in T} \cQ_i$.

\begin{lem}\label{lemma:invRiibound}
Under Assumption \ref{assump:general}, for any fixed $l \geq 1$, uniformly over
$S \subset \{1,\ldots,n\}$ with $|S| \leq l$ and $i \notin S$,
\begin{equation}\label{eq:invRiibound}
(R_{ii}^{(S)})^{-1} \prec 1.
\end{equation}
\end{lem}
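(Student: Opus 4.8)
The plan is to read off $(R_{ii}^{(S)})^{-1}$ directly from the Schur-complement identity of Lemma \ref{resolvent identities}(a),
\[
(R_{ii}^{(S)})^{-1}=h_{ii}\x-\z_i-\x\Big(\sum_{r,s}^{(iS)}
h_{ir}R_{rs}^{(iS)}h_{si}\Big)\x,
\]
and to bound each of the three terms on the right in operator norm, uniformly over $S$ with $|S|\le l$ and $i\notin S$. The first term obeys $\|h_{ii}\x\|_\op\le\gamma|h_{ii}|$, and $|h_{ii}|\prec n^{-1/2}$ uniformly over $i$ by the moment bound (\ref{eq:momentassump}) together with a union bound over the at most $n^{l+1}$ pairs $(i,S)$, so this term is $\prec 1$. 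The second term is deterministic with $\|\z_i\|_\op\le\upsilon$.

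The only term needing a little care is the quadratic form, which I would rewrite as $\sum_{r,s}^{(iS)} h_{ir}R_{rs}^{(iS)}h_{si}=(\bh_i^{(iS)}\otimes 1)^*R^{(iS)}(\bh_i^{(iS)}\otimes 1)$, with $\bh_i^{(iS)}\in\C^n$ the vector used in Lemma \ref{lemma:weakopnormprelim} (entries $h_{ri}$ for $r\notin iS$ and $0$ otherwise). Testing against unit vectors $\xi,\zeta$ in the Hilbert space $\cH$ on which $\cX$ acts gives
\[
\big|\big\langle\xi,\,(\bh_i^{(iS)}\otimes 1)^*R^{(iS)}(\bh_i^{(iS)}\otimes 1)\,\zeta\big\rangle_\cH\big|
=\big|\big\langle\bh_i^{(iS)}\otimes\xi,\,R^{(iS)}\,(\bh_i^{(iS)}\otimes\zeta)\big\rangle\big|
\le\|R^{(iS)}\|_\op\,\|\bh_i^{(iS)}\|_2^2,
\]
so this term has operator norm at most $\delta^{-1}\|\bh_i^{(iS)}\|_2^2$, since $\Im(H^{(iS)}\otimes\x-\z)={-}\Im\z\le{-}\delta$ forces $\|R^{(iS)}\|_\op\le\delta^{-1}$ by Lemma \ref{im bound inverse}. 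Finally $\|\bh_i^{(iS)}\|_2\le\|H\e_i\|_2\le\|H\|_\op$, and the single event $\{\|H\|_\op\le 3\}$ has probability at least $1-n^{-D}$ for $n\ge n_0(D)$ (the standard operator-norm tail bound cited in the proof of Lemma \ref{lemma:weakopnormprelim}) and controls $\|\bh_i^{(iS)}\|_2$ for every $i$ and $S$ at once. Combining the three bounds, on this event $\|(R_{ii}^{(S)})^{-1}\|_\op\le\gamma|h_{ii}|+\upsilon+9\gamma^2\delta^{-1}$, which is $\prec 1$ uniformly.

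The last step is to pass from this operator-norm estimate to the $L^p$-statement meant by $(R_{ii}^{(S)})^{-1}\prec 1$: for any $\y\in\cX$ and $p\in[1,\infty)$ one has $\|\y\|_p=\phi(|\y|^p)^{1/p}\le\|\y\|_\op$ because $|\y|^p\le\|\y\|_\op^p\,1_\cX$ and $\phi(1_\cX)=1$ (the observation of Remark \ref{rek:op holds in finite dimension}, used here in the simpler direction), so the $L^p$-norms inherit the bound. I do not expect a genuine obstacle: this is an a priori operator-norm bound that uses only the resolvent identity, the trivial bound $\|R^{(iS)}\|_\op\le\delta^{-1}$, and boundedness of $\|H\|_\op$, with no appeal to the fixed-point stability theory. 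The only thing to keep track of is the uniformity over $i$ and over all $S$ with $|S|\le l$, which, as indicated, is handled by the union bound for $|h_{ii}|\prec n^{-1/2}$ and by the fact that $\{\|H\|_\op\le 3\}$ is a single event covering all the $\bh_i^{(iS)}$ simultaneously.
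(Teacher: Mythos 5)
Your proof is correct and follows essentially the same approach as the paper: both begin from the Schur-complement identity of Lemma~\ref{resolvent identities}(a), bound $h_{ii}\x$ and $\z_i$ directly, and control the quadratic form via $\|R^{(iS)}\|_\op\le\delta^{-1}$ together with $\|\bh_i^{(iS)}\|_2\prec 1$. The only minor implementation difference is that you obtain $\|\bh_i^{(iS)}\|_2\prec 1$ from the single event $\{\|H\|_\op\le 3\}$, while the paper cites the scalar case of Lemma~\ref{lemma:concentration}(a); both routes give the needed uniformity over $i$ and $S$ with $|S|\le l$.
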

\begin{proof}
Let $\bh_i^{(iS)}$ be the $i^\text{th}$ column of $H^{(iS)}$, having
entries $(\bh_i^{(iS)})_j=h_{ji}$ if $j \notin S \cup \{i\}$ and 0
otherwise. Lemma \ref{resolvent identities}(a) gives
\begin{align*}
\|(R_{ii}^{(S)})^{-1}\|_\op &\leq
    |h_{ii}|\pnorm{\x}{\op} + \pnorm{\z_i}{\op} +
    \|(\bh_i^{(iS)}\otimes\x)^*R^{(iS)}(\bh_i^{(iS)}\otimes\x)\|_\op\\
             &\leq |h_{ii}|\pnorm{\x}{\op} + \pnorm{\z_i}{\op} +
    \|R^{(iS)}\|_\op\pnorm{\x}{\op}^2\|\bh_i^{(iS)}\|_2^2.
        \end{align*}
We have $\|\x\|_\op,\|\z_i\|_\op \prec 1$ by (\ref{eq:xzbounds}),
$h_{ii} \prec n^{-1/2}$ by (\ref{eq:momentassump}), and
$\|\bh_i^{(iS)}\|_2 \prec 1$
by Lemma \ref{lemma:concentration}(a) in the scalar case of $\cX=\C$,
so the result follows.
\end{proof}

\begin{lem}\label{lemma:bootstrapFA}
For any $S \subset \{1,\ldots,n\}$ and $r,j \notin S$ with $r \neq j$, define
\[\cZ_{rj}^{(S)}=\cQ_r[R_{rj}^{(S)}R_{rj}^{(S)*}].\]
Fix any $l \geq 1$. Under the assumptions of Lemma \ref{lemma:bootstrapping},
uniformly over subsets $S \subset \{1,\ldots,n\}$ with
$|S| \leq l$, $r,j \notin S$ with $r \neq j$, and
$T \subset \{1,\ldots,n\} \setminus (S \cup \{r,j\})$ with $|T| \leq l$,
\[\cQ_T[\cZ_{rj}^{(S)}] \prec n^{-(2+|T|)\alpha}.\]
\end{lem}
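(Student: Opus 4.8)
The plan is to reduce the statement, using the commutativity of the projections $\{\cQ_i\}$, to an estimate for $\cQ_r\cQ_T[R_{rj}^{(S)}R_{rj}^{(S)*}]$, and then to prove the latter by an iterated resolvent expansion in the indices of $T$. Since $r \notin T$ and the $\cQ_i$ mutually commute, $\cQ_T[\cZ_{rj}^{(S)}]=\cQ_r\cQ_T[R_{rj}^{(S)}R_{rj}^{(S)*}]$. The base case $|T|=0$ is immediate: (\ref{eq:bootstrappingassumption}) gives $R_{rj}^{(S)} \prec n^{-\alpha}$, so $R_{rj}^{(S)}R_{rj}^{(S)*} \prec n^{-2\alpha}$ by Lemma \ref{lemma:domination}(b), and since $\|R^{(S)}\|_\op \leq \delta^{-1}$ deterministically (Lemma \ref{im bound inverse}), the moment hypotheses of Lemma \ref{lemma:domination}(c) hold and $\cQ_r=1-\cE_r$ preserves $\prec$, giving $\cZ_{rj}^{(S)} \prec n^{-2\alpha}$.

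For general $T=\{t_1,\ldots,t_k\}$, I would expand $R_{rj}^{(S)}R_{rj}^{(S)*}$ by repeatedly applying the resolvent identity Lemma \ref{resolvent identities}(c), $R_{ab}^{(S')}=R_{ab}^{(tS')}+R_{at}^{(S')}(R_{tt}^{(S')})^{-1}R_{tb}^{(S')}$, to each resolvent factor (and its adjoint) with respect to $t_1,\ldots,t_k$ in turn. This writes $R_{rj}^{(S)}R_{rj}^{(S)*}$ as a sum of boundedly many (depending only on $l$) monomials, each a product of off-diagonal resolvent factors $R_{ab}^{(S')}$ with $a \neq b$, $a,b \notin S'$, $|S'| \leq 2l$, and inverse-diagonal factors $(R_{cc}^{(S')})^{-1}$ with $c \notin S'$. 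Two structural facts drive the argument: (i) if every factor of a monomial has a fixed $t \in T$ in its superscript set, then that monomial is $H^{(t)}$-measurable, hence killed by $\cQ_t$ and a fortiori by $\cQ_T$; (ii) conversely, the quantity (number of off-diagonal factors) $-$ (number of ``second-summand'' splittings $R_{ab}^{(S')}\mapsto R_{at}^{(S')}(R_{tt}^{(S')})^{-1}R_{tb}^{(S')}$ performed so far) is invariant along the expansion and equals $2$ initially, so in any \emph{surviving} monomial --- one not of the type in (i) for some single $t\in T$ --- one must, for each $t\in T$, have a factor with $t$ among its lower indices (only a splitting with respect to $t$ can produce this), hence at least $|T|$ splittings total, hence at least $2+|T|$ off-diagonal factors.

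It then remains to assemble the bound: every off-diagonal factor $R_{ab}^{(S')}$ appearing is $\prec n^{-\alpha}$ by (\ref{eq:bootstrappingassumption}) (applied with $2l$ in place of $l$), every inverse-diagonal factor $(R_{cc}^{(S')})^{-1}$ is $\prec 1$ by Lemma \ref{lemma:invRiibound}, and both enjoy the polynomial moment bounds needed for Lemma \ref{lemma:domination}(c) (using $\|R^{(S')}\|_\op \leq \delta^{-1}$ and the explicit bound in the proof of Lemma \ref{lemma:invRiibound}). Thus each surviving monomial is $\prec n^{-(2+|T|)\alpha}$ by Lemma \ref{lemma:domination}(b); applying $\cQ_r\cQ_T$, a composition of $L^p$-bounded projections which preserves $\prec$, and summing over the boundedly many surviving monomials gives $\cQ_T[\cZ_{rj}^{(S)}] \prec n^{-(2+|T|)\alpha}$. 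Uniformity over $S,r,j,T$ follows from the uniform versions of (\ref{eq:bootstrappingassumption}) and Lemma \ref{lemma:invRiibound} together with a union bound over the $\leq n^{Cl}$ index choices.

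The main obstacle is precisely fact (ii): one must organize the multi-resolvent expansion carefully enough that every monomial not annihilated by $\cQ_T$ is certified to carry at least $2+|T|$ genuinely small ($\prec n^{-\alpha}$) off-diagonal factors --- a naive count risks yielding only something like $2+\lceil |T|/2\rceil$, which is too weak. The invariant ``off-diagonal factors minus splittings'' is the clean device for this, and the rest is a routine combination of Lemmas \ref{lemma:domination}, \ref{im bound inverse}, \ref{resolvent identities}, and \ref{lemma:invRiibound}.
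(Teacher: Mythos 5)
Your strategy matches the paper's: reduce to $\cQ_r\cQ_T[R_{rj}^{(S)}R_{rj}^{(S)*}]$, expand iteratively via Lemma \ref{resolvent identities}(c), argue each resulting monomial is either annihilated by $\cQ_T$ or carries at least $2+|T|$ off-diagonal factors, and assemble with (\ref{eq:bootstrappingassumption}), Lemma \ref{lemma:invRiibound}, and Lemma \ref{lemma:domination}; your invariant device is a tidy reformulation of the paper's inductive $L(T)/P(T)$ bookkeeping, and your base case and final assembly are correct.

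There is, however, a concrete gap in carrying out the classification. You expand only via the off-diagonal identity $R_{ab}^{(S')}=R_{ab}^{(tS')}+R_{at}^{(S')}(R_{tt}^{(S')})^{-1}R_{tb}^{(S')}$, leaving the inverse-diagonal factors $(R_{cc}^{(S')})^{-1}$ unexpanded, and this breaks the dichotomy your counting rests on. After one off-diagonal splitting with respect to $t_1$, the factor $(R_{t_1t_1}^{(S)})^{-1}$ is present; when you then expand with respect to $t_2$ this factor has $t_2$ neither in its superscript set nor among its lower indices. The monomial $R_{rt_1}^{(t_2 S)}\,(R_{t_1t_1}^{(S)})^{-1}R_{t_1j}^{(t_2 S)}\,R_{rj}^{(t_1t_2S)*}$ (for $T=\{t_1,t_2\}$) is therefore not of your type~(i) for any $t\in T$, so it is ``surviving,'' yet it has only three off-diagonal factors where four are needed --- the $n^{-(2+|T|)\alpha}$ estimate fails for it. The fix is to also expand every inverse-diagonal factor using the second identity of Lemma \ref{resolvent identities}(c), as the paper does (note that its displayed $L(\{i_1,i_2\})$ terms carry $(R_{i_1i_1}^{(i_2S)})^{-1}$ with $i_2$ already in the superscript). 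Once this is done, surviving monomials genuinely have, for each $t\in T$, some factor with $t$ among its lower indices, and your counting goes through with the small amendment that a second-summand inverse-diagonal expansion raises the off-diagonal count by two rather than one: the off-diagonal count is $2+s_{\mathrm{off}}+2s_{\mathrm{inv}} \geq 2+|T|$ once $s_{\mathrm{off}}+s_{\mathrm{inv}} \geq |T|$, so the quantity you call ``invariant'' is in fact only non-decreasing, which still suffices.
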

\begin{proof}
For $|T|=0$, we have $R_{rj}^{(S)}R_{rj}^{(S)*} \prec n^{-2\alpha}$ by the
assumption (\ref{eq:bootstrappingassumption}) and H\"older's inequality. Then also
$\cZ_{rj}^{(S)}=R_{rj}^{(S)}R_{rj}^{(S)*}-\E_r[R_{rj}^{(S)}R_{rj}^{(S)*}]
\prec n^{-2\alpha}$ by Lemma \ref{lemma:domination}(c), so the assertion holds
for $|T|=0$.

If $|T| \geq 1$, suppose $i_1 \in T$. Then Lemma \ref{resolvent identities}(c)
gives
\begin{align*}
R_{rj}^{(S)}R_{rj}^{(S)*}
&=\underbrace{R_{rj}^{(i_1S)}R_{rj}^{(i_1S)*}}_{=L(\{i_1\})}\\
&\quad
+\underbrace{\Big(R_{ri_1}^{(S)}\frac{1}{R_{i_1i_1}^{(S)}}R_{i_1j}^{(S)}\Big)
R_{rj}^{(i_1S)*}
+R_{rj}^{(i_1S)}\Big(R_{ri_1}^{(S)}\frac{1}{R_{i_1i_1}^{(S)}}R_{i_1j}^{(S)}\Big)^*
+\Big(R_{ri_1}^{(S)}\frac{1}{R_{i_1i_1}^{(S)}}R_{i_1j}^{(S)}\Big)
\Big(R_{ri_1}^{(S)}\frac{1}{R_{i_1i_1}^{(S)}}R_{i_1j}^{(S)}\Big)^*}_{=P(\{i_1\})}.
\end{align*}
Here, the first term $L(\{i_1\})$ is independent of the entries of row and
column $i_1$ of $H$, so $\cQ_{i_1}[L(\{i_1\})]=0$. The remaining
terms constituting $P(\{i_1\})$ each have at least 3 off-diagonal resolvent
factors, i.e.\ factors of the form
$R_{pq}^{(S')}$ or $R_{pq}^{(S')*}$ for some $p \neq q$ and $p,q \notin S'$,
so (\ref{eq:invRiibound}) and
(\ref{eq:bootstrappingassumption}) imply $P(\{i_1\}) \prec n^{-3\alpha}$.

Now if $|T| \geq 2$ and $i_2 \in T$ with $i_2 \neq i_1$, we 
apply Lemma \ref{resolvent identities}(c) again
to expand each factor of $P(\{i_1\})$ over $i_2$, yielding
    \begin{align*}
R_{rj}^{(S)}R_{rj}^{(S)*}=L(\{i_1,i_2\})+P(\{i_1,i_2\})
    \end{align*}
    where
    \begin{align*}
    L(\{i_1,i_2\}) &=
L(\{i_1\})+\Big(R_{ri_1}^{(i_2S)}\frac{1}{R_{i_1i_1}^{(i_2S)}}R_{i_1j}^{(i_2S)}\Big)
R_{rj}^{(i_1i_2S)*}
+R_{rj}^{(i_1i_2S)}\Big(R_{ri_1}^{(i_2S)}\frac{1}{R_{i_1i_1}^{(i_2S)}}R_{i_1j}^{(i_2S)}\Big)^*\\
&\hspace{1in}
+\Big(R_{ri_1}^{(i_2S)}\frac{1}{R_{i_1i_1}^{(i_2S)}}R_{i_1j}^{(i_2S)}\Big)
\Big(R_{ri_1}^{(i_2S)}\frac{1}{R_{i_1i_1}^{(i_2S)}}R_{i_1j}^{(i_2S)}\Big)^*
\end{align*}
and $P(\{i_1,i_2\})$ collects all remaining terms in the expansion of
$P(\{i_1\})$. Each term of $L(\{i_1,i_2\})$ is independent of the
entries of either row and column $i_1$ or $i_2$ of $H$, so
$Q_{\{i_1,i_2\}}[L(\{i_1,i_2\})]=0$.
Each term of $P(\{i_1,i_2\})$ has at least 4 off-diagonal resolvent factors,
so $P(\{i_1,i_2\}) \prec n^{-4\alpha}$. Inductively applying
Lemma (\ref{resolvent identities})(c) to expand $P(\{i_1,\ldots,i_k\})$ in each
successive index $i_{k+1}$ of $T$, this shows
    \begin{align*}
R_{rj}^{(S)}R_{rj}^{(S)*}=L(T)+P(T)
    \end{align*}
    where
    \begin{itemize}
        \item Each term in $L(T)$ is independent of the entries of
row and column $i$ of $H$
for at least one index $i \in T$, so $\cQ_T[L(T)]=0$.
        \item $P(T)$ is a sum of at most $C_l$ summands, each summand a product
of at most $C_l$ factors, for a constant $C_l>0$ depending only on the given
upper bound $l$ for $|T|$.
        \item Each term of $P(T)$ has at least $2+|T|$ off-diagonal resolvent
factors, and hence by (\ref{eq:bootstrappingassumption})
is of size $\Oprec(n^{-(2+|T|)\alpha})$.
    \end{itemize}
Then $P(T) \prec n^{-(2+|T|)\alpha}$, uniformly over $T \subset \{1,\ldots,n\}
\setminus (S \cup \{r,j\})$ with $|T| \leq l$.
This implies $\cQ_{T \cup \{r\}}[L(T)]=0$ and
$\cQ_{T \cup \{r\}}[P(T)] \prec n^{-(2+|T|)\alpha}$ by
Lemma \ref{lemma:domination}(c), so
$\cQ_T[\cZ_{rj}^{(T)}]=\cQ_{T \cup \{r\}}[R_{rj}^{(T)}R_{rj}^{(T)*}]
\prec n^{-(2+|T|)\alpha}$ as desired.
\end{proof}

\begin{proof}[Proof of Lemma \ref{lemma:bootstrapping}]
In view of Lemma \ref{resolvent identities}(b) for the form of
$R_{ij}^{(S)}$, we consider first the quantity
\[\sum_r^{(iS)}h_{ir}R_{rj}^{(iS)}\]
for $S \subset \{1,\ldots,n\}$ with $|S| \leq l$ and $i,j \notin S$
with $i \neq j$.
Recalling the assumption $\E[(\sqrt{n}|h_{ij}|)^p] \leq C_p$ and
applying Lemma \ref{lemma:concentration}(a) conditional on $H^{(iS)}$,
for any $p \in [2,\infty)$ and a constant $C_p>0$,
\begin{equation}\label{eq:rosenthalapplication1}
\E\bigg[\bigg\|\sum_r^{(iS)}h_{ir}R_{rj}^{(iS)}\bigg\|_p^p\bigg]
\leq C_p n^{-p/2}\max\bigg\{\E\bigg[\bigg\|\bigg(\sum_r^{(iS)}
R_{rj}^{(iS)}R_{rj}^{(iS)*}\bigg)^{1/2}\bigg\|_p^p\bigg],
\E\bigg[\bigg\|\bigg(\sum_r^{(iS)}
R_{rj}^{(iS)*}R_{rj}^{(iS)}\bigg)^{1/2}\bigg\|_p^p\bigg]\bigg\}.
\end{equation}
The second bound of (\ref{eq:rosenthalapplication1})
may be controlled spectrally: Using that
$R^{(iS)}_{rj}=0$ for $r \in S \cup \{i\}$ and $j \notin S \cup \{i\}$,
\[\bigg\|\sum_r^{(iS)} R_{rj}^{(iS)*}R_{rj}^{(iS)}\bigg\|_\op
=\Big\|(\e_j \otimes 1) R^{(iS)*} R^{(iS)} (\e_j \otimes 1)\Big\|_\op
\leq \|R^{(iS)}\|_\op^2 \leq \delta^{-2}.\]
Thus, applying monotonicity of the $L^p$-norm in $p$
(Lemma \ref{Holder's inequality}),
\begin{equation}\label{eq:rosenthalseconderrorbound}
\E\bigg[\bigg\|\bigg(\sum_r^{(iS)}
R_{rj}^{(iS)*}R_{rj}^{(iS)}\bigg)^{1/2}\bigg\|_p^p\bigg]
\leq \E\bigg[\bigg\|\sum_r^{(iS)} R_{rj}^{(iS)*}R_{rj}^{(iS)}\bigg\|_\op^{p/2}
\bigg] \leq \delta^{-p}.
\end{equation}

For the first bound of (\ref{eq:rosenthalapplication1}), let us write
\[\sum_r^{(iS)} R_{rj}^{(iS)}R_{rj}^{(iS)*}
=R_{jj}^{(iS)}R_{jj}^{(iS)*}+\sum_r^{(ijS)} \E_r[R_{rj}^{(iS)}R_{rj}^{(iS)*}]
+\sum_r^{(ijS)} \cQ_r[R_{rj}^{(iS)}R_{rj}^{(iS)*}].\]
Applying $\|R_{jj}^{(iS)}\|_\op \leq \delta^{-1}$ for the first term, and
Lemma \ref{lemma:fluctuationavg}(a) with the estimates of Lemma
\ref{lemma:bootstrapFA} for the third term, this gives
\begin{equation}\label{eq:nonspectralexpand2}
\sum_r^{(iS)} R_{rj}^{(iS)}R_{rj}^{(iS)*}
=\sum_r^{(ijS)} \E_r[R_{rj}^{(iS)}R_{rj}^{(iS)*}]+\Oprec(1+n^{1-3\alpha}).
\end{equation}
We expand the resolvent for the remaining term of (\ref{eq:nonspectralexpand2}),
applying Lemma \ref{lemma:concentration}(b) to write, for any $r \notin S \cup
\{i,j\}$,
\[R_{rj}^{(iS)}={-}R_{rr}^{(iS)}\x\sum_s^{(irS)} h_{rs}R_{sj}^{(irS)}.\]
Here $R_{rj}^{(iS)} \prec n^{-\alpha}$ by the second statement of
(\ref{eq:bootstrappingassumption}),
so multiplying by $(R_{rr}^{(iS)})^{-1}$ and $\x^{-1}$ and applying
(\ref{eq:invRiibound}) and $\|\x^{-1}\|_\op \leq \gamma$, also
$\sum_s^{(irS)} h_{rs}R_{sj}^{(irS)} \prec n^{-\alpha}$.
Then the first statement of (\ref{eq:bootstrappingassumption}) gives
\[R_{rj}^{(iS)}={-}(\r_0)_{rr}\x\sum_s^{(irS)} h_{rs}R_{sj}^{(irS)}
+\Oprec(n^{-2\alpha}).\]
Applying this and independence of $R_{sj}^{(irS)}$ with the variables in
row/column $r$ of $H$,
\begin{align*}
\E_r[R_{rj}^{(iS)}R_{rj}^{(iS)*}]
&=(\r_0)_{rr}\x\bigg(\sum_{s,t}^{(irS)}
\E[h_{rs}h_{tr}]R_{sj}^{(irS)}R_{jt}^{(irS)*}\bigg)\x(\r_0)_{rr}^*
+\Oprec(n^{-3\alpha})\\
&=\frac{1}{n}(\r_0)_{rr}\x\bigg(\sum_s^{(irS)}
R_{sj}^{(irS)}R_{js}^{(irS)*}\bigg)\x(\r_0)_{rr}^*
+\Oprec(n^{-3\alpha}).
\end{align*}
Recalling $r \notin S \cup \{i,j\}$, we have
\begin{align*}
\sum_s^{(irS)} R_{sj}^{(irS)}R_{js}^{(irS)*}
&=R_{jj}^{(irS)}R_{jj}^{(irS)*}
+\sum_s^{(ijrS)} R_{sj}^{(irS)}R_{js}^{(irS)*}\\
&=R_{jj}^{(irS)}R_{jj}^{(irS)*}
+\sum_s^{(ijrS)} R_{sj}^{(iS)}R_{js}^{(iS)*}+\Oprec(n^{1-3\alpha})\\
&=\sum_s^{(iS)} R_{sj}^{(iS)}R_{js}^{(iS)*}+\Oprec(1+n^{1-3\alpha}),
\end{align*}
the second line using
$R_{sj}^{(irS)}=R_{sj}^{(iS)}+\Oprec(n^{-2\alpha})$ 
for $r \notin S \cup \{i,j,s\}$ by Lemma
\ref{resolvent identities}(c), (\ref{eq:bootstrappingassumption}), and
(\ref{eq:invRiibound}), and the third line using $\|R_{jj}^{(irS)}\|_\op \leq
\delta^{-1}$ and $\|R_{sj}^{(iS)}\|_\op \leq \delta^{-1}$ for $s \in \{j,r\}$.
So
\[\E_r[R_{rj}^{(iS)}R_{rj}^{(iS)*}]
=\frac{1}{n}(\r_0)_{rr}\x\bigg(\sum_s^{(iS)}
R_{sj}^{(iS)}R_{js}^{(iS)*}\bigg)\x(\r_0)_{rr}^*
+\Oprec(n^{-1}+n^{-3\alpha}).\]
Summing over $r \notin S \cup \{i,j\}$, this gives
\begin{align*}
\sum_r^{(ijS)} \E_r[R_{rj}^{(iS)}R_{rj}^{(iS)*}]
&=\sum_r^{(ijS)} \frac{1}{n}(\r_0)_{rr}\x
\bigg(\sum_s^{(iS)} R_{sj}^{(iS)}R_{js}^{(iS)*}\bigg)\x(\r_0)_{rr}^*
+\Oprec(1+n^{1-3\alpha})\\
&=\sum_{r=1}^n \frac{1}{n}(\r_0)_{rr}\x
\bigg(\sum_s^{(iS)} R_{sj}^{(iS)}R_{js}^{(iS)*}\bigg)\x(\r_0)_{rr}^*
+\Oprec(1+n^{1-3\alpha}),
\end{align*}
where the second step applies
the trivial bound $\|\sum_s^{(iS)} R_{sj}^{(iS)}R_{js}^{(iS)*}\|_\op
\leq n\|R^{(iS)}\|_\op^2 \leq n\delta^{-2}$ to include the summands for
$r \in S \cup \{i,j\}$ with an additional $\Oprec(1)$ error. Applying this back to (\ref{eq:nonspectralexpand2}), we
obtain
\[\cL_2\bigg(\sum_r^{(iS)} R_{rj}^{(iS)}R_{rj}^{(iS)*}\bigg) \prec
1+n^{1-3\alpha}\]
where here $\cL_2$ is the linear operator of Lemma \ref{lemma:linearmap2}.
By the quantitative invertibility of $\cL_2$ established in
Lemma \ref{lemma:linearmap2}, this implies
\[\sum_r^{(iS)} R_{rj}^{(iS)}R_{rj}^{(iS)*}
\prec 1+n^{1-3\alpha}.\]
Then, for any fixed $p \in [2,\infty)$, we get (from this and Lemma
\ref{lemma:domination})
\begin{equation}\label{eq:rosenthalfirsterrorbound}
\E\bigg[\bigg\|\bigg(\sum_r^{(iS)}
R_{rj}^{(iS)}R_{rj}^{(iS)*}\bigg)^{1/2}\bigg\|_p^p\bigg]
\prec 1+n^{(1-3\alpha)(p/2)},
\end{equation}
which controls the first bound of (\ref{eq:rosenthalapplication1}).

Then, applying (\ref{eq:rosenthalfirsterrorbound}) and
(\ref{eq:rosenthalseconderrorbound}) to
(\ref{eq:rosenthalapplication1}), for any fixed $p \in [2,\infty)$ and
all $n \geq n_0(p)$,
\[\E\bigg[\bigg\|\sum_r^{(iS)} h_{ir}R_{rj}^{(iS)}\bigg\|_p^p\bigg]
\leq n^{-p\alpha'+0.1}\]
where we have set $\alpha'=\min(\frac{3\alpha}{2},\frac{1}{2})$.
Then, for any fixed $q \in [1,\infty)$ and $\epsilon,D>0$, choosing $p \geq q$
large enough so that $p\epsilon>D+0.1$ and applying Markov's inequality and
monotonicity of $\|\cdot\|_p$ in $p$ (Lemma \ref{Holder's inequality}),
for all $n \geq n_0(q,\epsilon,D)$,
\begin{equation}\label{eq:Markov1}
\P\bigg[\bigg\|\sum_r^{(iS)}
h_{ir}R_{rj}^{(iS)}\bigg\|_q \geq n^{-\alpha'+\epsilon}\bigg]
\leq n^{p\alpha'-p\epsilon}
\E\bigg[\bigg\|\sum_r^{(iS)}
h_{ir}R_{rj}^{(iS)}\bigg\|_q^p\bigg]
\leq n^{p\alpha'-p\epsilon}
\E\bigg[\bigg\|\sum_r^{(iS)}
h_{ir}R_{rj}^{(iS)}\bigg\|_p^p\bigg]<n^{-D}.
\end{equation}
This shows $\sum_r^{(iS)} h_{ir}R_{rj}^{(iS)} \prec n^{-\alpha'}$.
Then by Lemma \ref{resolvent identities}(b) and the bound
$\|R_{ii}^{(S)}\|_\op \leq \delta^{-1}$, we have the improved estimate
(uniformly over $S \subset \{1,\ldots,n\}$ with $|S| \leq l$ and
$i,j \notin S$ with $i \neq j$)
\begin{equation}\label{eq:offdiagimproved}
R_{ij}^{(S)} \prec n^{-\alpha'}.
\end{equation}

Now to show $R_{ii}^{(S)}-(\r_0)_{ii} \prec n^{-\alpha'}$, recall from
(\ref{eq:Riiapproxfixedpoint}) that
\[\widetilde
R_{ii}^{(S)}=\bigg({-}\z_i-\Delta_i^{(S)}-\x\bigg[\frac{1}{n}\sum_{j=1}^n
\widetilde R_{jj}^{(S)}\bigg]\x\bigg)^{-1}\]
where $\widetilde R_{ii}^{(S)}=R_{ii}^{(S)}$ for all $i \notin S$, and
$\Delta_i^{(S)}$ is the error defined by (\ref{eq:expression of Delta_i})
and (\ref{eq:Deltaalt}). For
the term $\mathrm{I}$ of (\ref{eq:expression of Delta_i}), applying
Lemma \ref{lemma:concentration}(a) conditional on $H^{(iS)}$ gives,
for any $p \geq 2$ and a constant $C_p>0$,
\begin{align*}
&\E\bigg[\bigg\|\sum_r^{(iS)}\left(|h_{ir}|^2-\frac{1}{n}\right)R_{rr}^{(iS)}
\bigg\|_p^p\bigg]\\
&\leq C_pn^{-p}\max\bigg\{
\E\bigg[\bigg\|\bigg(\sum_r^{(iS)}R_{rr}^{(iS)}R_{rr}^{(iS)*}\bigg)^{1/2}\bigg\|_p^p\bigg],\;
\E\bigg[\bigg\|\bigg(\sum_r^{(iS)}R_{rr}^{(iS)*}R_{rr}^{(iS)}\bigg)^{1/2}\bigg\|_p^p\bigg]\bigg\}\\
&\leq C_pn^{-p}\max\bigg\{
\E\bigg[\bigg\|\bigg(\sum_{r,s=1}^n R_{rs}^{(iS)}R_{rs}^{(iS)*}\bigg)^{1/2}\bigg\|_p^p\bigg],\;
\E\bigg[\bigg\|\bigg(\sum_{r,s=1}^n R_{rs}^{(iS)*}R_{rs}^{(iS)}\bigg)^{1/2}\bigg\|_p^p\bigg]\bigg\}
\end{align*}
where the second inequality applies monotonicity of the operator square-root $0
\leq \x \leq \y \Rightarrow \x^{1/2} \leq \y^{1/2}$ and monotonicity of the
$L^p$-norm over the positive cone (Lemma \ref{Holder's inequality}). Similarly
applying Lemma \ref{lemma:concentration}(c) conditional on $H^{(iS)}$ for the
summation over $r \neq s$, the term $\mathrm{I}$ of (\ref{eq:expression
of Delta_i}) is bounded as
\begin{align}
&\E\bigg[\bigg\|\sum_r^{(iS)}\left(|h_{ir}|^2-\frac{1}{n}\right)R_{rr}^{(iS)}
+\sum_{r\neq s}^{(iS)}h_{ir}R_{rs}^{(iS)}h_{si}\bigg\|_p^p\bigg]\notag\\
&\leq C_pn^{-p}\max\bigg\{
\E\bigg[\bigg\|\bigg(\sum_{r,s=1}^n R_{rs}^{(iS)}R_{rs}^{(iS)*}\bigg)^{1/2}\bigg\|_p^p\bigg],\;
\E\bigg[\bigg\|\bigg(\sum_{r,s=1}^n R_{rs}^{(iS)*}R_{rs}^{(iS)}\bigg)^{1/2}\bigg\|_p^p\bigg],\notag\\
&\hspace{2in}n\,\E[\|R^{(iS)}-\diag(R^{(iS)})\|_p^p],\;n\,\E[\|{R^{(iS)}}^\st-\diag(R^{(iS)})\|_p^p]\bigg\}
\label{eq:rosenthal2}
\end{align}
where ${R^{(iS)}}^\st=\sum_{k,l=1}^n E_{kl} \otimes R^{(iS)}_{lk}$
is the partial transpose of $R^{(iS)}$ as defined in
(\ref{eq:partialtranspose}), and here
$\diag(R^{(iS)})=\sum_{k=1}^n E_{kk} \otimes
R_{kk}^{(iS)}$ is the operator that has only the diagonal entries
of $R^{(iS)}$. The first three bounds on the right side of
(\ref{eq:rosenthal2}) may be controlled spectrally: For the third bound,
\begin{equation}\label{eq:rosenthal2error3}
n\,\E[\|R^{(iS)}-\diag(R^{(iS)})\|_p^p]
\leq n\,\E[(\|R^{(iS)}\|_\op+\|\diag(R^{(iS)})\|_\op)^p] 
\leq n(2\delta^{-1})^p.
\end{equation}
For the first and second bounds,
\begin{align*}
\bigg\|\sum_{r,s=1}^n R_{rs}^{(iS)}R_{rs}^{(iS)*}\bigg\|_\op
&=n\,\Big\|(n^{-1}\Tr \otimes 1)R^{(iS)}R^{(iS)*}\Big\|_\op
\leq n\|R^{(iS)}\|_\op^2 \leq n\delta^{-2},\\
\bigg\|\sum_{r,s=1}^n R_{rs}^{(iS)*}R_{rs}^{(iS)}\bigg\|_\op
&=n\,\Big\|(n^{-1}\Tr \otimes 1)R^{(iS)*}R^{(iS)}\Big\|_\op
\leq n\|R^{(iS)}\|_\op^2 \leq n\delta^{-2}.
\end{align*}
Thus
\begin{equation}\label{eq:rosenthal2error1}
\E\bigg[\bigg\|\bigg(\sum_{r,s=1}^n
R_{rs}^{(iS)}R_{rs}^{(iS)*}\bigg)^{1/2}\bigg\|_p^p\bigg],\;
\E\bigg[\bigg\|\bigg(\sum_{r,s=1}^n R_{rs}^{(iS)*}R_{rs}^{(iS)}\bigg)^{1/2}\bigg\|_p^p\bigg]
\leq n^{p/2}\delta^{-p}.
\end{equation}

For the fourth bound $n\,\E[\|{R^{(iS)}}^\st-\diag(R^{(iS)})\|_p^p]$ of (\ref{eq:rosenthal2}),
we apply the following argument:
For any $M=\sum_{j,k}E_{jk} \otimes M_{jk} \in \C^{n \times n} \otimes \cX$,
note that
\[M^\st-\diag(M)=\sum_{1 \leq j \neq k \leq n} E_{jk} \otimes M_{kj}
=\sum_{j=1}^{n-1} M[j], \qquad \text{ for } M[j]=\sum_{k=1}^n E_{j+k,k}
\otimes M_{k,j+k}\]
where $j+k$ is interpreted modulo $n$. Here
\begin{align*}
\|M[j]\|_p^p&=\|(M[j]M[j]^*)^{1/2}\|_p^p
=\bigg\|\sum_{k=1}^n E_{j+k,j+k} \otimes (M_{k,j+k}M_{k,j+k}^*)^{1/2}
\bigg\|_p^p\\
&=(n^{-1}\Tr \otimes \tau)
\sum_{k=1}^n E_{j+k,j+k} \otimes (M_{k,j+k}M_{k,j+k}^*)^{p/2}
=\frac{1}{n}\sum_{k=1}^n \|M_{k,j+k}\|_p^p
\leq \max_{k=1}^n \|M_{k,j+k}\|_p^p,
\end{align*}
so
\[\|M^\st-\diag(M)\|_p \leq \sum_{j=1}^{n-1} \|M[j]\|_p
\leq \sum_{j=1}^{n-1}\Big(\max_{k=1}^n \|M_{k,j+k}\|_p\Big).\]
Applying this to $M=R^{(iS)}$, observe that for each $j=1,\ldots,n-1$ we have
$R^{(iS)}_{k,j+k} \prec n^{-\alpha'}$ if $k,j+k \notin S \cup \{i\}$ as
already shown in (\ref{eq:offdiagimproved}),
and $R^{(iS)}_{k,j+k}=0$ by definition if $k \in S \cup \{i\}$ or $j+k
\in S \cup \{i\}$ with $k \neq j+k$. Thus
$\max_k \|R_{k,j+k}^{(iS)}\|_p \prec n^{-\alpha'}$, so
$\|{R^{(iS)}}^\st\|_p \prec n^{1-\alpha'}$, and
\begin{equation}\label{eq:rosenthal2error4}
n\,\E[\|{R^{(iS)}}^\st\|_p^p]
\prec n \cdot n^{p(1-\alpha')}.
\end{equation}
Applying (\ref{eq:rosenthal2error3}), (\ref{eq:rosenthal2error1}),
and (\ref{eq:rosenthal2error4}) to
(\ref{eq:rosenthal2}), for any fixed $p \in [2,\infty)$ 
and all $n \geq n_0(p)$,
\[\E\bigg[\bigg\|\sum_r^{(iS)}\left(|h_{ir}|^2-\frac{1}{n}\right)R_{rr}^{(iS)}
+\sum_{r\neq s}^{(iS)}h_{ir}R_{rs}^{(iS)}h_{si}\bigg\|_p^p\bigg]
\leq n^{\max(1-p,-p/2,1-p\alpha')+0.1}
=n^{-p\alpha'+1.1},\]
the last equality using $\alpha' \leq 1/2$.
Then, for any fixed $q \in [1,\infty)$ and
$\epsilon,D>0$, applying this with $p \geq q$ large enough so that
$p\epsilon>D+1.1$, we obtain similarly to (\ref{eq:Markov1})
\[\P\bigg[\bigg\|\sum_r^{(iS)}\left(|h_{ir}|^2-\frac{1}{n}\right)R_{rr}^{(iS)}
+\sum_{r\neq s}^{(iS)}h_{ir}R_{rs}^{(iS)}h_{si}\bigg\|_q \geq
n^{-\alpha'+\epsilon}\bigg]<n^{-D}.\]
Hence, for the term $\mathrm{I}$ defining $\Delta_i^{(S)}$ in
(\ref{eq:expression of Delta_i}), we have $\mathrm{I} \prec n^{-\alpha'}$.

For the other terms of (\ref{eq:expression of Delta_i}), we have
$\mathrm{II},\mathrm{IV} \prec n^{-1}$ by (\ref{eq:DeltaII}),
and $\mathrm{III} \prec n^{-2\alpha'}$ by its form (\ref{eq:DeltaIII})
and (\ref{eq:offdiagimproved}) and (\ref{eq:invRiibound}).
Applying also $h_{ii} \prec n^{-1/2}$, this gives (uniformly over
$i=1,\ldots,n$)
\begin{equation}\label{eq:Deltapnormbound}
\Delta_i^{(S)} \prec n^{-\alpha'}.
\end{equation}
Then, setting $\Delta^{(S)}=\sum_i E_{ii} \otimes \Delta_i^{(S)}$, we have
$\|\Delta^{(S)}\|_p^p=n^{-1}\sum_i \|\Delta_i^{(S)}\|_p^p$ for any $p \in [1,\infty)$ so
also $\|\Delta^{(S)}\|_p \prec n^{-\alpha'}$ for each fixed $p$.
Recalling the weak estimate for
$\|\Delta^{(S)}\|_\op$ from (\ref{eq:Deltaopnormbound}) on the high probability event
$\cE$, which ensures $\Im(\z+\Delta^{(S)}) \geq \delta/2$,
this implies by (\ref{eq:RiiSfixedpoint}) and
Corollary \ref{cor:Stability of DX-valued fixed point}(a) applied now in the
norm $\|\cdot\|_p$ that
\[\bigg\|\sum_{i=1}^n E_{ii} \otimes \widetilde R_{ii}^{(S)}-\r_0\bigg\|_p
\prec n^{-\alpha'}.\]
Thus, for any $i \notin S$, any $q \in [1,\infty)$, and any $\epsilon,D>0$,
choosing $p \geq q$ large enough such that $p\epsilon>1.1$,
\[\|R_{ii}^{(S)}-(\r_0)_{ii}\|_q^p
\leq \sum_{i=1}^n \|\widetilde R_{ii}^{(S)}-(\r_0)_{ii}\|_p^p
=n\bigg\|\sum_{i=1}^n E_{ii} \otimes \widetilde R_{ii}^{(S)}-\r_0\bigg\|_p^p
\leq n^{-\alpha'p+1.1}<n^{(-\alpha'+\epsilon)p}\]
with probability at least $1-n^{-D}$ for $n \geq n_0(q,\epsilon,D)$.
This shows $R_{ii}^{(S)}-(\r_0)_{ii} \prec n^{-\alpha'}$.
\end{proof}

\subsection{Proof of Theorem \ref{aux:main theorem}}\label{subsec:proofgeneral}

We now prove the main result of this section,
Theorem \ref{aux:main theorem}.

\begin{proof}[Proof of Theorem \ref{aux:main theorem}, (\ref{DX-valued Stieltjes
transform estimates}--\ref{off diagonal estimates})]

Lemma \ref{lemma:weakopnormestimates} implies
$R_{ii}^{(S)}-(\r_0)_{ii} \prec n^{-\alpha}$ and $R_{ij}^{(S)}
\prec n^{-\alpha}$ for some $\alpha>0$, uniformly over sets $S$ with $|S| \leq
l$ and $i,j \notin S$ with $i \neq j$. Then, iterating
Lemma \ref{lemma:bootstrapping} a constant number of times, we get
$R_{ii}^{(S)}-(\r_0)_{ii} \prec n^{-1/2}$ and $R_{ij}^{(S)} \prec n^{-1/2}$,
implying for $S=\emptyset$ the statements
(\ref{DX-valued Stieltjes transform estimates}--\ref{off diagonal estimates}).
\end{proof}

For the remaining statements of Theorem \ref{aux:main theorem}, we collect here
several estimates needed for additional applications of fluctuation averaging
(Lemma \ref{lemma:fluctuationavg}).

\begin{lem}\label{lemma:mainproofFA}
Under Assumption \ref{assump:general}, define
\begin{align*}
\cZ_i&=(\r_0)_{ii}\cQ_i[R_{ii}^{-1}](\r_0)_{ii},\\
\cZ_{ij}&=\cQ_i\cQ_j[R_{ij}] \text{ for } i \neq j,\\
\cK_r^{(ij)}&=(\r_0)_{ii}\x(\r_0)_{rr}\x(\r_0)_{ii}\x\cQ_r[R_{rj}^{(i)}]
\text{ for distinct } r,i,j.
\end{align*}
Fix any $l \geq 1$.
\begin{enumerate}[(a)]
\item Uniformly over $S \subset \{1,\ldots,n\}$ with $|S| \leq l$
and $i \notin S$, $\cQ_S[\cZ_i] \prec n^{-1/2-|S|/2}$.
\item Uniformly over $S \subset \{1,\ldots,n\}$ with $|S| \leq l$
and $i,j \notin S$ with $i \neq j$, $\cQ_S[\cZ_{ij}] \prec n^{-1/2-|S|/2}$.
\item Uniformly over $S \subset \{1,\ldots,n\}$ with $|S| \leq l$
and distinct $i,j,r \notin S$, $\cQ_S[\cK_r^{(ij)}] \prec n^{-1/2-|S|/2}$.
\end{enumerate}
\end{lem}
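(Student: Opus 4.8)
The plan is to obtain all three bounds by the same mechanism already used for Lemma~\ref{lemma:bootstrapFA}. Each of $\cZ_i$, $\cZ_{ij}$, $\cK_r^{(ij)}$ is a fixed element of $\cX$ --- a short word in the blocks $(\r_0)_{kk}$ and $\x$, of operator norm at most $(\gamma\delta^{-1})^{O(1)}$ --- multiplied by $\cQ_{T_0}$ of a resolvent quantity that is already $\Oprec(n^{-1/2})$, with $T_0=\{i\}$, $\{i,j\}$, $\{r\}$ respectively; I will show that each additional index $i_1\in S$ contributes a further factor $n^{-1/2}$, exactly as in Lemma~\ref{lemma:bootstrapFA}. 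Applying the resolvent expansion Lemma~\ref{resolvent identities}(c) in $i_1$ rewrites every off-diagonal block $R_{pq}^{(S')}$, and every diagonal block or its inverse, as an $\mathscr G_{i_1}$-measurable leading term plus correction terms each carrying one extra off-diagonal factor; the leading term is killed by $\cQ_{i_1}$, and each extra off-diagonal factor is $\Oprec(n^{-1/2})$ by the estimates $R_{pq}^{(S')}\prec n^{-1/2}$ and $(R_{pp}^{(S')})^{-1}\prec 1$ coming from the already-proven part of Theorem~\ref{aux:main theorem} together with Lemma~\ref{lemma:invRiibound}. Iterating over the $O_l(1)$ indices of $S$ and combining with Lemma~\ref{lemma:domination}(a)--(c) will give $\cQ_S[\,\cdot\,]\prec n^{-1/2-|S|/2}$.

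For (b) and (c) this is essentially immediate: $\cZ_{ij}=\cQ_{\{i,j\}}[R_{ij}]$ with $R_{ij}\prec n^{-1/2}$ by \eqref{off diagonal estimates}, and $\cK_r^{(ij)}$ is a bounded deterministic multiple of $\cQ_{\{r\}}[R_{rj}^{(i)}]$ with $R_{rj}^{(i)}\prec n^{-1/2}$; so the cases $|S|=0$ follow from Lemma~\ref{lemma:domination}(c), and for $|S|\ge 1$ I will expand the single off-diagonal block $R_{ij}$ (resp.\ $R_{rj}^{(i)}$) successively over the indices of $S$ precisely as in Lemma~\ref{lemma:bootstrapFA}, and then apply $\cQ_{T_0}$ via Lemma~\ref{lemma:domination}(c).

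Part (a) needs extra work. By Lemma~\ref{resolvent identities}(a), $R_{ii}^{-1}=h_{ii}\x-\z_i-\x\big(\sum_{r,s}^{(i)}h_{ir}R_{rs}^{(i)}h_{si}\big)\x$; since $\z_i$ and $\tfrac1n\sum_r^{(i)}R_{rr}^{(i)}$ are $\mathscr G_i$-measurable and $\E[h_{ir}h_{si}]=\delta_{rs}/n$ for $r,s\ne i$, applying $\cQ_i$ leaves $\cQ_i[R_{ii}^{-1}]=h_{ii}\x-\x\,\mathrm I\,\x$ with $\mathrm I$ exactly the term ``$\mathrm I$'' of \eqref{eq:expression of Delta_i}. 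Running the Khintchine estimate from the proof of Lemma~\ref{lemma:bootstrapping} with the already-established input $R_{ij}^{(S)}\prec n^{-1/2}$, $R_{ii}^{(S)}-(\r_0)_{ii}\prec n^{-1/2}$ (so that the partial-transpose contribution is of order $n\cdot n^{p/2}$) gives $\mathrm I\prec n^{-1/2}$, and with $h_{ii}\prec n^{-1/2}$ this yields the base case $\cZ_i\prec n^{-1/2}$. For $|S|\ge 1$ the $h_{ii}\x$ piece is annihilated by any $\cQ_{i_1}$ with $i_1\ne i$, so it remains to bound $\cQ_S[\mathrm I]$ and use $\cQ_S[\cZ_i]=-(\r_0)_{ii}\x\,\cQ_S[\mathrm I]\,\x(\r_0)_{ii}$. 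Substituting $R_{rs}^{(i)}=R_{rs}^{(ii_1)}+R_{ri_1}^{(i)}(R_{i_1i_1}^{(i)})^{-1}R_{i_1s}^{(i)}$ and separating the $r=i_1$ or $s=i_1$ summands decomposes $\mathrm I=\mathrm I^{(i_1)}+\mathrm I'$, where $\mathrm I^{(i_1)}$ is the same expression with $R^{(i)},\bh_i^{(i)}$ replaced by $R^{(ii_1)},\bh_i^{(ii_1)}$ and is thus $\mathscr G_{i_1}$-measurable, while $\mathrm I'$ collects the small $r=i_1/s=i_1$ terms (each $\Oprec(n^{-1})$, using $h_{ii_1}\prec n^{-1/2}$ and Lemma~\ref{resolvent identities}(b)) together with $\su_{i_1}(R_{i_1i_1}^{(i)})^{-1}\sv_{i_1}-\tfrac1n\sum_r^{(ii_1)}R_{ri_1}^{(i)}(R_{i_1i_1}^{(i)})^{-1}R_{i_1r}^{(i)}$, where $\su_{i_1}=\sum_r^{(ii_1)}h_{ir}R_{ri_1}^{(i)}$ and $\sv_{i_1}=\sum_s^{(ii_1)}R_{i_1s}^{(i)}h_{si}$. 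Using Lemma~\ref{resolvent identities}(b) to express $\su_{i_1},\sv_{i_1}$ through the off-diagonal blocks $R_{ii_1},R_{i_1i}$ shows $\su_{i_1},\sv_{i_1}\prec n^{-1/2}$, hence $\mathrm I'\prec n^{-1}$. Thus $\cQ_{i_1}[\mathrm I]=\cQ_{i_1}[\mathrm I']\prec n^{-1}$, and iterating the expansion over the remaining indices of $S$ on the off-diagonal factors appearing in $\mathrm I'$, $\su_{i_1}$, $\sv_{i_1}$ (each step gaining a factor $\Oprec(n^{-1/2})$ as in Lemma~\ref{lemma:bootstrapFA}) gives $\cQ_S[\mathrm I]\prec n^{-1/2-|S|/2}$, whence $\cQ_S[\cZ_i]\prec n^{-1/2-|S|/2}$.

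The hard part will be the bookkeeping in (a): because $\mathrm I$ is a double sum over $r,s$ rather than a product of boundedly many resolvent factors, I cannot simply quote the monomial-counting of Lemma~\ref{lemma:bootstrapFA}, and the delicate point is to re-express each correction produced by an expansion step again as a bounded number of bilinear forms in the column $\bh_i$ (plus a $\tfrac1n$-normalized diagonal sum), so that the Khintchine bounds of Lemma~\ref{lemma:concentration} and the resolvent-identity rewrites of $\su_{i_1},\sv_{i_1}$ stay applicable at every stage and the ``one extra off-diagonal factor per step'' accounting is preserved. Parts (b) and (c) should be routine given the $\Oprec(n^{-1/2})$ estimates already in hand.
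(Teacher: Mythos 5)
Your parts (b) and (c) match the paper's argument: reduce to $\cQ_{S\cup\{j\}}[R_{ij}]$ resp.\ $\cQ_{S\cup\{r\}}[R_{rj}^{(i)}]$ and run the iterative Lemma~\ref{resolvent identities}(c) expansion from Lemma~\ref{lemma:bootstrapFA} with the optimal $n^{-1/2}$ bounds from~\eqref{off diagonal estimates}.

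For part (a), however, you have taken an unnecessary detour that creates a genuine unresolved difficulty, which you yourself flag in your last paragraph. You go through Lemma~\ref{resolvent identities}(a) to write $R_{ii}^{-1}$ as $h_{ii}\x-\z_i-\x(\bh_i^{(i)*}R^{(i)}\bh_i^{(i)})\x$ and then attempt to expand the quadratic form $\mathrm I$ over the indices of $S$. Because $\mathrm I$ is a sum over $n$ indices rather than a product of $O_l(1)$ resolvent factors, each expansion step produces new $n$-term sums ($\su_{i_1},\sv_{i_1}$, and the $n^{-1}$-normalized diagonal sum $\tfrac1n\sum_r^{(ii_1)}R_{ri_1}^{(i)}(R_{i_1i_1}^{(i)})^{-1}R_{i_1r}^{(i)}$) that you must re-express and re-expand, and you do not carry this out: you acknowledge that you ``cannot simply quote the monomial-counting of Lemma~\ref{lemma:bootstrapFA}'' and leave ``the delicate point'' as a sketch. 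The missing observation that removes this difficulty entirely is that Lemma~\ref{resolvent identities}(c) already provides a direct expansion of the \emph{diagonal inverse block} over a single minor index,
\[
\frac{1}{R_{ii}^{(S)}} = \frac{1}{R_{ii}^{(rS)}} -
\frac{1}{R_{ii}^{(S)}}R_{ir}^{(S)}\frac{1}{R_{rr}^{(S)}}R_{ri}^{(S)}
\frac{1}{R_{ii}^{(rS)}},
\]
which puts $R_{ii}^{-1}$ on exactly the same footing as $R_{ij}$ and $R_{rj}^{(i)}$: the leading term is $\mathscr G_{i_1}$-measurable and the correction is a \emph{bounded} product with two extra off-diagonal resolvent factors. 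Iterating this over the $|S|$ indices of $S$ (using $R_{pq}^{(S')}\prec n^{-1/2}$ and $(R_{pp}^{(S')})^{-1}\prec 1$) gives $\cQ_S[R_{ii}^{-1}]\prec n^{-(|S|+1)/2}$ for $|S|\ge 1$ by the same monomial-counting as Lemma~\ref{lemma:bootstrapFA}, and then $\cQ_S\cQ_i[R_{ii}^{-1}]\prec n^{-(|S|+1)/2}$ by Lemma~\ref{lemma:domination}(c); the case $S=\emptyset$ is the one you handled correctly via the Khintchine bound for $\mathrm I$ (equivalently, $\cQ_i[R_{ii}^{-1}]=-\Delta_i+\Oprec(n^{-1})$ from~\eqref{eq:DeltaZequiv} and~\eqref{eq:optimalDeltaibound}). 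With this replacement your proof of (a) would match parts (b) and (c) and the bookkeeping concern disappears.
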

\begin{proof}
For (a), since $\r_0$ does not depend on $H$, we have
$\cQ_S[\cZ_i]=(\r_0)_{ii}\cQ_{S \cup \{i\}}[R_{ii}^{-1}](\r_0)_{ii}$,
so by H\"older's inequality and the bound
$\|(\r_0)_{ii}\|_\op \leq \delta$ it suffices to show
\[\cQ_S[\cQ_i[R_{ii}^{-1}]] \prec n^{-1/2-|S|/2}.\]
We remark that for $S=\emptyset$, this follows from
$\cQ_i[R_{ii}^{-1}]=-\Delta_i+\Oprec(n^{-1}) \prec n^{-1/2}$ (c.f.\
(\ref{eq:DeltaZequiv}) and (\ref{eq:optimalDeltaibound}) below).
For (c), similarly it suffices to show
\[\cQ_S[\cQ_r[R_{rj}^{(i)}]] \prec n^{-1/2-|S|/2}.\]
The argument for all three parts is then the same as in Lemma
\ref{lemma:bootstrapFA}, applying Lemma \ref{resolvent identities}(c) to
iteratively expand $R_{ii}^{-1}$, $R_{ij}$, and $R_{rj}^{(i)}$ in the indices
of $S$, and applying now the optimal estimates
$R_{pq}^{(S')} \prec n^{-1/2}$ from (\ref{off diagonal estimates})
to bound the terms of the expansion. We omit further details for brevity.
\end{proof}

\begin{proof}[Proof of Theorem \ref{aux:main theorem}, (\ref{X-valued Stieltjes transform estimates})]

Let us specialize (\ref{eq:expression of Delta_i}) to $S=\emptyset$, and write
simply $\Delta_i=\Delta_i^{(S)}$. Then
\begin{equation}\label{eq:Deltaapprox}
\Delta_i={-}h_{ii}\x+
\x\Bigg[\sum_r^{(i)}\left(|h_{ir}|^2-\frac{1}{n}\right)R_{rr}^{(i)} + 
\sum_{r\neq s}^{(i)}h_{ir}R_{rs}^{(i)}h_{si}\Bigg]\x
+\Oprec(n^{-1}),
\end{equation}
using the bounds $\mathrm{II} \prec n^{-1}$ from (\ref{eq:DeltaII}) and
and $\mathrm{III} \prec n^{-1}$ from (\ref{eq:DeltaIII}), 
(\ref{off diagonal estimates}), and (\ref{eq:invRiibound}). Furthermore,
(\ref{eq:Deltapnormbound}) from the final iteration of bootstrapping with
$\alpha'=1/2$ shows
\begin{equation}\label{eq:optimalDeltaibound}
\Delta_i \prec n^{-1/2}.
\end{equation}

Define
\begin{equation}\label{eq:R_ii leave out error}
    \widehat{R}_{ii} = \bigg({-}\z_i -
\x\bigg[\frac{1}{n}\sum_{j=1}^n R_{jj}\bigg]\x\bigg)^{-1}.
\end{equation}
By (\ref{DX-valued Stieltjes transform estimates}) already shown, we have
    $R_{ii} = (\r_0)_{ii} + \Oprec(n^{-1/2})$ uniformly in $i \in
\{1,\ldots,n\}$. Then averaging over $i$ gives
    \begin{align*}
        \frac{1}{n}\sum_{i=1}^n R_{ii} = \frac{1}{n}\Tr \otimes 1[\r_0] +
\Oprec(n^{-1/2}) = \m_0 + \Oprec(n^{-1/2}).
    \end{align*}
Applying this, the identity $\a^{-1}-\b^{-1}=\a^{-1}(\b-\a)\b^{-1}$, and
$\|\widehat R_{ii}\|_\op,\|(-\z_i-\x\m_0\x)^{-1}\|_\op \leq \delta^{-1}$
by Lemma \ref{im bound inverse}, we may approximate $\widehat R_{ii}$
in (\ref{eq:R_ii leave out error}) as
    \begin{align*}
        \widehat{R}_{ii} = (-\z_i - \x\m_0\x + \Oprec(n^{-1/2}))^{-1} = (-\z_i -
\x\m_0\x)^{-1} + \Oprec(n^{-1/2}) = (\r_0)_{ii} + \Oprec(n^{-1/2}),
    \end{align*}
the last identity using the characterization of $\r_0$ via the fixed-point
equation of Lemma \ref{lem:uniquefixedpoint}(a). Therefore,
applying again $\a^{-1}-\b^{-1}=\a^{-1}(\b-\a)\b^{-1}$
with $\widehat R_{ii}^{-1}=R_{ii}^{-1}+\Delta_i$ from
(\ref{eq:Riiapproxfixedpoint}), we have
\begin{equation}\label{eq:RhatR}
        R_{ii} = \widehat{R}_{ii} + R_{ii}\Delta_i\widehat{R}_{ii} =
\widehat{R}_{ii} + (\r_0)_{ii}\Delta_i(\r_0)_{ii} + \Oprec(n^{-1})
\end{equation}
where this applies (\ref{DX-valued Stieltjes transform estimates}) and
(\ref{eq:optimalDeltaibound}) to bound the error.

Comparing the form
(\ref{eq:Deltaapprox}) for $\Delta_i$ with the expansion of $R_{ii}^{-1}$ in
Lemma \ref{resolvent identities}(a), and noting that $\cQ_i[\z_i]=0$ because
$\z_i$ does not depend on $H$, we have
\begin{equation}\label{eq:DeltaZequiv}
\Delta_i={-}\cQ_i[R_{ii}^{-1}]+\Oprec(n^{-1}).
\end{equation}
Thus, setting $\cZ_i=(\r_0)_{ii}\cQ_i[R_{ii}^{-1}](\r_0)_{ii}$ and averaging
again over $i$,
    \begin{align*}
        \frac{1}{n}\sum_{i=1}^n R_{ii} = \frac{1}{n}\sum_{i=1}^n \widehat{R}_{ii}
-\frac{1}{n}\sum_{i=1}^n \cZ_i+\Oprec(n^{-1}).
    \end{align*}
By Lemma \ref{lemma:fluctuationavg}(a) (or (b)) applied with the estimates of
Lemma \ref{lemma:mainproofFA}(a), we have $n^{-1}\sum_i \cZ_i \prec n^{-1}$.
Thus, defining
\[\widehat\Delta=\frac{1}{n}\sum_{i=1}^n R_{ii}-\frac{1}{n}\sum_{i=1}^n
\widehat{R}_{ii}
=\frac{1}{n}\sum_{i=1}^n R_{ii}-\frac{1}{n}\sum_{i=1}^n \bigg({-}\z_i
-\x\bigg[\frac{1}{n}\sum_{j=1}^n R_{jj}\bigg]\x\bigg)^{-1},\]
this shows $\widehat \Delta \prec n^{-1}$. Observe also that
    \begin{align*}
        \|\widehat{\Delta}\|_\op
=\bigg\|\frac{1}{n}\sum_{i=1}^n (R_{ii}-\widehat R_{ii})\bigg\|_\op
=\bigg\|\frac{1}{n}\sum_{i=1}^n
R_{ii}\Delta_i\widehat{R}_{ii}\bigg\|_\op
\leq\delta^{-2}\max_{i=1}^n \pnorm{\Delta_i}{\op}.
    \end{align*}
By the weak bound (\ref{eq:Deltaopnormbound}) for $\|\Delta_i\|_\op$ on
a high-probability event $\cE$, this implies
$\Im(\z_i+\x\widehat\Delta\x) \geq \delta/2$ for all $i=1,\ldots,n$ and all $n
\geq n_0(\gamma,\delta)$.
On this event $\cE$,
Corollary \ref{cor:Stability of DX-valued fixed point}(b) implies,
for each $p \in [1,\infty)$,
\[\left\|\frac{1}{n}\sum_{i=1}^n R_{ii}-\m_0\right\|_p \leq
(1+2\gamma^2\delta^{-2})\|\widehat \Delta\|_p.\]
Thus $(n^{-1}\Tr\otimes 1)[R]-\m_0 \prec n^{-1}$, showing
(\ref{X-valued Stieltjes transform estimates}).
\end{proof}

\begin{proof}[Proof of Theorem \ref{aux:main theorem},
(\ref{isotropic estimates})]

By the polarization identity
\begin{align*}
(\u\otimes 1)^*R(\v\otimes 1)&=
\frac{1}{4}\Big[((\v+\u)\otimes 1)^*R((\v+\u)\otimes 1)
    -((\v-\u)\otimes 1)^*R((\v-\u)\otimes 1)\\
    &\qquad+i((\v+i\u)\otimes 1)^*R((\v+i\u)\otimes 1)
    -i((\v-i\u)\otimes 1)^*R((\v-i\u)\otimes 1)\Big]
\end{align*}
it suffices to prove the result uniformly over $\u=\v \in \C^n$ satisfying
$\|\v\|_2 \leq \sqrt{2}\upsilon$. We have
    \begin{align*}
        (\v\otimes 1)^*R(\v\otimes 1) &= \sum_i|v_i|^2R_{ii} + \sum_{i\neq j}\bar{v}_iv_jR_{ij}\\
        &= \sum_i|v_i|^2(\r_0)_{ii} + \sum_{i\neq j}\bar{v}_iv_jR_{ij} +
\Oprec(n^{-1/2})\\
&=(\v \otimes 1)^*\r_0(\v \otimes 1)+\sum_{i\neq j}\bar{v}_iv_jR_{ij} +
\Oprec(n^{-1/2})
    \end{align*}
    where the second line applies  $\|\v\|_2 \prec 1$ and
(\ref{DX-valued Stieltjes transform estimates}). It remains to
show $\sum_{i\neq j}\bar{v}_iv_jR_{ij} \prec n^{-1/2}$.

Separating $R_{ij}$ into its conditional mean and fluctuations,
    \begin{align*}
        \sum_{i\neq j}\bar{v}_iv_jR_{ij} = \sum_{i\neq
j}\bar{v}_iv_j\E_i\E_j[R_{ij}] + \sum_{i\neq j}\bar{v}_iv_j\E_i\cQ_j[R_{ij}] +
\sum_{i\neq j}\bar{v}_iv_j\E_j\cQ_i[R_{ij}] + \sum_{i\neq
j}\bar{v}_iv_j\cQ_i\cQ_j[R_{ij}].
    \end{align*}
    We first examine $\E_i[R_{ij}]$. Recall $\widehat{R}_{ii}$ from
(\ref{eq:R_ii leave out error}). Observe that
$R_{ii}=(\r_0)_{ii}+\Oprec(n^{-1/2})$ by
(\ref{DX-valued Stieltjes transform estimates}),
whereas $\widehat R_{ii}=(\r_0)_{ii}+\Oprec(n^{-1})$ with the smaller error
$n^{-1}$ by its definition and the estimate
(\ref{X-valued Stieltjes transform estimates}) already shown. Then,
applying Lemma \ref{resolvent identities}(b) 
and (\ref{eq:RhatR}) to expand $R_{ij}$,
    \begin{align*}
        {-}\E_i[R_{ij}] &=
\E_i\left[R_{ii}\x\sum_r^{(i)}h_{ir}R_{rj}^{(i)}\right] = \E_i\left[\widehat{R}_{ii}\x\sum_r^{(i)}h_{ir}R_{rj}^{(i)}\right]
         + \E_i\left[R_{ii}\Delta_i\widehat{R}_{ii}\x\sum_r^{(i)}h_{ir}R_{rj}^{(i)}\right]\\
        &=
\underbrace{\E_i\left[(\r_0)_{ii}\x\sum_r^{(i)}h_{ir}R_{rj}^{(i)}\right]}_{=\mathrm{I}}
+
\underbrace{\E_i\left[(\r_0)_{ii}\Delta_i(\r_0)_{ii}\x\sum_r^{(i)}h_{ir}R_{rj}^{(i)}\right]}_{=\mathrm{II}}
         + \underbrace{\E_i\left[\Oprec(n^{-1}) \cdot
\x\sum_r^{(i)}h_{ir}R_{rj}^{(i)}\right]}_{=\mathrm{III}}.
    \end{align*}
In these expressions, by (\ref{off diagonal estimates}),
(\ref{eq:invRiibound}), and Lemma \ref{resolvent identities}(b), we have
\begin{equation}\label{eq:offdiagexpansionbound}
\sum_r^{(i)} h_{ir} R_{rj}^{(i)}
={-}\x^{-1}R_{ii}^{-1}R_{ij} \prec n^{-1/2}
\end{equation}
so $\mathrm{III} \prec n^{-3/2}$.
Since $R^{(i)}_{rj}$ is a function only of $H^{(i)}$
(and $\r_0,\x$ do not depend on $H$), it follows from $\E[h_{ir}]=0$
that $\mathrm{I}=0$. For $\mathrm{II}$, recall the form of
$\Delta_i$ from (\ref{eq:Deltaapprox}).
Substituting this expression of $\Delta_i$ into $\mathrm{II}$ gives
    \begin{align*}
        \mathrm{II} &= - (\r_0)_{ii}\x(\r_0)_{ii}\x\sum_r^{(i)}\E_i[h_{ii}h_{ir}]R_{rj}^{(i)}
        + \sum_{t,r}^{(i)}\E_i\left[\left(|h_{it}|^2 - \frac{1}{n}\right)h_{ir}\right](\r_0)_{ii}\x R_{tt}^{(i)}\x(\r_0)_{ii}\x R_{rj}^{(i)}\\
        &\qquad+ \sum_r^{(i)} \sum_{t\neq
s}^{(i)}\E_i[h_{it}h_{si}h_{ir}](\r_0)_{ii}\x
R_{ts}^{(i)}\x(\r_0)_{ii}\x R_{rj}^{(i)} + \Oprec(n^{-3/2}).
    \end{align*}
The first term is 0 since $r\neq i$.
Similarly, the second term is 0 for summands $r \neq t$,
and the third term is 0 since at least one of $r,s,t$ is distinct from
the other two. Thus
\begin{align*}
        \mathrm{II}= 
        \sum_{r}^{(i)}\E_i[|h_{ir}|^2h_{ir}](\r_0)_{ii}\x
R_{rr}^{(i)}\x(\r_0)_{ii}\x R_{rj}^{(i)} + \Oprec(n^{-3/2}).
\end{align*}
Note that the single summand for $r=j$ is $\Oprec(n^{-3/2})$, 
that all summands for $r \neq j$ are $\Oprec(n^{-2})$ by (\ref{off diagonal
estimates}), and that
$R_{rr}^{(i)}=R_{rr}+\Oprec(n^{-1})=(\r_0)_{rr}+\Oprec(n^{-1/2})$. Then
we may further write this as
\begin{align*}
        \mathrm{II} &=
\sum_{r}^{(ij)}\E_i[|h_{ir}|^2h_{ir}](\r_0)_{ii}\x
R_{rr}^{(i)}\x(\r_0)_{ii}\x R_{rj}^{(i)}+\Oprec(n^{-3/2})\\
&=\sum_{r}^{(ij)}\E_i[|h_{ir}|^2h_{ir}](\r_0)_{ii}\x(\r_0)_{rr}\x(\r_0)_{ii}\x
R_{rj}^{(i)}+\Oprec(n^{-3/2})\\
&=\sum_{r}^{(ij)}\E_i[|h_{ir}|^2h_{ir}](\r_0)_{ii}\x(\r_0)_{rr}\x(\r_0)_{ii}\x(\E_r+\cQ_r)[R_{rj}^{(i)}]+\Oprec(n^{-3/2}).
\end{align*}
By Lemma \ref{lemma:fluctuationavg}(a) applied with the estimates of Lemma
\ref{lemma:mainproofFA}(c), we have
\[\sum_{r}^{(ij)}\E_i[|h_{ir}|^2h_{ir}]
(\r_0)_{ii}\x(\r_0)_{rr}\x(\r_0)_{ii}\x\cQ_r[R_{rj}^{(i)}]
\prec n^{-3/2}.\]
    We now examine $\E_r[R_{rj}^{(i)}]$.
Again by Lemmas \ref{resolvent identities}(b) and \ref{lemma:domination}(c),
    \begin{align*}
        {-}\E_r[R_{rj}^{(i)}]=\E_r\left[R_{rr}^{(i)}\x\sum_s^{(ir)}h_{rs}R_{sj}^{(ir)}\right]
= (\r_0)_{rr}\x\E_r\left[\sum_s^{(ir)}h_{rs}R_{sj}^{(ir)}\right] +
O_\prec(n^{-1}).
    \end{align*}
    This first term is 0, so $\E_r[R_{rj}^{(i)}]\prec n^{-1}$. Combining the
above gives $\mathrm{II} \prec n^{-3/2}$, and hence
    $\E_i[R_{ij}]\prec n^{-3/2}$. By symmetry, also $\E_j[R_{ij}] \prec
n^{-3/2}$, so we conclude that
    \begin{align*}
        \sum_{i\neq j}\bar{v}_iv_jR_{ij} = \sum_{i\neq
j}\bar{v}_iv_j\cQ_i\cQ_j[R_{ij}] + \sum_{i\neq j}\bar{v}_iv_j \cdot
\Oprec(n^{-3/2})
=\sum_{i\neq j}\bar{v}_iv_j\cQ_i\cQ_j[R_{ij}]+\Oprec(n^{-1/2}),
    \end{align*}
where the second equality applies $\sum_i |v_i| \leq \sqrt{n}\|\v\|_2 \prec
\sqrt{n}$.
Finally, by Lemma \ref{lemma:fluctuationavg}(c) applied with the
estimates of Lemma \ref{lemma:mainproofFA}(b), we have
\[\sum_{i \neq j} \bar v_i v_j \cQ_i\cQ_j[R_{ij}] \prec \frac{1}{\sqrt{n}}
\left(\sum_{i \neq j} |v_i|^2|v_j|^2\right)^{1/2} \prec n^{-1/2}.\]
So $\sum_{i\neq j}\bar{v}_iv_j R_{ij} \prec n^{-1/2}$
as desired, completing the proof.
\end{proof}

\section{Analysis of the Kronecker deformed Wigner model}\label{sec:resolvent}

We now prove Proposition \ref{rmt:fixed point characterization} and
Theorem \ref{rmt:main theorem}. For spectral arguments $z \in \C^+$,
recall the following quantities from Section \ref{sec:model}:
\begin{align*}
    Q&=A \otimes I+I \otimes B+\Theta \otimes \Xi \in
\C^{n^2 \times n^2},\\
\q&=\a \otimes 1+1 \otimes \b+\Theta \otimes \Xi \in \cA \otimes \cA,\\
    G(z)&=(Q-z\,I \otimes I)^{-1}, \qquad m(z)=n^{-2}\Tr G(z),\\
\g(z)&=(\q-z\,1 \otimes 1)^{-1},\qquad G_0(z)=(\tau^\cD \otimes
\tau^\cD)[\g(z)], \qquad m_0(z)=\tau \otimes \tau[\g(z)].
\end{align*}

We first show Proposition \ref{rmt:fixed point characterization} and
all statements of Theorem \ref{rmt:main theorem} except the estimate
$G_{ij,\alpha\beta} \prec n^{-1}$ of (\ref{rmt:1/n entries}) using
the analyses of Section \ref{sec:general}.

\begin{proof}[Proof of Proposition \ref{rmt:fixed point characterization}]
We apply Lemma \ref{lem:uniquefixedpoint}(b) with $\cX=\cA$, $\x=1_\cA$, and
\[\z={-}I \otimes \b-\Theta \otimes \Xi
+z\,I \otimes 1_\cA \in \C^{n \times n} \otimes \cA.\]
Then $\Im \z=(\Im z)(I \otimes 1)$, so $\z_i={-}\b-\theta_i\Xi+z \in \cA^+$
for each $i=1,\ldots,n$. Then
Lemma \ref{lem:uniquefixedpoint}(b) ensures that
$\m_b(z)=(\tau \otimes 1)[\g(z)]$ is the unique fixed point
in $\cA^+$ to the fixed-point equation (\ref{eq:mb}). Similarly
$\m_a(z)=(1 \otimes \tau)[\g(z)]$ is the unique fixed point
in $\cA^+$ to (\ref{eq:ma}), and the identity
$m_0(z)=\tau[\m_a(z)]=\tau[\m_b(z)]$ follows from taking a second trace $\tau$
for either $\m_a$ or $\m_b$.
Furthermore, Lemma \ref{lem:uniquefixedpoint}(a) shows that
\[(\tau^\cD \otimes 1)[\g(z)]=
({-}\z-I \otimes \m_b(z))^{-1}=\sum_{i=1}^n E_{ii} \otimes
(\b+\theta_i\Xi-z-\m_b(z))^{-1}.\]
Then applying $1 \otimes \tau^\cD$ shows
\[G_0=\sum_{i=1}^n E_{ii} \otimes
\tau^\cD\big[(\b+\theta_i\Xi-z-\m_b(z))^{-1}\big],\]
and similarly
\[G_0=\sum_{\alpha=1}^n 
\tau^\cD\big[(\a+\xi_\alpha\Theta-z-\m_a(z))^{-1}\big]
\otimes E_{\alpha\alpha}.\]
\end{proof}

\begin{proof}[Proof of Theorem \ref{rmt:main theorem}, (\ref{rmt:Stieltjes
transform}--\ref{rmt:root n entries}) and (\ref{rmt:isotropic entries})]

Define
\begin{equation}\label{eq:tildeg}
\tilde \g(z)
=(\a \otimes I+1_\cA \otimes B+\Theta \otimes \Xi-z1_\cA \otimes I)^{-1} \in \cA
\otimes \C^{n \times n}
\end{equation}
and abbreviate $G=G(z)$, $\tilde \g=\tilde \g(z)$, and $\g=\g(z)$.
Fix any $\epsilon>0$ and consider the event $\cE=\{\|B\|_\op \leq 3\}$.
We apply Theorem \ref{aux:main theorem} conditional on $B$ and
this event $\cE$, with $\cX=\C^{n \times n}$, $R=G$, $\r_0=\tilde \g$,
$H=A$, $\h=\a$, $\x=I$, and
\[\z={-}I \otimes B-\Theta \otimes \Xi+z\,I \otimes I.\]
Then Assumption \ref{assump:general} holds (for $\gamma=1$ and modified
constants $\upsilon,\delta>0$).
Conditional on $B$ and the event $\cE$, Theorem \ref{aux:main theorem} shows
over the randomness of $A$, uniformly
in $i \neq j$ and $\u,\u' \in \C^n$ with $\|\u\|_2,\|\u'\|_2 \leq \upsilon$,
\begin{equation}\label{eq:layer1}
\begin{gathered}
(n^{-1}\Tr \otimes I)[G]-(\tau \otimes I)[\tilde \g] \prec n^{-1},\\
G_{ii}-(\tau^\cD \otimes I)[\tilde \g]_{ii} \prec n^{-1/2}, \qquad
G_{ij} \prec n^{-1/2},\\
(\u \otimes I)^* G (\u' \otimes I)
-(\u \otimes I)^* (\tau^\cD \otimes I)[\tilde \g](\u' \otimes I)
\prec n^{-1/2}.
\end{gathered}
\end{equation}
In light of the bound $G_{ii}^{-1} \prec 1$ from (\ref{eq:invRiibound}),
the form $(\tau^{\cD} \otimes I)[\tilde\g]_{ii}=(B+\theta_i\Xi-z-M_B(z))^{-1}$
from Lemma \ref{lem:uniquefixedpoint}(a)
where $M_B(z)=(\tau \otimes 1)[\tilde \g(z)]$,
and the identity $A^{-1}-B^{-1}=A^{-1}(B-A)B^{-1}$,
the second statement here shows also
\[G_{ii}^{-1}-\big((\tau^\cD \otimes I)[\tilde\g]_{ii}\big)^{-1}
=G_{ii}^{-1}-(B+\theta_i\Xi-z-M_B(z)) \prec n^{-1/2},\]
hence
\[G_{ii}^{-1}-G_{jj}^{-1}=(\theta_i-\theta_j)\Xi+\Oprec(n^{-1/2}).\]
As $\P[\cE]>1-n^{-D}$ for any fixed $D>0$ and all $n \geq n_0(D)$,
these statements then also hold unconditionally.
In particular, by Remark \ref{rek:op holds in finite dimension} this implies
$\|G_{ij}\|_\op \prec n^{-1/2}$ and
$\|G_{ii}^{-1}-G_{jj}^{-1}-(\theta_i-\theta_j)\Xi\|_\op \prec n^{-1/2}$.
The argument for $G_{\alpha\alpha}$ and $G_{\alpha\beta}$ is symmetric,
so this shows (\ref{rmt:diagonalblocksa}--\ref{rmt:offdiagonalblocks}).
The bounds (\ref{rmt:root n entries}) are an immediate consequence of
(\ref{rmt:offdiagonalblocks}).

To prove the remaining statements (\ref{rmt:Stieltjes transform}),
(\ref{rmt:diagonal entries}), and (\ref{rmt:isotropic entries}), we
apply Theorem \ref{aux:main theorem} again to the second tensor factor,
with $\cX=\cA$, $R=\tilde \g$, $\r_0=\g$, $H=B$, $\h=\b$, $\x=1_\cA$, and
\[\z={-}\a \otimes I-\Theta \otimes \Xi+z\,1_\cA \otimes I.\]
This gives, uniformly in $\alpha \neq \beta$ and
$\v,\v' \in \C^n$ with $\|\v\|_2,\|\v'\|_2 \leq \upsilon$,
\begin{equation}\label{eq:layer2}
\begin{gathered}
(1 \otimes n^{-1}\Tr)[\tilde \g]-(1 \otimes \tau)[\g] \prec n^{-1},\\
\tilde \g_{\alpha\alpha}-(1 \otimes \tau^\cD)[\g]_{\alpha\alpha} \prec
n^{-1/2}, \qquad \tilde \g_{\alpha\beta} \prec n^{-1/2},\\
(1 \otimes \v)^* \tilde \g (1 \otimes \v')
-(1 \otimes \v)^* (1 \otimes \tau^\cD)[\g](1 \otimes \v')
\prec n^{-1/2}.
\end{gathered}
\end{equation}
We remark that if $T:\cA \to \cB$ and $T':\cA' \to \cB'$ are two linear maps
between vector spaces, then for any $x \in \cA \otimes \cA'$,
\begin{equation}\label{eq:linearmapcomp}
(T \otimes T')[x]=(T \otimes 1)(1 \otimes T')[x]
=(1 \otimes T')(T \otimes 1)[x].
\end{equation}
Thus we may
combine the first statements of (\ref{eq:layer1}) and (\ref{eq:layer2}) to get
as desired
    \begin{align*}
        m(z) &= (n^{-1}\Tr\,\otimes\,n^{-1}\Tr)[G]
=n^{-1}\Tr\big[(n^{-1}\Tr \otimes I)[G]\big]\\
        &= (\tau \otimes n^{-1}\Tr)[\tilde \g]+n^{-1}\Tr[\Delta_a]
=\tau\big[(1 \otimes n^{-1}\Tr)[\tilde \g]\big]+n^{-1}\Tr[\Delta_a]\\
        &= \tau \otimes \tau[\g] + n^{-1}\Tr[\Delta_a] + \tau[\Delta_b]
        = m_0(z) + n^{-1}\Tr[\Delta_a] + \tau[\Delta_b]
    \end{align*}
where $\Delta_a\in\C^{n \times n}$ and $\Delta_b \in \cA$ are errors satisfying
$\Delta_a, \Delta_b \prec n^{-1}$. We have
$|n^{-1}\Tr \Delta_a | \leq \|\Delta_a\|_1 \prec n^{-1}$
and $|\tau[\Delta_b]| \leq \|\Delta_b\|_1 \prec n^{-1}$ (Lemma \ref{Holder's
inequality}), showing (\ref{rmt:Stieltjes transform}). Similarly,
applying
(\ref{eq:linearmapcomp}) with $(T,T')=(\u^*[\cdot]\u',\v^*[\cdot]\v')$
and $(T,T')=(\tau^\cD,\v^*[\cdot]\v')$,
we may combine the last
statements of (\ref{eq:layer1}) and (\ref{eq:layer2}) to get
\begin{align*}
        (\u \otimes \v)^* G (\u' \otimes \v')&=
\v^*\big[(\u \otimes I)^* G (\u' \otimes I)\big]\v'\\
&=(\u \otimes \v)^* (\tau^\cD \otimes I)
[\tilde \g](\u' \otimes \v')+\v^*\Delta_1'\v'\\
&=\u^*\tau^\cD[(1 \otimes \v)^*\tilde \g(1 \otimes \v')]\u'+\v^*\Delta_1'\v'\\
        &=(\u \otimes \v)^* (\tau^\cD \otimes \tau^\cD)[\g](\u' \otimes \v')
+\v^*\Delta_1'\v' + \u^*\tau^\cD[\Delta_2']\u'
\end{align*}
where $\Delta_1'\in\C^{n \times n}$ and $\Delta_2' \in \cA$ satisfy
$\Delta_1',\Delta_2' \prec n^{-1/2}$. Since $\tau^\cD$ is a $L^p$-contraction
for each fixed $p$ (Lemma \ref{lemma:Lpcontraction}), this implies also
$\tau^\cD[\Delta_2'] \prec n^{-1/2}$. Then, since 
$\Delta_1',\tau^\cD(\Delta_2') \in \C^{n \times n}$,
by Remark \ref{rek:op holds in finite dimension} we have
$\|\Delta_1'\|_\op,\|\tau^\cD(\Delta_2')\|_\op \prec n^{-1/2}$ and hence
$\v^*\Delta_1'\v' \prec n^{-1/2}$ and $\u^*\tau^\cD[\Delta_2']\u' \prec
n^{-1/2}$,
showing (\ref{rmt:isotropic entries}). Finally, specializing this to
$\u=\u'=\e_i$ and $\v=\v'=\e_\alpha$ shows (\ref{rmt:diagonal entries}).
\end{proof}

In the remainder of this section, we show the final estimate (\ref{rmt:1/n
entries}) of Theorem \ref{rmt:main theorem},
$G_{ij,\alpha\beta} \prec n^{-1}$
when $i \neq j$ and $\alpha \neq \beta$. Recalling $\tilde \g$ from
(\ref{eq:tildeg}), define the matrices in $\C^{n \times n}$
\begin{equation}\label{eq:Midef}
M_B=(\tau \otimes I)[\tilde \g], \qquad
M_i=(B+\theta_i\Xi-zI-M_B)^{-1} \text{ for } i=1,\ldots,n
\end{equation}
so that (by Lemma \ref{lem:uniquefixedpoint}(b))
$M_B=n^{-1}\sum_i M_i$. Symmetrically, letting
\[\check\g=(A \otimes 1_\cA+I \otimes \b
+\Theta \otimes \Xi-z\,I \otimes 1_\cA)^{-1} \in \C^{n \times n} \otimes \cA,\]
define
\begin{equation}\label{eq:Midef}
M_A=(I \otimes \tau)[\check \g], \qquad
M_\alpha=(A+\xi_\alpha\Theta-zI-M_A)^{-1} \text{ for } \alpha=1,\ldots,n
\end{equation}
so that $M_A=n^{-1}\sum_\alpha M_\alpha$.
We denote the commuting projection operators
\[\E_i=\E[\cdot|A^{(i)},B],\qquad \cQ_i=1-\E_i,\qquad
\E_\alpha=\E[\cdot|A,B^{[\alpha]}],\qquad\cQ_\alpha=1-\E_\alpha\]
and write as before $\cQ_S=\prod_{i \in S} \cQ_i$.
With slight abuse of notation, we will use the distinction between Greek and
Roman indices to distinguish between $M_\alpha$ and $M_i$, $\E_\alpha$ and
$\E_i$, and $\cQ_\alpha$ and $\cQ_i$.

\begin{lem}\label{lemma:offdiagFA}
In the setting of Theorem \ref{rmt:main theorem}, for any
distinct $i,j,k$ and any $\alpha \neq \beta$, define
\[\Delta^k=G_{kj}E_{\beta\alpha}G_{ik} \in \C^{n \times n}.\]
Fix any $l \geq 1$. Then uniformly over 
$S \subseteq \{1,\ldots,n\}$ with $|S| \leq l$,
distinct $i,j,k \notin S$, and $\alpha \neq \beta$,
\[\cQ_{S \cup \{k\}}[\Delta^k] \prec n^{-1-|S|/2}.\]
\end{lem}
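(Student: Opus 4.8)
The plan is to imitate the iterative leave-one-out expansion already carried out in the proofs of Lemma~\ref{lemma:bootstrapFA} and Lemma~\ref{lemma:mainproofFA}, applied now to the two resolvent blocks $G_{kj}$ and $G_{ik}$ occurring in $\Delta^k$. I regard $G$ as the generalized resolvent of Section~\ref{sec:general} with $\cX=\C^{n\times n}$, $H=A$, $\x=I$, so that Lemma~\ref{resolvent identities} applies verbatim to the blocks $G_{pq}^{(S')}$ (obtained by setting rows and columns $S'$ of $A$ to $0$), and so that the minor estimates $G_{pq}^{(S')}\prec n^{-1/2}$ for distinct $p,q\notin S'$, together with $(G_{pp}^{(S')})^{-1}\prec 1$, are available uniformly over $|S'|\le l'$ for every fixed $l'$: the first is the leave-one-out version of (\ref{off diagonal estimates}) produced by iterating Lemma~\ref{lemma:bootstrapping}, exactly as in the proof of Theorem~\ref{rmt:main theorem}, and the second is (\ref{eq:invRiibound}). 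Since $E_{\beta\alpha}$ is deterministic with $\|E_{\beta\alpha}\|_\op=1$, the case $|S|=0$ is immediate: Hölder's inequality with $G_{kj},G_{ik}\prec n^{-1/2}$ gives $\Delta^k\prec n^{-1}$, while $\|\Delta^k\|_p\le\|G\|_\op^2\le\delta^{-2}$ deterministically, so Lemma~\ref{lemma:domination}(c) yields $\cQ_k[\Delta^k]\prec n^{-1}$.

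For $|S|\ge 1$, I would expand $G_{kj}$ and $G_{ik}$ one index of $S$ at a time, applying Lemma~\ref{resolvent identities}(c) to each off-diagonal resolvent factor $G_{pq}^{(S')}$ and to each inverse-diagonal factor $(G_{pp}^{(S')})^{-1}$ that arises (the constant $E_{\beta\alpha}$ being unaffected, as it does not involve $A$), and collect the outcome as $\Delta^k=L(S)+P(S)$, precisely as in Lemma~\ref{lemma:bootstrapFA}. Here $L(S)$ is the sum of all terms which, for at least one index $m\in S$, became independent of the entries of row and column $m$ of $A$ — this occurs exactly when every resolvent factor chose its leave-out branch at the expansion step for $m$ — so that $\cQ_m$, and hence $\cQ_{S\cup\{k\}}$, annihilates them. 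The complement $P(S)$ consists of at most $C_l$ summands, each a product of at most $C_l$ factors drawn from the off-diagonal blocks $G_{pq}^{(S')}$, the inverse-diagonal blocks $(G_{pp}^{(S')})^{-1}$, and the single constant $E_{\beta\alpha}$.

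The key point is that every summand of $P(S)$ carries at least $2+|S|$ off-diagonal resolvent factors: $\Delta^k$ begins with the two off-diagonal blocks $G_{kj}$ and $G_{ik}$ (off-diagonal since $i,j,k$ are distinct), and each of the $|S|$ expansion steps contributes at least one further off-diagonal factor to any non-leave-out term, since the correction branch for an off-diagonal factor $G_{pq}^{(S')}$ replaces it by $G_{pm}^{(S')}(G_{mm}^{(S')})^{-1}G_{mq}^{(S')}$ (a net gain of one off-diagonal factor), while the correction branch for an inverse-diagonal factor produces two new off-diagonal factors. Hölder's inequality with $G_{pq}^{(S')}\prec n^{-1/2}$, $(G_{pp}^{(S')})^{-1}\prec 1$, and $\|E_{\beta\alpha}\|_\op=1$ then gives $P(S)\prec n^{-(2+|S|)/2}=n^{-1-|S|/2}$ uniformly over the indicated indices, with polynomially bounded moments (as for the analogous quantities throughout Section~\ref{sec:general}), so Lemma~\ref{lemma:domination}(c) upgrades this to $\cQ_{S\cup\{k\}}[P(S)]\prec n^{-1-|S|/2}$; combined with $\cQ_{S\cup\{k\}}[L(S)]=0$ this is the asserted bound. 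The only delicate part, and the main obstacle, is the combinatorial bookkeeping of this expansion — checking that each of the boundedly many terms lands in exactly one of the two families, that the $L(S)$ terms are genuinely independent of the right rows and columns, and that the off-diagonal count of $P(S)$ increases by at least one per step — but this is structurally identical to the recursion carried out in full in the proof of Lemma~\ref{lemma:bootstrapFA}, so I would present it by displaying only the first expansion step, in some $i_1\in S$, and then invoking that induction.
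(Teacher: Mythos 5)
Your proof is correct and follows the same route the paper takes: it points to the expansion scheme of Lemma~\ref{lemma:bootstrapFA}, applying Lemma~\ref{resolvent identities}(c) iteratively in the indices of $S$, identifying the leave-out terms $L(S)$ annihilated by $\cQ_{S\cup\{k\}}$, counting at least $2+|S|$ off-diagonal resolvent factors in the remainder $P(S)$, and using the minor estimates $G_{pq}^{(S')}\prec n^{-1/2}$ together with $\|E_{\beta\alpha}\|_\op\le 1$ to close the bound. The paper's own proof consists of exactly this observation and a deferral to Lemmas~\ref{lemma:bootstrapFA} and \ref{lemma:mainproofFA}, so your more explicit account of the bookkeeping (including the base case $|S|=0$ via Lemma~\ref{lemma:domination}(c)) fills in precisely the details that the paper omits.
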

\begin{proof}
The proof is the same as that of Lemmas \ref{lemma:bootstrapFA}
and \ref{lemma:mainproofFA}, using Lemma \ref{resolvent identities}(c)
to expand $G_{kj},G_{ik}$ in the indices of $S$ and applying
the estimates $\|E_{\beta\alpha}\|_\op \leq 1$ and
$G_{pq}^{(S')} \prec n^{-1/2}$ for 
and $p,q \notin S'$ with $p \neq q$. We omit the details for brevity.
\end{proof}

\begin{proof}[Proof of Theorem \ref{rmt:main theorem}, (\ref{rmt:1/n entries})]
Throughout this proof, $G_{ij},G_{\alpha\beta}$ etc.\ are all matrices in
$\C^{n \times n}$, so the stochastic domination notation $\prec$ may be
understood in the operator norm sense,
c.f.\ Remark \ref{rek:op holds in finite dimension}.
We recall our convention of Roman indices $i,j,k,\ldots$ for the first tensor
factor and Greek indices $\alpha,\beta,\gamma,\ldots$ for the second tensor
factor. In the following, we use a superscript $(\cdot)$ to denote a minor 
on $A$ and $[\cdot]$ to denote a minor on $B$. 

Fix any indices $i \neq j$ and $\alpha \neq \beta$. Applying Lemma
\ref{resolvent identities}(b) to the second tensor factor,
\[G_{\alpha\beta}={-}G_{\alpha\alpha}\sum_\gamma^{[\alpha]} b_{\alpha\gamma}
G_{\gamma\beta}^{[\alpha]}.\]
Recall (c.f.\ (\ref{eq:offdiagexpansionbound})) that
\begin{equation}\label{eq:onestepexpansiona}
\sum_\gamma^{[\alpha]} b_{\alpha\gamma} G_{\gamma\beta}^{[\alpha]} \prec
n^{-1/2}.
\end{equation}
By (\ref{DX-valued Stieltjes transform estimates})
applied to the second tensor factor, we have
$G_{\alpha\alpha}=M_\alpha+\Oprec(n^{-1/2})$, where $M_\alpha$ depends only on
$A$ and not on $B$. Then by Lemma \ref{lemma:domination}(c), also
$\E_\alpha G_{\alpha\alpha}=M_\alpha+\Oprec(n^{-1/2})$, so
\begin{equation}\label{eq:onestepexpansionb}
G_{\alpha\alpha}-\E_\alpha G_{\alpha\alpha} \prec n^{-1/2}.
\end{equation}
Then applying both (\ref{eq:onestepexpansiona}) and
(\ref{eq:onestepexpansionb}),
\[G_{\alpha\beta}={-}(\E_\alpha G_{\alpha\alpha})
\sum_\gamma^{[\alpha]} b_{\alpha\gamma}
G_{\gamma\beta}^{[\alpha]}+\Oprec(n^{-1}),\]
so
\begin{align}
G_{ij,\alpha\beta}={-}\sum_\gamma^{[\alpha]} b_{\alpha\gamma}
\e_i^*(\E_\alpha G_{\alpha\alpha})G_{\gamma \beta}^{[\alpha]}\e_j+\Oprec(n^{-1})
&\prec \frac{1}{\sqrt{n}}\left(\sum_\gamma^{[\alpha]}
\left|\e_i^*(\E_\alpha G_{\alpha\alpha})
G_{\gamma\beta}^{[\alpha]}\e_j\right|^2\right)^{1/2}
+\frac{1}{n}\nonumber\\
&\prec \frac{1}{\sqrt{n}}\,\E_\alpha \left(\sum_\gamma^{[\alpha]}
\left|\e_i^*G_{\alpha\alpha}G_{\gamma\beta}^{[\alpha]}\e_j\right|^2\right)^{1/2}
+\frac{1}{n},\label{eq:Goffdiag1}
\end{align}
the first inequality applying independence of $(b_{\alpha\gamma})_{\gamma=1}^n$
with $(\E_\alpha G_{\alpha\alpha})G_{\gamma\beta}^{[\alpha]}$ and
the scalar version of Lemma \ref{lemma:concentration}(a), and the
second line applying Jensen's inequality and convexity of the $\ell_2$-norm.

Fixing $i \neq j$ and $\alpha \neq \beta$,
define $\Delta \in \C^{n \times n}$ as the matrix with entries
\begin{equation}\label{eq:Deltadef}
\Delta_{\gamma\nu}
=\e_i^* G_{\alpha\nu}G_{\gamma\beta}\e_j
=\sum_k G_{ik,\alpha\nu}G_{kj,\gamma\beta}=\sum_k\underbrace{\e_\gamma^*G_{kj}
\e_\beta\e_\alpha^* G_{ik}\e_\nu}_{=\Delta_{\gamma\nu}^k}.
\end{equation}
We claim that (uniformly over $i \neq j$ and $\alpha \neq \beta$)
\begin{equation}\label{eq:Deltabound}
\|\Delta\|_\op \prec n^{-1/2}.
\end{equation}
Then, for every $\gamma \neq \alpha$ (including $\gamma=\beta$), applying
\[G_{\alpha\alpha}G_{\gamma\beta}^{[\alpha]}
=G_{\alpha\alpha}G_{\gamma\beta}
-G_{\alpha\alpha}G_{\gamma\alpha}G_{\alpha\alpha}^{-1}G_{\alpha\beta},
\quad G_{\alpha\alpha},G_{\alpha\alpha}^{-1} \prec 1,
\quad G_{\gamma\alpha},G_{\alpha\beta} \prec n^{-1/2},\]
which follow from Lemma \ref{resolvent identities}(c), (\ref{eq:invRiibound}),
and (\ref{off diagonal estimates}) of
Theorem \ref{aux:main theorem}, we have

\[\e_i^*G_{\alpha\alpha}G_{\gamma\beta}^{[\alpha]}\e_j
=\Delta_{\gamma\alpha}-
\e_i^*G_{\alpha\alpha}G_{\gamma\alpha}G_{\alpha\alpha}^{-1}G_{\alpha\beta}\e_j
=\Delta_{\gamma\alpha}+\Oprec(n^{-1})\]
and hence by (\ref{eq:Deltabound}),
\[\left(\sum_\gamma^{[\alpha]}
\left|\e_i^*G_{\alpha\alpha}G_{\gamma\beta}^{[\alpha]}\e_j\right|^2\right)^{1/2}
\leq \|\Delta\|_\op+n^{-1/2} \prec n^{-1/2}.\]
Applying this in (\ref{eq:Goffdiag1}) yields the desired bound
$G_{ij,\alpha\beta} \prec n^{-1}$.

It remains to show (\ref{eq:Deltabound}). For this, defining
$\Delta=\sum_k \Delta^k$ where $\Delta^k=G_{kj}E_{\beta\alpha}G_{ik}$
as in (\ref{eq:Deltadef}), observe that
\[\|\Delta^k\|_\op \leq \|G_{kj}\|_\op \cdot \|G_{ik}\|_\op
\prec \begin{cases} n^{-1/2} & \text{ if } k \in \{i,j\}\\
n^{-1} & \text{ if } k \notin \{i,j\} \end{cases}\]
Then
\[\Delta=\sum_k^{(ij)} (\E_k+\cQ_k)[\Delta^k]+\Oprec(n^{-1/2}).\]
By Lemma \ref{lemma:fluctuationavg}(a) applied with the estimates of
Lemma \ref{lemma:offdiagFA}, we have
$\sum_k^{(ij)} \cQ_k[\Delta^k] \prec n^{-1/2}$, so
\[\Delta=\sum_k^{(ij)} \E_k[\Delta^k]+\Oprec(n^{-1/2}).\]
For any $k \notin \{i,j\}$, we have analogously to (\ref{eq:onestepexpansiona})
and (\ref{eq:onestepexpansionb}) that
\[G_{kk}-M_k \prec n^{-1/2},
\qquad \sum_\ell^{(k)} a_{k \ell} G_{\ell j}^{(k)} \prec n^{-1/2},
\qquad \sum_\ell^{(k)} G_{i\ell}^{(k)}a_{\ell k}  \prec n^{-1/2}.\]
Hence, applying the resolvent identities of Lemma \ref{resolvent identities}(b),
\begin{align*}
G_{ik}&={-}\sum_\ell^{(k)} G_{i\ell}^{(k)}a_{\ell k}G_{kk}
={-}\sum_\ell^{(k)} G_{i\ell}^{(k)}a_{\ell k} M_k+\Oprec(n^{-1})\\
G_{kj}&={-}\sum_\ell^{(k)} G_{kk} a_{k \ell} G_{\ell j}^{(k)}
={-}\sum_\ell^{(k)} M_k a_{k \ell} G_{\ell j}^{(k)}+\Oprec(n^{-1}).
\end{align*}
Applying this into the definition of $\Delta^k$,
\begin{align*}
\sum_k^{(ij)} \E_k[\Delta^k]
&=\sum_k^{(ij)}\E_k\Big[G_{kj}E_{\beta\alpha}G_{ik}\Big]\\
&=\sum_k^{(ij)} \sum_{\ell,m}^{(k)}
\E_k[a_{km}a_{\ell k}]M_kG_{mj}^{(k)}E_{\beta\alpha}
G_{i\ell}^{(k)}M_k+\Oprec(n^{-1/2})\\
&=\frac{1}{n}\sum_k^{(ij)} 
\sum_\ell^{(ijk)} M_kG_{\ell j}^{(k)}E_{\beta\alpha}
G_{i\ell}^{(k)}M_k+\Oprec(n^{-1/2}),
\end{align*}
the last equality using $\E[a_{km}a_{\ell k}]=n^{-1}\1\{\ell=m\}$
and then absorbing the summands with $\ell \in \{i,j\}$ into the
$\Oprec(n^{-1/2})$ error. Now applying $G_{\ell j}-G_{\ell
j}^{(k)},G_{i\ell}-G_{i\ell}^{(k)} \prec n^{-1}$
(as follows from Lemma \ref{resolvent identities}(c)),
$M_k \prec 1$,
and $G_{\ell j}^{(k)},G_{i\ell}^{(k)} \prec n^{-1/2}$ for all
$\ell \notin \{i,j,k\}$, we get
\begin{align*}
\sum_k^{(ij)} \E_k[\Delta^k]&=\frac{1}{n}\sum_k^{(ij)} \sum_\ell^{(ijk)}
M_kG_{\ell j} E_{\beta\alpha} G_{i\ell} M_k+\Oprec(n^{-1/2})\\
&=\frac{1}{n}\sum_k \sum_\ell M_kG_{\ell j}E_{\beta\alpha}
G_{i\ell}M_k+\Oprec(n^{-1/2}),
\end{align*}
the second line introducing an additional $\Oprec(n^{-1/2})$ errors upon
including the summands with $\ell \in \{i,j,k\}$, followed by $k \in \{i,j\}$.
Observing that
\[\frac{1}{n}\sum_{k,\ell=1}^n M_kG_{\ell j}E_{\beta\alpha}
G_{i\ell}M_k = \frac{1}{n}\sum_{k=1}^n M_k\left[\sum_{\ell=1}^n
G_{\ell j}E_{\beta\alpha}
G_{i\ell}\right]M_k=\frac{1}{n}\sum_{k=1}^n M_k\Delta M_k,\]
this gives
\[\Delta=\sum_k^{(ij)} \E_k[\Delta^k]+\Oprec(n^{-1/2})
=\frac{1}{n}\sum_{k=1}^n M_k\Delta M_k+\Oprec(n^{-1/2}).\]
Thus $\cL_1(\Delta) \prec n^{-1/2}$ where $\cL_1$ is the linear
operator of Lemma \ref{lemma:linearmap1} (in the current setting with $\m_0=M_B$
and $(\r_0)_{ii}=M_i$). By the quantitative invertibility
of $\cL_1$ shown in Lemma \ref{lemma:linearmap1}, this implies the claim
(\ref{eq:Deltabound}),
completing the proof of Theorem \ref{rmt:main theorem}.
\end{proof} 

\section{Analysis of least-squares problem}\label{sec:optimization}

In this section, we prove Theorem \ref{optimization: main theorem},
Corollary \ref{cor:optimization}, and Proposition \ref{prop:computation}.
Recall the optimization objective from Section \ref{sec:model},
\[f(X)=\frac{1}{2}\|XA+BX\|_F^2+\frac{1}{2}\sum_{i,j=1}^n \xi_i\theta_j
x_{ij}^2\]
and its minimizer under a linear constraint,
\begin{equation}\label{eq:optimization objective}
    \widehat{X}=\argmin_{X \in \R^{n \times n}} f(X)
\text{ subject to } \frac{1}{n}\v^* X\u=1.
\end{equation}

\begin{proof}[Proof of Theorem \ref{optimization: main theorem}]
    Consider the following vectorization of (\ref{eq:optimization objective})
(where we use the convention of vectorization by column, i.e.\
$\bx=\sum_k\e_k \otimes X\e_k$)
    \begin{align*}
f(\bx)&=\frac{1}{2}\bx^*[(A \otimes I + I \otimes B)^2 + (\Theta \otimes
\Xi)]\bx,\\
        \hat{\bx}&=\argmin_{\bx \in \R^{n^2}} f(\bx)
        \text{ subject to } n^{-1}(\u \otimes \v)^*\bx=1.
    \end{align*}
Denote 
\begin{align*}
P&=[(A \otimes I + I \otimes B)^2 + (\Theta \otimes \Xi)]^{-1} \in \R^{n \times
n} \otimes \R^{n \times n},\\
\mathsf{p}&=[(\a \otimes 1 + 1 \otimes \b)^2 + (\Theta \otimes \Xi)]^{-1} \in
\cA \otimes \cA,\\
P_0&=(\tau^\cD \otimes \tau^\cD)[\mathsf{p}].
\end{align*}
Simple calculus yields the explicit forms for $\hat{\bx}$ and $f(\hat{\bx})$ as 
\begin{equation}\label{eq:Xexplicit}
        \hat{\bx}=\frac{1}{n^{-2}(\u \otimes \v)^*P(\u \otimes \v)}
\cdot n^{-1}P(\u \otimes \v), \qquad
        f(\hat{\bx})=\frac{1}{2}\frac{1}{n^{-2}(\u \otimes \v)^*P(\u \otimes
\v)}.
\end{equation}

Consider the linearization of $P$ given by
    \begin{align*}
        \widetilde{P}&=\begin{bmatrix}
            -i\,\Theta \otimes \Xi & A \otimes I + I \otimes B\\
            A \otimes I + I \otimes B & -i\,I \otimes I
        \end{bmatrix}^{-1}\\
        &=\left(\begin{bmatrix}
            0 & 1\\
            1 & 0
        \end{bmatrix} \otimes (A \otimes I + I \otimes B) -i
        \begin{bmatrix}
            1 & 0\\
            0 & 0
        \end{bmatrix} \otimes \Theta \otimes \Xi - i\begin{bmatrix}
            0 & 0\\
            0 & 1
        \end{bmatrix}\otimes I \otimes I\right)^{-1}.
    \end{align*}
Denote $\e_1=(1,0) \in \C^2$. Then by Schur's complement,
    \begin{equation}\label{eq:schurcomplement}
-i\widetilde{P}=\begin{bmatrix}
        \Theta \otimes \Xi & i(A \otimes I + I \otimes B)\\
        i(A \otimes I + I \otimes B) & I \otimes I
    \end{bmatrix}^{-1}=\begin{bmatrix}
        [(A \otimes I + I \otimes B)^2+\Theta \otimes \Xi]^{-1} & *\\
        * & *
    \end{bmatrix}
\end{equation}
so that $P=(\e_1 \otimes I \otimes I)^*[-i\widetilde P](\e_1 \otimes I \otimes I)$. Defining also
\begin{align}
\tilde\p&=\left(\begin{bmatrix}
            0 & 1\\
            1 & 0
        \end{bmatrix} \otimes (\a \otimes 1 + 1 \otimes \b) -i\begin{bmatrix}
            1 & 0\\
            0 & 0
        \end{bmatrix} \otimes \Theta \otimes \Xi - i\begin{bmatrix}
            0 & 0\\
            0 & 1
        \end{bmatrix}\otimes 1 \otimes 1\right)^{-1},\label{eq:ptilde}\\
\widetilde{P}_0&=(I_{2 \times 2} \otimes \tau^\cD \otimes
\tau^\cD)[\tilde\p],\notag
\end{align}
we have similarly $\p=(\e_1 \otimes 1 \otimes 1)^*[-i\tilde \p](\e_1 \otimes
1 \otimes 1)$. Then, by (\ref{eq:linearmapcomp}) applied with
$(T,T')=(\e_1^*[\cdot]\e_1,\tau^\cD \otimes \tau^\cD)$, we have also
\[P_0=(\e_1 \otimes 1 \otimes 1)^*[-i\widetilde P_0](\e_1 \otimes 1 \otimes
1).\]

By (\ref{isotropic estimates}) of
Theorem \ref{aux:main theorem}, uniformly over $\u,\v,\u',\v' \in \R^n$ with
$\|\u\|_2,\|\v\|_2,\|\u'\|_2,\|\v'\|_2 \leq \upsilon\sqrt{n}$,
    \begin{align}
        \frac{1}{n^2}(\u' \otimes \v')^*P(\u \otimes \v)&={-i}\left(\e_1
\otimes\frac{\u'}{\sqrt{n}} \otimes
\frac{\v'}{\sqrt{n}}\right)^*\widetilde{P}\left(\e_1 \otimes \frac{\u}{\sqrt{n}}
\otimes \frac{\v}{\sqrt{n}}\right)\notag\\
        &={-i}\left(\e_1 \otimes\frac{\u'}{\sqrt{n}} \otimes
\frac{\v'}{\sqrt{n}}\right)^*\widetilde{P}_0\left(\e_1 \otimes
\frac{\u}{\sqrt{n}} \otimes \frac{\v}{\sqrt{n}}\right)+\Oprec(n^{-1/2})\notag\\
        &=\frac{1}{n^2}(\u' \otimes \v')^*P_0(\u \otimes \v)+\Oprec(n^{-1/2}).
\label{eq:Pquadform}
    \end{align}
Here, in the second line, we have applied (\ref{isotropic estimates}) of
Theorem \ref{aux:main theorem} twice as in the proof of Theorem \ref{rmt:main
theorem} in Section \ref{sec:resolvent}, first to the second tensor factor
conditional on $B$ and the event $\cE=\{\|B\|_\op \leq 3\}$ with
$\cX=\C^{2 \times 2} \otimes \C^{n \times n}$ (the product of
first and third factors),
\[H=A,\quad \x=\begin{bmatrix} 0 & 1 \\ 1 & 0 \end{bmatrix} \otimes I,
\quad \z=i\begin{bmatrix} 1 & 0 \\ 0 & 0 \end{bmatrix}
\otimes \Theta \otimes \Xi+i\begin{bmatrix} 0 & 0 \\ 0 & 1 \end{bmatrix}
\otimes I \otimes I-\begin{bmatrix} 1 & 0 \\ 0 & 1 \end{bmatrix} \otimes I
\otimes B,\]
and then to the third tensor factor with $\cX=\C^{2 \times 2} \otimes \cA$
(the product of the first and second factors),
\[H=B, \quad \x=\begin{bmatrix} 0 & 1 \\ 1 & 0 \end{bmatrix} \otimes 1,
\quad \z=i\begin{bmatrix} 1 & 0 \\ 0 & 0 \end{bmatrix}
\otimes \Theta \otimes \Xi+i\begin{bmatrix} 0 & 0 \\ 0 & 1 \end{bmatrix}
\otimes 1 \otimes I-\begin{bmatrix} 1 & 0 \\ 0 & 1 \end{bmatrix} \otimes \a
\otimes I.\]

Since $\p$ is a positive operator, satisfying
$\p \geq \|\p^{-1}\|_\op^{-1}(1_\cA \otimes 1_\cA)$, it follows from positivity
of $\tau^\cD \otimes \tau^\cD$ (Lemma \ref{conditional expectation}) and the
bounds $\|\a\|_\op=\|\b\|_\op=2$ and $\|\Theta\|_\op,\|\Xi\|_\op \leq \upsilon$
that
    \[P_0=\tau^\cD \otimes \tau^\cD[\p]\geq \|\p^{-1}\|_{\op}^{-1}
\geq (16+\upsilon^2)^{-1}.\] 
    Therefore, for $\u=\u'$ and $\v=\v'$ satisfying $\|\u\|_2,\|\v\|_2 \geq
\upsilon^{-1}\sqrt{n}$, we have the constant lower bound
    \[\frac{1}{n^2}(\u \otimes \v)^*P_0(\u \otimes \v)
    \geq (16+\upsilon^2)^{-1}\frac{1}{n^2}\|\u\|_2^2\|\v\|_2^2
    \geq (16\upsilon^4+\upsilon^6)^{-1},\]
so we may apply the approximation (\ref{eq:Pquadform}) to both the numerator
and denominator of (\ref{eq:Xexplicit}) to get
\begin{align*}
f(\hat\bx)&=\frac{1}{2}
\frac{1}{n^{-2}(\u \otimes \v)^* P_0(\u \otimes \v)}+\Oprec(n^{-1/2}),\\
\frac{1}{n}(\u' \otimes \v')^* \hat\bx
&=\frac{n^{-2}(\u' \otimes \v')^* P_0(\u \otimes \v)}
{n^{-2}(\u \otimes \v)^* P_0(\u \otimes \v)}+\Oprec(n^{-1/2}).
\end{align*}
Finally, since $P_0 \in \cD \otimes \cD \subset \C^{n \times n} \otimes \C^{n
\times n}$ is a diagonal matrix, writing $\su=\diag(\u)$ etc.\ we have
\begin{align*}
n^{-2}(\u' \otimes \v')^* P_0(\u \otimes \v)
&=n^{-2}(\Tr \otimes \Tr)[(\su' \otimes \sv')^* P_0(\su \otimes \sv)]\\
&=(n^{-1}\Tr \otimes n^{-1}\Tr)(\tau^\cD \otimes \tau^\cD)
[(\su' \otimes \sv')^* \p(\su \otimes \sv)]\\
&=(\tau \otimes \tau)[(\su' \otimes \sv')^* \p(\su \otimes \sv)].
\end{align*}
Applying this identity to both the numerators and denominators above concludes
the proof.
\end{proof}

\begin{proof}[Proof of Corollary \ref{cor:optimization}]
Under the given assumptions, for any fixed
non-commutative polynomial $p$, we have
\[\lim_{n \to \infty} (\tau \otimes \tau)
\Big(p(\a \otimes 1,1 \otimes \b,\Theta \otimes \Xi,\su \otimes \sv,\su' \otimes
\sv')\Big)
=(\tau \otimes \tau)
\Big(p(\a \otimes 1,1 \otimes \b,\theta \otimes \xi,\sU \otimes \sU,\sU'
\otimes \sU')\Big)\]
where $\Theta,\su,\su',\Xi,\sv,\sv' \in \cD \subset \cA$ on the left are real
diagonal matrices, and $\theta,\sU,\sU',\xi,\sV,\sV' \in \cA$ on the right are 
commuting, self-adjoint limiting operators, free of $(\a,\b)$ and
for which $(\theta,\sU,\sU')$ and $(\xi,\sV,\sV')$
have joint laws under $\tau$ given by $\cP$ and $\cQ$, respectively.
The assumptions $\|\Theta\|_\op,\|\Xi\|_\op \leq \upsilon$ and $\Theta,\Xi \geq
\delta$ imply that
the spectrum of $(\a \otimes 1+1 \otimes \b)^2+\Theta \otimes \Xi$
is contained in $[\delta^2,16+\upsilon^2]$. The inverse function
$x \mapsto x^{-1}$ may be approximated uniformly by polynomials on this
interval, so the above convergence implies
\begin{align*}
&\lim_{n \to \infty}
(\tau \otimes \tau)[(\su' \otimes \sv')[(\a \otimes 1+1 \otimes
\b)^2+\Theta \otimes \Xi]^{-1}(\su \otimes \sv)]\\
&=(\tau \otimes \tau)[(\sU' \otimes \sV')[(\a \otimes 1+1 \otimes
\b)^2+\theta \otimes \xi]^{-1}(\sU \otimes \sV)]
\end{align*}
where this limit depends only on the joint laws $\cP,\cQ$.
Defining this limit quantity as $T'(\cP,\cQ)$, and the analogous limit with
$\su=\su'$ and $\sv=\sv'$ as $T(\cP,\cQ)$,
the corollary then follows from Theorem \ref{optimization: main theorem}.
\end{proof}

\begin{proof}[Proof of Proposition \ref{prop:computation}]
Following the above proof of Corollary \ref{cor:optimization}, 
we write $\theta,\sU,\sU',\xi,\sV,\sV' \in \cA$ for the limiting self-adjoint
operators, which are free of $(\a,\b)$ and such
that $(\theta,\sU,\sU')$ and $(\xi,\sV,\sV')$
have joint laws under $\tau$ given by $\cP$ and $\cQ$.
Thus $\|\sU\|_\infty \equiv \|\sU\|_\op$ in the statement of
Proposition \ref{prop:computation}. In this proof, we denote
by $\cD \subset \cA$ the von Neumann subalgebra generated by 
$\theta,\sU,\sU',\xi,\sV,\sV'$.

We set $\eta=\min\{\sqrt{x_ax_b}:x_a \in \supp(\theta),\,x_b \in \supp(\xi)\}$
and define a limiting linearized operator $\tilde \p$ analogous to
(\ref{eq:ptilde}),
\[\tilde \p=\begin{bmatrix} -i\eta^{-1}\,\theta \otimes \xi &
\a \otimes 1+1 \otimes \b \\ \a \otimes 1+1 \otimes \b & -i\eta\,1 \otimes 1
\end{bmatrix}^{-1}.\]
It is direct to check as in (\ref{eq:schurcomplement}) that we have
\begin{equation}\label{eq:schur2}
[(\a \otimes 1+1 \otimes \b)^2+\theta \otimes
\xi]^{-1}=(\e_1 \otimes 1 \otimes 1)^*[-i\eta^{-1}\tilde \p](\e_1 \otimes 1
\otimes 1).
\end{equation}
Introducing the shorthands
\[\d_a=\theta^{-1/2}, \qquad \d_b=\xi^{-1/2},\]
\[\tilde \d=\begin{bmatrix} \eta^{1/2}\d_a \otimes \d_b & 0 \\ 0 & \eta^{-1/2}
1 \otimes 1 \end{bmatrix},
\qquad \tilde \a=\begin{bmatrix} 0 & \d_a\a \otimes \d_b \\ 0 
& 0 \end{bmatrix}, \qquad \tilde \b=\begin{bmatrix} 0 & \d_a \otimes \d_b\b \\ 
0 & 0 \end{bmatrix}\]
and fixing a real argument $z>1$, we write this as
\begin{align*}
\tilde \p&=\left(\tilde \d^{-1}
\begin{bmatrix} -i\,1 \otimes 1 & \d_a\a \otimes \d_b+\d_a \otimes \d_b\b \\
\a\d_a \otimes \d_b+\d_a \otimes \b\d_b & -i\,1 \otimes 1
\end{bmatrix}\tilde\d^{-1}\right)^{-1}\\
&=\tilde\d\Big(\underbrace{\tilde\a+\tilde\a^*+\tilde\b+\tilde\b^*}_{=\tilde \q}
+\,i(z-1)-iz\Big)^{-1}\tilde\d.
\end{align*}
Note that
\[\|\d_a\a \otimes \d_b+\d_a \otimes \d_b\b\|_\op \leq
\|\d_a\|_\op\|\d_b\|_\op(\|\a\|_\op+\|\b\|_\op) \leq 4\eta^{-1},\]
so $\tilde\q=\tilde\a+\tilde\a^*+\tilde\b+\tilde\b^*$ is self-adjoint with
spectrum contained in $[-4\eta^{-1},4\eta^{-1}]$. Hence
\begin{equation}\label{eq:tildeqbound}
\|\tilde \q+i(z-1)\|_\op=\max\{|\lambda+i(z-1)|:\lambda \in
\spec(\tilde\q)\} \leq \sqrt{(z-1)^2+16\eta^{-2}}.
\end{equation}
Applying iteratively $(\tilde \q+i(z-1)-iz)^{-1}
={-}(iz)^{-1}+(iz)^{-1}[\tilde \q+i(z-1)](\tilde \q+i(z-1)-iz)^{-1}$ to
write a series expansion of $(\tilde \q+i(z-1)-iz)^{-1}$, we have
\begin{align*}
\tilde \p&={-}\sum_{k=0}^{M-1} (iz)^{-(k+1)}\tilde\d[\tilde \q+i(z-1)]^k
\tilde\d+\r_M\\
&={-}\sum_{k=0}^{M-1} (iz)^{-(k+1)} \sum_{m=0}^k
\binom{k}{m} [i(z-1)]^{k-m}\tilde\d\tilde\q^m\tilde\d+\r_M\\
&=i\sum_{m=0}^{M-1}\underbrace{\left(\frac{1}{i^m z(z-1)^m}
\sum_{k=m}^{M-1}
\binom{k}{m}\left(\frac{z-1}{z}\right)^k\right)}_{=C_m(z)}
\tilde\d\tilde\q^m\tilde\d+\r_M
\end{align*}
with remainder
\begin{align*}
\|\r_M\|_\op&=
\left\|z^{-M}\tilde\d[\tilde\q+i(z-1)]^M(\tilde\q-i)^{-1}\tilde\d\right\|_\op
\leq \eta^{-1} \left(\frac{\sqrt{(z-1)^2+16\eta^{-2}}}{z}\right)^M.
\end{align*}
Here, we have applied (\ref{eq:tildeqbound}), $\|(\tilde \q-i)^{-1}\|_\op
\leq 1$, and $\|\tilde \d\|_\op \leq \eta^{-1/2}$ as follows from its
definition. Then, recalling the Schur complement identity (\ref{eq:schur2})
and applying also norm contractivity of $\tau \otimes \tau$
(Lemma \ref{conditional expectation}), we obtain
\begin{align}
&(\tau \otimes \tau)[(\sU' \otimes \sV')[(\a \otimes 1+1 \otimes
\b)^2+\theta \otimes \xi]^{-1}(\sU \otimes \sV)]\notag\\
&=\sum_{m=0}^{M-1} C_m(z) (\tau \otimes \tau)
\Big[(\e_1 \otimes \sU' \otimes \sV')^* [\eta^{-1}\tilde\d\tilde\q^m\tilde\d]
(\e_1 \otimes \sU \otimes \sV)\Big]+r_M
\label{eq:computationexpansion}
\end{align}
where $r_M$ is an error satisfying (\ref{eq:computationremainder}).
We remark that the left side is real because all elements are self-adjoint,
while each summand on the right
is real for even $m$ and pure imaginary for odd $m$ by the definition of
$C_m(z)$. Thus, taking real parts,
this identity also holds with the summation restricted to even $m$.

We now analyze the summand for each even $m \in \{0,\ldots,M-1\}$.
Let $\cW_m$ be the
set of words in the letters $(\A,\A^*,\B,\B^*)$ starting with a letter in
$\{\A,\B\}$ and alternating between a letter $\{\A,\B\}$ and a letter
$\{\A^*,\B^*\}$, understood as non-commutative monic monomials of four variables.
Then, applying the definitions of $\tilde\d,\tilde\a,\tilde\b$, we have
\begin{align*}
&(\e_1 \otimes 1 \otimes 1)^* \eta^{-1}\tilde\d
(\tilde \a+\tilde\a^*+\tilde \b+\tilde\b^*)^m\tilde\d
(\e_1 \otimes 1 \otimes 1)^*\\
&=\sum_{w \in \cW_m} \d_a w(\d_a\a,\a\d_a,\d_a,\d_a)\d_a
\otimes \d_b w(\d_b,\d_b,\d_b\b,\b\d_b)\d_b \in \cA \otimes \cA.
\end{align*}
Thus
\begin{align*}
&(\tau \otimes \tau)\Big[(\e_1 \otimes \sU' \otimes \sV')^*
[\eta^{-1}\tilde\d\tilde\q^m\tilde\d](\e_1 \otimes \sU \otimes \sV)\Big]\\
&=\sum_{w \in \cW_m} \tau\Big(\sU'\d_a
w(\d_a\a,\a\d_a,\d_a,\d_a)\d_a\sU\Big) \cdot \tau\Big(\sV'\d_b
w(\d_b,\d_b,\d_b\b,\b\d_b)\d_b \sV\Big)\\
&=\sum_{w \in \cW_m} \tau\Big(\d_a\sU\sU'\d_a
w(\d_a\a,\a\d_a,\d_a,\d_a)\Big) \cdot \tau\Big(\d_b \sV\sV'\d_b
w(\d_b,\d_b,\d_b\b,\b\d_b)\Big).
\end{align*}

For any $\d_1,\ldots,\d_{k+1} \in \cD$, by the free moment-cumulant relations
\cite[Proposition 11.4]{NicaSpeicher2006}, we have
\[\tau(\d_1\a\d_2\a\ldots \d_k\a\d_{k+1})
=\sum_{\pi \in \NC(2k+1)} \kappa_\pi(\d_1,\a,\d_2,\a,\ldots,\d_k,\a,\d_{k+1})\]
where $\NC(2k+1)$ is the set of non-crossing partitions of $(1,2,\ldots,2k+1)$
and $\kappa_\pi$ is the free cumulant associated to each $\pi \in \NC(2k+1)$.
Since $\a$ is free of $\cD$ and has $2^\text{nd}$ free cumulant equal to 1
and remaining free cumulants 0 \cite[Example 11.21]{NicaSpeicher2006}, we have
$\kappa_\pi(\d_1,\a,\d_2,\a,\ldots,\d_k,\a,\d_{k+1})=0$ unless the blocks of
$\pi$ containing $\a$ constitute a non-crossing pairing and are
disjoint from those containing
$(\d_1,\ldots,\d_{k+1})$. Let $\NC_2(k)$ denote the set of all
non-crossing pairings of $\{2,4,6,\ldots,2k\}$ corresponding to the locations of
the $\a$'s, and for each $\rho \in \NC_2(k)$, let $K(\rho) \in \NC(k+1)$ be
its complement in $\{1,2,\ldots,2k+1\}$, i.e.\ the
coarsest non-crossing partition of the remaining elements
$\{1,3,5,\ldots,2k+1\}$ for which
$\rho \cup K(\rho)$ forms a non-crossing partition of $\{1,2,\ldots,2k+1\}$.
Then each $\pi \in \NC(2k+1)$ for which
$\kappa_\pi(\d_1,\a,\d_2,\a,\ldots,\d_k,\a,\d_{k+1}) \neq 0$
is the union of some $\rho \in \NC_2(k)$ and some
$\bar\rho \leq K(\rho)$ that refines $K(\rho)$, so we have
\begin{align*}
\tau(\d_1\a\d_2\a\ldots \d_k\a\d_{k+1})
&=\sum_{\rho \in \NC_2(k)} \sum_{\bar\rho \in \NC(k+1):\bar\rho\leq K(\rho)}
\kappa_{\rho}(\a,\ldots,\a)\kappa_{\bar\rho} (\d_1,\d_2,\ldots,\d_{k+1})\\
&=\sum_{\rho \in \NC_2(k)} \sum_{\bar\rho \leq K(\rho)}
\kappa_{\bar\rho} (\d_1,\d_2,\ldots,\d_{k+1})
=\sum_{\rho \in \NC_2(k)} \prod_{S \in K(\rho)} \tau\left(\prod_{i \in S}
\d_i\right),
\end{align*}
the last equality applying that $\{\bar\rho:\bar\rho \leq K(\rho)\}$ is a
product of non-crossing partitions of the blocks of $K(\rho)$ and applying the
free moment-cumulant relation over each block of $K(\rho)$.

Applying this above,
corresponding to each word $w \in \cW_m$, let $\NC_{w,2}(\A)$ be the set of all
non-crossing pairings of the letters $\{A,A^*\}$ of $w$ (not necessarily
pairing $A$ with $A^*$), and let $\NC_{w,2}(\B)$ be those of the letters
$\{B,B^*\}$ of $w$. Then
\[\tau\Big(\d_a\sU\sU'\d_a w(\d_a\a,\a\d_a,\d_a,\d_a)\Big)
\cdot \tau\Big(\d_b\sV\sV'\d_b w(\d_b,\d_b,\d_b\b,\b\d_b)\Big)\\
=\sum_{\rho_a \in \NC_{w,2}(\A)}
\sum_{\rho_b \in \NC_{w,2}(\B)} \val(\rho_a,\rho_b)\]
where $\val(\cdot)$ is precisely the quantity defined in
Section \ref{sec:computation}.
Finally, we observe that summing first over
words $w \in \cW_m$ of $(A,A^*,B,B^*)$ and then over pairings $\rho_a \in
\NC_{w,2}(\A)$ and $\rho_b \in \NC_{w,2}(\B)$ is equivalent to
summing over all disjoint non-crossing pairings $(\rho_a,\rho_b) \in
\NC_{2,2}(m)$, and then identifying $w$ as the word with letters
$\{\A,\A^*\}$ for elements of $\rho_a$ and $\{\B,\B^*\}$ for elements of
$\rho_b$ and that alternates between $\{A,B\}$ and $\{A^*,B^*\}$. Thus
\[(\tau \otimes \tau)\Big[(\e_1 \otimes \sU' \otimes \sV')^*
[\eta^{-1}\tilde\d\tilde\q^m\tilde\d](\e_1 \otimes \sU \otimes \sV)\Big]
=\sum_{(\rho_a,\rho_b) \in \NC_{2,2}(m)} \val(\rho_a,\rho_b).\]
A direct calculation shows that
this identity holds also for $m=0$, upon defining
$\NC_{2,2}(0)$ to have the single pair $(\emptyset,\emptyset)$
with its value defined as in (\ref{eq:emptyval}).
Applying this back to (\ref{eq:computationexpansion}) 
and using $i^m=(-1)^{m/2}$ in $C_m(z)$ concludes the proof.
\end{proof}

\appendix

\section{Contour integral representation in the case $\Theta=\Xi=\eta I$}\label{appendix:contour}

The optimization problem (\ref{eq:QPintro}) with $\Theta=\Xi=\eta I$ 
and $\eta \sim 1/(\log n)^C$ was studied previously in \cite{fan2023spectral}.
In this setting, its solution is given explicitly by
\[\widehat{X}=\frac{1}{n^{-2}(\u \otimes \v)^*P(\u \otimes \v)} \cdot n^{-1}
P(\u \otimes \v)
\quad \text{ where } \quad
P=[(A \otimes I+I \otimes B)^2+\eta^2 I \otimes I]^{-1}.\]
Applying the linearization
\[[(A \otimes I+I \otimes B)^2+\eta^2 I \otimes I]^{-1}
=\frac{1}{\eta}\Im [A \otimes I+I \otimes B-i\eta I \otimes I]^{-1},\]
the analyses of \cite{fan2023spectral}
rested on a deterministic approximation for the resolvent
\[R(z)=(Q-z\,I \otimes I)^{-1}, \qquad Q=A \otimes I+I \otimes B\]
at spectral scales $\Im z \sim 1/(\log n)^C$ decaying slowly with $n$.

This setting is special because the matrices $A \otimes I$ and $I \otimes B$
constituting $Q$
commute. In this setting, writing the spectral decompositions $A=\sum_{j=1}^n
\lambda_j\u_j\u_j^*$ and $B=\sum_{k=1}^n \mu_k\v_k\v_k^*$, the spectral
decomposition of $Q$ is explicit and given by
\[Q=\sum_{j,k=1}^n (\lambda_j+\mu_k)(\u_j \otimes \v_k)(\u_j \otimes \v_k)^*.\]
In particular, the limit eigenvalue distribution of $Q$ is the (classical)
convolution of the semicircle law with itself. Furthermore, defining a contour
$\Gamma$ enclosing $\{\lambda_1,\ldots,\lambda_n\}$ and such that
$|\Im w| \leq (\Im z)/2$ for all $w \in \Gamma$,
by the Cauchy integral formula applied to
$f_k(w)=(w+\mu_k-z)^{-1}$ (which is analytic inside $\Gamma$)
the resolvent of $Q$ has the explicit contour integral representation
\begin{align*}
R(z)&=\sum_{j,k=1}^n \frac{1}{\lambda_j+\mu_k-z}\u_j\u_j^* \otimes \v_k\v_k^*\\
&=\frac{1}{2\pi i}\oint_\Gamma \frac{1}{w-\lambda_j}\frac{1}{w+\mu_k-z}
\u_j\u_j^* \otimes \v_k\v_k^*\,dw\\
&={-}\frac{1}{2\pi i} \oint_\Gamma R_A(w) \otimes R_B(z-w)\,dw
\end{align*}
where $R_A(z)=(A-zI)^{-1}$ and $R_B(z)=(B-zI)^{-1}$. From this representation,
resolvent estimates of the form in Theorem \ref{rmt:main theorem} may be 
deduced from known local laws for the resolvents of the Wigner matrices $A$ and
$B$, see e.g.\ \cite{ErdosYauYin2012Rigidity,ErdosEtAl2013Spectral},
and analysis of this commutative case suggests that the estimates in
Theorem \ref{rmt:main theorem} are also optimal (for fixed $z \in \C^+$).

We emphasize that this type of analysis and contour integral representation
does not extend to models of the form
$Q=A \otimes I+I \otimes B+\Theta \otimes \Xi$ when $\Theta \otimes \Xi$ does
not commute with either $A \otimes I$ or $I \otimes B$, which is the focus of our
current work.

\section{Operator concentration inequalities}\label{appendix:concentration}

In this section, we prove Lemma \ref{lemma:concentration} using the following
version of the non-commutative Rosenthal inequality of
\cite{JungeXu2008}. We recall that $\cX$ is a von Neumann algebra with
faithful, normal, tracial state $\phi$, and $L^p(\cX)$ is its associated
non-commutative $L^p$ space (c.f.\ Appendix \ref{appendix:background}) with norm
$\|\x\|_p=\phi((\x^*\x)^{p/2})^{1/p}$.

\begin{lem}[\cite{JungeXu2008}, Theorem 2.1]\label{lemma:rosenthal}
Let $\cY \subset \cX$ be a von Neumann subalgebra with $\phi$-invariant
conditional expectation (c.f.\ Lemma \ref{conditional expectation})
$\phi^\cY:\cX \to \cY$. Suppose $\x_1,\ldots,\x_n
\in L^p(\cX)$ satisfy $\phi^\cY(\x_i)=0$, and are independent over $\cY$
in the sense that for each $i$, each $\x$ in the von Neumann
subalgebra generated by $\x_i$, and each $\x'$ in the von
Neumann subalgebra generated by $\{\x_j:j \neq i\}$, we have
$\phi^\cY(\x\x')=\phi^\cY(\x)\phi^\cY(\x')$.

Then for any $p \in [2,\infty)$ and a universal constant $C>0$,
\[\bigg\|\sum_{i=1}^n \x_i\bigg\|_p
\leq Cp\max\bigg\{\bigg\|\bigg(\sum_{i=1}^n
\phi^\cY(\x_i\x_i^*)\bigg)^{1/2}\bigg\|_p,\;
\bigg\|\bigg(\sum_{i=1}^n
\phi^\cY(\x_i^*\x_i)\bigg)^{1/2}\bigg\|_p,\;
\bigg(\sum_{i=1}^n \|\x_i\|_p^p\bigg)^{1/p}\bigg\}.\]
\end{lem}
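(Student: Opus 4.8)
The plan is to obtain the bound by reducing it to the noncommutative martingale setting and then invoking the noncommutative Burkholder--Rosenthal inequality; indeed the statement is a specialization of \cite[Theorem 2.1]{JungeXu2008}, and I would present it as a short deduction from the martingale version together with a verbatim matching of hypotheses. First I would build the filtration: set $\cN_0=\cY$ and let $\cN_i$ be the von Neumann subalgebra generated by $\cY$ and $\x_1,\dots,\x_i$, with $\phi$-invariant conditional expectations $\E_i\colon\cX\to\cN_i$ (these exist and restrict to $\phi^\cY$ on the relevant subalgebras since $\phi$ is tracial, c.f.\ Lemma \ref{conditional expectation}). The independence-over-$\cY$ hypothesis says exactly that for each $i$, any $\x$ in the algebra generated by $\x_i$, and any $\x'$ in the algebra generated by $\{\x_j:j\neq i\}$, one has $\phi^\cY(\x\x')=\phi^\cY(\x)\phi^\cY(\x')$; combined with $\phi^\cY(\x_i)=0$ this gives $\E_{i-1}[\x_i]=\phi^\cY(\x_i)=0$, so $(\x_i)_{i=1}^n$ is a martingale-difference sequence for $(\cN_i)$. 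The same factorization identifies the conditioned square functions, $\sum_i\E_{i-1}[\x_i\x_i^*]=\sum_i\phi^\cY(\x_i\x_i^*)$ and likewise with $\x_i^*\x_i$, so the column/row terms on the right-hand side of the martingale inequality are precisely the two conditioned $L^p(\ell^2)$ terms appearing in the statement.

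With this reduction, the estimate follows from the noncommutative Burkholder--Rosenthal inequality for $L^p$-martingales with $p\in[2,\infty)$, namely $\|\sum_i\x_i\|_p\le c(p)\max\{\|(\sum_i\E_{i-1}[\x_i\x_i^*])^{1/2}\|_p,\ \|(\sum_i\E_{i-1}[\x_i^*\x_i])^{1/2}\|_p,\ (\sum_i\|\x_i\|_p^p)^{1/p}\}$. I would treat this as the key black box (c.f.\ \cite{JungeXu2008}, with background in \cite{pisier1998non,pisier2016martingales}); its proof goes via the noncommutative Burkholder--Gundy inequalities of Pisier--Xu, which control $\|\sum_i\x_i\|_p$ by the \emph{unconditioned} row and column square functions with a constant of order $\sqrt p$ for $p\ge 2$, followed by the noncommutative Stein/dual-Doob inequality to pass from the unconditioned to the conditioned square functions, and a standard duality argument in the $L^{p'}$--$L^p$ pairing to produce the $\ell^p$-diagonal term. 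For $p\ge 2$ this yields a constant $c(p)=O(p)$ (Junge--Xu in fact obtain the sharp order $O(p/\log p)$), which is the claimed bound with some universal $C$.

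The step I would expect to demand the most care is the \textbf{constant tracking}, i.e.\ justifying that the overall constant is genuinely $O(p)$ rather than, say, $O(p^2)$: the $\sqrt p$ blow-up in noncommutative Burkholder--Gundy combines with the constant in the noncommutative Doob/Stein inequality, and obtaining the linear-in-$p$ bound requires either citing Junge--Xu's sharp analysis directly or interpolating carefully between the elementary $L^2$ estimate (where the inequality holds with an absolute constant) and larger $p$. I would note, however, that this point is cosmetic here: every invocation of Lemma \ref{lemma:concentration}, and hence of Lemma \ref{lemma:rosenthal}, is made at a \emph{fixed} $p\in[2,\infty)$, so any finite $c(p)$ --- in particular the crude $O(p^2)$ bound obtained by iterating Burkholder--Gundy and Stein without optimization --- already suffices for all downstream uses. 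Apart from this, every hypothesis of the martingale inequality is matched verbatim by the hypotheses as stated, so no further work is needed.
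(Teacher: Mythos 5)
Your proposal is sound: the paper does not prove this lemma at all but imports it verbatim as \cite[Theorem 2.1]{JungeXu2008}, and your reduction---building the filtration $\cN_i$ generated by $\cY$ and $\x_1,\ldots,\x_i$, checking that independence over $\cY$ makes $(\x_i)$ a martingale-difference sequence with conditioned square functions $\sum_i\phi^\cY(\x_i\x_i^*)$ and $\sum_i\phi^\cY(\x_i^*\x_i)$, then invoking the noncommutative Burkholder--Rosenthal inequality---is exactly how Junge--Xu establish that theorem. Your remark that any finite constant $c(p)$ suffices for the paper's fixed-$p$ applications is also correct, so nothing further is needed.
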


\begin{lem}[Decoupling]\label{lemma:decoupling}
Let $(\alpha_i)_{i=1}^n$ be a sequence of independent $\C$-valued random
variables, let $(\alpha_i^{\prime})_{i=1}^n$ be an independent
copy of $(\alpha_i)_{i=1}^n$, and let
$(\y_{ij}:i,j=1,\ldots,n)$ be elements of a Banach space with norm
$\|\cdot\|$. Then for a universal constant $C>0$,
        \begin{align*}
            \E\bigg[\bigg\|\sum_{i \neq j}\alpha_i\alpha_j\y_{ij}\bigg\|^p\bigg]
\leq C^{p+1} \E\bigg[\bigg\|\sum_{i \neq
j}\alpha_i\alpha_i^{\prime}\y_{ij}\bigg\|^p\bigg]
        \end{align*}
    \end{lem}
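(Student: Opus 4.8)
The plan is to prove this by the classical random-partition (selector) method for decoupling, whose only subtle point is a conditional-expectation identity. First I would introduce independent symmetric $\{0,1\}$-valued random variables $\delta_1,\dots,\delta_n$, independent of $(\alpha_i)$ and $(\alpha_i')$, and set $I=\{i:\delta_i=1\}$, $J=\{j:\delta_j=0\}$, so that $I\sqcup J=\{1,\dots,n\}$. Since $\P_\delta[i\in I,\ j\in J]=\tfrac14$ for every ordered pair $i\neq j$, one has the identity
\[
\sum_{i\neq j}\alpha_i\alpha_j\,\y_{ij}
=4\,\E_\delta\!\Big[\sum_{i\in I,\ j\in J}\alpha_i\alpha_j\,\y_{ij}\Big].
\]
Applying Jensen's inequality over $\delta$ to the convex function $x\mapsto\|x\|^p$ (here $p\ge 1$) and then taking expectation over $(\alpha_i)$, it suffices to bound $\E_\alpha\big\|\sum_{i\in I,\ j\in J}\alpha_i\alpha_j\y_{ij}\big\|^p$ by a constant multiple of $\E_{\alpha,\alpha'}\big\|\sum_{i\neq j}\alpha_i\alpha_j'\y_{ij}\big\|^p$, uniformly over the (now fixed) bipartition $I\sqcup J$.

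I would carry out this bound in two further steps. First, replace $(\alpha_j)_{j\in J}$ by the independent copy $(\alpha_j')_{j\in J}$: as $(\alpha_i)_{i\in I}$ is independent of $(\alpha_j)_{j\in J}$, and $(\alpha_j')_{j\in J}$ is an identically distributed copy also independent of $(\alpha_i)_{i\in I}$, the sums $\sum_{i\in I,\ j\in J}\alpha_i\alpha_j\y_{ij}$ and $\sum_{i\in I,\ j\in J}\alpha_i\alpha_j'\y_{ij}$ have the same law, hence the same $p$-th moment. Second, recognize the restricted decoupled sum as a conditional expectation of the full one: conditioning on $\mathscr G=\sigma\big((\alpha_i)_{i\in I},(\alpha_j')_{j\in J}\big)$ and integrating out the remaining variables $(\alpha_i)_{i\in J}$ and $(\alpha_j')_{j\in I}$, every term $\alpha_i\alpha_j'\y_{ij}$ of $\sum_{i\neq j}\alpha_i\alpha_j'\y_{ij}$ with $(i,j)\notin I\times J$ contains a factor that is integrated out, has mean zero, and is independent of $\mathscr G$ (using $i\neq j$ together with $\E\alpha_i=\E\alpha_j'=0$), so it vanishes; the terms with $(i,j)\in I\times J$ are $\mathscr G$-measurable and survive. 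Thus
\[
\E\!\Big[\sum_{i\neq j}\alpha_i\alpha_j'\,\y_{ij}\ \Big|\ \mathscr G\Big]
=\sum_{i\in I,\ j\in J}\alpha_i\alpha_j'\,\y_{ij},
\]
and Jensen's inequality for the conditional expectation gives $\E\big\|\sum_{i\in I,\ j\in J}\alpha_i\alpha_j'\y_{ij}\big\|^p\le\E\big\|\sum_{i\neq j}\alpha_i\alpha_j'\y_{ij}\big\|^p$. Chaining the three steps yields the inequality with $C=4$, since $4^p\le C^{p+1}$.

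This argument is mostly bookkeeping and I do not expect a genuine obstacle; the one place demanding care is the last step, namely keeping straight which of the four index blocks $I\times I$, $I\times J$, $J\times I$, $J\times J$ remain after conditioning on $\mathscr G$, and checking that the surviving block $I\times J$ reproduces exactly the restricted sum (noting in passing that all pairs in $I\times J$ automatically satisfy $i\neq j$, so no diagonal terms are spuriously introduced).
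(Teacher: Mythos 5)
Your proof is correct as written but takes a genuinely different route from the paper's. The paper proves the lemma in one line by citing the tail-probability decoupling theorem of de la Peña and Montgomery-Smith (their Theorem~1), and then integrating the tail comparison over $t^{p-1}\,dt$; your argument is instead a self-contained, elementary derivation via the classical random-partition (selector) method, which is arguably more transparent and gives an explicit constant $C=4$.

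One substantive discrepancy worth flagging: your conditional-expectation step in Sub-step~3b requires $\E\alpha_i=\E\alpha_i'=0$ (you note this explicitly), whereas the lemma as stated in the paper makes no centering assumption, and the de la Peña--Montgomery-Smith decoupling theorem the paper cites likewise holds for arbitrary kernels without moment hypotheses. So your argument proves a slightly weaker lemma than the one written. This is harmless here, because the only place the lemma is invoked (the proof of Lemma~\ref{lemma:concentration}(c)) has $\E\alpha_i=0$ by hypothesis; but if you were writing this into the paper you would want to either add the centering hypothesis to the statement, or subtract off the means before decoupling (writing $\alpha_i=\bar\alpha_i+\E\alpha_i$ and expanding) to recover the fully general version. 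The restriction to $p\geq 1$ for Jensen is similarly fine, since the lemma is only applied with $p\geq 2$.
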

    \begin{proof}
        By \cite[Theorem 1]{DelapenaMontgomerySmith1994}, for a universal
constant $C>0$,
        \begin{align*}
            \frac{1}{p}\,\E\bigg[\bigg\|\sum_{i \neq
j}\alpha_i\alpha_j\y_{ij}\bigg\|^p\bigg] &= \int_0^\infty
t^{p-1}\P\bigg[\bigg\|\sum_{i \neq j}\alpha_i\alpha_j\y_{ij}\bigg\|\geq t\bigg]\;dt\\
            &\leq \int_0^\infty Ct^{p-1}\P\bigg[C\bigg\|\sum_{i \neq
j}\alpha_i\alpha_i^{\prime}\y_{ij}\bigg\|\geq t\bigg]\;dt =
\frac{C^{p+1}}{p}\E\bigg[\bigg\|\sum_{i \neq
j}\alpha_i\alpha_i^{\prime}\y_{ij}\bigg\|^p\bigg].
        \end{align*}
    \end{proof}

\begin{proof}[Proof of Lemma \ref{lemma:concentration}]
Throughout, $C_p,C_p',C_p''$ denote $p$-dependent constants
that may change from instance to instance.

For (a), fix any $p \geq 2$. We apply Lemma \ref{lemma:rosenthal} in the
setting of \cite[Example 1.3]{JungeXu2008}:
Let $L^\infty(\Omega)$ be the von Neumann algebra of bounded scalar
random variables over the underlying probability space
$(\Omega,\mathscr{F},\P)$, and consider $\cM=L^\infty(\Omega) \otimes \cX$
equipped with the state
$\E \circ \phi$. Then $\alpha_i\x_i \in L^p(\cM,\E \circ \phi)$,
$\E:\cM \to \cX$ coincides with the conditional
expectation onto the subalgebra $\cX \subset \cM$, and
$\{\alpha_i\x_i\}_{i=1}^n$ are independent over $\cX$ in the sense of
Lemma \ref{lemma:rosenthal}, so Lemma \ref{lemma:rosenthal} shows
\[\E\bigg[\bigg\|\sum_{i=1}^n \alpha_i\x_i\bigg\|_p^p\bigg] \leq C_p
\max\bigg\{\bigg\|\bigg(\sum_{i=1}^n \E[|\alpha_i|^2]\x_i\x_i^*\bigg)^{1/2}\bigg\|_p^p,\;
\bigg\|\bigg(\sum_{i=1}^n  \E[|\alpha_i|^2]\x_i^*\x_i\bigg)^{1/2}\bigg\|_p^p,\;
\sum_{i=1}^n  \E[|\alpha_i|^p]\|\x_i\|_p^p\bigg\}.\]
Applying the bounds $\E[|\alpha_i|^2],
\E[|\alpha_i|^p] \leq C_p$, operator monotonicity of the square-root
$0 \leq \x \leq \y \Rightarrow \x^{1/2} \leq \y^{1/2}$, and monotonicity of the
$L^p$-norm on the positive cone (Lemma \ref{Holder's inequality}), this implies
\begin{equation}\label{eq:rosenthalbound}
\E\bigg[\bigg\|\sum_{i=1}^n \alpha_i\x_i\bigg\|_p^p\bigg] \leq C_p'
\max\bigg\{\bigg\|\bigg(\sum_{i=1}^n \x_i\x_i^*\bigg)^{1/2}\bigg\|_p^p,\;
\bigg\|\bigg(\sum_{i=1}^n \x_i^*\x_i\bigg)^{1/2}\bigg\|_p^p,\;
\sum_{i=1}^n \|\x_i\|_p^p\bigg\}.
\end{equation}
Here, for $p \geq 2$,
the third term is bounded by the second by the following argument (see
also \cite[Eq.\ (2.4)]{JungeXu2008}): Consider
$\hat{\x}=\sum_{i=1}^n E_{i1} \otimes \x_i \in \C^{n \times n} \otimes \cX$
equipped with the trace $n^{-1}\Tr \otimes \phi$,
and the linear map $T:\C^{n \times n} \otimes \cX \to \C^{n \times n} \otimes
\cX$ defined by
\[
    T(E_{ij} \otimes \x)=\begin{cases}
        E_{i,j+i-1} \otimes \x & \text{if $j=1$}\\
        0 & \text{otherwise}
      \end{cases}
\]
where $j+i-1$ is
interpreted modulo $n$. Thus $\hat\x$ has $\cX$-valued entries
$\x_1,\ldots,\x_n$ along the first column, and $T(\hat\x)$ has these entries
instead along the main diagonal. We have
\begin{align*}
\|\hat\x\|_p^p&=\|(\hat\x^*\hat\x)^{1/2}\|_p^p=\frac{1}{n}
\bigg\|\bigg(\sum_{i=1}^n \x_i^*\x_i\bigg)^{1/2}\bigg\|_p^p,\\
\|T(\hat\x)\|_p^p&=\|(T(\hat\x)^*T(\hat\x))^{1/2}\|_p^p
=\frac{1}{n}\sum_{i=1}^n \|(\x_i^*\x_i)^{1/2}\|_p^p
=\frac{1}{n}\sum_{i=1}^n \|\x_i\|_p^p.
\end{align*}
For $p=2$, this shows $\|T(\hat\x)\|_2=\|\hat\x\|_2$. For $p=\infty$, we have
\[\|T(\hat\x)\|_\op=\max_i \|\x_i\|_\op
=\max_i \|(\x_i^*\x_i)^{1/2}\|_\op \leq \bigg\|\bigg(\sum_{i=1}^n
\x_i^*\x_i\bigg)^{1/2}\bigg\|_\op
=\|\hat\x\|_\op\]
by operator monotonicity of the square-root and monotonicity of the operator
norm on the positive cone. Then
$\|T(\hat\x)\|_p \leq \|\hat\x\|_p$ for all $p \in [2,\infty]$
by the Riesz-Thorin interpolation (Lemma \ref{lemma:rieszthorin}). Thus, the
third term of (\ref{eq:rosenthalbound}) is at most the first, yielding the
claim of part (a).

Part (b) follows from a two-fold application of part (a): Let $\x_i = \sum_j
\beta_j\y_{ij}$, so $\sum_i \alpha_i\x_i=\sum_{i,j}\alpha_i\beta_j \y_{ij}$.
Then, applying part (a) conditional on $(\beta_i)_{i=1}^n$, we have
    \begin{align}\label{khintchine bilinear proof}
        \E\bigg[\bigg\|\sum_{i,j=1}^n \alpha_i\beta_j\y_{ij}\bigg\|_{p}^p\bigg] \leq
C_p \max\bigg\{\E\bigg[\bigg\|\bigg(\sum_{i=1}^n
\x_i\x_i^*\bigg)^{1/2}\bigg\|_p^p\bigg], \E\bigg[\bigg\|\bigg(\sum_{i=1}^n
\x_i^*\x_i\bigg)^{1/2}\bigg\|_p^p\bigg]\bigg\}
    \end{align}
    To apply part (a) again on these errors, define
        $\hat{\y}_{ij}=E_{1i} \otimes \y_{ij} \in \C^{n \times n} \otimes \cX$
equipped with the trace $n^{-1}\Tr \otimes \phi$.
    It follows that $\hat{\y}_{ij}\hat{\y}_{kl}^*=\1_{i=k}
E_{11}\otimes \y_{ij}\y_{il}^*$, so
\[E_{11} \otimes \sum_{i=1}^n \x_i\x_i^*=\sum_{j,l=1}^n \beta_j\bar\beta_l
\sum_{i=1}^n
E_{11} \otimes \y_{ij}\y_{il}^*=
\bigg(\sum_{j=1}^n \beta_j \sum_{i=1}^n \hat \y_{ij}\bigg)
\bigg(\sum_{l=1}^n \beta_l \sum_{k=1}^n \hat \y_{kl}\bigg)^*.\]
Then
\[\bigg\|\bigg(\sum_{i=1}^n\x_i\x_i^*\bigg)^{1/2}\bigg\|_p^p
=n\bigg\|\bigg(E_{11} \otimes \sum_{i=1}^n
\x_i\x_i^*\bigg)^{1/2}\bigg\|_p^p
=n\bigg\|\sum_{j=1}^n \beta_j \sum_{i=1}^n \hat\y_{ij}\bigg\|_p^p.\]
Applying part (a) with $\hat \x_j=\sum_i \hat{\y}_{ij} \in \C^{n \times n}
\otimes \cX$ in place of $\x_j \in \cX$, this is bounded as
\begin{align*}
\E\bigg[\bigg\|\bigg(\sum_{i=1}^n \x_i\x_i^*\bigg)^{1/2}\bigg\|_p^p\bigg]
&\leq C_pn\max\bigg\{\bigg\|\bigg(\sum_{j=1}^n
\hat\x_j\hat\x_j^*\bigg)^{1/2}\bigg\|_p^p,
\bigg\|\bigg(\sum_{j=1}^n \hat\x_j^*\hat\x_j\bigg)^{1/2}\bigg\|_p^p
\bigg\}\\
&=C_pn\max\bigg\{\bigg\|\bigg(E_{11} \otimes \sum_{i,j}
\y_{ij}\y_{ij}^*\bigg)^{1/2}\bigg\|_p^p,
\bigg\|\bigg(\sum_{i,j,k} E_{ik} \otimes
\y_{ij}^*\y_{kj}\bigg)^{1/2}\bigg\|_p^p\bigg\}.
\end{align*}
The first term is $n^{-1}\|(\sum_{i,j}\y_{ij}\y_{ij}^*)^{1/2}\|_p^p$.
For the second term,
we identify $\sum_{i,j,k} E_{ik} \otimes \y_{ij}^*\y_{kj}={\Y^\st}^*{\Y^\st}$ where
$\Y^\st=\sum_{i,j} E_{ji} \otimes \y_{ij} \in \C^{n \times n} \otimes \cX$, and
we apply $\|({\Y^\st}^*{\Y^\st})^{1/2}\|_p^p=\|\Y^\st\|_p^p$.
We have analogously $\E\|(\sum_i \x_i^*\x_i)^{1/2}\|_p^p
\leq C_pn\max(n^{-1}\|(\sum_{i,j}\y_{ij}^*\y_{ij})^{1/2}\|_p^p,\|\Y\|_p^p)$,
and combining these gives part (b).
   
Finally, part (c) follows from part (b)
and the decoupling result of Lemma \ref{lemma:decoupling}:
Since $\y_{ii}=0$ for all $i=1,\ldots,n$,
    \begin{align*}
        &\E\bigg[\bigg\|\sum_{i \neq j}\alpha_i\alpha_j\y_{ij}\bigg\|_p^p\bigg]
\leq C_p \E\bigg[\bigg\|\sum_{i \neq
j}\alpha_i\alpha_i^{\prime}\y_{ij}\bigg\|_p^p\bigg]
=C_p \E\bigg[\bigg\|\sum_{i,j=1}^n
\alpha_i\alpha_i^{\prime}\y_{ij}\bigg\|_p^p\bigg] 
\\
        &\leq C_p'\max\bigg\{\bigg\|\bigg(\sum_{i\neq j}
\y_{ij}\y_{ij}^*\bigg)^{1/2}\bigg\|_p^p, 
        \bigg\|\bigg(\sum_{i\neq j}\y_{ij}^*\y_{ij}\bigg)^{1/2}\bigg\|_p^p, 
        n\|\Y\|_p^p,n\|\Y^\st\|_p^p\bigg\}.
    \end{align*}
\end{proof}

\section{Fluctuation averaging}\label{Fluctuation averaging}

In this section, we prove Lemma \ref{lemma:fluctuationavg}. The proof is
analogous to the argument in the scalar setting of
\cite[Lemma A.2]{FanJohnstoneSun2018Spiked}. For part (c), we
will apply the following
combinatorial lemma from \cite{FanJohnstoneSun2018Spiked}.

    \begin{lem}[\cite{FanJohnstoneSun2018Spiked}, Lemma
A.3]\label{lemma:FAcombinatorial}
        Fix $l\geq 1$. For each $a=1,\ldots,l$, let $B^a=(B_{ij}^a)_{i,j=1}^n
\in \R^{n\times n}$ satisfy
        \begin{align*}
            B_{ij}^a \geq 0, \quad\quad B_{ii}^a=0, \quad\quad \|B^a\|_F \leq 1
\qquad \text{ for all } i,j=1,\ldots,n.
        \end{align*}
       For $(\i,\j)=(i_1,...,i_l,j_1,...,j_l)
\in\{1,\ldots,n\}^{2l}$, let $T(\i,\j)$ be the set of elements of
$\{1,\ldots,n\}$ that appear exactly once in $(\i,\j)$. Then for some constant
$C_l>0$ and all $t\in\{0,\ldots,2l\}$,
    \begin{align*}
    \sum_{(\i,\j) \in \{1,\ldots,n\}^{2l}:|T(\i,\j)|=t}\;
\prod_{a=1}^lB_{i_aj_a}^a \leq C_l n^{t/2}
    \end{align*}
    \end{lem}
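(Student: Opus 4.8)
The plan is to prove this by the standard two-step reduction of a random-matrix combinatorial estimate to a weighted graph-counting problem. First I would group the tuples $(\i,\j)\in\{1,\ldots,n\}^{2l}$ by their \emph{shape}: the partition $\sigma$ of the $2l$ index slots $\{(a,i),(a,j):a=1,\ldots,l\}$ into blocks of slots carrying equal value. There are at most a constant number $c_l$ of shapes, each shape having some number $t$ of singleton blocks, and $|T(\i,\j)|=t$ exactly when the shape of $(\i,\j)$ has $t$ singletons. It therefore suffices to bound, for each fixed shape $\sigma$ with $t$ singletons, the sum $\sum\prod_a B^a_{i_aj_a}$ over all $(\i,\j)$ whose indices are constant along the blocks of $\sigma$ (this only enlarges the sum since $B^a\ge 0$, and it is a coarsening of the set $\{\text{shape}=\sigma\}$). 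That sum equals $S(\sigma):=\sum_y\prod_{a=1}^l B^a_{y_{\sigma(a,i)}\,y_{\sigma(a,j)}}$, where $y$ ranges over maps from the blocks of $\sigma$ to $\{1,\ldots,n\}$ and $\sigma(a,i)$ is the block containing slot $(a,i)$. If for some $a$ the slots $(a,i),(a,j)$ lie in the same block then $S(\sigma)=0$ since $B^a_{kk}=0$, so we may discard such $\sigma$; otherwise $S(\sigma)=S(H_\sigma)$ is the weighted-graph sum attached to the loopless multigraph $H_\sigma$ on the blocks of $\sigma$ with one edge $e_a$ (weighted by $B^a$) for each $a$. The key structural fact is that in $H_\sigma$ the degree of each block equals its cardinality, so the $t$ singleton blocks are precisely the degree-$1$ vertices and all other vertices have degree $\ge 2$.

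It then remains to prove the graph bound: for any loopless multigraph $H$ on $m$ vertices with $l$ edges carrying weights $B^a\ge 0$, $\|B^a\|_F\le 1$, one has $S(H)\le C^l\,n^{\mathrm{deg}_1(H)/2}$, where $\mathrm{deg}_1(H)$ counts degree-$1$ vertices; then $C_l=c_l\,C^l$. I would prove this by induction on $l$. Since $S$ is multiplicative and $\mathrm{deg}_1$ additive over connected components, reduce to $H$ connected, and process $H$ by two moves. \textbf{Leaf peeling:} if $v$ currently has degree $1$ via an edge $a$ to $w$, sum out $x_v$; using $\|B^a\|_{\mathrm{op}}\le\|B^a\|_F\le 1$ and Cauchy--Schwarz, this produces a factor $\le n^{1/2}$ times a ``decoration'' vector at $w$ with $\ell_2$- and $\ell_\infty$-norms $\le 1$ — and if $v$ already carried a decoration, the sum-out produces \emph{no} factor of $n$, only a transported decoration. \textbf{Series/parallel reduction:} if $v$ currently has degree $2$, sum out $x_v$ to replace $v$ by a single edge with weight $(B^{a_1})^{\top}\mathrm{diag}(\cdot)B^{a_2}$ (or a diagonal decoration if the two neighbors coincide), still obeying $\|\cdot\|_F\le 1$; and merge parallel edges via Hadamard products, which preserves $\|\cdot\|_F\le 1$. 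Iterating, every original degree-$1$ vertex of $H_\sigma$ is charged exactly one factor $n^{1/2}$, and $H$ collapses to a leafless $2$-core on which, by the series/parallel reductions, $S$ is bounded by a constant using trace estimates $|\Tr(B^{a_1}\cdots B^{a_r})|\le\|B^{a_1}\|_F\|B^{a_r}\|_F\prod_{2\le j\le r-1}\|B^{a_j}\|_{\mathrm{op}}\le 1$. This gives $S(H)\le C^l\,n^{\mathrm{deg}_1(H)/2}$, and tracing back through the shape reduction yields the claim.

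\textbf{Main obstacle.} The delicate part is the bookkeeping in the inductive graph bound. One must carry the decorations produced by peeling and contraction with the correct norm control (generic decorations satisfy $\ell_2,\ell_\infty\le 1$, while decorations coming from merged parallel edges additionally satisfy $\ell_1\le 1$, which is what prevents a spurious $n^{1/2}$ when such an edge degenerates to a leaf), so that no step introduces an extra power of $n$ at a high-degree vertex and each of the $t$ original degree-$1$ vertices — and no other vertex — is charged exactly one $n^{1/2}$, with the leafless remainder contributing only the harmless constant $C^l$. Handling the multigraph features cleanly within this accounting — parallel edges, and the self-loops created when a contracted degree-$2$ vertex has both endpoints equal — is where essentially all the work lies; the rest is routine.
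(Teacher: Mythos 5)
The skeleton of your argument — group tuples by their coincidence pattern (a ``shape'' $\sigma$), show each shape's contribution equals a weighted multigraph sum $S(H_\sigma)$ with vertex degrees equal to block sizes, and then bound $S(H)$ by $C^l n^{\deg_1(H)/2}$ — is the right framework, and is essentially the standard graph-combinatorial route for this lemma. Your handling of leaf peeling with $\ell_2/\ell_\infty$-controlled decorations, Hadamard merges of parallel edges (and the crucial observation that a merged edge carries an $\ell_1$-controlled sum, preventing a spurious $\sqrt{n}$), and the contraction of decorated degree-$2$ vertices are all sound ideas with the correct norm estimates.

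However, there is a genuine gap in the claim that ``$H$ collapses to a leafless $2$-core on which, by the series/parallel reductions, $S$ is bounded by a constant using trace estimates.'' Series/parallel reductions do not collapse an arbitrary leafless $2$-core. The fixed points of series/parallel reduction are precisely graphs of treewidth $\geq 3$; the smallest example is the simple graph $K_4$ (four vertices of degree $3$, no parallel edges, no degree-$2$ vertices), which arises here from a shape with four blocks of size $3$ on $l=6$ edges. On $K_4$, no series or parallel move applies, and its contribution $S(K_4)=\sum_{i_1,\ldots,i_4}\prod_{1\le p<q\le 4}B^{pq}_{i_p i_q}$ is not a trace of a cyclic product. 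To bound it one needs an additional move: a Cauchy--Schwarz split at a vertex of degree $\geq 3$, which produces decorations at two of the neighbors and ``collapses'' the remaining incident edges through $\ell_\infty$ or $\ell_2$ bounds. Concretely, summing out $i_4$ via Cauchy--Schwarz yields $\|B^{14}_{i_1\cdot}\|_2\,\|B^{24}_{i_2\cdot}\|_2$, after which the remaining triangle with these two diagonal decorations is bounded by $\|\diag(c)\|_\op\,\|B^{12}\diag(d)B^{23}\|_F\,\|B^{13}\|_F\le 1$. This ``high-degree vertex'' reduction (or an equivalent device) has to be added to your toolkit and woven into the same decoration bookkeeping; without it, the inductive bound on the $2$-core is not established, and the proof of the key graph estimate $S(H)\leq C^l n^{\deg_1(H)/2}$ is incomplete.

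A secondary point worth flagging is that the $\ell_1$-control of a decoration is \emph{not} preserved under transport along an edge: if $\|c\|_1\le 1$ and $d=c^\top B^a$, then generically $\|d\|_1$ is only $O(\sqrt{n})$, not $O(1)$ (only $\|d\|_2\le\|c\|_2\,\|B^a\|_\op\le 1$ survives). Your accounting avoids disaster only because an $\ell_1$-controlled decoration is summed out \emph{in place} (charging $\leq 1$) rather than transported, and because original degree-$\geq 2$ vertices that shed their edges end up with an in-place $\ell_1$-controlled decoration coming from a self-loop contraction or a parallel merge. This invariant — which vertices are entitled to a final $\ell_1$-bounded decoration and which end up paying $\sqrt{n}$, and why the total $\sqrt{n}$-charge is exactly $t$ — is the crux, and the present write-up does not verify it; you acknowledge this (``the rest is routine'' is too optimistic), and combined with the missing high-degree vertex move, this is where the proof still needs real work.
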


\begin{proof}[Proof of Lemma \ref{lemma:fluctuationavg}]
For part (a), fix any $\epsilon,D>0$ and $p \in [1,\infty)$.
Pick an even integer $l>p$ such that $\epsilon(l-1)>D+1$. Then by
monotonicity of $\|\cdot\|_p$ in $p$ (Lemma \ref{Holder's inequality}),
\begin{equation}\label{eq:FAmomentexpansion}
\E\left\|\sum_{i=1}^n u_i \x_i\right\|_p^l \leq
\E\left\|\sum_{i=1}^n u_i \x_i\right\|_l^l =
\E\phi\left(\left[\left(\sum_{i=1}^n u_i \x_i\right)\left(\sum_{i=1}^n \bar{u}_i
\x_i^*\right)\right]^{l/2}\right) =
\sum_{\i}u_\i\E\phi\left(\x_\i\right)
\end{equation}
where we denote
    \begin{align*}
        \i = (i_1,i_2,...,i_l), \quad \sum_{\i}=\sum_{i_1,...,i_l=1}^n, \quad
u_\i = u_{i_1}\bar{u}_{i_2}\cdots u_{i_{l-1}}\bar{u}_{i_l}, \quad \x_\i =
\x_{i_1}\x_{i_2}^*\cdots\x_{i_{l-1}}\x_{i_l}^* =
\prod_{a=1}^l\widetilde{\x}_{i_a}
    \end{align*}
and write $\widetilde{\x}_{i_a} = \x_{i_a}$ if $a$ is odd and
$\widetilde{\x}_{i_a}=\x_{i_a}^*$ if $a$ is even.
Here and below, the product over $a=1,\ldots,l$ is non-commutative, and should be
understood in the ordered sense.

For fixed $\i$, let
$T \equiv T(\i) \subseteq \{1,\ldots,n\}$ be the indices that appear exactly
once in $\i=(i_1,\ldots,i_l)$. Using that $\{\E_j,\cQ_j\}$ commute, we have the identity
    \begin{align*}
        \x=\left(\prod_{j\in T}(\E_j+\cQ_j)\right)[\x] =
\sum_{S\subseteq T}\E_{T\setminus S}\cQ_S[\x].
    \end{align*}
(In the case $T=\emptyset$, this is the trivial identity $\x=\x$.)
Applying this to each $\widetilde{\x}_{i_a}$,
    \begin{align*}
        \x_\i=\sum_{S_1,\ldots,S_l \subseteq
T} \x(S_1,\ldots,S_l),
\quad\quad \x(S_1,\ldots,S_l) = \prod_{a=1}^l\E_{T\setminus
S_a}\cQ_{S_a}[\widetilde{\x}_{i_a}].
    \end{align*}
    By H\"older's inequality (Lemma \ref{Holder's inequality}) and
(\ref{eq:jensencondexp}),
    \begin{align*}
        |\phi(\x(S_1,\ldots,S_l))| \leq
\prod_{a=1}^l \|\E_{T\setminus S_a}\cQ_{S_a}[\widetilde{\x}_{i_a}]\|_l
\leq \prod_{a=1}^l\E_{T\setminus S_a}\|\cQ_{S_a}[\widetilde{\x}_{i_a}]\|_l.
    \end{align*}
Let us write $S \setminus i$ for $S$ removing $i$ if $i \in S$, or
for $S$ itself if $i \notin S$. Since $\E_{i_a}[\widetilde \x_{i_a}]=0$ by
assumption, we have $\cQ_{i_a}[\widetilde \x_{i_a}]=\widetilde \x_{i_a}$, so
$\cQ_{S_a}[\widetilde{\x}_{i_a}]=\cQ_{S_a\setminus i_a}[\widetilde{\x}_{i_a}]$.
Then the given condition (\ref{eq:QSassump3}) and
Lemma \ref{lemma:domination}(c) (in the setting of scalar random variables) imply 
$\E_{T\setminus S_a}\|\cQ_{S_a}[\widetilde{\x}_{i_a}]\|_l
 \prec n^{-\alpha - \beta|S_a \setminus i_a|}$, for each fixed $l$
uniformly over $i_a \in \{1,\ldots,n\}$ and over $S_a \subseteq T \subseteq
\{1,\ldots,n\}$ with $|T| \leq l$. Thus, multiplying across $a=1,\ldots,l$
and taking the full expectation, for all $n \geq n_0(l,\epsilon)$ we have
\begin{equation}\label{eq:FAbound}
|\E \phi(\x(S_1,\ldots,S_l))| \leq 
\E |\phi(\x(S_1,\ldots,S_l))| \leq
n^{-\alpha l-\beta\sum_{a=1}^l |S_a \setminus i_a|+\epsilon}.
\end{equation}
Now consider any $a \in \{1,\ldots,l\}$ such that $i_a \in T$. Observe that
\begin{itemize}
\item $\x(S_1,\ldots,S_l)=0$ unless $i_a \in S_a$. Indeed, if instead
$i_a \in T\setminus S_a$, then the assumption $\E_{i_a}[\widetilde
\x_{i_a}]=0$ implies $\E_{T\setminus S_a}\cQ_{S_a}[\widetilde{\x}_{i_a}]=0$.
\item $\E\phi(\x(S_1,\ldots,S_l))=0$ unless also $i_a \in S_b$ for some
$b \neq a$. Indeed, if instead $i_a \in T \setminus S_b$ for every $b \neq a$,
then $\E_{T\setminus S_b}\cQ_{S_b}[\widetilde{\x}_{i_b}]
=\E_{i_a}\E_{T\setminus (S_b\cup\{i_a\})}
\cQ_{S_b}[\widetilde{\x}_{i_b}]$ is $\mathscr{G}_{i_a}$-measurable, hence
    \begin{align*}
        \E_{i_a} \x(S_1,\ldots,S_l)
        = \prod_{b<a} \E_{T\setminus S_b}
\cQ_{S_b}[\widetilde{\x}_{i_b}] \cdot
\underbrace{\E_{i_a}\E_{T\setminus S_a}\cQ_{S_a}[\widetilde{\x}_{i_a}]}_{=0}
\cdot \prod_{b>a} \E_{T\setminus S_b}\cQ_{S_b}[\widetilde{\x}_{i_b}]=0
    \end{align*}
where the middle term is 0 because $\E_{i_a}\cQ_{S_a}=0$ for $i_a \in S_a$.
Then by linearity of $\phi$,
$\E\phi(\x(S_1,\ldots,S_l))=\phi(\E \x(S_1,\ldots,S_l))=0$.
\end{itemize}
Thus, if $\E \phi(\x(S_1,\ldots,S_l)) \neq 0$, then each $i_a \in T$
must appear in both $S_a$ and
some set $S_b$ for $b \neq a$, where $i_a \neq i_b$ because $i_a$
appears only once in $\i=(i_1,\ldots,i_l)$ by definition of $T$. So 
$\sum_{a=1}^l |S_a \setminus i_a| \geq |T|$. Applying this to
(\ref{eq:FAbound}) and then to (\ref{eq:FAmomentexpansion}),
since for each fixed $\i$ and $T \equiv T(\i)$
the number of choices of subsets
$S_1,\ldots,S_l \subseteq T$ is at most a constant $C_l$,
\begin{equation}\label{eq:FAbound2}
        \E\left\|\sum_{i=1}^n u_i\x_i\right\|_p^l \leq
C_ln^{-\alpha l+\epsilon} \sum_\i |u_\i| n^{-\beta|\mathcal{T(\i)}|}
\leq C_ln^{-\alpha l+\epsilon} \|\u\|_\infty^l \sum_{t=1}^l n^{-\beta t}
\cdot |\{\i:T(\i)=t\}|.
\end{equation}
By scale invariance of the statement of the lemma,
let us assume without loss of generality that $\|\u\|_\infty=n^{-1}$.
Applying this and the bound $|\{\i:T(\i)=t\}| \leq C_l'n^{t+(l-t)/2}$ for a
constant $C_l'>0$, we get (for different constants $C_l,C_l'>0$)
\[\E\left\|\sum_{i=1}^n u_i\x_i\right\|_p^l \leq C_ln^{-\alpha l+\epsilon} 
\sum_{t=1}^l(n^{-\beta})^t(n^{-1/2})^{l-t}\leq C_ln^{-\alpha l+\epsilon}\sum_{t=1}^l (n^{-\beta}+n^{-1/2})^l
\leq C_l'n^{-(\alpha+\beta') l+\epsilon}\]
where $\beta'=\min\{1/2,\beta\}$. Then by Markov's inequality under our choices $l>p$ and
$\epsilon(l-1)>D+1$,
    \begin{align*}
        \mathbb{P}\left(\left\|\sum_{i=1}^n u_i\x_i\right\|_p \geq
n^{-\alpha-\beta'+\epsilon}\right) \leq n^{(\alpha+\beta'-\epsilon)l}\,
\E\left\|\sum_{i=1}^n u_i\x_i\right\|_p^l \leq C_l'n^{-\epsilon(l-1)}<n^{-D}
    \end{align*}
for all $n \geq n_0(l,\epsilon,D)$. Here $l$ depends only on $(p,\epsilon,D)$,
showing $\sum_i u_i\x_i \prec n^{-\alpha-\beta'}$ as claimed in part (a).

The argument for part (b) is the same, until the analysis of
(\ref{eq:FAbound2}) where we apply a different
counting argument: By scale invariance, we may consider 
without loss of generality $\u\in\C^n$ with $\|\u\|_2=1$. Under the given
condition (\ref{eq:QSassump1}), specializing to $\beta=1/2$,
the first inequality of (\ref{eq:FAbound2}) becomes
\[\E\left\|\sum_{i=1}^n u_i\x_i\right\|_p^l \leq
C_ln^{-\alpha l+\epsilon} \sum_\i |u_\i| n^{-|\mathcal{T(\i)}|/2}.\]
Now let $\pi(\i)$ be the partition of $\{1,\ldots,l\}$ induced by coincidence of
indices in $\i$, i.e.\ $a,b$ belong to the same block of $\pi(\i)$ if and only
if $i_a=i_b$. Then $|T(\i)| \equiv |T(\pi(\i))|$ is the number of singleton
blocks of $\pi(\i)$, depending on $\i$ only via $\pi(\i)$, so
we may write the above as
\begin{equation}\label{eq:FAtmp}
\E\left\|\sum_{i=1}^n u_i\x_i\right\|_p^l \leq
C_ln^{-\alpha l+\epsilon} \sum_\pi n^{-|T(\pi)|/2}
\sum_{\i:\pi(\i)=\pi} |u_\i|.
\end{equation}
Here $\sum_{\i:\pi(\i)=\pi} |u_\i|$ is a sum over one
(distinct) index $i \in \{1,\ldots,n\}$ for each block $P$ of $\pi$, for which
we have $\sum_{\i:\pi(\i)=\pi} |u_\i| \leq \prod_{P \in \pi} \sum_{i=1}^n
|u_i|^{|P|}$. Under our assumed normalization $\|\u\|_2=1$, we have
    \begin{align*}
        \sum_{i=1}^n |u_i| \leq \sqrt{n}, \quad\quad \sum_{i=1}^n |u_i|^k
\leq \|\u\|_\infty^{k-2} \sum_{i=1}^n |u_i|^2 \leq 1 \quad \text{ for any }
k \geq 2.
    \end{align*}
Thus $\sum_{\i:\pi(\i)=\pi} |u_\i| \leq n^{|T(\pi)|/2}$. Applying this to
(\ref{eq:FAtmp}), we have
$\E\|\sum_i u_i\x_i\|_p^l \leq C_l'n^{-\alpha l+\epsilon}$. The proof of (b) now follows 
by the same Markov inequality argument as in part (a).

Part (c) is also similar: By scale invariance, we may consider $U \in \C^{n \times
n}$ such that
$\sum_{i \neq j} |u_{ij}|^2=1$. Fixing $p \in [1,\infty)$ and picking an even
integer $l>p$,
    \begin{align*}
        \E\Bigg\|\sum_{i\neq j} u_{ij} \x_{ij}\Bigg\|_p^l \leq
\E\phi\Bigg(\Bigg[\Bigg(\sum_{i\neq j} u_{ij} \x_{ij}\Bigg)\Bigg(\sum_{i\neq j}
\bar{u}_{ij} \x_{ij}^*\Bigg)\Bigg]^{l/2}\Bigg) = \sum_{\i,\j}u_{\i,\j}\E\phi(\x_{\i,\j})
    \end{align*}
    where
    \begin{align*}
        &(\i,\j) = (i_1,\ldots,i_l,j_1,\ldots,j_l),\quad\quad\sum_{\i,\j}=\sum_{i_1\neq j_1}\cdots\sum_{i_l\neq j_l}\\
        &u_{\i,\j} = u_{i_1j_1}\bar{u}_{i_2j_2}\cdots
u_{i_{l-1}j_{l-1}}\bar{u}_{i_lj_l},\quad\quad\x_{\i,\j}=\x_{i_1j_1}\x_{i_2j_2}^*\cdots\x_{i_{l-1}j_{l-1}}\x_{i_lj_l}^*=\prod_{a=1}^l
\widetilde \x_{i_aj_a}.
    \end{align*}
    Define $T \equiv T(\i,\j)$ as the indices that appear exactly once in
the combined index list $(\i,\j)=(i_1,\ldots,i_l,j_1,\ldots,j_l)$. Then,
expanding
\[\x_{\i,\j}=\sum_{S_1,\ldots,S_l \subseteq T} \x(S_1,\ldots,S_l)
=\sum_{S_1,\ldots,S_l \subseteq T} 
\prod_{a=1}^l \E_{T \setminus S_a}\cQ_{S_a}[\widetilde \x_{i_a,j_a}],\]
the same arguments as above using the conditions $\E_{i_a}[\widetilde
\x_{i_a,j_a}]=\E_{j_a}[\widetilde \x_{i_a,j_a}]=0$ and (\ref{eq:QSassump2}) show
\begin{itemize}
\item $|\E \phi(\x(S_1,\ldots,S_l))| \leq n^{-\alpha l-\sum_{a=1}^l |S_a
\setminus \{i_a,j_a\}|/2+\epsilon}$.
\item If $i_a \in T$ (or $j_a \in T$), then $\x(S_1,\ldots,S_l)=0$ unless
$i_a \in S_a$ (resp.\ $j_a \in S_a$).
\item If $i_a \in T$ (or $j_a \in T$), then $\E \phi(\x(S_1,\ldots,S_l))=0$
unless furthermore $i_a \in S_b$ (resp.\ $j_a \in S_b$) for some $b \neq a$.
\end{itemize}
Thus $\sum_{a=1}^l |S_a \setminus \{i_a,j_a\}| \geq |T|$, so we obtain
similarly as part (b)
\[\E\left\|\sum_{i\neq j} u_{ij} \x_{ij}\right\|_p^l
\leq C_l n^{-\alpha l+\epsilon}\sum_\pi n^{-|T(\pi)|/2}
\sum_{\i,\j:\pi(\i,\j)=\pi} |u_{\i,\j}|\]
where $\pi$ is the partition of $\{1,\ldots,2l\}$ induced by coincident indices
of the combined list $(\i,\j)$, and $|T(\pi)|$ is the number of singleton
blocks of $\pi$. Lemma \ref{lemma:FAcombinatorial} applied with
\[B_{ij}^a=\begin{cases} |u_{ij}| & \text{ if } i \neq j \\
0 & \text{ if } i=j \end{cases} \qquad \text{ for all } a=1,\ldots,l\]
shows $\sum_{\pi:|T(\pi)|=t} \sum_{\i,\j:\pi(\i,\j)=\pi} |u_{\i,\j}| \leq C_l
n^{t/2}$. Then the proof of part (c) is concluded by the same Markov
inequality argument as in part (a-b).
\end{proof}

\section{von Neumann algebras and non-commutative $L^p$ spaces}\label{appendix:background}

We collect here several pieces of background on von Neumann algebras and
non-commutative $L^p$ spaces that are needed in our main arguments. We refer to
\cite{nelson1974notes}, \cite[Section 1]{davies1992noncommutative},
and \cite[Chapter 14]{pisier2016martingales} for additional discussion.
Throughout, $\cX$ is a (finite) von Neumann algebra with faithful, normal,
tracial state $\phi$ as in Section \ref{sec:preliminaries}. 

\begin{lem}[Conditional expectation.
\cite{brown2008c}, Lemma 1.5.11]\label{conditional expectation}
Let $\cB \subseteq \cX$ be a von Neumann subalgebra.
Then there exists a unique linear map $\phi^\cB:\cX \to \cB$
(the $\phi$-invariant conditional expectation) that satisfies the following:
    \begin{itemize}
        \item $\phi^\cB$ is normal, contractive in the operator norm,
and completely positive.
        \item For any $\y_1,\y_2 \in \cB$ and $\x \in \cX$, we have 
        $\phi^\cB[\y_1\x\y_2]=\y_1\phi^\cB[\x]\y_2$.
        \item For any $\y \in \cB$, $\phi(\y)=\phi(\phi^\cB[\y])$.
    \end{itemize}
\end{lem}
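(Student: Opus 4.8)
The final statement to be proven is Lemma \ref{conditional expectation}, the existence and uniqueness of the $\phi$-invariant conditional expectation $\phi^\cB:\cX \to \cB$. Since this is cited to \cite{brown2008c}, a full proof would simply reference that source, but let me sketch the standard argument.

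\textbf{Plan.} The plan is to construct $\phi^\cB$ via a Hilbert space projection and then verify its algebraic properties. First I would realize $\cX$ on the GNS Hilbert space $L^2(\cX,\phi)$ associated to the trace $\phi$: the inner product is $\langle \x,\y\rangle = \phi(\y^*\x)$, and $\cX$ embeds densely with $\|\x\|_2 = \phi(\x^*\x)^{1/2}$. The closure of $\cB$ inside $L^2(\cX,\phi)$ is a closed subspace $L^2(\cB,\phi)$; let $e_\cB$ be the orthogonal projection onto it. The key structural fact, exploiting that $\phi$ is \emph{tracial}, is that $e_\cB$ maps $\cX \subset L^2(\cX,\phi)$ back into $\cB$: this is where traciality is essential, since for a non-tracial state the analogous projection need not preserve the algebra. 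Concretely, for $\x \in \cX$ one checks $e_\cB \x$ lies in $\cB$ by verifying it is fixed by the right action of the commutant of $\cB$ restricted appropriately, or more elementarily by a bounded-approximation argument using that $\phi^\cB$ defined on $\cX$ by $\phi(\phi^\cB[\x]\,\y^*) = \phi(\x\y^*)$ for all $\y \in \cB$ determines a well-defined element of $\cB$ via the Riesz representation combined with the trace property.

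\textbf{Key steps, in order.} (1) Define $\phi^\cB[\x]$ for $\x \in \cX$ as the unique element of $\cB$ representing the functional $\y \mapsto \phi(\x\y)$ restricted to $\cB$ via the trace pairing, using traciality to see this functional is normal on $\cB$ and hence represented by a (a priori unbounded, affiliated) operator, then arguing boundedness. (2) Verify the bimodule property $\phi^\cB[\y_1\x\y_2]=\y_1\phi^\cB[\x]\y_2$ for $\y_1,\y_2\in\cB$: this follows from the defining relation $\phi(\phi^\cB[\y_1\x\y_2]\,\y) = \phi(\y_1\x\y_2\y) = \phi(\x\,\y_2\y\y_1)$ together with traciality, matching $\phi(\y_1\phi^\cB[\x]\y_2\,\y)$. (3) Verify $\phi\circ\phi^\cB = \phi$ on $\cX$: take $\y = 1_\cB = 1_\cX$ in the defining relation. (4) Verify complete positivity: if $\x\geq 0$ then $\phi(\phi^\cB[\x]\y^*\y) = \phi(\x\,y^*y) \geq 0$ for all $\y\in\cB$, and faithfulness of $\phi|_\cB$ gives $\phi^\cB[\x]\geq 0$; the ``complete'' version follows by applying the same argument in $M_k(\cX)$ with trace $\frac{1}{k}\mathrm{Tr}\otimes\phi$. (5) Verify operator-norm contractivity: from $\phi^\cB[\x]^*\phi^\cB[\x] \leq \phi^\cB[\x^*\x]$ (Kadison--Schwarz, which follows from complete positivity) and $\|\phi^\cB[\x^*\x]\|_\op \leq \|\x^*\x\|_\op$ (positivity plus $\phi^\cB[1]=1$), deduce $\|\phi^\cB[\x]\|_\op^2 \leq \|\x\|_\op^2$. (6) Normality follows since $\phi$ is normal and the trace pairing is separately normal. (7) Uniqueness: any map with these three listed properties must satisfy $\phi(\psi[\x]\y) = \phi(\psi[\x\y]) = \phi(\phi^\cB[\psi[\x\y]])$... more directly, if $\psi$ also satisfies the bimodule and trace-preserving properties then $\phi(\psi[\x]\y) = \phi(\psi[\x]\y) $ and using the module property $\psi[\x\y]=\psi[\x]\y$ for $\y\in\cB$ together with trace-preservation gives $\phi(\psi[\x]\y)=\phi(\x\y)$ for all $\y\in\cB$, which pins down $\psi[\x]$ by faithfulness of $\phi|_\cB$, hence $\psi=\phi^\cB$.

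\textbf{Main obstacle.} The delicate point is Step (1): showing that the functional $\y\mapsto\phi(\x\y)$ on $\cB$ is represented by a \emph{bounded} element of $\cB$ rather than merely an operator affiliated to $\cB$, and that the resulting map $\x\mapsto\phi^\cB[\x]$ is everywhere-defined on $\cX$ and normal. This is exactly the place where finiteness of the von Neumann algebra (existence of the faithful normal tracial state $\phi$) does the work, via the fact that $L^2(\cB,\phi)$ is obtained from $L^2(\cX,\phi)$ by a conditional-expectation-type projection that restricts to a bounded idempotent on the algebra level; it is also the content of the cited \cite[Lemma 1.5.11]{brown2008c} and of the classical Umegaki theorem. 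In the write-up I would either invoke that reference directly or present the GNS-projection construction, noting that traciality makes $e_\cB$ commute with the modular structure so that $e_\cB(\cX)\subseteq\cB$, and all remaining verifications (2)--(7) are routine manipulations with the trace property.
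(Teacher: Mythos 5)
The paper does not prove this lemma but simply cites it to \cite{brown2008c}, Lemma 1.5.11, and your sketch is precisely the standard GNS-projection / Umegaki construction that the cited reference (and essentially every textbook treatment) uses. Your outline of the key steps and your identification of the genuine subtlety — that boundedness of $\phi^\cB[\x]$ and the inclusion $e_\cB(\cX)\subseteq\cB$ rely crucially on traciality of $\phi$ — are correct and match the paper's implicit approach.
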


Defining $\|\x\|_p=\phi(|x|^p)^{1/p}$, the space $L^p(\cX)$ is the
Banach space completion of $\cX$ under $\|\cdot\|_p$. We set
$\|\x\|_\infty \equiv \|\x\|_\op$ and $L^\infty(\cX) \equiv \cX$. These spaces
$L^p(\cX)$ may be continuously embedded into a common space of (unbounded)
densely-defined operators affiliated to $\cX$ --- we refer to
\cite{nelson1974notes} or \cite[Section 1]{davies1992noncommutative} for this
construction.

\begin{lem}[Non-commutative $L^p$-spaces.
\cite{pisier2016martingales} Theorem 14.1,
\cite{davies1992noncommutative} Proposition 1.1]\label{Holder's inequality}
For each $p \in [1,\infty)$, $\|\x\|_p=\phi(|x|^p)^{1/p}$ defines a complete
norm on $L^p(\cX)$, satisfying
\[\|\x\|_p \leq \|\y\|_p \text{ for all } \x,\y \in L^p(\cX) \text{ with }
0 \leq \x \leq \y, \qquad |\phi(x)| \leq \|\x\|_1 \text{ for all } \x \in
L^1(\cX).\]
\begin{enumerate}[(a)]
\item (H\"older's inequality)
For any $1 \leq p,q,r \leq \infty$ with $1/p+1/q=1/r$, these norms satisfy
\[\|\x\y\|_r \leq \|\x\|_p\|\y\|_q \text{ for all } \x \in L^p(\cX),
\;\y \in L^q(\cX).\]
In particular, $\|\x\|_p \leq \|\x\|_q$ for any $1 \leq p \leq q \leq \infty$
so $L^q(\cX) \subseteq L^p(\cX)$, and $\|\x\y\|_p \leq \|\x\|_\op\|\y\|_p$.
\item (Duality) For each $p \in [1,\infty)$, let $q$ be such that
$\frac{1}{p}+\frac{1}{q}=1$. Then
the map $\y \in L^q \mapsto \ell_\y \in (L^p)^*$ given by
$\ell_\y(\x)=\phi(\x\y)$ is a Banach space isomorphism between
$L^q$ and the dual $(L^p)^*$ of $L^p$.
\end{enumerate}
\end{lem}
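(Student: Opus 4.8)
This is a classical statement about non-commutative $L^p$-spaces, and the plan is to reduce it to standard facts about the $*$-algebra $\widetilde{\cX}$ of $\phi$-measurable (densely defined, closed) operators affiliated with $\cX$, into which all the $L^p(\cX)$ embed. First I would recall Nelson's construction of $\widetilde{\cX}$: since $\phi$ is a faithful normal finite trace, strong sums and strong products of $\phi$-measurable operators are again $\phi$-measurable, $\widetilde{\cX}$ is a complete topological $*$-algebra in the measure topology, and $\phi$ extends to a normal, faithful, tracial weight on its positive cone. For $\x\in\widetilde{\cX}$ one then has the generalized singular value function $t\mapsto\mu_t(\x)$ on $[0,1)$, the non-increasing rearrangement of the spectral scale of $|\x|$, obeying (i) $\phi(f(|\x|))=\int_0^1 f(\mu_t(\x))\,dt$ for Borel $f\ge 0$; (ii) $0\le\x\le\y\Rightarrow\mu_t(\x)\le\mu_t(\y)$; and (iii) $\mu_{s+t}(\x\y)\le\mu_s(\x)\mu_t(\y)$. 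From (i) and (ii), $\|\x\|_p=(\int_0^1\mu_t(\x)^p\,dt)^{1/p}$ is positively homogeneous, vanishes only at $\x=0$, is monotone on the positive cone, and tends to $\|\x\|_\op$ as $p\to\infty$; and $|\phi(\x)|\le\phi(|\x|)=\|\x\|_1$ is immediate.

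For part (a), I would first prove H\"older in the conjugate case $1/p+1/q=1$, i.e.\ $|\phi(\x\y)|\le\|\x\|_p\|\y\|_q$: by polar decomposition this reduces to $\x,\y\ge 0$, where $\phi(\x\y)\le\int_0^1\mu_t(\x)\mu_t(\y)\,dt$ by submultiplicativity of $\mu$, and then the scalar H\"older inequality on $[0,1)$ finishes. The triangle inequality for $\|\cdot\|_p$, $p\ge 1$, follows from the dual formula $\|\x\|_p=\sup\{|\phi(\x\y)|:\|\y\|_q\le 1\}$ as a supremum of seminorms, and completeness of $L^p(\cX)$ in $\|\cdot\|_p$ is part of the cited construction. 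The general H\"older $\|\x\y\|_r\le\|\x\|_p\|\y\|_q$ with $1/r=1/p+1/q$ I would get by Stein's complex interpolation applied to the analytic bilinear family $z\mapsto\phi(|\x|^{rz/p}\,u\,v\,|\y|^{r(1-z)/q})$ built from the polar decompositions $\x=u|\x|$, $\y=|\y|v$ (or, up to a harmless constant, directly from (iii) and scalar H\"older with exponents $p/r$ and $q/r$). The stated consequences are then immediate: $\|\x\|_p\le\|\x\|_q$ for $1\le p\le q\le\infty$ by H\"older with the splitting $1/p=1/q+1/s$ and $\y=1_\cX$ (so that $\|\y\|_s=\phi(1)^{1/s}=1$), and $\|\x\y\|_p\le\|\x\|_\op\|\y\|_p$ by taking one exponent equal to $\infty$.

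For part (b), one inclusion is exactly H\"older: $\y\in L^q\mapsto\ell_\y$ maps contractively into $(L^p)^*$, and $\|\ell_\y\|=\|\y\|_q$ by testing against $\x=u\,|\y|^{q-1}/\|\y\|_q^{q-1}$ when $1<p<\infty$ (the endpoint $p=1$, $q=\infty$ being the elementary near-equality obtained from spectral projections of $|\y|$). The substantive direction is surjectivity, i.e.\ every $\Lambda\in(L^p)^*$ is of the form $\ell_\y$: I would restrict $\Lambda$ to $\cX\subset L^p$ to obtain a normal functional, represent it by a density $\y\in L^1(\cX)$ via the non-commutative Radon--Nikodym theorem, and then upgrade to $\y\in L^q(\cX)$ with $\|\y\|_q=\|\Lambda\|$ through a truncation argument and the H\"older duality already established.

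The main obstacle is H\"older's inequality in the general (non-conjugate) case together with the surjectivity in part (b): non-commutativity rules out the one-line scalar proof of H\"older, so one genuinely needs either submultiplicativity of generalized singular values or a complex-interpolation argument, and the surjectivity rests on the non-commutative Radon--Nikodym theorem, a substantial result in its own right. In the present paper all of this is classical and available off the shelf, so we merely cite \cite{pisier2016martingales} (Theorem 14.1) and \cite{davies1992noncommutative} (Proposition 1.1); the above is the route one would follow for a self-contained account.
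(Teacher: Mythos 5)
The paper does not prove this lemma at all: it is stated as cited background (to Pisier \cite{pisier2016martingales}, Theorem~14.1 and Davies \cite{davies1992noncommutative}, Proposition~1.1), and you yourself note this in your closing sentence. So there is no paper proof to compare against; what I can do is assess your sketch as a self-contained route.

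Your outline is the standard one and the structure is sound: embed into Nelson's algebra $\widetilde{\cX}$ of $\phi$-measurable operators, express $\|\x\|_p$ via the generalized singular value function $\mu_t(\x)$, reduce H\"older and monotonicity to their scalar analogues on $[0,1)$, and obtain duality surjectivity from non-commutative Radon--Nikodym. Two places are a bit loose. First, your step ``$\phi(\x\y)\le\int_0^1\mu_t(\x)\mu_t(\y)\,dt$ by submultiplicativity of $\mu$'' overstates what (iii) gives directly: the pointwise bound $\mu_{s+t}(\x\y)\le\mu_s(\x)\mu_t(\y)$ does not immediately yield the integral inequality without a constant; what is actually used (Fack--Kosaki) is the logarithmic submajorization $\int_0^s\log\mu_t(\x\y)\,dt\le\int_0^s\log\bigl(\mu_t(\x)\mu_t(\y)\bigr)\,dt$ together with a Hardy--Littlewood--P\'olya argument, and the same issue affects your parenthetical ``up to a harmless constant'' route to the non-conjugate H\"older --- the sharp $\|\x\y\|_r\le\|\x\|_p\|\y\|_q$ needs either this submajorization or the interpolation argument, not raw (iii). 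Second, the surjectivity step in (b) glosses over why an arbitrary $\Lambda\in(L^p)^*$ restricted to $\cX$ is normal (this needs an argument; for $1<p<\infty$ one typically uses uniform convexity / reflexivity rather than Radon--Nikodym directly, while the $p=1$ case is genuinely the normal-functional representation theorem). These are exactly the ``substantive'' points you flag as the main obstacles, and in a self-contained account they would need the fuller versions I indicate; since the paper merely cites the references, none of this affects the paper's argument.
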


\begin{lem}[$L^p$-contractivity of conditional expectation]\label{lemma:Lpcontraction}
Let $\cB \subseteq \cX$ be a von Neumann subalgebra, and let $\phi^\cB:\cX \to
\cB$ be the unique $\phi$-invariant conditional expectation. Then for any $p \in
[1,\infty)$ and $\x \in \cX$,
\[\|\phi^\cB(\x)\|_p \leq \|\x\|_p.\]
\end{lem}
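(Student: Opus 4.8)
The plan is to deduce the $L^p$-contractivity of the conditional expectation from the properties already available: namely, that $\phi^\cB$ is completely positive and contractive in operator norm (Lemma \ref{conditional expectation}), that it is $\phi$-preserving, and the duality/H\"older structure of the non-commutative $L^p$-spaces (Lemma \ref{Holder's inequality}). I would first handle the endpoint cases $p=1$ and $p=\infty$ and then obtain the intermediate range either by interpolation or, more cleanly, by a direct duality argument.

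For $p=1$, the key point is that $\phi^\cB$ is $\phi$-preserving and positive. Write $\x = \su - \sv + i(\su' - \sv')$ with $\su,\sv,\su',\sv' \geq 0$; it suffices by linearity and the triangle inequality to treat $\x \geq 0$, since for positive $\x$ one has $\|\x\|_1 = \phi(\x)$. For $\x \geq 0$ we have $\phi^\cB(\x) \geq 0$ by positivity, so $\|\phi^\cB(\x)\|_1 = \phi(\phi^\cB(\x)) = \phi(\x) = \|\x\|_1$. To pass to general $\x$, I would use the duality pairing instead: for any $\x \in \cX$ and any $\y \in \cB$ with $\|\y\|_\op \leq 1$, the module property gives $\phi(\phi^\cB(\x)\y) = \phi(\phi^\cB(\x\y)) = \phi(\x\y)$, and taking the supremum over such $\y$ (which by Lemma \ref{Holder's inequality}(b) computes $\|\phi^\cB(\x)\|_1$ since $\phi^\cB(\x) \in \cB$) yields $\|\phi^\cB(\x)\|_1 \leq \sup_{\|\y\|_\op \leq 1}|\phi(\x\y)| \leq \|\x\|_1$. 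The case $p=\infty$ is exactly the operator-norm contractivity already asserted in Lemma \ref{conditional expectation}.

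For general $p \in (1,\infty)$, the cleanest route is duality. Let $q$ be the conjugate exponent. For $\x \in \cX$, by Lemma \ref{Holder's inequality}(b) we have $\|\phi^\cB(\x)\|_p = \sup\{|\phi(\phi^\cB(\x)\y)| : \y \in L^q(\cX),\ \|\y\|_q \leq 1\}$; since $\phi^\cB(\x) \in \cB$, we may restrict the supremum to $\y \in L^q(\cB)$ without loss (a standard fact: the dual of $L^p(\cB)$ is $L^q(\cB)$, and $\phi^\cB(\x)$ is insensitive to the component of $\y$ orthogonal to $\cB$). For such $\y$, using that $\phi^\cB$ is a $\phi$-preserving conditional expectation and a $\cB$-bimodule map on $\cX$ (and extending the identity $\phi(\phi^\cB(\x)\y) = \phi(\x\y)$ for $\y \in \cB$ to $\y \in L^q(\cB)$ by density, since $\phi^\cB$ extends to an $L^p$-bounded operator by the $p=1$ and $p=\infty$ bounds and Riesz--Thorin, Lemma \ref{lemma:rieszthorin}), we get $\phi(\phi^\cB(\x)\y) = \phi(\x\y)$, hence $|\phi(\phi^\cB(\x)\y)| \leq \|\x\|_p\|\y\|_q \leq \|\x\|_p$ by H\"older's inequality. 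Taking the supremum gives $\|\phi^\cB(\x)\|_p \leq \|\x\|_p$.

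The main obstacle is purely a matter of care with the density/extension argument: one must ensure the identity $\phi(\phi^\cB(\x)\y)=\phi(\x\y)$ is legitimately extended from $\y \in \cB$ to $\y \in L^q(\cB)$, and that $\phi^\cB$ — a priori only defined on $\cX=L^\infty(\cX)$ — extends to a well-defined bounded operator on $L^p(\cX)$. Both follow from the $p=1$ contractivity established above together with the Riesz--Thorin interpolation (Lemma \ref{lemma:rieszthorin}) applied to the two endpoint bounds, after which continuity and density of $\cX$ in $L^p(\cX)$ close the argument. An alternative that sidesteps interpolation entirely is to invoke the classical theorem that a normal $\phi$-preserving conditional expectation between von Neumann algebras with a common trace is automatically $L^p$-contractive for all $p$ \cite[Section 1]{davies1992noncommutative}; I would cite this directly as the fallback if the duality argument becomes unwieldy.
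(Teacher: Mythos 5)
Your proof is correct, and the core ingredients (positivity, $\phi$-preservation, $L^p$--$L^q$ duality, Riesz--Thorin) are the right ones, but the logical structure is a bit circuitous compared to the paper's.

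For $p=1$, the paper uses the polar decomposition: pick the partial isometry $\y\in\cB$ with $\y\,\phi^\cB(\x)=|\phi^\cB(\x)|$, then $\|\phi^\cB(\x)\|_1=\phi(\y\,\phi^\cB(\x))=\phi(\y\x)\le\|\x\|_1$ by the module and $\phi$-preserving properties plus H\"older. Your first attempt (split $\x$ into four positives) only gives the inequality up to a multiplicative constant for non-self-adjoint $\x$, as you implicitly recognize when you pivot to the duality pairing; the duality version you give is fine and essentially equivalent to the paper's step.

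For $p\in(1,\infty)$, the paper's duality argument is self-contained precisely because it only takes the supremum over $\y\in\cB$ with $\|\y\|_q=1$ (a dense subset of the $L^q(\cB)$ unit ball), for which the identity $\phi(\y\,\phi^\cB(\x))=\phi(\y\x)$ is purely algebraic and needs no continuity or extension. You instead want to pair against general $\y\in L^q(\cB)$, which forces you to extend both $\phi^\cB$ and the pairing identity from $\cX$ to $L^q$ by density, and you bootstrap this via Riesz--Thorin. That resolution works, but it introduces a redundancy: once you apply Lemma \ref{lemma:rieszthorin} to the $p=1$ and $p=\infty$ endpoints with constants $M_0=M_1=1$, you already obtain $\|\phi^\cB(\x)\|_p\le\|\x\|_p$ for all $p$, and the whole duality step for $p>1$ is unnecessary. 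So the cleanest version of your argument is simply: endpoints $+$ Riesz--Thorin. This is a genuinely different route from the paper's per-$p$ duality proof, and both are valid; the paper's duality argument has the small advantage of being direct for each $p$ without invoking interpolation, while the interpolation route you sketch requires less bookkeeping once the endpoints are in hand.
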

\begin{proof}
For $p=1$, let $\y \in \cB$ be the projection
operator for which $\y\phi^\cB(\x)=|\phi^\cB(\x)|$. Then
\[\|\phi^\cB(\x)\|_1=\phi(\y\phi^\cB(\x))
=\phi(\y\x) \leq \|\y\|_\op \|\x\|_1=\|\x\|_1.\]
Similarly for $p \in (1,\infty)$, by the density of $\cX$ in $L^q(\cX)$ and
the above $L^p$-$L^q$ duality on $\cX$ as well as
on its subalgebra $\cB$,
\[\|\phi^\cB(\x)\|_p
=\sup_{\y \in \cB:\|\y\|_q=1} \phi(\y\phi^\cB(\x))
=\sup_{\y \in \cB:\|\y\|_q=1} \phi(\y\x)
\leq \sup_{\y \in \cX:\|\y\|_q=1} \phi(\y\x)=\|\x\|_p.\]
\end{proof}

\begin{lem}[Riesz-Thorin interpolation. \cite{davies1992noncommutative},
Proposition 1.6]\label{lemma:rieszthorin}
Suppose, for some $p_0,q_0,p_1,q_1 \in [1,\infty]$, that
$T:\cX \to L^{q_0}(\cX,\phi) \cap L^{q_1}(\cX,\phi)$ is a linear map
satisfying
\[\|T\x\|_{q_0} \leq M_0\|\x\|_{p_0},
\qquad \|T\x\|_{q_1} \leq M_1\|\x\|_{p_1}\]
for all $\x \in \cX$ and some $M_0,M_1>0$. If
$\frac{1}{p_\theta}=\frac{1-\theta}{p_1}+\frac{\theta}{p_2}$
and $\frac{1}{q_\theta}=\frac{1-\theta}{q_1}+\frac{\theta}{q_2}$,
then for all $\x \in \cX$,
\[\|T\x\|_{q_\theta} \leq M_0^{1-\theta}M_1^\theta\|\x\|_{p_\theta}.\]
\end{lem}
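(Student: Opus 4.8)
The plan is to run the classical complex-interpolation argument of Thorin, adapted to the non-commutative setting via the Hadamard three-lines theorem. First I would normalize: replacing $T$ by $M_0^{\theta-1}M_1^{-\theta}T$ we may assume $M_0=M_1=1$, and by homogeneity of all the norms involved it suffices to show $\|T\x\|_{q_\theta}\le 1$ for a fixed $\x\in\cX$ with $\|\x\|_{p_\theta}=1$. Invoking the $L^{q_\theta}$--$L^{q_\theta'}$ duality of Lemma~\ref{Holder's inequality}(b), where $1/q_\theta+1/q_\theta'=1$, this reduces further to proving $|\phi(T\x\cdot\y)|\le 1$ for every $\y\in\cX$ with $\|\y\|_{q_\theta'}=1$. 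One checks, as in the commutative case, that $\tfrac{1-\theta}{q_0'}+\tfrac{\theta}{q_1'}=\tfrac{1}{q_\theta'}$ where $q_j'$ is the conjugate exponent of $q_j$.

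Next I would construct the interpolating analytic family. Write polar decompositions $\x=u|\x|$ and $\y=v|\y|$ with partial isometries $u,v\in\cX$, and for $z$ in the strip $\{z\in\C:0\le\Re z\le 1\}$ set, using the Borel functional calculus (applied after replacing $|\x|$, $|\y|$ by $|\x|+\epsilon 1$, $|\y|+\epsilon 1$ to make them invertible, with $\epsilon\downarrow 0$ at the very end),
\[
\x_z=u\,|\x|^{\,p_\theta\left(\frac{1-z}{p_0}+\frac{z}{p_1}\right)},
\qquad
\y_z=v\,|\y|^{\,q_\theta'\left(\frac{1-z}{q_0'}+\frac{z}{q_1'}\right)},
\]
with the convention that an exponent $p_\theta/p_j$ (resp.\ $q_\theta'/q_j'$) is read as $0$ when $p_j=\infty$ (resp.\ $q_j'=\infty$). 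These are chosen so that $\x_\theta=\x$ and $\y_\theta=\y$, while on the two boundary lines $|\x_{it}|=|\x|^{p_\theta/p_0}$ gives $\|\x_{it}\|_{p_0}=\|\x\|_{p_\theta}^{p_\theta/p_0}=1$, and likewise $\|\x_{1+it}\|_{p_1}=1$, $\|\y_{it}\|_{q_0'}=1$, $\|\y_{1+it}\|_{q_1'}=1$ for all $t\in\R$. Define $F(z)=\phi\big(T\x_z\cdot\y_z\big)$; since $z\mapsto\x_z,\y_z$ are $\cX$-valued, analytic on the open strip, and bounded and norm-continuous on its closure (with $\|\x_z\|_\op\le(\|\x\|_\op\vee1)^{p_\theta}$ etc.), and $T,\phi$ are continuous, $F$ is analytic, bounded, and continuous up to the boundary. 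By Hölder's inequality (Lemma~\ref{Holder's inequality}(a)) and the hypotheses on $T$,
\[
|F(it)|\le\|T\x_{it}\|_{q_0}\,\|\y_{it}\|_{q_0'}\le\|\x_{it}\|_{p_0}\le 1,
\qquad
|F(1+it)|\le\|T\x_{1+it}\|_{q_1}\,\|\y_{1+it}\|_{q_1'}\le 1.
\]

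Finally, the Hadamard three-lines theorem applied to $F$ yields $|F(\theta)|\le 1$, i.e.\ $|\phi(T\x\cdot\y)|\le 1$; taking the supremum over $\y$ in the unit ball of $L^{q_\theta'}(\cX)$ (using density of $\cX$ there) gives $\|T\x\|_{q_\theta}\le 1$, and undoing the normalizations and sending $\epsilon\downarrow 0$ completes the proof. The main obstacle is making the analytic family rigorous: one must verify that $|\x|^w$ is a well-defined element of $\cX$ with $\|\,|\x|^w\|_\op\le(\|\x\|_\op\vee 1)^{\Re w}$ uniformly on the strip (this is where the $\epsilon$-regularization and boundedness of $|\x|\in\cX$ are used), that $z\mapsto\x_z$ is genuinely $\cX$-analytic (most cleanly seen by approximating $|\x|$ by finite spectral sums, on which $z\mapsto|\x|^w$ is literally a finite sum of exponentials, and passing to the limit), and that the degenerate edge cases $p_\theta=\infty$ or $q_\theta'=\infty$ — in which a norm is the operator norm — are covered; these are handled by noting the corresponding analytic family is then constant, so the boundary bounds still hold. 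Alternatively, one may simply cite Stein's interpolation theorem for non-commutative $L^p$ spaces, of which this statement is the special case of a constant operator.
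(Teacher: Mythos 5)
The paper does not prove this lemma; it is quoted verbatim from \cite{davies1992noncommutative}, Proposition 1.6, and used as a black box. Your proposal gives the standard Thorin-style complex interpolation argument, transplanted to the non-commutative setting via polar decomposition, Borel functional calculus on $|\x|$ and $|\y|$, and the Hadamard three-lines theorem, which is indeed how this result is proved in the literature. The skeleton and the technical caveats you flag (regularization $|\x|\mapsto|\x|+\epsilon$, analyticity via spectral approximation, degenerate exponents) are all the right concerns, and the duality reduction via Lemma~\ref{Holder's inequality}(b) is appropriate.

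One genuine slip: the normalization ``replacing $T$ by $M_0^{\theta-1}M_1^{-\theta}T$ we may assume $M_0=M_1=1$'' is not a valid reduction. That rescaling replaces $(M_0,M_1)$ by $\bigl((M_0/M_1)^\theta,\,(M_1/M_0)^{1-\theta}\bigr)$, which are not both $1$ unless $M_0=M_1$; no scalar multiple of $T$ can normalize both constants simultaneously. Consequently the boundary estimates you later assert, $|F(it)|\le 1$ and $|F(1+it)|\le 1$, do not follow. The fix is standard and does not change the structure of the argument: either keep $M_0,M_1$ and invoke the three-lines theorem in the form $\sup_t|F(it)|\le A_0$, $\sup_t|F(1+it)|\le A_1 \Rightarrow |F(\theta)|\le A_0^{1-\theta}A_1^\theta$, or absorb the constants into the analytic family by defining $F(z)=M_0^{z-1}M_1^{-z}\,\phi(T\x_z\cdot\y_z)$. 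Either repair makes the argument correct. You might also note that the exponent indices in the quoted lemma statement ($p_1,p_2$ and $q_1,q_2$) are a typo for $p_0,p_1$ and $q_0,q_1$, which you implicitly corrected in your interpolating exponents.
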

\vspace{\baselineskip}

\subsection*{Acknowledgments}

Z.F.\ would like to thank Cheng Mao, Jiaming Xu, and Yihong Wu for the
collaboration \cite{fan2023spectral} that inspired this work. A preliminary
version of these results were presented at the Random Matrices and
Applications workshop at ICERM, and we would like to thank the workshop
participants for their helpful feedback. This research is supported in part by
NSF DMS-2142476 and a Sloan Research Fellowship.

\bibliographystyle{plain}
\bibliography{citations}

\end{document}